\documentclass[10pt, a4paper]{amsart}
\usepackage{amssymb,graphicx}
\usepackage{mathrsfs, bbm, slashed}
\usepackage{color}
\usepackage{enumerate}
\usepackage{a4wide}

\newcommand{\R}{\mathbb{R}}
\newcommand{\C}{\mathbb{C}}
\newcommand{\Z}{\mathbb{Z}}
\newcommand{\N}{\mathbb{N}}
\newcommand{\T}{\mathbb{T}}
\newcommand{\bS}{\mathbb{S}}
\newcommand{\bB}{\mathbb{B}}

\newcommand{\cO}{\mathcal{O}}

\newcommand{\sD}{\mathscr{D}}
\newcommand{\sE}{\mathscr{E}}
\newcommand{\sI}{\mathscr{I}}
\newcommand{\sJ}{J}
\newcommand{\sL}{\mathscr{L}}
\newcommand{\sH}{\mathscr{H}}
\newcommand{\sK}{\mathscr{K}}
\newcommand{\sM}{\mathscr{M}}
\newcommand{\sR}{\mathscr{R}}

\newcommand{\sU}{\mathscr{U}}
\newcommand{\sV}{\mathscr{V}}
\newcommand{\sW}{\mathscr{W}}

\newcommand{\co}{\mathfrak{c}_{0}}

 \newcommand{\psidos}{$\Psi$DOs}

\def\Xint#1{\mathchoice
{\XXint\displaystyle\textstyle{#1}}
{\XXint\textstyle\scriptstyle{#1}}
{\XXint\scriptstyle\scriptscriptstyle{#1}}
{\XXint\scriptscriptstyle\scriptscriptstyle{#1}}
\!\int}
\def\XXint#1#2#3{{\setbox0=\hbox{$#1{#2#3}{\int}$}
\vcenter{\hbox{$#2#3$}}\kern-.5\wd0}}
\def\dashint{\Xint-}

\newcommand{\bint}{\ensuremath{\dashint}}
\newcommand{\gbint}{\sideset{^g\!\!}{}\bint}
\newcommand{\gubint}{\sideset{^{g_1}\!\!}{}\bint}

\newcommand{\op}{\operatorname}

\newcommand{\Sp}{\op{Sp}}
\newcommand{\Vol}{\op{Vol}}
\newcommand{\Tr}{\op{Tr}}
\newcommand{\Trw}{\Tr_{\omega}}
\newcommand{\dist}{\op{dist}}
\newcommand{\ran}{\op{ran}}
\newcommand{\Span}{\op{Span}}

\newcommand{\Com}{\op{Com}}

\newcommand{\rk}{\op{rk}}
\newcommand{\car}{\mathbbm{1}}

\newcommand{\SU}{\op{SU}}
\newcommand{\su}{\op{su}}

\newcommand{\scal}[2]{\ensuremath{\left\langle #1 | #2 \right\rangle}}
\newcommand{\ketbra}[2]{\ensuremath{|#1\left\rangle\! \right\langle#2|}}

\newcommand{\dom}{\op{dom}}

\numberwithin{equation}{section}

\newtheorem{theorem}{Theorem}[section]
\newtheorem{proposition}[theorem]{Proposition}
\newtheorem{corollary}[theorem]{Corollary}

\newtheorem{theoremalph}{Theorem}
\newtheorem{lemmalph}[theoremalph]{Lemma}
\newtheorem{propositionalph}[theoremalph]{Proposition}

\newtheorem{lemma}[theorem]{Lemma}

\newtheorem{conjecture*}{Conjecture}

\theoremstyle{definition}
\newtheorem{definition}[theorem]{Definition}

\theoremstyle{remark}
\newtheorem{example}[theorem]{Example}
\newtheorem{remark}[theorem]{Remark}

\newtheorem{claim*}{Claim}

\newcommand{\RV}{\textup{RV}}
\newcommand{\shD}{\slashed{D}}
\newcommand{\shS}{\slashed{S}}

\newcommand{\EL}{\op{EL}}

\newcommand{\loc}{\textup{loc}}

\title{Nonclassical Weyl Laws and\\ Connes' Integration for weak Lorentz Ideals, I}
\date{\today}

\author{Rapha\"el Ponge}
 \address{Department of Mathematics and Statistics, University of Ottawa, Canada}
 \email{ponge.math@icloud.com}

\author{Yongqiang Tian}
 \address{School of Mathematics and Statistics, Central South University, Changsha, China}
 \email{tianyongqiang@csu.edu.cn}

\keywords{noncommutative geometry; weak Lorentz ideals; spectral analysis; Dixmier traces; Birman--Solomyak perturbation theory}

\subjclass[2020]{58B34; 47B10; 81Q10}

\begin{document}
\begin{abstract}
Motivated by nonclassical Weyl laws arising in various contexts---including Connes' approach to the Riemann Hypothesis---we develop a systematic theory of Dixmier traces and Connes' noncommutative integration for weak Lorentz ideals associated with regularly varying functions. A key ingredient is an asymptotic additivity property for eigenvalue partial sums, obtained by combining Karamata's theorem with results of Kalton and Lord--Sukochev--Zanin. This yields a direct construction of Dixmier traces in terms of eigenvalue sequences and a complete spectral characterization of measurable operators, answering a question of Connes in this general setting.

We also extend to weak Lorentz ideals the Birman--Solomyak perturbation theory for eigenvalue and singular-value asymptotics. Weyl operators---those admitting precise asymptotic limits for their rescaled eigenvalue sequences---are shown to form a closed subset of the ideal, stable under compact perturbations, extending classical results of Weyl and Birman--Solomyak.

We further study \emph{strong measurability} (measurability with respect to all positive normalized traces). We prove that every Weyl operator is strongly measurable, so spectral measurability implies strong measurability. The converse does not hold in general; a spectral characterization via Pietsch's correspondence is obtained in the companion paper~\cite{Po-Ti:Part2}.

Finally, as an application, we establish spectral measurability for operators arising from nonclassical Weyl laws: operators associated with the Riemann Hypothesis (assuming RH); Schr\"odinger operators with anisotropic potentials and Dirichlet Laplacians on infinite-volume domains; Dirac operators on open spin manifolds with conformally cusp metrics; and the operator formed by the Dirac operator of the Podle\`s quantum sphere and the Laplacian on the 2-torus.
\end{abstract}
\maketitle

\section{Introduction}
In the framework of Connes' noncommutative geometry~\cite{Co:NCG}, the role of the integral is played by positive normalized traces, and more especially
Dixmier traces~\cite{Di:CRAS66}, on the weak trace class $\sL_{1,\infty}$. In the approach of~\cite{LSZ:Book, Po:JNCG23, Su:CIMPA17}, Dixmier traces are obtained by applying an \emph{extended limit} $\omega$ to the Ces\`aro mean, 
\begin{equation}\label{eq:Intro.Cesaro}
 \frac{1}{\log (N+1)}\sum_{j<N}\lambda_j(T), \qquad N\geq 0. 
\end{equation}
where $(\lambda_j(T))_{j\geq 0}$ is any eigenvalue sequence for $T$ (see Section~\ref{sec:Dixmier} for the precise meaning of eigenvalue sequence). 

Following Connes~\cite{Co:NCG}, $T$ is called \emph{measurable} (or \emph{Dixmier-measurable}) if $\Trw(T)$ is independent of the choice of $\omega$; this common value is the \emph{noncommutative integral} $\bint T$. It can further be shown (see~\cite{Co:NCG, LSZ:Book, Po:JNCG23}) that
\begin{equation}\label{eq:Intro.spectral-measurability-lunf}
 \biggl(\text{$T$ is measurable and}\ \bint T=L\biggr) \Longleftrightarrow \lim_{N\rightarrow \infty} \frac{1}{\log N}\sum_{j < N} \lambda_j(T)=L,
\end{equation}
giving both a spectral characterization of measurability and a practical way to compute NC integrals without reference to extended limits.

Stronger notions of measurability arise by enlarging the class of traces. From the point of view of noncommutative geometry, it is natural to use all positive normalized traces. The corresponding notion is called \emph{strong measurability} or \emph{positive measurability}. A spectral characterization of strongly measurable operators on $\sL_{1,\infty}$ was provided by Semenov-Sukochev-Usachev-Zanin~\cite{SSUZ:AIM15} as a consequence of a version of Pietsch's correspondence for traces on $\sL_{1,\infty}$. 

Birman-Solomyak~\cite{BS:JFAA70, BS:Book} developed a perturbation theory for eigenvalue and singular-value asymptotics of compact operators in weak Schatten classes $\sL_{p,\infty}$, $p>0$, which became a cornerstone of operator ideal techniques in semiclassical analysis. The existence of Weyl laws for operators in $\sL_{1,\infty}$ implies strong measurability (see, e.g.,~\cite{Po:JNCG23}). In particular, the Weyl laws of Birman-Solomyak~\cite{BS:VLU77, BS:VLU79, BS:SMJ79} for negative-order \psidos\ imply a stronger form of Connes' trace theorem~\cite{Co:CMP88}.

The aim of this paper, and its companion paper~\cite{Po-Ti:Part2}, is to extend this circle of ideas to the larger setting of \emph{weak Lorentz ideals}, with a special emphasis on the spectral aspects. This is motivated by instances of ``nonclassical" Weyl laws in various settings, including Connes' approach to Riemann Hypothesis (see below).

 \subsection{Nonclassical Weyl laws and weak Lorentz ideals}
 \emph{Classical Weyl laws} are eigenvalue asymptotics of the form,
 \begin{equation*}
 \lambda_j(T)\sim c j^{-\frac{1}{p}} \qquad \text{as}\ j\rightarrow \infty,
\end{equation*}
where $\lambda_0(T)\geq \lambda_1(T)\geq \cdots $ are the eigenvalues of $T$ (which for simplicity is assumed to be a positive compact operator). As coined by Simon~\cite{Si:JFA83},  \emph{nonclassical Weyl laws} are eigenvalue asymptotics of the form,
 \begin{equation*}
 \lambda_j(T)\sim c j^{-\frac{1}{p}}(\log j)^q,  \qquad q\neq 0.
\end{equation*}
 They appear in numerous settings, including Dirichlet problems on infinite-volume domains, Weyl pseudodifferential operators on Euclidean space, Weyl laws on open manifolds, Weyl laws in sub-Riemannian geometry, Weyl laws for bisingular operators, \emph{etc}.  (see, e.g., \cite{Ba:MZ12, CHT:arXiv22, DR:LNM87, GU:JMAA20, Mo:MA08, Si:JFA83} and the references therein).

Assuming the Riemann Hypothesis (RH) holds, one of the most striking examples of nonclassical Weyl law occurs for the Dirac operator $\sD$ whose eigenvalues are
the imaginary parts of the zeros of Riemann's zeta function (see, e.g.,~\cite{Co:AFA24, CM:PNAS22}). The Riemann-van Mangoldt formula then implies that
\begin{equation*}
 \lambda_j\left(|\sD|^{-1}\right) \sim c j^{-1}\log j  \qquad \text{as}\ j\rightarrow \infty.
\end{equation*}
Therefore, at least from the point of view of noncommutative geometry and Connes' approach to RH, it is important to extend the framework of Connes' NC integral to operators satisfying nonclassical Weyl laws.

The right operator-ideal setting is provided by \emph{weak Lorentz ideals} (in the sense of~\cite{LSZ:Book}).  In this paper's setup they are associated with functions of regular-variations of negative index, i.e., functions that are decreasing near $\infty$ and satisfy $\lim_{t\to\infty}g(\lambda t)/g(t)=\lambda^\rho$ for some $\rho<0$ (which is called the index of $g$). The corresponding weak Lorentz ideal is 
\begin{equation*}
 \sL_g=\left\{ T; \ \mu_j(T)=\op{O}(g(j))\right\},
\end{equation*}
where $\mu_0\geq \mu_1(T)\geq \cdots $ are the singular values of $T$. In particular, for $g(t)=(t+1)^{-1/p}$, $p>0$, we recover the weak Schatten class $\sL_{p,\infty}$.

In this paper we develop a systematic theory of Dixmier traces and Connes' integration in the general setting of weak Lorentz ideals, we extend the Birman--Solomyak perturbation theory,  and set up the theory of strong measurability in this setting. We also provide several examples that are either new or refinements of previously known examples. Further results and examples are provided in the companion paper~\cite{Po-Ti:Part2}.  
We also refer to~\cite{GS:JFA14, GU:JMAA20, LU:JMAA25, SS:JFA13, TU:arXiv24}, and the references therein, for various related results.

\subsection{Measurability and Connes' integral}
Originally, Dixmier traces were defined on the Dixmier-Macaev ideal $\sM_{1,\infty}$, which strictly contains the weak trace-class $\sL_{1,\infty}$ (see~\cite{Co:NCG, Di:CRAS66}). More recently (see \cite{LSZ:Book, Po:JNCG23, Su:CIMPA17}) it was observed that a simpler construction of Dixmier traces was available on the smaller weak trace class~$\sL_{1,\infty}$ as a direct consequence of the following asymptotic additivity property,
\begin{equation}\label{eq:Intro.asymptotic-additivity-lunf}
 \sum_{j<N} \lambda_j(S+T)= \sum_{j<N} \lambda_j(S) + \sum_{j<N} \lambda_j(T)+\op{O}(1), \qquad S,T\in \sL_{1,\infty}.
\end{equation}
The above result was established in the first edition of~\cite{LSZ:Book}. It readily implies that applying any extended limit to the Ces\`aro mean~(\ref{eq:Intro.Cesaro}) defines a positive linear trace on $\sL_{1,\infty}$. In particular, the Dixmier trace is defined directly on the whole ideal $\sL_{1,\infty}$ and the additivity is immediate.

It has been known for some time that the original approach to Dixmier traces of~\cite{Co:NCG, Di:CRAS66} extends \emph{mutatis mutandis} to Lorentz ideals (a.k.a.\ Marcinkiewicz ideals; see, e.g.,~\cite{DdPSS:Pos98, GS:JFA14, SS:JFA13} and the references therein). A key observation of this paper is that a version of~(\ref{eq:Intro.asymptotic-additivity-lunf}) holds for their weak versions as well.

From now on, we let $g:[0,\infty)\rightarrow (0,\infty)$ be a continuous $\RV_{-1}$-function such that $\int_0^\infty g(t)\,dt = \infty$, and set $G(t) = \int_0^t g(s)\,ds$.

\begin{propositionalph}[see Proposition~\ref{prop:asymptotic-additivity}] \label{lem:Intro.asymptotic-additivity}
 For all $S,T\in \sL_g$, we have
 \begin{equation}\label{eq:Intro.asymptotic-additivity}
  \sum_{j<N} \lambda_j(S+T)=  \sum_{j<N} \lambda_j(S) +  \sum_{j<N} \lambda_j(T)+\op{o}\left(G(N)\right).
\end{equation}
\end{propositionalph}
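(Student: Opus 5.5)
The plan is to reduce the statement to two known facts. The first is Kalton's / Lord--Sukochev--Zanin's result expressing eigenvalue partial sums through the diagonal of an upper-triangular (Ringrose) form. The second is a singular-value majorization for the nilpotent part, combined with Karamata's theorem to control the resulting error by $\op{o}(G(N))$.

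\textbf{Step 1: Karamata.} Since $g\in \RV_{-1}$ and $\int_0^\infty g=\infty$, Karamata's theorem gives that $G$ is slowly varying and $tg(t)=\op{o}(G(t))$ as $t\to\infty$. Consequently, for $T\in\sL_g$ we have
\[
\sum_{j<N}\mu_j(T)=\op{O}(G(N)),\qquad N\mu_N(T)=\op{o}(G(N)),
\]
and $G(\lambda N)/G(N)\to 1$ for every $\lambda>0$. In particular, partial sums over dyadic blocks $N\le j<2N$ are $\op{o}(G(N))$.

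\textbf{Step 2: reduction to a single operator.} By the Lord--Sukochev--Zanin approach, every compact $A$ can be written as $A=D+Q$, where $D$ is normal with $\lambda(D)=\lambda(A)$ and $Q$ is quasi-nilpotent. Moreover, Kalton's theorem gives an estimate: for $A$ in a quasi-Banach ideal whose ``arithmetic mean'' is controlled, one has
\[
\Bigl|\sum_{j<N}\lambda_j(A)-\Tr\bigl(E_N A E_N\bigr)\Bigr|\lesssim \sum_{j\gtrsim N}\frac{N}{j}\,\mu_j(A)+\cdots
\]
for suitable spectral projections $E_N$. The precise form I would use is the one from LSZ that gives $\sum_{j<N}\lambda_j(A)=\Tr(AP_N)+\op{O}(N\mu_N(A))$ up to tail terms. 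Here $P_N$ is a projection of rank comparable to $N$ built from the singular-value decomposition of a dominating positive operator.

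Apply this with the positive operator $|S|+|T|+|S^*|+|T^*|$, which lies in $\sL_g$. This gives a common projection $P_N$ for $S$, $T$ and $S+T$. Linearity of $\Tr(\,\cdot\,P_N)$ then reduces the claim to showing that each error term is $\op{o}(G(N))$.

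\textbf{Step 3: bounding the errors (the main obstacle).} The error terms take the form $N\mu_N$ plus sums $\sum_{j>N}\frac{N}{j}\mu_j$ or $\frac1N\sum_{j<N}j\mu_j$.
\begin{itemize}
\item The term $N\mu_N$ is $\op{O}(Ng(N))=\op{o}(G(N))$ directly by Step 1.
\item For $\frac1N\sum_{j<N}jg(j)$, apply Karamata again to the $\RV_0$ function $tg(t)$: this sum is $\sim Ng(N)=\op{o}(G(N))$.
\item For $N\sum_{j>N}g(j)/j$, the summand $g(t)/t$ is $\RV_{-2}$, so the sum is $\sim g(N)$. Hence this term is $\op{O}(Ng(N))$ as well.
\end{itemize}

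The delicate point is to ensure that the available version of Kalton/LSZ yields error terms of exactly this shape, and not merely $\op{O}$ of the partial sums. If only the weaker estimate $\sum_{j<N}\lambda_j(Q)=\op{O}(N\mu_N)$-type for the quasi-nilpotent part is available, I would instead use a dyadic decomposition. Group indices into blocks $[2^k,2^{k+1})$, apply the LSZ inequality
\[
\Bigl|\sum_{j<N}\lambda_j(A)-\sum_{j<N}\lambda_j(B)\Bigr|\le C\sum_{k}\min\bigl(1,N2^{-k}\bigr)\,2^k\mu_{2^k}(A-B)
\]
in an appropriate form, and sum using $2^kg(2^k)=\op{o}(G(2^k))$ together with the slow variation of $G$.
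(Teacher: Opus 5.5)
Your Karamata bookkeeping in Steps~1 and~3 is correct, and it is exactly the last step of the paper's proof: once the defect $\sum_{j<N}\lambda_j(S+T)-\sum_{j<N}\lambda_j(S)-\sum_{j<N}\lambda_j(T)$ is known to be $\op{O}(Ng(N))$, the relation $Ng(N)=\op{o}(G(N))$ finishes the argument.

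The gap is Step~2, which carries all of the difficulty. You claim that one projection $P_N$, built from the singular vectors of $|S|+|T|+|S^*|+|T^*|$, satisfies $\sum_{j<N}\lambda_j(A)=\Tr(AP_N)+\op{O}(N\mu_N)$ simultaneously for $A=S$, $T$, $S+T$. This is not a result of Kalton or of Lord--Sukochev--Zanin, and you give no argument for it. Since $A\mapsto \Tr(AP_N)$ is linear, this claim would give the proposition at once, so assuming it begs the question. Nor is there reason to expect it. Eigenvalue partial sums of a non-normal operator are governed by that operator's own Ringrose (triangular) structure. The triangular structures of $S$, $T$ and $S+T$ are unrelated to one another and to the singular vectors of a dominating positive operator. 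Diagonal formulas of this kind, as in Kalton--Lord--Potapov--Sukochev, require a modulation hypothesis relative to the chosen positive operator. The missing ingredient is the one the paper uses. $\sL_g$ is a quasi-Banach ideal, hence geometrically stable by Kalton's theorem, and therefore closed under logarithmic submajorization. For such ideals, Lemma~5.6.4 of Lord--Sukochev--Zanin gives the additivity directly with an $\op{O}(Ng(N))$ defect, and no common projection is needed.

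The dyadic fallback does not repair this. As written, the inequality $\bigl|\sum_{j<N}\lambda_j(A)-\sum_{j<N}\lambda_j(B)\bigr|\le C\sum_k \min(1,N2^{-k})\,2^k\mu_{2^k}(A-B)$ is false, because eigenvalues of non-normal operators are not Lipschitz under perturbation. Take $A=\begin{pmatrix}0&1\\ \epsilon&0\end{pmatrix}$ and $B=\begin{pmatrix}0&1\\ 0&0\end{pmatrix}$, extended by $0$, with $N=1$. The left side equals $\sqrt{\epsilon}$, while $A-B$ has rank one and norm $\epsilon$, so the right side is $\op{O}(\epsilon)$.

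More fundamentally, an estimate controlled by all the singular values of a generic perturbation in $\sL_g$ can only give $\op{O}(G(N))$. Knowing $2^kg(2^k)=\op{o}(G(2^k))$ for each $k$ does not make $\sum_{2^k\le N}2^kg(2^k)$ small, because this sum is a condensed form of $\sum_{j<N}g(j)\sim G(N)$. At best you would obtain $\sum_{j<N}\lambda_j(S+T)-\sum_{j<N}\lambda_j(S)=\op{O}(G(N))$. That is not the additivity statement with an $\op{o}(G(N))$ remainder.
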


This result is a direct consequence of combining results of Kalton~\cite{Ka:Crelle98} and Lord-Sukochev-Zanin~\cite{LSZ:Book} with Karamata's theorem for $\RV_{-1}$-functions. It enables us to extend to weak Lorentz ideals the approach of~\cite{LSZ:Book, Po:JNCG23} to Dixmier traces and Connes' integral on $\sL_{1,\infty}$.

\begin{propositionalph}[see Proposition~\ref{prop:dixmier}]
Given any extended limit $\omega$, the following formula
\begin{equation}\label{intro:def-dixmier}
\Trw(T) := \omega\Bigl(\Bigl\{ \frac{1}{G(N)}\sum_{j < N}\lambda_j(T)\Bigr\}_{N \geq 0}\Bigr), \qquad T\in \sL_g,
\end{equation}
 defines a positive linear trace on $\sL_g$ which annihilates $(\sL_g)_0$.
\end{propositionalph}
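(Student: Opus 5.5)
The plan is to derive every property of $\Trw$ from the asymptotic additivity of Proposition~\ref{lem:Intro.asymptotic-additivity}, together with two elementary bounds on the Ces\`aro sums.

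\emph{Step 1: the sequence is bounded.} First we check that the sequence in~(\ref{intro:def-dixmier}) lies in $\ell^\infty$, so that $\omega$ can be applied to it. By Weyl's inequality, $\sum_{j<N}|\lambda_j(T)|\leq \sum_{j<N}\mu_j(T)$. For $T\in\sL_g$ we have $\mu_j(T)\leq C g(j)$. Since $g$ is $\RV_{-1}$, Karamata's theorem (or simply the fact that $g$ is eventually decreasing and $g(j)\sim g(j+1)$) gives
\begin{equation*}
\sum_{j<N} g(j)= G(N)+\op{O}(1) = \op{O}(G(N)).
\end{equation*}
The bound at small $j$ is harmless, because $G(N)\to\infty$. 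Hence
\begin{equation*}
\Bigl|G(N)^{-1}\sum_{j<N}\lambda_j(T)\Bigr|\leq C'\|T\|_{\sL_g}.
\end{equation*}
This estimate also shows that the value does not depend on the choice of eigenvalue sequence. Two eigenvalue sequences differ only by reordering within blocks of eigenvalues of equal modulus, and the resulting partial-sum discrepancy is controlled as in Proposition~\ref{lem:Intro.asymptotic-additivity}. Alternatively, we can apply that proposition with $S=0$.

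\emph{Step 2: linearity.} Homogeneity is immediate, since $(\alpha\lambda_j(T))_j$ is an eigenvalue sequence for $\alpha T$. For additivity, divide~(\ref{eq:Intro.asymptotic-additivity}) by $G(N)$. The error becomes an $\op{o}(1)$ sequence, which any extended limit sends to $0$. Since $\omega$ is linear, $\Trw(S+T)=\Trw(S)+\Trw(T)$.

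\emph{Step 3: positivity and the trace property.} If $T\geq0$, its eigenvalues are nonnegative, so the sequence is nonnegative and $\Trw(T)\geq0$, because extended limits are positive. For the trace property, $ST$ and $TS$ have the same nonzero eigenvalues with the same algebraic multiplicities, for $S$ bounded and $T\in\sL_g$ (and both products lie in $\sL_g$). So they admit a common eigenvalue sequence, and $\Trw(ST)=\Trw(TS)$.

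\emph{Step 4: vanishing on $(\sL_g)_0$.} If $\mu_j(T)=\op{o}(g(j))$, then for every $\epsilon>0$ we have $\sum_{j<N}\mu_j(T)\leq C_\epsilon+\epsilon\sum_{j<N}g(j)$. Since $G(N)\to\infty$ (because $\int_0^\infty g=\infty$), the normalized partial sums tend to $0$, and so $\Trw(T)=0$.

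\emph{Main obstacle.} The only substantive input is the additivity in Step 2, which is Proposition~\ref{lem:Intro.asymptotic-additivity} and may be assumed here. Among the remaining steps, the point requiring care is the comparison $\sum_{j<N}g(j)\asymp G(N)$ for a merely continuous $\RV_{-1}$ function. I would handle it with the uniform convergence theorem for regularly varying functions, which gives $g(j)\sim\int_j^{j+1}g$.
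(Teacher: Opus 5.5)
Your proposal is correct, and apart from the trace step it is the paper's proof. The shared ingredients are:
\begin{itemize}
\item boundedness via Weyl's inequality and $\sum_{j<N}g(j)=G(N)+\op{O}(1)$;
\item additivity and independence of the eigenvalue sequence from Proposition~\ref{prop:asymptotic-additivity}, where the case $S=0$ is exactly Corollary~\ref{cor:comparison};
\item homogeneity and positivity;
\item vanishing on $(\sL_g)_0$ via $\mu_j(T)=\op{o}(g(j))$.
\end{itemize}

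The genuine difference is the trace property. You get $\Trw(AT)=\Trw(TA)$ directly, because $AT$ and $TA$ have the same nonzero eigenvalues with the same algebraic multiplicities and hence share an eigenvalue sequence. The paper instead rewrites $\Com(\sL_g)$ as the span of the operators $U^*TU-T$, using that every bounded operator is a combination of four unitaries (see (\ref{def:com})). It then applies asymptotic additivity to the pair $(U^*TU,-T)$ to obtain Corollary~\ref{cor:com--spectral}: $\sum_{j<N}\lambda_j(C)=\op{o}(G(N))$ for every $C\in\Com(\sL_g)$.

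Your route is shorter and avoids the unitary decomposition. The paper's route produces an extended-limit-free statement about commutators, which it reuses in the direct proof of Proposition~\ref{prop:tauberian}. You could recover that statement too, by applying asymptotic additivity to the pair $(AT,-TA)$.

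One caution about Step~1. The boundedness estimate does not by itself show that $\Trw$ is independent of the eigenvalue sequence. The block-reordering argument also falls short as stated. The discrepancy between two eigenvalue sequences is of order $N|\lambda_{N-1}(T)|$, and the additive Weyl inequality only bounds this by $\op{O}(G(N))$, not $\op{o}(G(N))$. Making it work would require $|\lambda_j(T)|=\op{O}(g(j))$, which comes from the multiplicative Weyl inequality, together with Karamata's theorem. So rely on your alternative, the $S=0$ case of asymptotic additivity, which is what the paper does.
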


As in the $\sL_{1,\infty}$-setting, we say that an operator $T\in \sL_g$ is \emph{measurable} if $\Trw(T)$ takes the same value for all extended limits $\omega$. This common value is then defined to be the \emph{noncommutative integral} of $T$, and is denoted $\gbint T$. The set of measurable operators is a closed subspace of $\sL_g$, which contains both its commutator space $\Com(\sL_g)$ and the separable ideal $(\sL_g)_0$ (see Proposition~\ref{prop:measurable-space-properties}).

We have the following extension of~(\ref{eq:Intro.spectral-measurability-lunf}) to weak Lorentz ideals.

\begin{theoremalph}[see Theorem~\ref{thm:measurable} for the full statement]\label{thm:intro1}
For any $T \in \sL_g$, we have
\begin{equation}\label{eq:Intro.spectral-measurable}
 \biggl(\text{$T$ is measurable and}\ \gbint T=L\biggr) \Longleftrightarrow \lim_{N\rightarrow \infty} \frac{1}{G(N)}\sum_{j < N} \lambda_j(T)=L .
\end{equation}
\end{theoremalph}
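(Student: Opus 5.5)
The direction ($\Leftarrow$) is immediate. If $\frac{1}{G(N)}\sum_{j<N}\lambda_j(T)\to L$, then every extended limit $\omega$ sends a convergent sequence to its limit, so $\Trw(T)=L$ for all $\omega$. Hence $T$ is measurable with $\gbint T=L$. Throughout, I use Proposition~\ref{prop:dixmier} to know that $\Trw$ is well defined on $\sL_g$; in particular it does not depend on the chosen eigenvalue sequence.

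For ($\Rightarrow$), write $a_N=\frac{1}{G(N)}\sum_{j<N}\lambda_j(T)$. First I check that $(a_N)$ is bounded. Weyl's inequality gives $|\sum_{j<N}\lambda_j(T)|\le\sum_{j<N}\mu_j(T)\le C\sum_{j<N}g(j)$. Karamata's theorem for $\RV_{-1}$ functions, with $g$ eventually decreasing, shows $\sum_{j<N}g(j)=G(N)+\op{O}(1)$, so $a_N\in\ell_\infty$. The key fact is the standard one: for a bounded real sequence, the set $\{\omega(a)\}$ over all extended limits $\omega$ is exactly $[\liminf a_N,\limsup a_N]$. This follows from Hahn--Banach, extending $\lim$ from $c$ together with $a$, with any value in that interval permitted by the sublinear functional $\limsup$. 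Therefore measurability forces $\liminf a_N=\limsup a_N$, i.e.\ convergence to $L$.

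Complex $T$ needs an extra step, since $\lambda_j(T)$ may be complex. Real and imaginary parts of $a_N$ can be handled separately, because extended limits are linear and real on real sequences. Applying the argument above to $\Re a_N$ and $\Im a_N$ shows that both converge. Here I use that the relevant extended limits are states on $\ell_\infty$ extended complex-linearly, so one may choose $\omega$ independently realizing any value in the real interval for $\Re a$.

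The main obstacle is matching the notion of extended limit used in Section~\ref{sec:Dixmier}. If extended limits are required to have extra invariance (e.g.\ dilation invariance), then the attainable values are no longer the full $[\liminf,\limsup]$, and one must instead show that $a_N$ is ``slowly oscillating''. In that case I would use $G(2N)/G(N)\to1$, which holds because $G$ is slowly varying by Karamata, together with $\sum_{N\le j<2N}|\lambda_j|\le C(G(2N)-G(N))=\op{o}(G(N))$. These give $a_{kN}-a_N\to0$ for bounded $k$ (and $a_{\lfloor\lambda N\rfloor}-a_N\to 0$ for fixed $\lambda$), so the invariant-limit version of the Hahn--Banach argument still yields full convergence.
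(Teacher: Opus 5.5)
Your proof is correct and is essentially the paper's own argument: the paper also reduces the equivalence to the fact that a bounded sequence converges to $L$ if and only if every extended limit assigns it the value $L$ (Lemma~\ref{lem:el property}, which you prove via Hahn--Banach rather than the paper's subsequence argument), and it gets independence from the choice of eigenvalue sequence from Corollary~\ref{cor:comparison}. Your last paragraph is unnecessary, because the extended limits of Section~\ref{sec:Dixmier} carry no invariance requirement; note also that the termwise bound $|\lambda_j(T)|\le Cg(j)$ used there would need the multiplicative Weyl inequality, not the additive one.
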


As with~(\ref{eq:Intro.spectral-measurability-lunf}) this provides us with a spectral characterization of measurable operators in $\sL_g$ and a spectral interpretation of $\gbint$. This also provides a simple test for measurability and a practical way to compute NC integrals in applications.

In particular, the asymptotic additivity property~(\ref{eq:Intro.asymptotic-additivity}) gives a direct proof that the r.h.s.\ of~(\ref{eq:Intro.spectral-measurable}) is a singular trace on its domain, answering a question of Connes~\cite[Question~A]{Po:JNCG23} in the generality of all weak Lorentz ideals (see \S\S~\ref{subsec:Connes-question}).

\subsection{Birman-Solomyak perturbation theory for weak Lorentz ideals}
The perturbation theory of Birman-Solomyak~\cite{BS:JFAA70, BS:Book} for asymptotics of eigenvalues and singular values of operators in weak Schatten classes is a cornerstone of the use of operator ideal techniques in semiclassical analysis. We observe that the $\RV$-property is the very property that allows us to extend Birman-Solomyak's theory to weak Lorentz ideals.

Let $g:[0,\infty)\rightarrow (0,\infty)$ be a continuous $\RV_\rho$-function with $\rho<0$. We equip the quotient space $\dot{\sL}_g := \sL_g/(\sL_g)_0$ with its natural quotient quasi-norm, which satisfies $\|\dot{T}\|_{\dot{g}} = \limsup_{j\to\infty} g(j)^{-1}\mu_j(T)$ (see Lemma~\ref{lem:Lorentz.distance-formula}).

If $T=T^*\in \sL_g$ we let $\lambda_0^\pm (T)\geq \lambda_1^\pm (T)\geq \cdots $ be the positive/negative eigenvalues of $T$. We say that $T$ is a \emph{Weyl operator} if
\begin{equation*}
 \Lambda^+(T):= \lim_{j\rightarrow \infty} g(j)^{-1}\lambda_j^+(T) \quad \text{and}\quad  \Lambda^-(T):= \lim_{j\rightarrow \infty} g(j)^{-1}\lambda_j^{-}(T)\quad \text{both exist}.
\end{equation*}
More generally, if $T\in \sL_g$ is not selfadjoint, then $T$ is called a Weyl operator if its real and imaginary parts both are Weyl operators. We then define
\begin{equation*}
 \Lambda^{\pm}(T):=  \Lambda^{\pm}\left(\Re T\right)+ i \Lambda^{\pm}\left(\Im T\right).
\end{equation*}
In addition, we denote by $\sW(\sL_g)$ the set of Weyl operators in $\sL_g$.

The following perturbation result extends to all weak Lorentz ideals the perturbation result of Birman-Solomyak for eigenvalue asymptotics of operators in weak Schatten classes.

\begin{theoremalph}[see Theorem~\ref{thm:cont-Lambda}]\label{thm:Intro.Bir-Sol.Lambda}
 Let $T\in \sL_g$ and $T_\ell\in \sW(\sL_g)$, $\ell \geq 0$, be such that
 \begin{equation*}
 \dot{T}_\ell \longrightarrow \dot{T} \quad \text{in}\ \dot{\sL}_g.
\end{equation*}
Then $\lim_{\ell \rightarrow \infty} \Lambda^\pm(T_\ell)$ exist, $T\in \sW(\sL_g)$, and we have
\begin{equation*}
 \Lambda^+(T)= \lim_{\ell \rightarrow \infty} \Lambda^\pm(T_\ell).
\end{equation*}
 In particular, if $T$ and the $T_\ell$ are selfadjoint, then
 \begin{equation*}
 \lim_{j \rightarrow \infty} g(j)^{-1} \lambda_j^\pm(T)=  \lim_{\ell \rightarrow \infty} \big(\lim_{j \rightarrow \infty} g(j)^{-1} \lambda_j^\pm(T_\ell)\big).
\end{equation*}
\end{theoremalph}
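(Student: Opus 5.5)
The plan is to follow the Birman--Solomyak strategy: a Ky Fan type inequality for eigenvalue counting, made uniform by the regular variation of $g$. I first reduce to the selfadjoint case. If $\dot T_\ell\to\dot T$ in $\dot\sL_g$, then $\dot{(\Re T_\ell)}\to\dot{(\Re T)}$ and $\dot{(\Im T_\ell)}\to\dot{(\Im T)}$. This holds because $\Re$ and $\Im$ are bounded for the quasi-norm, up to the quasi-triangle constant, and $\mu_j(T^*)=\mu_j(T)$. Since $\Lambda^\pm(T)$ is defined through the real and imaginary parts, it is enough to treat $T=T^*$ and $T_\ell=T_\ell^*$, and by symmetry only the positive eigenvalues.

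The key tool is the Weyl--Ky Fan inequality for selfadjoint compact operators,
\begin{equation*}
 \lambda^+_{j+k}(A+B)\leq \lambda_j^+(A)+\mu_k(B),
\end{equation*}
which I would use with $A=T_\ell$ and $B=T-T_\ell$. Put $\delta_\ell:=\|\dot T-\dot T_\ell\|_{\dot g}=\limsup_k g(k)^{-1}\mu_k(T-T_\ell)$, so $\mu_k(T-T_\ell)\leq(\delta_\ell+\epsilon)g(k)$ for $k\geq k_0(\epsilon,\ell)$. Fix $\theta\in(0,1)$ and take $j=\lfloor(1-\theta)n\rfloor$ and $k=\lceil\theta n\rceil$. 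The property $g(\lambda t)/g(t)\to\lambda^\rho$ then gives
\begin{equation*}
 \limsup_{n\to\infty} g(n)^{-1}\lambda_n^+(T)\leq (1-\theta)^{\rho}\Lambda^+(T_\ell)+\theta^{\rho}\delta_\ell .
\end{equation*}
Interchanging the roles of $T$ and $T_\ell$ gives
\begin{equation*}
 \Lambda^+(T_\ell)\leq (1-\theta)^{\rho}\liminf_{n} g(n)^{-1}\lambda_n^+(T)+\theta^{\rho}\delta_\ell .
\end{equation*}
Regular variation is exactly what makes the shift $n\mapsto(1-\theta)n$ cost only a multiplicative factor $(1-\theta)^\rho$. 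For this I use uniform convergence on compact $\lambda$-sets, or just pointwise convergence at the fixed $\lambda=1-\theta$ and $\lambda=\theta$, with monotonicity to handle the integer parts.

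Set $\overline\Lambda:=\limsup_n g(n)^{-1}\lambda_n^+(T)$ and $\underline\Lambda:=\liminf_n g(n)^{-1}\lambda_n^+(T)$. Both are finite, since $\lambda_n^+(T)\leq\mu_n(T)=\op{O}(g(n))$. Letting $\ell\to\infty$, so that $\delta_\ell\to0$, I get
\begin{equation*}
 \overline\Lambda\leq(1-\theta)^\rho\liminf_\ell\Lambda^+(T_\ell), \qquad \limsup_\ell\Lambda^+(T_\ell)\leq(1-\theta)^\rho\underline\Lambda .
\end{equation*}
Letting $\theta\to0$ gives $\overline\Lambda\leq\liminf_\ell\Lambda^+(T_\ell)\leq\limsup_\ell\Lambda^+(T_\ell)\leq\underline\Lambda$. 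Hence all these quantities coincide. So the limit $\lim_\ell\Lambda^+(T_\ell)$ exists, $\lambda_n^+(T)/g(n)$ converges, and the two limits are equal. The same argument applies to $\lambda^-$ by passing to $-T$.

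The main obstacle is the bookkeeping in the Ky Fan step. The index splitting must be combined with the regular-variation ratio limits, and the error terms must be controlled uniformly. In particular, the $\epsilon$ coming from $\limsup$ in $\delta_\ell$ is sent to zero before $\theta\to0$, and the integer-part rounding $g(\lceil\theta n\rceil)\sim\theta^\rho g(n)$ needs care. I would use the monotonicity of $g$ near infinity to bound $g(\lfloor x\rfloor)$ and $g(\lceil x\rceil)$ by $g(x\pm1)$, and then use $g(x+1)\sim g(x)$, which holds by regular variation.
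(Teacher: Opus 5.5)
Your proof is correct and follows essentially the paper's route: the paper derives Lemma~\ref{lem:cont-Lambda} from the same Weyl inequality with a proportional index splitting and the regular variation of $g$, then lets $\ell\to\infty$, and it handles the non-selfadjoint case through $\Re T$ and $\Im T$ exactly as you do. The one difference is that the paper optimizes over the splitting parameter $\theta$ to get the quantitative bounds $\bigl|\overline{\Lambda}^\pm(T)^r-\overline{\Lambda}^\pm(S)^r\bigr|\leq\|\dot{T}-\dot{S}\|_{\dot{g}}^r$ (and likewise for $\underline{\Lambda}^\pm$) for arbitrary selfadjoint $S,T$; you instead keep $\theta$ fixed, send $\ell\to\infty$ and then $\theta\to0$, which suffices for the theorem but does not give these continuity estimates.
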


This result implies that $\sW(\sL_g)$ is a closed subset of $\sL_g$ on which the functionals $\Lambda^\pm$ are continuous (Corollary~\ref{cor:Bir-Sol.continuity-Lambda}). It also implies that $\sW(\sL_g)$ and the functionals $\Lambda^\pm$ are invariant under perturbations by operators in $(\sL_g)_0$ (Corollary~\ref{cor:ptb-Lambda}). This extends to weak Lorentz ideals a well-known result of Weyl~\cite{We:NGWG11}.

We also have a version for singular values of Theorem~\ref{thm:Intro.Bir-Sol.Lambda} and its above-mentioned consequences (see Theorem~\ref{thm:Bir-Sol.Delta} and its corollaries). In particular, we obtain an extension to weak Lorentz ideals of Ky Fan's theorem (see Corollary~\ref{cor:Bir-Sol.Ky-Fan-Lorentz}).

Finally, as with weak Schatten classes  (see, e.g., \cite[\S11.6]{BS:Book}) all these perturbation results for eigenvalue asymptotics admit equivalent reformulations in terms of counting functions (see Proposition~\ref{prop:Bir-Sol.Npm} and Proposition~\ref{prop:Bir-Sol.nu}).

\subsection{Strong measurability}
Let $T_{g}$ be any positive operator in $\sL_g$ such that $\lambda_j(T_g)=g(j)$ for $j$ large enough. A trace $\varphi$ on $\sL_g$ is called \emph{$g$-normalized} if $\varphi(T_{g})=1$. For instance, every Dixmier trace is $g$-normalized.

As with the weak trace class $\sL_{1,\infty}$, we may consider stronger notions of measurability by considering larger classes of traces. From the perspective of noncommutative geometry, it is natural to consider the whole class of positive normalized traces. Accordingly, we shall say that an operator $T\in \sL_g$ is \emph{strongly measurable} (or positively measurable) if $T$ takes the same value on all positive normalized traces. In particular, such an operator is measurable. We denote by $\sM_s(\sL_g)$ the space of strongly measurable operators.

As normalized positive traces span the whole space of continuous traces, it can be shown (see Proposition~\ref{prop:decomp}) that
\begin{equation*}
 \sM_s(\sL_g)=\C T_{g} \oplus \overline{\Com(\sL_g)}
\end{equation*}
Moreover, all the operators in $(\sL_g)_0$ are strongly measurable (see Proposition~\ref{prop:strong-meas.positive-trace-sLg0}). We also have the following result, which extends to weak Lorentz ideals the corresponding result for $\sL_{1,\infty}$ in~\cite{Po:JNCG23}.

\begin{theoremalph}[see Theorem~\ref{thm:weyl implies strong}] \label{thm:Intro.weyl implies strong}
 If $T\in \sL_g$ is a Weyl operator, then it is strongly measurable, and we have
 \begin{equation*}
 \gbint T = \Lambda^+(T)-\Lambda^{-}(T).
\end{equation*}
In particular, if $T$ is selfadjoint, then
\begin{equation*}
 \gbint T = \lim_{j\rightarrow \infty} g(j)^{-1} \lambda_j^+(T)-  \lim_{j\rightarrow \infty} g(j)^{-1} \lambda_j^-(T).
\end{equation*}
\end{theoremalph}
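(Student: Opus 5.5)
We first reduce to the selfadjoint case. Strong measurability and $\gbint$ are linear, and Weyl operators are defined through their real and imaginary parts, with $\Lambda^\pm(T)=\Lambda^\pm(\Re T)+i\Lambda^\pm(\Im T)$. So it suffices to treat $T=T^*$. Writing $T=T_+-T_-$, both $T_\pm$ are positive, and their nonzero eigenvalues are $\lambda_j^\pm(T)$. Each $T_\pm$ is therefore a positive Weyl operator with $\lambda_j(T_\pm)/g(j)\to\Lambda^\pm(T)$. It is then enough to show that a positive operator $A\in\sL_g$ with $\lambda_j(A)\sim c\,g(j)$ is strongly measurable with $\varphi(A)=c$ for every positive normalized trace $\varphi$.

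We next treat the positive case by comparison with $T_g$. Fix $\epsilon>0$. Since $\lambda_j(A)\sim c g(j)$, for $j\geq j_0$ we have
\[
(c-\epsilon)g(j)\leq\lambda_j(A)\leq(c+\epsilon)g(j).
\]
Choose an orthonormal eigenbasis diagonalizing $A$. Let $D$ be the diagonal operator in that basis with entries $g(j)$ for $j\geq j_0$ and $0$ otherwise. Then $D$ and $T_g$ have the same eigenvalues up to finitely many. Hence $D=UT_gU^*+F$ with $U$ a partial isometry and $F$ of finite rank, or more simply $D-T_g$ is a commutator plus an element of $(\sL_g)_0$. To make this precise, we use that positive operators with equal singular values are unitarily equivalent modulo finite rank on the complement of the kernels. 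Since positive traces vanish on finite-rank operators (they vanish on $(\sL_g)_0$ by Proposition~\ref{prop:strong-meas.positive-trace-sLg0}), we get $\varphi(D)=\varphi(T_g)=1$.

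Now $A-(c-\epsilon)D$ is positive modulo a finite-rank operator, and $(c+\epsilon)D-A$ is as well. Positivity of $\varphi$ then gives
\[
c-\epsilon\leq\varphi(A)\leq c+\epsilon.
\]
Letting $\epsilon\to0$ yields $\varphi(A)=c$ for all positive normalized traces. Hence $A$ is strongly measurable. Since Dixmier traces are positive and $g$-normalized, we get $\gbint A=c$. Combining the two parts gives $\gbint T=\Lambda^+(T)-\Lambda^-(T)$.

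The main technical point is the identification $\varphi(D)=\varphi(T_g)$: one must check that traces only see the eigenvalue sequence of positive operators up to finite rank. A trace is unitarily invariant, so $\varphi(UBU^*)=\varphi(B)$, and the kernels and finite discrepancies are absorbed by finite-rank operators. If the infinite-dimensional kernels cause trouble, a fallback is to pass through Proposition~\ref{prop:decomp}: show that $A-cT_g\in\overline{\Com(\sL_g)}$ by approximating $A-cT_g$ in the quasi-norm by differences $U B U^* - B$ plus $(\sL_g)_0$ terms. The RV property of $g$ ensures that the quasi-norm controls these $\epsilon$-errors.
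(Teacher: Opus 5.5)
Your proof is correct, but its key step differs from the paper's.

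The paper proceeds in two stages. It first gets Dixmier measurability from $\sum_{j<N}\lambda_j(T)=\Lambda(T)G(N)+\op{o}(G(N))$ via Theorem~\ref{thm:measurable}. It then gets strong measurability from the structural fact $T-\Lambda(T)T_g\in(\sL_g)_0$, combined with Proposition~\ref{prop:strong-meas.positive-trace-sLg0}. That proposition extends vanishing from finite-rank operators to all of $(\sL_g)_0$ only by using the automatic continuity of positive traces (Proposition~\ref{prop:cont--positive}).

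Your sandwich $(c-\epsilon)D\leq A\leq(c+\epsilon)D+F$, with $F$ of finite rank, is purely order-theoretic. It uses only positivity and the vanishing of normalized positive traces on finite-rank operators. That vanishing is the elementary divergence argument in the proof of Proposition~\ref{prop:strong-meas.positive-trace-sLg0}, and it needs no continuity. You then read off $\gbint A=c$ from the fact that Dixmier traces are positive normalized traces, so Theorem~\ref{thm:measurable} is not needed either.

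In exchange, the paper's route gives sharper information. The remainder $T-\Lambda(T)T_g$ is genuinely small: it lies in $(\sL_g)_0$, not merely in $\overline{\Com(\sL_g)}$, which is all that strong measurability yields through Proposition~\ref{prop:decomp}. The paper's route also gives the explicit partial-sum asymptotics.

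On the point you flagged: unitary invariance alone cannot give $\varphi(D)=\varphi(T_g)$ when $\ker A$ is infinite-dimensional, since every $UT_gU^*$ is injective. No fallback is needed, however. Let $V$ be the isometry sending the basis $(\xi_j)$ defining $T_g$ to the eigenvectors $(\eta_j)$ of $A$ used to define $D$. Then $VT_gV^*-D$ has finite rank, and $\varphi(VT_gV^*)=\varphi(T_gV^*V)=\varphi(T_g)$ follows directly from $\varphi(BS)=\varphi(SB)$ for $B\in\sL(\sH)$ and $S\in\sL_g$. The paper's assertion $T-\Lambda(T)T_g\in(\sL_g)_0$ tacitly requires the same kind of choice of $T_g$ adapted to an eigenbasis of $T$; for an arbitrary basis it can fail.
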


The above result relates Weyl law properties to measurability. It provides a simple way to check strong measurability. The converse does not hold.

This leads us to introduce a further notion of measurability. We shall call an operator \emph{spectrally measurable} if it is a Weyl operator in $\sL_g$. In particular, an operator $T=T^*\in \sL_g$ is spectrally measurable if and only if $\lim_{j\rightarrow \infty}g(j)^{-1} \lambda_j^\pm(T)$ both exist. Thus, Theorem~\ref{thm:Intro.weyl implies strong} asserts that spectral measurability implies strong measurability.

We also show that the space $\sM_s(\sL_g)$ does not depend on the choice of representative function in the equivalence class defining $\sL_g$ (Proposition~\ref{prop:strong-independent}).

In the companion paper~\cite{Po-Ti:Part2} we will prove a spectral characterization of strongly measurable operators on $\sL_g$. It will extend to weak Lorentz ideals the spectral characterization of strongly measurable operators on $\sL_{1,\infty}$ of Semenov-Sukochev-Usachev-Zanin~\cite{SSUZ:AIM15}. 

\subsection{Examples arising from nonclassical Weyl laws}
In Section~\ref{sec:examples}, we present several examples of spectrally measurable operators arising from various settings. Recall that by Theorem~\ref{thm:Intro.weyl implies strong} spectral measurability implies strong measurability. Therefore, in all the examples below the operators are measurable with respect to the whole set of positive normalized traces. 

We use a general paradigm to construct our examples. Namely, suppose that $A$ is a selfadjoint operator with non-negative spectrum such that $0$ is isolated in the spectrum, and the positive part of the spectrum consists of positive eigenvalues with finite multiplicity. Assume further that the counting function $N(A;\lambda)$ of $A$ satisfies a (nonclassical) Weyl law of the form,
\begin{equation}\label{eq:Intro.A-Weyl-law}
 N(A;\lambda)\sim c \lambda^p (\log \lambda)^q, \qquad c>0, \quad p>0, \quad q\geq -1.
\end{equation}
\begin{lemmalph}[see Lemma~\ref{lem:example.Ap-spectral-meas}] \label{lem:Intro.Ap-spectral-meas}
If $A$ satisfies the Weyl law~(\ref{eq:Intro.A-Weyl-law}), then the operator $A^{-p}$ is spectrally measurable in $\sL_g$, and we have
 \begin{equation*}
 \gbint A^{-p} = cp^{-q}.
\end{equation*}
\end{lemmalph}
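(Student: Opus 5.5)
The plan is to compute the eigenvalue asymptotics of $A^{-p}$ directly by inverting the Weyl law~(\ref{eq:Intro.A-Weyl-law}). Once this is done, the lemma follows from the definition of Weyl operators together with Theorem~\ref{thm:Intro.weyl implies strong}.

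Throughout, $g$ is taken to be the $\RV_{-1}$-function attached to the Weyl law, i.e.\ $g(t)\sim t^{-1}(\log t)^q$ as $t\to\infty$. The condition $q\geq -1$ is exactly what gives $\int_0^\infty g=\infty$. The operator $A^{-p}$ is understood as $0$ on $\ker A$ and as the inverse power on the positive spectral subspace. Since $0$ is isolated in the spectrum, $A^{-p}$ is bounded, positive and compact. Its positive eigenvalues are $\lambda_j(A^{-p})=\mu_j^{-p}$, where $0<\mu_0\leq\mu_1\leq\cdots$ are the positive eigenvalues of $A$ counted with multiplicity. In particular $\lambda_j^-(A^{-p})=0$, so $\Lambda^-(A^{-p})=0$, and $A^{-p}$ is a Weyl operator as soon as $g(j)^{-1}\mu_j^{-p}$ converges.

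The core step is to show that
\begin{equation*}
 \mu_j^p\sim c^{-1}p^{q}\, j(\log j)^{-q} \qquad\text{as } j\to\infty.
\end{equation*}
By definition of the counting function (counting eigenvalues $<\lambda$, or $\leq\lambda$, depending on the convention), we have for every $\varepsilon\in(0,1)$
\begin{equation*}
 N\bigl(A;(1-\varepsilon)\mu_j\bigr)\leq j+1\leq N\bigl(A;(1+\varepsilon)\mu_j\bigr),
\end{equation*}
with the obvious adjustment of $\pm1$ according to the convention. Since $\mu_j\to\infty$, the Weyl law applies to both sides. Because $\lambda\mapsto\lambda^p(\log\lambda)^q$ is regularly varying of index $p$, this yields
\begin{equation*}
 (1-\varepsilon)^p(1+o(1))\,c\,\mu_j^p(\log\mu_j)^q\leq j+1\leq (1+\varepsilon)^p(1+o(1))\,c\,\mu_j^p(\log\mu_j)^q .
\end{equation*}
Letting $\varepsilon\to0$ gives $j\sim c\,\mu_j^p(\log\mu_j)^q$. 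Taking logarithms gives $\log j=p\log\mu_j+q\log\log\mu_j+O(1)$, hence $\log j\sim p\log\mu_j$ and so $(\log\mu_j)^q\sim p^{-q}(\log j)^q$. Substituting back proves the displayed asymptotic for $\mu_j^p$.

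It follows that $\lambda_j(A^{-p})\sim c\,p^{-q}\,j^{-1}(\log j)^q\sim c\,p^{-q}\,g(j)$. Hence $A^{-p}\in\sL_g$, it is a Weyl operator with $\Lambda^+(A^{-p})=cp^{-q}$ and $\Lambda^-(A^{-p})=0$, i.e.\ it is spectrally measurable. Theorem~\ref{thm:Intro.weyl implies strong} then gives $\gbint A^{-p}=cp^{-q}$.

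The main obstacle is the inversion step. Eigenvalue multiplicities make $N(A;\cdot)$ jump, so one cannot simply write $N(A;\mu_j)=j$. The sandwich with the dilation factor $1\pm\varepsilon$, combined with the regular variation of the model function $\lambda^p(\log\lambda)^q$, is designed to handle this. Care is also needed in the case $q<0$, where one should check that the $\log\log$ correction is indeed absorbed into $o(\log j)$; it is, since $\log\log\mu_j=o(\log\mu_j)$.
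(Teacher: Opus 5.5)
Your proof is correct and follows the paper's route. In both, the Weyl law is turned into $\lambda_j(A^{-p})\sim cp^{-q}j^{-1}(\log j)^q$, and Theorem~\ref{thm:weyl implies strong} is then applied. The only difference is in the inversion step: the paper rewrites the Weyl law as $N(A^{-p};\lambda)=N(A;\lambda^{-1/p})\sim cp^{-q}\lambda^{-1}|\log\lambda|^q$ and invokes the general Lemma~\ref{lem:B5} (built on the same sandwich $N(\lambda_j)\le j\le N(a\lambda_j)$ you use), whereas you invert by hand for $\lambda^p(\log\lambda)^q$ via $\log j\sim p\log\mu_j$, avoiding asymptotic inverses of regularly varying functions.
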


The specific examples treated in Section~\ref{sec:examples} include the following:
\begin{itemize}
\item \emph{Riemann Hypothesis.} Assuming (RH), let $\sD$ be the Dirac operator whose eigenvalues are the imaginary parts of the zeros of $\zeta(s)$ on the critical line $\Re s = 1/2$ (see, e.g.,~\cite{Co:AFA24, CM:PNAS22}). The Riemann--von Mangoldt formula implies the nonclassical Weyl law,
\[
N(|\sD|;\lambda) = \frac{1}{\pi}\,\lambda\log\lambda\Bigl(1 + \op{O}\bigl((\log\lambda)^{-1}\bigr)\Bigr).
\]
Setting $g(t) = (t+1)^{-1}\log(t+2)$, the operator $|\sD|^{-1}$ then is spectrally measurable in $\sL_g$ (see Proposition~\ref{prop:RH}).\smallskip

\item \emph{Schr\"odinger operators and Dirichlet Laplacians on unbounded domains.} On $\R^n$, $n\geq 2$, consider the Schr\"odinger operators, 
\begin{equation*}
 H_\alpha := -\Delta + |x_1\cdots x_n|^\alpha, \qquad \alpha>0. 
\end{equation*}
These operators were introduced by Simon~\cite{Si:AP83, Si:JFA83} in 2D and studied in higher dimensions in~\cite{CR:JMP15}. It is shown in~\cite{CR:JMP15,Si:JFA83} that we have nonclassical Weyl laws, 
\[
N(H_\alpha;\lambda) \sim c(n,\alpha)\,\lambda^{\frac{n}{2}+\frac{1}{\alpha}}(\log\lambda)^{n-1},
\]
where $c(n,\alpha)$ is some explicit constant (see Eq.~(\ref{eq:Examples.Simon-cnalpha})). It was actually this Weyl law that prompted Simon~\cite{Si:JFA83} to coin the terminology ``nonclassical Weyl law''. Thus, if we set $g(t) = (t+1)^{-1}(\log(t+2))^{n-1}$, then the operator $H_\alpha^{-(n/2+1/\alpha)}$ is spectrally measurable in $\sL_g$ (see Proposition~\ref{prop:Simon}). 

As noticed in~\cite{CR:JMP15, Si:JFA83}, as $\alpha \rightarrow \infty$ the spectrum of $H_\alpha$ is closely related to that of the  Dirichlet Laplacian $\Delta_\Omega$ on the infinite-volume domain, 
 \[
 \Omega:=\{(x_1,\ldots, x_n)\in \R^n;\ |x_1\cdots x_n|<1\}. 
\]
In particular, as shown in~\cite{CR:JMP15, Si:JFA83}, we have the Weyl law, 
\[
N\big(\Delta_\Omega;\lambda\big) \sim c(n,\infty)\,\lambda^{\frac{n}{2}} (\log \lambda)^{n-1}, \qquad c(n,\infty):=\frac{2n^n}{n!}(2\pi)^{-n}|\bB^n|,
\]
This implies that  $\Delta_\Omega^{-n/2}$ is likewise spectrally measurable in $\sL_g$ (see Proposition~\ref{prop:Examples.Dirichlet}). This refines~\cite[Example~5.5]{TU:arXiv24}, where only Dixmier-measurability was established (see also~\cite{GU:JMAA20}).\smallskip

\item \emph{Open manifolds with conformally cusp metrics.} Let $X^n$ be an open spin manifold with a conformally cusp metric $\bar{g}=x^{2r}g_0$ (examples include finite-volume hyperbolic manifolds and metric horns). Assuming no harmonic spinors on the boundary, Moroianu~\cite{Mo:MA08} proved that $\shD_{\bar{g}}$ has discrete spectrum, with Weyl laws,
\[
N(|\shD_{\bar{g}}|;\lambda) \sim
\begin{cases}
\slashed{c}_1(n)\,\Vol_{\bar{g}}(X)\,\lambda^n & \text{if}\ r > 1/n,\\[4pt]
\slashed{c}_2(n)\,\Vol_{h_0}(M)\,\lambda^n\log\lambda & \text{if}\ r = 1/n.
\end{cases}
\]
where $ \slashed{c}_1(n)$ and $\slashed{c}_2(n)$ are explicit constants (see Eq.~(\ref{eq:Examples.Open-constants})). This gives spectral measurability of $|\shD_{\bar{g}}|^{-n}$ in $\sL_{1,\infty}$ when $r>1/n$, and in $\sL_g$ with $g(t)=(t+1)^{-1}\log(t+2)$ when $r=1/n$. In the latter case, we further have
\begin{equation*}
 \gbint|\shD_{\bar{g}}|^{-n}=\slashed{c}_2(n)\Vol_{h_0}(M),
\end{equation*}
This shows that NC integral recaptures the volume of the boundary $M=\partial X$, even though $(X^n,g)$ has infinite volume for $r=1/n$ (see Proposition~\ref{prop:cusp}).\smallskip

\item \emph{Podle\`s quantum spheres.} Given $q\in(0,1)$, let $(\cO(\bS^2_q),\sH,\sD_q)$ be the spectral triple of Dabrowski--Sitarz~\cite{DS:PAN03} on the Podle\`s quantum sphere, which has ``dimension zero'' in the sense of noncommutative geometry (see~\cite{EIS:CMP14}). Following Gayral--Sukochev~\cite{GS:JFA14}, we consider the operator, 
\begin{equation*}
 A:=|\sD_q|\otimes 1 + 1\otimes \Delta, 
\end{equation*}
where $\Delta$ is the Laplacian on $\T^2=(\R/(2\pi\Z))^2$. Using the results of~\cite{EIS:CMP14, GS:JFA14} it can be shown that we have the nonclassical Weyl law,
\[
N\bigl(A;\lambda\bigr)\sim\frac{\pi}{(\log q)^2}\,\lambda(\log\lambda)^2.
\]
It follows that $(|\sD_q|\otimes 1 + 1\otimes\Delta)^{-1}$ is spectrally measurable in $\sL_g$ with $g(t)=(t+1)^{-1}(\log(t+2))^2$ (see Proposition~\ref{prop:Podles}). This  improves the measurability result of~\cite{GS:JFA14} where only Dixmier-measurability in a larger ideal was established (see also~\cite{GU:JMAA20, TU:arXiv24}). 
\end{itemize}

We refer to~\cite{DR:LNM87, GU:JMAA20, LU:JMAA25, TU:arXiv24}, and the companion paper~\cite{Po-Ti:Part2}, for further instances of nonclassical Weyl laws. Combining these nonclassical Weyl laws with Lemma~\ref{lem:Intro.Ap-spectral-meas} produces further examples of spectrally measurable operators in weak Lorentz ideals. 

\subsection{Organization of the paper}
This paper is organized as follows.
In Section~\ref{sec:Lorentz}, we recall background on singular values, quasi-Banach ideals, functions of regular variation, and weak Lorentz ideals.
In Section~\ref{sec:Dixmier} we construct Dixmier traces on weak Lorentz ideals, establish the spectral characterization of measurable operators, and provide the spectral-theoretic construction of Connes' integral.
In Section~\ref{sec:Bir-Sol} we develop the Birman--Solomyak perturbation theory for eigenvalues and singular values of operators in $\sL_g$.
In Section~\ref{sec:strong-hyper} we study strong measurability and spectral measurability, and show that the latter implies the former. We also examine the independence of strong measurability from the choice of the function $g(t)$. In Section~\ref{sec:examples} we describe the concrete examples arising from nonclassical Weyl laws mentioned above, including the Riemann Hypothesis example.
Appendix~\ref{app:proofs} collects proofs of technical results for weak Lorentz ideals. Appendix~\ref{app:counting} develops the counting function asymptotics used in Sections~\ref{sec:Bir-Sol} and~\ref{sec:strong-hyper}.

\subsection*{Acknowledgements} This paper originated from discussions with Alexander Usachev. We would like to thank him warmly for sharing his insights with us. The first-named author also wishes to thank Magnus Goffeng and Edward McDonald for discussions related to the subject matter of this paper. His research was partially supported by NSFC grant No.~11971328 (China).

\section{Singular Values and Weak Lorentz Ideals}\label{sec:Lorentz}
In this section, we record the main definitions and properties regarding weak Lorentz ideals.
Throughout this paper we let $\sH$ be a (separable) Hilbert space with inner product $\scal{\cdot}{\cdot}$. The algebra of bounded linear operators on $\sH$ is denoted $\sL(\sH)$. The operator norm is denoted $\|\cdot\|$. We also denote by $\sK$ the (closed) ideal of compact operators on $\sH$.

\subsection{Singular values}
Given any operator $T\in \sK$, we let $(\mu_j(T))_{j\geq 0}$ be its sequence of \emph{singular values}, i.e., $\mu_j(T)$ is the $(j+1)$-th eigenvalue counted with multiplicity of the absolute value $|T|=\sqrt{T^*T}$. By the \emph{min-max principle} we have
\begin{align}
 \mu_j(T)=\min \left\{\|T_{|E^\perp}\|;\ \dim E=j\right\}.
  \label{eq:min-max}
\end{align}
In addition, we have
\begin{equation*}
  \mu_j(T)=\dist(T,\sR_j), \qquad \text{where}\ \sR_j=\{R\in \sL(\sH);\ \rk R\leq j\}.
\end{equation*}

We record the following properties of singular values (see, e.g., \cite{BS:Book, GK:AMS69}),
\begin{gather}
 \mu_j(T)=\mu_j(T^*)=\mu_j(|T|),
 \label{eq:Quantized.properties-mun1}\\
 \left|\mu_j(T)-\mu_j(S)\right|\leq \|S-T\|
 \label{eq:Quantized.properties-mun2}\\
\mu_j(ATB)\leq \|A\| \mu_j(T) \|B\| \qquad \forall A, B\in \sL(\sH),
 \label{eq:Quantized.properties-mun3}\\
 \mu_j(U^*TU)= \mu_j(T)
 \qquad \forall U\in\sL(\sH), \ \text{$U$ unitary}.
  \label{eq:Quantized.properties-mun4}
\end{gather}
In addition, we have Ky Fan's inequalities,
\begin{gather}
  \mu_{j+k}(S+T)\leq \mu_j(S) + \mu_k(T),
 \label{eq:Lorentz.Fan1}\\
 \sum_{j<N} \mu_j(S+T) \leq \sum_{j<N} \mu_j(S) +  \sum_{j<N} \mu_j(T).
 \label{eq:Lorentz.Fan2}
 \end{gather}

\subsection{Quasi-Banach ideals}
We briefly recall basic definitions regarding quasi-Banach ideals.

\begin{definition}
A \emph{quasi-norm} on a vector space $E$ is a function $\|\cdot\|: E\rightarrow [0,\infty)$ with the following properties:
\begin{enumerate}
 \item[(i)] $\| \lambda x\|=|\lambda| \|x\|$ for all $x\in E$ and $\lambda \in \C$.

 \item[(ii)] $\|0\|=0$ and $\|x\|>0$ for $x\neq 0$.

  \item[(iii)] There exists $C>0$ such that
 \begin{equation}\label{eq:quasi}
 \|x+y\|\leq C\left(\|x\|+\|y\|\right) \qquad \forall x,y\in E.
\end{equation}
\end{enumerate}
\end{definition}

In other words, a quasi-norm is like a norm where the usual triangular inequality is relaxed into the quasi-triangular inequality. In particular, any norm is a quasi-norm. Similarly to norms any quasi-norm $\|\cdot\|$ on a vector space $E$ defines a topology in which a basis of the neighborhood system for the origin consists of the balls
\begin{equation*}
 B(0,\delta)=\left\{x\in E; \ \|x\|\leq \delta\right\}, \qquad \delta>0.
\end{equation*}
Note this topology need not be locally convex if $\|\cdot\|$ is not a norm. Moreover, every point admits a countable neighborhood basis.

We say that a quasi-norm on $E$ is \emph{$r$-convex} with $r\in (0,1]$ if
\begin{equation*}
 \|x+y\|^r \leq \|x\|^r +\|y\|^r \qquad \forall x,y \in E.
\end{equation*}
 This property of $r$-convexity implies the quasi-triangular inequality with a specific constant:
\begin{equation*}
 \|x+y\| \leq 2^{\frac{1-r}{r}}\big(\|x\|+\|y\|\big), \qquad x,y\in E.
\end{equation*}
Moreover, the $r$-convexity implies that the function $d(x,y):=\|x-y\|^r$, $x,y\in E$, is a metric that defines the same topology as the original quasi-norm $\|\cdot\|$. Conversely, any quasi-norm is equivalent to an $r$-convex quasi-norm for some  $r\in (0,1]$ (see, e.g.,~\cite[Theorem~1.3]{KPR:Cam84}). As a result any quasi-norm topology is metrizable.

\begin{definition}
 A \emph{quasi-Banach space} is a quasi-normed space whose topology is complete, i.e., every Cauchy sequence is convergent.
\end{definition}

\begin{remark}
 The quasi-triangular inequality~(\ref{eq:quasi}) implies the inequalities,
 \begin{equation*}
 \| x_1+\cdots + x_n\| \leq \sum_{1\leq j \leq n} C^j \|x_j\|, \qquad x_j \in E.
\end{equation*}
Thus, if $E$ is a quasi-Banach space, and we have
\begin{equation*}
\sum_{j\geq 0} C^j \|x_j\|<\infty, \qquad x_j\in E,
\end{equation*}
then the series $\sum x_j$ converges in $E$ (see~\cite[Lemma 3.2.3]{LSZ:Book}). Conversely, this property characterizes the completeness of a quasi-norm
(see the proof of~\cite[Theorem~3.1.2]{LSZ:Book}; see also~\cite[Theorem~5.1]{Fo:RealAnalysis} for a proof in the setting of normed vector space).
\end{remark}

\begin{definition}
 A \emph{quasi-Banach ideal} is a two-sided ideal $\sJ$ with a complete quasi-norm $\|\cdot\|_{\sJ}$ such that
 \begin{equation*}
 \|ATB\|_{\sJ} \leq \|A\| \|T\|_{\sJ} \|B\| \qquad \text{for all $T\in \sJ$ and $A,B\in \sL(\sH)$}.
\end{equation*}
If $\|\cdot\|_\sJ$ is a norm, then we say that $\sJ$ is a \emph{Banach ideal.}
\end{definition}

\begin{example}
If $p>0$, then the \emph{Schatten class},
\begin{equation*}
 \sL_p:= \big\{ T\in \sK; \ \sum \mu_j(T)^p<\infty\},
\end{equation*}
is a quasi-Banach ideal with quasi-norm,
\begin{equation*}
 \|T\|_p:=  \big(\sum \mu_j(T)^p\big)^{1/p}, \qquad T\in \sL_p.
\end{equation*}
This is actually a Banach ideal for $p\geq 1$.
\end{example}

\begin{example}
For $p>0$, the \emph{weak Schatten class},
\begin{equation*}
 \sL_{p,\infty}:= \big\{ T\in \sK; \ \mu_j(T)=\op{O}(j^{-\frac{1}{p}})\big\},
\end{equation*}
is a quasi-Banach ideal with quasi-norm,
\begin{equation*}
 \|T\|_{p,\infty}:= \sup_{j\geq 0}(j+1)^{\frac 1p}\mu_j(T), \qquad T\in \sL_{p,\infty}.
\end{equation*}
 \end{example}

\begin{remark}\label{rmk:Lorentz.polar-decomposition}
 Every two-sided ideal $\sJ$ of $\sL(\sH)$ is contained in $\sK$ and contains the ideal of finite-rank operators. Moreover, properties of the polar decomposition imply that
\begin{equation*}
 \big( T\in \sJ \Longleftrightarrow |T|\in \sJ\big) \qquad \text{and} \qquad  \big( T\in \sJ \Longleftrightarrow T^*\in \sJ\big).
\end{equation*}
 If in addition $\sJ$ is a quasi-Banach ideal, then~(\ref{eq:Quantized.properties-mun1}) implies that
 \begin{gather*}
 \|T\|_{\sJ}=\|T^*\|_{\sJ}=\||T|\|_{\sJ},\\
 \|U^*TU\|_{\sJ}= \|T\|_{\sJ} \qquad \forall U\in \sL(\sH), \ U \ \text{unitary}.
\end{gather*}
In fact, if $S,T\in \sJ$, then
\begin{equation*}
\big( \mu_j(S)=\mu_j(T) \quad \forall j\geq 0 \big)\ \Longrightarrow \ \|S\|_{\sJ}=\|T\|_{\sJ}.
\end{equation*}
More generally, it can be shown that
\begin{equation*}
\big(T\in \sJ \quad \text{and} \quad \mu_j(S)\leq \mu_j(T) \quad \forall j\geq 0 \big)\ \Longrightarrow \ \big( S\in \sJ \quad \text{and} \quad \|S\|_{\sJ}\leq \|T\|_{\sJ}\big).
\end{equation*}
\end{remark}

\begin{remark}
We refer to~\cite{GK:AMS69, LSZ:Book, Si:AMS05} for further background on Banach ideals and quasi-Banach ideals.
\end{remark}

\subsection{Functions of regular variation}
In what follows, we shall say that a real-valued function $g(t)$ defined on an interval $[a,\infty)$ is \emph{ultimately increasing} (resp., \emph{ultimately decreasing}) if it is increasing (resp., decreasing) on some interval $[b,\infty)$ with $b\geq a$.

\begin{definition}
 An $L^\infty_\loc$-function $g:[a,\infty)\rightarrow (0,\infty)$ is \emph{slowly varying} if
 \begin{equation}\label{eq:slowly varying}
 \lim_{t\rightarrow \infty} \frac{g(\lambda t)}{g(t)}=1 \qquad \forall \lambda>0.
\end{equation}
\end{definition}

\begin{remark}
 If $g(t)$ is ultimately increasing or decreasing, then~(\ref{eq:slowly varying}) is equivalent to
 \begin{equation*}
  \lim_{t\rightarrow \infty} \frac{g(2 t)}{g(t)}=1.
\end{equation*}
\end{remark}

\begin{remark}[see~{\cite[Proposition~1.3.6]{BGT:Cambridge87}}]\label{rmk:sv vs power function}
 If $g(t)$ is slowly varying, then, for all $\rho>0$, we have
 \begin{equation*}
 \lim_{t\rightarrow \infty} t^\rho g(t)=\infty \qquad \text{and}\qquad \lim_{t\rightarrow \infty} t^{-\rho} g(t)=0.
\end{equation*}
\end{remark}

\begin{definition}
 Let $g:[a,\infty)\rightarrow (0,\infty)$ be an $L^\infty_\loc$-function. For $\rho \in \R$, we say that $g(t)$ has \emph{regular variation of index} $\rho$, or is $\RV_\rho$, if the following conditions are satisfied:
\begin{enumerate}
 \item[(i)] $g(t)$ is ultimately monotonic.

 \item[(ii)] We have
\begin{equation}\label{cond:rv}
 \lim_{t\rightarrow \infty} \frac{g(\lambda t)} {g(t)}= \lambda^\rho \qquad \forall \lambda>0.
\end{equation}
\end{enumerate}
 \end{definition}

\begin{remark}
 If $\rho>0$ (resp., $\rho<0$), then (i)--(ii) imply that $g(t)$ is ultimately increasing (resp., decreasing).
\end{remark}

\begin{remark}
 In our terminology an $\RV_0$-function is a slowly varying function that is either ultimately increasing or ultimately decreasing.
\end{remark}

\begin{remark}
 An $L^\infty_\loc$-function $g:[a,\infty)\rightarrow (0,\infty)$ is $\RV_\rho$ if it is ultimately monotonic and $t^{-\rho}g(t)$ is slowly varying. In particular, it follows from Remark~\ref{rmk:sv vs power function} that we have
\begin{equation}\label{eq:asymptotic-RV-power}
 \lim_{t\rightarrow \infty} t^{-\alpha} g(t)=\infty \quad \text{if $ \alpha <\rho$}, \qquad \textup{and}\qquad
 \lim_{t\rightarrow \infty} t^{-\alpha} g(t)=0 \quad \text{if $ \alpha >\rho$}.
\end{equation}
This implies that
\begin{equation*}
 \lim_{t\rightarrow \infty} g(t)=0 \quad \text{if $\rho<0$}, \qquad \textup{and}\qquad
 \lim_{t\rightarrow \infty} g(t)=\infty \quad \text{if $ \rho >0$}.
\end{equation*}
In addition, we have
\begin{equation*}
 \int_0^\infty g(t)dt <\infty \quad \text{if $\rho<-1$}, \qquad \text{and} \qquad \int_0^\infty g(t)dt =\infty \quad \text{if $\rho>-1$}.
\end{equation*}
 \end{remark}

\begin{remark}\label{rmk:UCT}
By the Uniform Convergence Theorem (UCT) (see, e.g., \cite[Theorem~1.5.2]{BGT:Cambridge87}) the convergence~(\ref{cond:rv}) holds uniformly with respect to $\lambda$ as it varies within compact subsets of $(0,\infty)$. If $\rho>0$ (resp., $\rho<0$), we even have uniform convergence on each interval $(0,a]$ (resp., $[a,\infty)$).
\end{remark}

Throughout this paper we will make frequent use of the following consequence of the UCT.

\begin{lemma}\label{lem:UCT-consequence}
 Let $g:[a,\infty)\rightarrow (0,\infty)$ be $\RV_\rho$, $\rho\in \R$.
 \begin{enumerate}
 \item If $t_\alpha \rightarrow \infty$ and $\lambda_\alpha \rightarrow \lambda>0$, then $g(\lambda_\alpha t_\alpha)\sim \lambda^\rho g(t_\alpha)$.

 \item If $\lim u(s)=\lim v(s)=\infty$ and $u(s)\sim v(s)$ for $s$ large, then $g(u(s))\sim g(v(s))$.
 \end{enumerate}
 \end{lemma}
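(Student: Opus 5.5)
Part (1) is a direct consequence of the UCT of Remark~\ref{rmk:UCT}. We write
\begin{equation*}
 \frac{g(\lambda_\alpha t_\alpha)}{g(t_\alpha)} = \frac{g(\lambda_\alpha t_\alpha)}{\lambda_\alpha^\rho g(t_\alpha)}\cdot \lambda_\alpha^{\rho}.
\end{equation*}
Since $\lambda_\alpha\to\lambda>0$, for $\alpha$ large the $\lambda_\alpha$ lie in a compact interval $K=[\lambda/2,2\lambda]\subset(0,\infty)$. By the UCT, $\sup_{\mu\in K}\bigl|g(\mu t)/g(t)-\mu^\rho\bigr|\to 0$ as $t\to\infty$. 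Because $t_\alpha\to\infty$, we get $g(\lambda_\alpha t_\alpha)/g(t_\alpha)-\lambda_\alpha^\rho\to 0$. Continuity of $\mu\mapsto\mu^\rho$ gives $\lambda_\alpha^\rho\to\lambda^\rho\neq 0$. Hence $g(\lambda_\alpha t_\alpha)/g(t_\alpha)\to\lambda^\rho$, that is, $g(\lambda_\alpha t_\alpha)\sim\lambda^\rho g(t_\alpha)$.

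For part (2), we reduce to part (1) by setting $t_s:=v(s)$ and $\lambda_s:=u(s)/v(s)$. By assumption $t_s\to\infty$ and $\lambda_s\to 1$, and $u(s)=\lambda_s t_s$. Part (1) with $\lambda=1$ then yields $g(u(s))\sim 1^\rho g(v(s))=g(v(s))$. Here we use that $v(s)>0$ for $s$ large, since $v(s)\to\infty$, so the quotient is well defined. We also use that $u(s),v(s)\geq a$ eventually, so $g$ is defined at these points.

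The only point needing care is in part (1): part (1) is stated for nets or sequences indexed by $\alpha$, while the UCT is a statement about the limit $t\to\infty$. The uniform supremum bound transfers verbatim to any net $t_\alpha\to\infty$, so this causes no difficulty. There is therefore no real obstacle; the whole content is the UCT itself, which we take from~\cite[Theorem~1.5.2]{BGT:Cambridge87}.
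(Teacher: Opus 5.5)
Your proposal is correct and follows the paper's approach: part (1) is read off from the Uniform Convergence Theorem (Remark~\ref{rmk:UCT}) applied on a compact interval containing the $\lambda_\alpha$, and part (2) is reduced to part (1) with a ratio tending to $1$. The only difference is cosmetic: the paper takes $t=u(s)$ and $\lambda=v(s)/u(s)$, while you take the symmetric choice $t_s=v(s)$ and $\lambda_s=u(s)/v(s)$.
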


\begin{remark}
 Part (2) follows from part (1) by taking $t_\alpha=u(s)$ and $\lambda_\alpha =v(s)/u(s)$.
\end{remark}

\begin{remark}\label{rmk:slow decreasing}
 As a first consequence of Lemma~\ref{lem:UCT-consequence}, we see that if $g:[a,\infty)\rightarrow (0,\infty)$ is $\RV_\rho$ , then
 \begin{equation}\label{eq:slow decreasing}
 \lim_{t\rightarrow \infty} \frac{g(t+b)}{g(t)}=1 \qquad \forall b\in \R.
\end{equation}
\end{remark}

\begin{example}
 The following are among the most well-known examples of $\RV_\rho$-functions on $[0,\infty)$.
 \begin{itemize}
 \item $g(t)=t^\rho$, $\rho>0$, and more generally $g(t)=(t+a)^\rho$, $\rho\neq 0$ with $a>0$.

 \item $g(t)=(\log(t+2))^q$, $q\neq 0$ (these functions are $\RV_0$).

 \item $g(t)=t^\rho (\log (t+2))^q$, $\rho \neq 0$, $q\neq 0$.
\end{itemize}
 \end{example}

\begin{theorem}[Karamata; see, e.g., {\cite{BGT:Cambridge87, BIKS:Springer18}}]\label{thm:karamata}
 Let $g:[a,\infty)\rightarrow (0,\infty)$ be $\RV_\rho$, $\rho\in \R$.
 \begin{enumerate}
 \item If $\rho>-1$, or if $\rho=-1$ and $\int_0^\infty g(t)dt=\infty$, then
\begin{equation}\label{eq:asymptotic--karamata}
 \lim_{t\rightarrow \infty} \frac{tg(t)}{\int_0^t g(s)ds} = \rho+1.
\end{equation}

\item If $\rho<-1$, or if $\rho=-1$ and $\int_0^\infty g(t)dt<\infty$, then
\begin{equation*}
 \lim_{t\rightarrow \infty} \frac{tg(t)}{\int_t^\infty g(s)ds} = |\rho+1|.
\end{equation*}
\end{enumerate}
\end{theorem}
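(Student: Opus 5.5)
The plan is to reduce Karamata's theorem to the behaviour of the ratio $G(t)/(tg(t))$, where $G(t)=\int_0^t g(s)\,ds$ in case (1) and $G(t)=\int_t^\infty g(s)\,ds$ in case (2). We substitute $s=tu$ and write
\begin{equation*}
 \frac{1}{t g(t)}\int_0^t g(s)\,ds = \int_0^1 \frac{g(tu)}{g(t)}\,du \qquad \text{(up to the contribution of $[0,a]$)}, \qquad \frac{1}{t g(t)}\int_t^\infty g(s)\,ds = \int_1^\infty \frac{g(tu)}{g(t)}\,du .
\end{equation*}
By~(\ref{cond:rv}) the integrands converge pointwise to $u^\rho$. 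We have $\int_0^1 u^\rho\,du=(\rho+1)^{-1}$ when $\rho>-1$, and $\int_1^\infty u^\rho\,du=|\rho+1|^{-1}$ when $\rho<-1$. So the whole task is to justify passing to the limit under the integral sign. The contribution of the fixed interval $[0,a]$, or of $[0,b]$ on which $g$ is only $L^\infty_\loc$, is $O(1)$. This term is negligible after division by $tg(t)$, because $tg(t)\to\infty$ by~(\ref{eq:asymptotic-RV-power}) when $\rho>-1$.

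For dominated convergence we use Potter-type bounds: for every $\delta>0$ there is $t_0$ such that
\begin{equation*}
 \frac{g(tu)}{g(t)}\leq 2\max(u^{\rho+\delta},u^{\rho-\delta}) \qquad \text{for } t,\ tu\geq t_0 .
\end{equation*}
These follow from applying the UCT (Remark~\ref{rmk:UCT}) to the slowly varying function $\ell(t)=t^{-\rho}g(t)$. Uniformity on $[1,2]$ gives $\ell(2^k x)/\ell(x)\leq (1+\epsilon)^k$, and iterating dyadically yields $\ell(y)/\ell(x)\leq C\max(y/x,x/y)^\delta$. In case (1) with $\rho>-1$ we take $\delta<\rho+1$ so that $u^{\rho-\delta}$ is integrable on $(0,1]$. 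The piece $u\in(0,t_0/t)$ is handled separately: it equals $\int_0^{t_0}g/(tg(t))\to 0$. In case (2) with $\rho<-1$ we take $\delta<|\rho+1|$ to dominate on $[1,\infty)$. Dominated convergence then gives the two limits.

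The main obstacle is the boundary case $\rho=-1$, where $u^{-1}$ is not integrable at the relevant endpoint and domination fails. Here the claim is that the ratio tends to $0$, i.e.\ $G(t)/(tg(t))\to\infty$. We take this from the other side. Fix $\lambda\in(0,1)$ in case (1) and use positivity to bound
\begin{equation*}
 \frac{G(t)}{tg(t)}\geq \int_\lambda^1 \frac{g(tu)}{g(t)}\,du .
\end{equation*}
By uniform convergence on the compact set $[\lambda,1]$ (Remark~\ref{rmk:UCT}) the right side tends to $\int_\lambda^1 u^{-1}\,du=\log(1/\lambda)$. Letting $\lambda\to 0$ gives $\liminf G(t)/(tg(t))=\infty$. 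Case (2) is handled the same way with $[1,\Lambda]$, $\Lambda\to\infty$, and finiteness of $\int_t^\infty g$ is exactly what makes the ratio well defined. In both boundary cases the divergence and convergence hypotheses serve only to ensure that $G$ is the appropriate tail or partial integral; in case (1) the divergence also guarantees $G(t)\to\infty$, so the $O(1)$ compact contribution is harmless. Inverting the ratios gives~(\ref{eq:asymptotic--karamata}) and its counterpart in (2).
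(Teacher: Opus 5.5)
Your proposal is correct. The paper gives no proof of this statement: it quotes Karamata's theorem as a known result from \cite{BGT:Cambridge87, BIKS:Springer18}.

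Your argument is essentially the standard textbook proof found there. For $\rho\neq-1$ you substitute $s=tu$, derive Potter bounds for $\ell(t)=t^{-\rho}g(t)$ from the UCT by dyadic iteration, and apply dominated convergence with $\delta<|\rho+1|$. For $\rho=-1$ you use the one-sided bound $\int_\lambda^1 g(tu)/g(t)\,du\to\log(1/\lambda)$ (respectively $\int_1^\Lambda$), which follows from uniform convergence on compacts, and then let $\lambda\to0$ (respectively $\Lambda\to\infty$).

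The details all check out:
\begin{itemize}
\item The piece $u\in(0,t_0/t)$ equals $\int_0^{t_0}g/(tg(t))$, which tends to $0$ because $tg(t)\to\infty$ when $\rho>-1$.
\item Your dyadic iteration gives the constant $2^\delta$ in the Potter bound, which is at most $2$ for $\delta\le1$.
\item You handle the mismatch between the domain $[a,\infty)$ and the integral $\int_0^t$ in the statement sensibly.
\item Your remark about the boundary case is right: at $\rho=-1$ the divergence/convergence hypothesis only selects which integral is the meaningful one. In case (1), if $\int g<\infty$, then $tg(t)\to0$ while $G(t)$ tends to a positive limit, so the ratio still tends to $0=\rho+1$.
\end{itemize}

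Compared with the paper's bare citation, your route buys self-containedness and nothing more.
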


\begin{remark}
 We refer to~\cite{BGT:Cambridge87, BIKS:Springer18} for further background on slowly varying functions and functions of regular variation.
\end{remark}

\begin{remark}\label{rmk:continuous rv}
 In the rest of the article we will solely consider \emph{continuous} $\RV$-functions. On the one hand, in practice the main examples of $\RV$-functions that show up are continuous and even smooth. On the other hand, in this paper, we are only interested in the values of these functions at integer points. It follows from a result of Adamovi\'c (see~\cite[Proposition~1.3.4]{BGT:Cambridge87}) that if $h(t)$ is an $\RV$-function we always can find an ultimately monotonic \emph{smooth} function $h_1(t)$ such that $h_1(t)\sim h(t)$ and $h_1=h$ on $\N_0$. Therefore, there is no loss of generality of restricting ourselves to continuous (or even smooth) $\RV$-functions.
\end{remark}

\subsection{Weak Lorentz ideals} In what follows, we let $g:[0,\infty)\rightarrow (0,\infty)$ be a continuous $\RV_\rho$-function with $\rho<0$. We also define
\begin{equation*}
 G(t):=\int_0^t g(s)ds, \qquad t\geq 0.
\end{equation*}
In addition, given any operator $T\in \sK$, we set
\begin{equation*}
 \|T\|_g := \sup_{j\geq 0} g(j)^{-1}\mu_j(T).
\end{equation*}

\begin{lemma}\label{lem:Lorentz.quasi-norm}
 The following hold.
 \begin{enumerate}
 \item[(i)] Let $T\in \sK$. We have
 \begin{gather*}
 \|\lambda T\|_g = |\lambda| \, \|T\|_g \qquad \forall \lambda \in \C,\\
 \|ATB\|_g \leq \|A\| \|T\|_g \|B\|   \qquad \forall A,B\in \sL(\sH).
\end{gather*}

\item[(ii)]  There is $C>0$ such that
\begin{equation}\label{eq:g-triangle inequality}
 \|S+T\|_g \leq C\big(\|S\|_g +\|T\|_g\big) \qquad \forall S,T\in \sK.
\end{equation}
\end{enumerate}
\end{lemma}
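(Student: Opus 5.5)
Part (i) follows directly from the definition of $\|\cdot\|_g$ and the singular-value properties recorded earlier. Since $\mu_j(\lambda T)=|\lambda|\mu_j(T)$, multiplying by $g(j)^{-1}$ and taking the supremum over $j$ gives $\|\lambda T\|_g=|\lambda|\,\|T\|_g$. Likewise, (\ref{eq:Quantized.properties-mun3}) gives $g(j)^{-1}\mu_j(ATB)\leq \|A\|\,g(j)^{-1}\mu_j(T)\,\|B\|$ for every $j$, and taking the supremum yields the second inequality. If $\|T\|_g=\infty$, both statements are trivially true with the usual conventions.

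For part (ii), I would use Ky Fan's inequality (\ref{eq:Lorentz.Fan1}) split at half-indices, together with a doubling property of $g$. Fix $j\geq 0$ and write $j=k+l$ with $k=\lceil j/2\rceil$ and $l=\lfloor j/2\rfloor$, so that $k,l\geq \lfloor j/2\rfloor$. Then
\begin{equation*}
 \mu_j(S+T)\leq \mu_k(S)+\mu_l(T)\leq \|S\|_g\, g(k)+\|T\|_g\, g(l).
\end{equation*}
It therefore suffices to find a constant $C$ such that
\begin{equation}\label{eq:proposal-doubling}
 g(m)\leq C g(j) \qquad \text{for all}\ j\geq 0 \ \text{and all integers}\ m \ \text{with}\ \lfloor j/2\rfloor\leq m\leq j.
\end{equation}
Dividing the previous display by $g(j)$ and taking the supremum over $j$ then gives (\ref{eq:g-triangle inequality}).

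The main (mild) obstacle is proving (\ref{eq:proposal-doubling}) uniformly in $j$, since $g$ is only ultimately monotone. Choose $b$ such that $g$ is decreasing on $[b,\infty)$. By the RV property, $g(t/2)/g(t)\to 2^{-\rho}$; in fact it is enough that this ratio is bounded for $t$ large.

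\emph{Large $j$.} Take $j$ large enough that $\lfloor j/2\rfloor\geq b$ and $j/3\geq b$. Monotonicity on $[b,\infty)$ gives $g(m)\leq g(\lfloor j/2\rfloor)\leq g(j/3)$. Since $g(j/3)/g(j)\to 3^{-\rho}$, this quotient is bounded for large $j$, which gives (\ref{eq:proposal-doubling}) in this range.

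\emph{Small $j$.} For the finitely many remaining $j$, the relevant values of $m$ also lie in a finite set. Since $g$ is continuous and positive on $[0,\infty)$, it is bounded above and below by positive constants on any compact interval, so the ratio $g(m)/g(j)$ is bounded over this finite set.

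Taking the larger of the two constants yields (\ref{eq:proposal-doubling}) for all $j$, and hence part (ii).
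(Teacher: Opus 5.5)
Your proof is correct and follows essentially the paper's route: Fan's inequality~(\ref{eq:Lorentz.Fan1}) with the index split at $\lfloor j/2\rfloor$, combined with a uniform bound on the ratios $g(m)/g(j)$ for $m$ between $\lfloor j/2\rfloor$ and $j$. The paper obtains that bound as $C=\sup_k C_k$ from three explicit ratios, whereas you get it from ultimate monotonicity, the limit $g(j/3)/g(j)\to 3^{-\rho}$, and positivity of $g$ at the finitely many small indices.
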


\begin{remark}\label{rmk:quasi-norm}
The first part is a direct consequence of~(\ref{eq:min-max}) and~(\ref{eq:Quantized.properties-mun3}). The second part is a standard consequence of Fan's inequality~(\ref{eq:Lorentz.Fan1}).  For the reader's convenience a proof of this result is given in Appendix~\ref{app:proofs}.
\end{remark}

\begin{definition}[see~\cite{LSZ:Book}]
The  \emph{weak Lorentz ideal} associated with $g$ is
 \begin{equation*}
 \sL_g:=\left\{T \in \sK; \ \mu_j(T)= \op{O}\left(g(j)\right)\right\}.
\end{equation*}
\end{definition}

Lemma~\ref{lem:Lorentz.quasi-norm} ensures that $\sL_g$ is a two-sided ideal on which $\|\cdot\|_g$ is a quasi-norm satisfying~(\ref{eq:g-triangle inequality}). We actually have the following result.

\begin{proposition}\label{prop:Lorentz.quasi-Banach}
 $\sL_g$ is a quasi-Banach ideal.
\end{proposition}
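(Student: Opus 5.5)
By Lemma~\ref{lem:Lorentz.quasi-norm}, $\|\cdot\|_g$ is a quasi-norm on $\sL_g$ with the ideal property $\|ATB\|_g\leq \|A\|\|T\|_g\|B\|$. Indeed, $\|T\|_g=0$ forces $\mu_0(T)=\|T\|=0$, so $T=0$. It therefore only remains to prove completeness. The plan is the standard one: take a Cauchy sequence in $\sL_g$, obtain an operator-norm limit, and then upgrade to convergence in $\|\cdot\|_g$ using a pointwise (in $j$) estimate on singular values.

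The first step is to compare the two norms. Since $\mu_0(T)=\|T\|$, we have $\|T\|\leq g(0)\|T\|_g$. Hence a $\|\cdot\|_g$-Cauchy sequence $(T_n)$ in $\sL_g$ is Cauchy in operator norm, and so converges in norm to some $T\in\sL(\sH)$. Because each $T_n$ is compact and $\sK$ is norm-closed, $T\in\sK$.

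The second step is to show $T\in\sL_g$ and $\|T_n-T\|_g\to 0$. Fix $\epsilon>0$ and choose $n_0$ such that $\mu_j(T_n-T_m)\leq \epsilon g(j)$ for all $j\geq 0$ and all $n,m\geq n_0$. For fixed $n$ and $j$, let $m\to\infty$. Since $T_n-T_m\to T_n-T$ in operator norm, (\ref{eq:Quantized.properties-mun2}) gives $\mu_j(T_n-T_m)\to\mu_j(T_n-T)$. Therefore $\mu_j(T_n-T)\leq\epsilon g(j)$ for every $j$, that is, $\|T_n-T\|_g\leq\epsilon$ for $n\geq n_0$. In particular $T_n-T\in\sL_g$, so $T=T_n-(T_n-T)\in\sL_g$ because $\sL_g$ is a vector space by (\ref{eq:g-triangle inequality}). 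It follows that $T_n\to T$ in $\sL_g$.

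No step is a real obstacle. The only point needing care is that the quasi-norm is a supremum over $j$ of quantities that are each continuous in operator norm, which is exactly what (\ref{eq:Quantized.properties-mun2}) provides; this is what allows passing to the limit $m\to\infty$ uniformly in $j$. Note also that this argument uses neither the regular variation of $g$ nor continuity of $g$, only $g>0$. Those hypotheses enter solely through the quasi-triangle inequality of Lemma~\ref{lem:Lorentz.quasi-norm}, which we take as given.
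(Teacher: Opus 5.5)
Your proposal is correct and follows essentially the same route as the paper's proof in Appendix~\ref{app:proofs}. Both arguments use $\|T\|=\mu_0(T)\leq g(0)\|T\|_g$ to obtain a limit $T\in\sK$. Both then let $m\to\infty$ via (\ref{eq:Quantized.properties-mun2}) to get $\|T-T_n\|_g\leq\epsilon$, and conclude $T\in\sL_g$ from the quasi-triangle inequality of Lemma~\ref{lem:Lorentz.quasi-norm}.
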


\begin{remark}
 The only part that needs to be proved is the completeness of the quasi-norm $\|\cdot\|_g$. This can be deduced from Calkin's correspondence for quasi-Banach ideals as described in~\cite[\S 3]{LSZ:Book}. For the reader's convenience, a direct proof of this property is given in~Appendix~\ref{app:proofs}.
\end{remark}

\begin{example}
 If $g(t)=(1+t)^{-1/p}$, $p>0$, then $\sL_g$ is the weak Schatten class $\sL_{p,\infty}$.
\end{example}

\begin{remark}
 If $h(t)$ is a continuous $\RV$-function such that $h(t)=\op{O}(g(t))$, then we have a continuous inclusion $\sL_{g}\subseteq \sL_h$. In particular, in view of~(\ref{eq:asymptotic-RV-power}) and the previous example, we have continuous inclusions,
 \begin{equation*}
 \sL_{q,\infty} \subseteq \sL_g \subseteq \sL_{p,\infty} \qquad \text{for}\ p<\rho<q.
\end{equation*}
\end{remark}

\begin{remark}
 The previous remark also implies that if $h(t)$ is a continuous $\RV_\rho$-function such that $h(t)=\op{O}(g(t))$ and $g(t)=\op{O}(h(t))$, e.g., if $h(t)\sim g(t)$, then $\sL_h=\sL_g$ with equivalent quasi-norms.
\end{remark}

In what follows we denote by $\sR$ the ideal of finite-rank operators. We set
\begin{equation*}
 \dist_{\sL_g}(T;\sR):= \inf\left\{\|T-R\|_g; \ R\in \sR\right\}, \qquad T\in \sL_g.
\end{equation*}
Recall also that if $T\in \sK$ has polar decomposition $T=U|T|$ and $(\xi_j)_{j\geq 0}$ is any orthonormal sequence in $\sH$ such that $|T|\xi_j= \mu_j(T)$, we have the Schmidt series representations,
\begin{equation*}
 |T| = \sum_{j \geq 0} \mu_j(T)\ketbra{\xi_j}{\xi_j} \qquad \text{and} \qquad T=\sum_{j \geq 0}  \mu_j(T)\ketbra{U\xi_j}{\xi_j},
\end{equation*}
where the series converges in $\sK$. Here we use the ketbra notation $\ketbra{\eta}{\xi}$ to denote the projection onto $\C \eta$ along $(\C\xi)^\perp$.

We have the following formulas for the distance function $\dist_{\sL_g}(\cdot, \sR)$.

\begin{lemma}\label{lem:Lorentz.distance-formula}
 Let $T\in \sL_g$ have Schmidt series  $T=\sum_{j \geq 0} \mu_j(T)\ketbra{U\xi_j}{\xi_j}$. Set
 \begin{equation*}
T_N=\sum_{j<N} \mu_j(T) \ketbra{U\xi_j}{\xi_j},\quad N\geq 1.
 \end{equation*} We then have
 \begin{equation*}
  \dist_{\sL_g}(T;\sR)= \limsup_{N\rightarrow \infty} \|T-T_N\|_g = \limsup_{j\rightarrow \infty} g(j)^{-1}\mu_j(T).
\end{equation*}
\end{lemma}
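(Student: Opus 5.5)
We prove the two equalities by establishing three inequalities. Write $\ell:=\limsup_{j\to\infty} g(j)^{-1}\mu_j(T)$ and $T_N$ as in the statement. We will show
\begin{equation*}
 \ell\leq \dist_{\sL_g}(T;\sR)\leq \limsup_{N\to\infty}\|T-T_N\|_g \leq \ell .
\end{equation*}
The middle inequality is immediate, since each $T_N$ has finite rank. Hence $\dist_{\sL_g}(T;\sR)\leq \inf_N \|T-T_N\|_g \leq \limsup_N\|T-T_N\|_g$.

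\textbf{Upper bound.} Let $N\geq 1$. Because $T-T_N=\sum_{j\geq N}\mu_j(T)\ketbra{U\xi_j}{\xi_j}$ is a Schmidt series, we have $\mu_k(T-T_N)=\mu_{N+k}(T)$ for all $k\geq 0$. Therefore
\begin{equation*}
 \|T-T_N\|_g=\sup_{k\geq 0} g(k)^{-1}\mu_{N+k}(T)=\sup_{k\geq 0}\frac{g(N+k)}{g(k)}\cdot g(N+k)^{-1}\mu_{N+k}(T).
\end{equation*}
The difficulty is the ratio $g(N+k)/g(k)$. Since $g$ is ultimately decreasing, this ratio is at most $1$ once $k$ is large, but for small $k$ it need not be. To handle this, fix $\epsilon>0$ and $K$ with $g(k)$ decreasing for $k\geq K$, and split the supremum at $k=K$.

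For $k\geq K$, the ratio is at most $1$. The second factor is at most $\sup_{m\geq N} g(m)^{-1}\mu_m(T)$, which is below $\ell+\epsilon$ once $N$ is large.

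For $k<K$, we have $\mu_{N+k}(T)\leq \mu_N(T)$. Hence each of these finitely many terms is bounded by $g(k)^{-1}g(N)\cdot g(N)^{-1}\mu_N(T)$. Since $g(N)\to 0$ (because $\rho<0$) and $g(k)$ is bounded below by a positive constant on $[0,K]$ (by continuity and positivity of $g$), these terms tend to $0$.

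Altogether, $\limsup_N\|T-T_N\|_g\leq \ell+\epsilon$ for every $\epsilon>0$, which gives the upper bound.

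\textbf{Lower bound.} Let $R\in\sR$ with $\rk R\leq r$. Apply Ky Fan's inequality~(\ref{eq:Lorentz.Fan1}) with $S=T-R$ and $T=R$. Since $\mu_r(R)=0$, this gives $\mu_{j}(T)\leq \mu_{j-r}(T-R)$ for $j\geq r$. Hence
\begin{equation*}
 g(j)^{-1}\mu_j(T)\leq \frac{g(j-r)}{g(j)}\, g(j-r)^{-1}\mu_{j-r}(T-R)\leq \frac{g(j-r)}{g(j)}\|T-R\|_g .
\end{equation*}
By Remark~\ref{rmk:slow decreasing}, $g(j-r)/g(j)\to 1$ as $j\to\infty$. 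Taking the limsup in $j$ gives $\ell\leq\|T-R\|_g$. Taking the infimum over $R\in\sR$ then yields $\ell\leq\dist_{\sL_g}(T;\sR)$, which closes the chain of inequalities.

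The main obstacle is the non-monotone initial segment of $g$ in the upper bound. It is handled by the splitting at $k=K$ and the fact that $g(N)\to0$.
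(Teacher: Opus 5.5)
Your proposal is correct and follows the paper's proof essentially step for step: the same three-inequality chain, the same Ky Fan argument with $\mu_r(R)=0$ and $g(j-r)/g(j)\to 1$ for the lower bound, and the same splitting at the monotonicity threshold of $g$ for the upper bound. The only differences are cosmetic. The paper bounds the ratio $\sup_{j\geq 0} g(j+N)/g(j)$ by $1$ for large $N$, using $g(j+N)\to 0$ on the finitely many small indices, whereas you kill the small-index terms through $\mu_N(T)\to 0$; and the paper verifies $\mu_k(T-T_N)=\mu_{N+k}(T)$ explicitly via $|T-T_N|=S_N$, which you take as standard.
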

\begin{proof}
 See Appendix~\ref{app:proofs}.
\end{proof}

\begin{remark}
 A version of the above result for weak Schatten classes is proved in~\cite{SXZ:JFA23}.
\end{remark}

In what follows we denote by $(\sL_{g})_0$ the closure of $\sR$ in $\sL_g$. This is the maximal closed sub-ideal of $\sL_g$. Moreover, we have
\begin{equation*}
 T\in (\sL_{g})_0 \Longleftrightarrow \dist_{\sL_g}(T;\sR)=0.
\end{equation*}
Therefore, Lemma~\ref{lem:Lorentz.distance-formula} provides the following characterization of $(\sL_{g})_0$.

\begin{proposition}\label{prop:separable part}
 Let $T\in \sK$. The following are equivalent:
 \begin{enumerate}
 \item[(i)] $T\in (\sL_{g})_0$.

 \item[(ii)] Every Schmidt series for $T$ converges in $\sL_g$.

 \item[(iii)] $\lim_{j\rightarrow \infty} g(j)^{-1}\mu_j(T)=0$.
\end{enumerate}
In particular, we have
\begin{equation*}
 (\sL_{g})_0= \left\{T \in \sK; \ \mu_j(T)= \op{o}\left(g(j)\right)\right\} \subsetneq \sL_g.
\end{equation*}
\end{proposition}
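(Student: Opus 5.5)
The plan is to deduce everything from Lemma~\ref{lem:Lorentz.distance-formula}, together with the fact recorded just before the statement that $T\in(\sL_g)_0$ if and only if $\dist_{\sL_g}(T;\sR)=0$. Since $(\sL_g)_0\subseteq \sL_g$ by definition, in (i) and (ii) we may assume $T\in\sL_g$. For (iii) we must show that any compact $T$ with $\mu_j(T)=\op{o}(g(j))$ lies in $\sL_g$. This holds because $\op{o}(g(j))$ implies $\op{O}(g(j))$.

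For (i)$\Leftrightarrow$(iii), take $T\in\sL_g$. Lemma~\ref{lem:Lorentz.distance-formula} gives $\dist_{\sL_g}(T;\sR)=\limsup_j g(j)^{-1}\mu_j(T)$. This $\limsup$ is nonnegative, so it vanishes exactly when $\lim_j g(j)^{-1}\mu_j(T)=0$, and the equivalence follows.

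For (ii)$\Leftrightarrow$(iii), fix a Schmidt series of $T$ with partial sums $T_N$. Lemma~\ref{lem:Lorentz.distance-formula} gives $\limsup_N\|T-T_N\|_g=\limsup_j g(j)^{-1}\mu_j(T)$. The series converges in $\sL_g$ to its $\sK$-limit $T$ exactly when $\|T-T_N\|_g\to0$, that is, when $\limsup_N\|T-T_N\|_g=0$. One should check that the $\sL_g$-limit, if it exists, equals $T$. This holds because $\|\cdot\|_g$ dominates a multiple of the operator norm: $\mu_0\le g(0)\|\cdot\|_g$. So convergence in $\sL_g$ is equivalent to (iii), and since (iii) does not depend on the chosen Schmidt series, this applies to every Schmidt series simultaneously. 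If $T$ is assumed only to be compact in (ii), note that convergence in $\sL_g$ forces $T\in\sL_g$, since $\sL_g$ is complete by Proposition~\ref{prop:Lorentz.quasi-Banach}. Conversely, (iii) gives $T\in\sL_g$ directly.

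The displayed identity is just (iii) rewritten. For strict inclusion, take a diagonal operator with eigenvalues $g(j)$. Then $g(j)^{-1}\mu_j=1\not\to0$. One must check that $\mu_j=g(j)$; this holds for large $j$ because $g$ is ultimately decreasing, and finitely many terms do not matter. This operator therefore lies in $\sL_g\setminus(\sL_g)_0$. The only slightly delicate point is this ordering issue. One handles it by using the decreasing rearrangement: since $g\to0$ and $g$ is ultimately decreasing, the rearranged values agree with $g(j)$ up to a finite shift, and Remark~\ref{rmk:slow decreasing} guarantees the ratio still tends to $1$.
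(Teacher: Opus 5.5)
Your proposal is correct and follows the paper's route. The paper states the proposition as an immediate consequence of the distance formula in Lemma~\ref{lem:Lorentz.distance-formula} together with $T\in(\sL_g)_0\Leftrightarrow \dist_{\sL_g}(T;\sR)=0$, and a diagonal operator with eigenvalues $g(j)$ witnesses $(\sL_g)_0\neq\sL_g$; you supply exactly these details.

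Your ``finite shift'' hedge is unnecessary. Take $j$ large enough that $g(j)<\min_{k<t_0}g(k)$, where $g$ is nonincreasing on $[t_0,\infty)$. From that point on, the decreasing rearrangement coincides exactly with $g(j)$.
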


\begin{remark}
 The fact that $(\sL_{g})_0\subsetneq \sL_g$ implies that $\sL_g$ is not separable.
\end{remark}

For $T\in \sK$, define
\begin{equation}\label{eq:Lorentz norm}
 \|T\|_{G}:= \sup_{N\geq 1} \frac{1}{G(N)} \sum_{j<N} \mu_j(T).
\end{equation}
It follows from~(\ref{eq:min-max}), (\ref{eq:Quantized.properties-mun3}), and~(\ref{eq:Lorentz.Fan2}) that we have
 \begin{gather*}
 \|\lambda T\|_{G} = |\lambda| \, \|T\|_{G} \qquad \forall \lambda \in \C,\\
 \|ATB\|_{G} \leq \|A\| \|T\|_{G} \|B\|   \qquad \forall A,B\in \sL(\sH),\\
 \|S+T\|_{G} \leq \|S\|_{G} + \|T\|_{G} \qquad \forall S,T\in \sK.
 \end{gather*}

The \emph{Lorentz ideal}\footnote{Lorentz ideals are also called Marcinkiewicz ideals in the literature (see the discussion in~{\cite[page 75]{LSZ:Book}} on this topic).} associated with $G(t)$  is
\begin{equation*}
 \sM_{G}= \bigg\{T\in \sK;\ \sum_{j<N} \mu_j(T)=\op{O}(G(N))\bigg\}.
\end{equation*}
The functional $\|\cdot\|_{G}$ is a norm on $\sM_{G}$ with respect to which $\sM_{G}$ is a Banach ideal.

\begin{proposition}\label{prop:Lorentz.Marcinkiewicz}
 The following hold.
 \begin{enumerate}
 \item[(i)] We always have a continuous inclusion,
 \begin{equation*}
 \sL_g \subseteq \sM_{G}.
\end{equation*}

\item[(ii)] If $-1<\rho<0$, then $\sL_g=\sM_{G}$ with equivalent quasi-norms. In particular, $\sL_g$ is a quasi-Banach ideal with respect to the norm $\|\cdot\|_{G}$.
\end{enumerate}
\end{proposition}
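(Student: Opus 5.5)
For (i), we need $\sum_{j<N}\mu_j(T)\le C\|T\|_g\,G(N)$ for all $N\geq 1$, with $C$ independent of $T$. Since $\mu_j(T)\le \|T\|_g\, g(j)$, it is enough to find $C$ with
\begin{equation*}
\sum_{j<N} g(j) \le C\, G(N) \qquad \forall N\geq 1.
\end{equation*}
The function $g$ is continuous and positive, and it is decreasing on some interval $[b,\infty)$. For $b+1\le j<N$ we then have $g(j)\le \int_{j-1}^{j} g(s)\,ds$, so the tail of the sum satisfies $\sum_{b+1\le j<N} g(j)\le G(N)$. The finitely many terms with $j<b+1$ are bounded by a constant $c_0$. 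Also $G(N)\ge G(1)>0$, because $g>0$. Hence $\sum_{j<N}g(j)\le c_0+G(N)\le (c_0/G(1)+1)\,G(N)$. This gives $\|T\|_G\le C\|T\|_g$, i.e.\ the continuous inclusion $\sL_g\subseteq\sM_G$. No Karamata argument is needed here, and this holds for every $\rho<0$.

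For (ii), assume $-1<\rho<0$. We need the reverse estimate $\|T\|_g\le C'\|T\|_G$. Because the singular values are nonincreasing,
\begin{equation*}
N\mu_{N-1}(T)\le \sum_{j<N}\mu_j(T)\le \|T\|_G\, G(N).
\end{equation*}
Theorem~\ref{thm:karamata}(1) applies since $\rho>-1$, and gives $G(N)\sim N g(N)/(\rho+1)$. Hence $\mu_{N-1}(T)\le \|T\|_G\, G(N)/N \le C_1\|T\|_G\, g(N)$ for $N$ large. By Remark~\ref{rmk:slow decreasing} (or Lemma~\ref{lem:UCT-consequence}) we have $g(N)\sim g(N-1)$, so $\mu_{N-1}(T)\le C_2\|T\|_G\, g(N-1)$ for large $N$.

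The finitely many small indices need a separate bound. Each $g(j)$ there is positive, and $\mu_j(T)\le\mu_0(T)\le \|T\|_G\,G(1)$ (take $N=1$). So enlarging the constant gives $\sup_j g(j)^{-1}\mu_j(T)\le C'\|T\|_G$. Therefore $\sM_G\subseteq\sL_g$ continuously, and with (i) the two quasi-norms are equivalent.

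Finally, $\sM_G$ is a Banach ideal under the norm $\|\cdot\|_G$, and this norm is equivalent to $\|\cdot\|_g$. So $\sL_g$ is complete with respect to $\|\cdot\|_G$, and the ideal inequality holds, which makes it a Banach ideal for this norm.

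The main obstacle is the step in (ii) that invokes Karamata's theorem, because it is the only place where the restriction $\rho>-1$ is used. When $\rho=-1$, $G(N)/(Ng(N))\to\infty$, and the inclusion in (i) is genuinely strict.
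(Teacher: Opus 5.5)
Your proof is correct and is essentially the paper's argument. For (i) you bound $\sum_{j<N}\mu_j(T)$ by $\|T\|_g\sum_{j<N}g(j)$ and compare that sum with $G(N)$ using the eventual monotonicity of $g$; your direct integral comparison avoids the paper's case split on whether $\int_0^\infty g(t)\,dt$ diverges. For (ii) you combine the monotonicity estimate on $\sum_{j<N}\mu_j(T)$ with Karamata's theorem as the paper does, except that the paper uses $\mu_N$ rather than $\mu_{N-1}$, which spares it the extra step $g(N)\sim g(N-1)$. Your closing claim that the inclusion is strict when $\rho=-1$ is not part of the statement and would need an explicit operator in $\sM_G\setminus\sL_g$ (the paper cites Sedaev--Sukochev for one), but this does not affect the proof.
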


\begin{remark}
 The above result, or at least part of it, is well-known (see, e.g.,\cite[\S 14]{GK:AMS69}). For the reader's convenience, a proof of this result is given in Appendix~\ref{app:proofs}.
 \end{remark}

\begin{example}
 If $g(t)=(1+t)^{-1}$, then $\sM_{G}$ is the celebrated \emph{Dixmier-Macaev ideal},
 \begin{equation*}
 \sM_{1,\infty}:=\bigg\{ T\in \sK; \ \sum_{j<N} \mu_j(T) =\op{O}(\log N)\bigg\}.
\end{equation*}
\end{example}

\begin{remark}
If $\rho=-1$ and $\int_0^\infty g(t)dt=\infty$ (e.g., $g(t)=(t+1)^{-1}(\log (t+2))^{q}$ with $q>-1$), then by Karamata Theorem
(Theorem~\ref{thm:karamata}) we have $tg(t)=\op{o}(G(t))$. It follows that
\begin{equation*}
 \sL_g\subseteq \sV_g, \qquad \text{where}\ \sV_g:=\left\{T\in \sM_{G};\ j\mu_j(T)=\op{o}(G(j))\right\}.
\end{equation*}
As $\sV_g\subsetneq \sM_{G}$ (\emph{cf}.\ \cite[Proposition~10]{SS:JFA13}), we see that  $\sL_g\subsetneq \sM_{G}$. In fact, the operator considered in the proof of~\cite[Proposition~10]{SS:JFA13} is an example of operator in $\sM_{G}$ which is not in $\sL_g$ (see also~\cite[Lemma~1.2.8]{LSZ:Book}).
\end{remark}

\section{Dixmier Traces and Connes' Integration on Weak Lorentz Ideals} \label{sec:Dixmier}
In this section, we extend to weak Lorentz ideals the approach of~\cite{LSZ:Book, Po:JNCG23, Su:CIMPA17} to Dixmier traces and to Connes' integral on the weak trace class $\sL_{1,\infty}$. This simplifies previous approaches to Dixmier traces on Lorentz ideals and provides full analogues for weak Lorentz ideals of the results of~\cite{LSZ:Book, Po:JNCG23}.

Throughout this section, we let $g:[0,\infty)\rightarrow (0,\infty)$ be a continuous $\RV_{-1}$ function such that $\int_0^\infty g(t)dt=\infty$. We also set $G(t)=\int_0^t g(s)ds$. These assumptions ensure that
\begin{equation}\label{eq:Lorentz.divergence}
 \sum_{j<N} g(j)= G(N) +\op{O}(1) \qquad \text{and} \qquad \sum_{j\geq 0} g(j) =\infty.
\end{equation}
In particular, this implies that $\sL_1\subsetneq \sL_g$.

In addition, by Karamata Theorem (Theorem~\ref{thm:karamata}) we have
\begin{equation}\label{eq:Karamata-RV1}
 tg(t) =\op{o}(G(t)) \qquad \text{as $t\rightarrow \infty$}.
\end{equation}

\subsection{Traces on $\sL_g$}
By definition, the \emph{commutator space} of $\sL_g$ is
\begin{equation*}
 \Com(\sL_g):= \Span\left\{[A,T]; \ A \in \sL(\sH), \ T\in \sL_g\right\}.
\end{equation*}
Using the fact that every operator $A\in \sL(\sH)$ is a linear combination of 4 unitary operators (see, e.g., \cite[\S VI.6]{RS1:1980}), it can be shown that
\begin{equation}\label{def:com}
 \Com(\sL_g):= \Span\left\{U^*TU-T;  \ T\in \sL_g,\ U \in \sL(\sH), \ U\ \text{unitary}\right\}.
\end{equation}

We shall say that a linear functional $\varphi: \sL_g\rightarrow \C$ is a \emph{trace} if it annihilates the commutator space $ \Com(\sL_g)$. That is,
\begin{equation*}
 \varphi(AT)=\varphi(TA) \qquad \forall T\in \sL_g \ \forall A\in \sL(\sH).
\end{equation*}
 In fact, in view of~(\ref{def:com}), the above trace property is equivalent to being \emph{unitarily invariant}, i.e.,
 \begin{equation}\label{eq:uni invariance}
 \varphi(U^*TU)=\varphi(T) \qquad T\in \sL_g \ \forall U\in \sL(\sH), \ U\ \text{unitary}.
\end{equation}

More generally, given any subspace $\sE\subseteq \sL_g$ containing $\Com(\sL_g)$ we shall call a trace any linear functional on $\sE$ that annihilates
$\Com(\sL_g)$.

We also recall that a linear functional $\varphi: \sL_g\rightarrow \C$ is called \emph{positive} if
\begin{equation*}
 \left( T\in \sL_g \quad \text{and} \quad T\geq 0\right) \ \Longrightarrow \ \varphi(T)\geq0.
\end{equation*}

\subsection{Inequalities for eigenvalues}
For an arbitrary compact operator $T$, the non-zero part of the spectrum of $T$ consists of isolated eigenvalues. For each $\lambda \in \Sp(T)\setminus \{0\}$, the associated root space is $E_\lambda(T):= \cup_{j\geq 0} \ker (T-\lambda)^j$. This is a finite-dimensional space and its dimension is called the \emph{algebraic multiplicity} of $\lambda$.

\begin{definition}
 An \emph{eigenvalue sequence} for $T$ is any sequence $\lambda(T)=(\lambda_j(T))_{j\geq 0}$ consisting of all eigenvalues of $T$ in such a way that
 \begin{itemize}
 \item Each eigenvalue is repeated according to its algebraic multiplicity.

 \item We have
 \begin{equation*}
 |\lambda_0(T)|\geq |\lambda_1(T)|\geq  |\lambda_2(T)|\geq \cdots \geq 0.
\end{equation*}
\end{itemize}
 \end{definition}

\begin{remark}
If $T\geq 0$, then $\mu(T)=(\mu_j(T))_{j\geq 0}$ is the unique eigenvalue sequence of $T$.
\end{remark}
\begin{remark}
 In general, an eigenvalue sequence need not be unique.
\end{remark}

It is immediate that if $\lambda(T)$ is an eigenvalue sequence for $T$, then $c\lambda(T)$ is an eigenvalue sequence for $cT$ for all $c\in \C$. In addition, we have the following Weyl's inequality (see, e.g., \cite{GK:AMS69, Si:AMS05}),
\begin{equation}\label{eq:NC-integration.Weyl-inequality}
 \sum_{j<N} |\lambda_j(T)| \leq \sum_{j<N} \mu_j(T).
\end{equation}

In what follows, given any operator $T\in \sK$, by $\lambda(T)$ we shall mean an arbitrary eigenvalue sequence for $T$.

As mentioned in the Introduction, the approach of~\cite{LSZ:Book, Po:JNCG23, Su:CIMPA17} to Dixmier traces on the weak trace $\sL_{1,\infty}$ relies on the asymptotic additivity property~(\ref{eq:Intro.asymptotic-additivity-lunf}).  The key observation in this section is that we have an analogous result for operators in $\sL_g$.

For one thing, it was shown by Kalton~\cite[Proposition~3.2]{Ka:Crelle98} that any quasi-Banach ideal is geometrically stable, and hence is closed under logarithmic submajorization in the sense of~\cite{SZ:AIM14} (see~\cite[Lemma~35]{SZ:AIM14}; see also~\cite[Proposition~2.4.18]{LSZ:Book}). In particular, the ideal $\sL_g$ is closed under logarithmic submajorization. We may thus apply~\cite[Lemma~5.6.4]{LSZ:Book} to get that, for all $S,T\in \sL_g$, we have
  \begin{equation}\label{eq:1st asymptotic}
  \sum_{j<N} \lambda_j(S+T)= \sum_{j<N} \lambda_j(S)+ \sum_{j<N} \lambda_j(T) + \op{O}\left(Ng(N)\right).
\end{equation}
On the other hand, by the Karamata asymptotic~(\ref{eq:asymptotic--karamata}) we have $Ng(N)=\op{o}(G(N))$. Therefore, we arrive at the following result.

\begin{proposition}[Asymptotic Additivity]\label{prop:asymptotic-additivity}
Given any operators $S,T\in \sL_g$, we have
 \begin{equation*}
 \sum_{j<N} \lambda_j(S+T)= \sum_{j<N} \lambda_j(S)+ \sum_{j<N} \lambda_j(T) + \op{o}\left(G(N)\right).
\end{equation*}
\end{proposition}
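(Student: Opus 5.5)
The plan is to combine the two ingredients already assembled just before the statement. The first is the estimate~(\ref{eq:1st asymptotic}), which comes from Kalton's geometric stability~\cite[Proposition~3.2]{Ka:Crelle98} and~\cite[Lemma~5.6.4]{LSZ:Book}. The second is Karamata's theorem. Since Karamata's theorem upgrades the error term from $\op{O}(Ng(N))$ to $\op{o}(G(N))$, the work consists of checking that the hypotheses of these cited results are met in our setting.

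\emph{Step 1.} Verify that~\cite[Lemma~5.6.4]{LSZ:Book} applies to $\sL_g$.
By Proposition~\ref{prop:Lorentz.quasi-Banach}, $\sL_g$ is a quasi-Banach ideal. Kalton's result then makes it geometrically stable, and~\cite[Lemma~35]{SZ:AIM14} shows it is closed under logarithmic submajorization. In~\cite{LSZ:Book}, Lemma~5.6.4 asserts that for $S,T$ in such an ideal $\sJ$, the sequence
\begin{equation*}
\Bigl\{\sum_{j<N}\lambda_j(S+T)-\sum_{j<N}\lambda_j(S)-\sum_{j<N}\lambda_j(T)\Bigr\}_{N\geq 0}
\end{equation*}
is dominated, up to a constant, by $N\mu_N(A)$ for some operator $A\in\sJ$ (built from $S$ and $T$). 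Equivalently, the sequence $\{\frac{1}{N+1}\cdot(\text{defect})\}$ is the singular value sequence of an element of $\sJ$. I expect this translation to be the main obstacle, since the precise formulation in~\cite{LSZ:Book} has to be matched with the membership criterion $\mu_N(A)=\op{O}(g(N))$ defining $\sL_g$. If this route fails, the fallback is to argue directly. Take Schur/upper-triangular decompositions and project onto the span of the first $N$ root vectors. Weyl's inequality~(\ref{eq:NC-integration.Weyl-inequality}) and Fan's inequality~(\ref{eq:Lorentz.Fan1}) then bound the defect by a constant times $N(\mu_{N}(S)+\mu_N(T))$, up to a shift of indices. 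By Remark~\ref{rmk:slow decreasing}, such shifts cost nothing. In either case we obtain~(\ref{eq:1st asymptotic}) with error $\op{O}(Ng(N))$.

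\emph{Step 2.} Apply Karamata. Since $g$ is $\RV_{-1}$ with $\int_0^\infty g=\infty$, Theorem~\ref{thm:karamata}(1) with $\rho=-1$ gives
\begin{equation*}
\lim_{t\rightarrow\infty}\frac{tg(t)}{G(t)}=\rho+1=0,
\end{equation*}
that is,~(\ref{eq:Karamata-RV1}). Evaluating at integers gives $Ng(N)=\op{o}(G(N))$. Combining this with Step~1 yields
\begin{equation*}
\sum_{j<N}\lambda_j(S+T)=\sum_{j<N}\lambda_j(S)+\sum_{j<N}\lambda_j(T)+\op{o}(G(N)).
\end{equation*}
Finally, the argument is insensitive to the choice of eigenvalue sequences. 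Two eigenvalue sequences differ only by reordering within blocks of equal modulus $|\lambda|$, and at any cut $N$ the resulting discrepancy in the partial sums is at most $2\,m\,|\lambda_N|$, where $m$ is the size of the block. This is controlled by the same $\op{O}(N\mu_N)$ bound via Weyl's inequality, so it is absorbed in the $\op{o}(G(N))$ term.
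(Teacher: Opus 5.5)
Your main line is exactly the paper's proof. Kalton's geometric stability of quasi-Banach ideals gives closure under logarithmic submajorization, so \cite[Lemma~5.6.4]{LSZ:Book} yields the $\op{O}(Ng(N))$ error in~(\ref{eq:1st asymptotic}), and Karamata's theorem with $\rho=-1$ turns it into $\op{o}(G(N))$. Your fallback ``direct'' argument and your closing remark on eigenvalue sequences are unnecessary and not sound as sketched: the additive Weyl inequality~(\ref{eq:NC-integration.Weyl-inequality}) only gives $N|\lambda_N(T)|=\op{O}(G(N))$, not $\op{O}(Ng(N))$, and the $\op{O}(Ng(N))$ bound on the defect is the nontrivial content of the cited lemma, which does not follow from Weyl's and Fan's inequalities alone.
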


We mention a few direct consequences of Proposition~\ref{prop:asymptotic-additivity}.

\begin{corollary}\label{cor:comparison}
 If $T\in \sL_g$, and $\lambda(T)$ and $\lambda'(T)$ are eigenvalue sequences for $T$, then
 \begin{equation*}
  \sum_{j<N} \lambda_j'(T) =\sum_{j<N} \lambda_j(T) + \op{o}\left(G(N)\right).
\end{equation*}
\end{corollary}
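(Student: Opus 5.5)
The plan is to apply Proposition~\ref{prop:asymptotic-additivity} with a degenerate decomposition of $T$. The asymptotic additivity statement holds for \emph{arbitrary} eigenvalue sequences of $S$, $T$ and $S+T$. This is how~(\ref{eq:1st asymptotic}) is inherited from \cite[Lemma~5.6.4]{LSZ:Book}, where $\lambda(\cdot)$ denotes any eigenvalue sequence. So we may choose the sequences attached to the three operators independently.

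Take $S=T$ and replace $T$ by $-T$ in Proposition~\ref{prop:asymptotic-additivity}. Then $S+(-T)=0$. The zero operator has the unique eigenvalue sequence $(0,0,\ldots)$. For $S=T$ we use the eigenvalue sequence $\lambda'(T)$. For $-T$ we use $-\lambda(T)$, which is an eigenvalue sequence of $-T$ because, as noted before~(\ref{eq:NC-integration.Weyl-inequality}), $c\lambda(T)$ is an eigenvalue sequence for $cT$. The proposition then gives
\begin{equation*}
0=\sum_{j<N}\lambda'_j(T)-\sum_{j<N}\lambda_j(T)+\op{o}\bigl(G(N)\bigr),
\end{equation*}
which is exactly the claim.

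The only point needing care is that the proposition really allows independent choices of eigenvalue sequences for the operators involved. If one preferred not to rely on that reading, there is a direct argument. Two eigenvalue sequences of $T$ differ only by reorderings within blocks of equal modulus. Hence the two partial sums differ by at most the sum of $|\lambda_j(T)|$ over the single block of indices $j$ with $|\lambda_j(T)|=|\lambda_{N-1}(T)|$, counting only those eigenvalues that appear in one sum but not the other.

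To bound this, fix $\epsilon>0$ and write $T=T_1+T_2$ with $T_1$ finite rank and $\|T_2\|_g<\epsilon$; here the finite-rank part gives an $\op{O}(1)$ contribution. This route is more cumbersome, because the moduli of the eigenvalues are only controlled through Weyl's inequality~(\ref{eq:NC-integration.Weyl-inequality}) and the Karamata bound $Ng(N)=\op{o}(G(N))$. The one-line argument via Proposition~\ref{prop:asymptotic-additivity} is therefore the one I would present.
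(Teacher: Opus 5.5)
Your argument is correct and is essentially the paper's. The paper applies Proposition~\ref{prop:asymptotic-additivity} with $S=0$ and takes $\lambda'(T)$ as the eigenvalue sequence of $S+T=T$, while you use the pair $(T,-T)$ with $S+T=0$. Both rest on the additivity holding for independently chosen eigenvalue sequences, the same device the paper uses in its proof of Corollary~\ref{cor:com--spectral}.

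Drop the fallback sketch, though, because it fails as stated. A general $T\in\sL_g$ cannot be written as a finite-rank operator plus one of $\|\cdot\|_g$-norm $<\epsilon$: by Proposition~\ref{prop:separable part} that is possible only for $T\in(\sL_g)_0$.
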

\begin{proof}
 This follows from Proposition~\ref{prop:asymptotic-additivity} for $S=0$ and $\lambda(S+T)=\lambda'(T)$.
\end{proof}

\begin{corollary}[Kalton~\cite{Ka:Crelle98}]\label{cor:com--spectral}
 If $T\in \Com(\sL_g)$, then
\begin{equation}\label{eq:spectral2}
 \sum_{j<N} \lambda_j(T)= \op{o}\left(G(N)\right).
\end{equation}
\end{corollary}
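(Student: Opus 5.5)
By \eqref{def:com} and linearity, it suffices to treat a single generator $T=U^*SU-S$ with $S\in\sL_g$ and $U$ unitary. The only nontrivial input is the asymptotic additivity of Proposition~\ref{prop:asymptotic-additivity}, applied to the pair $U^*SU$ and $-S$, both of which lie in $\sL_g$ because $\sL_g$ is a two-sided ideal. This gives
\begin{equation*}
 \sum_{j<N}\lambda_j(T)=\sum_{j<N}\lambda_j(U^*SU)+\sum_{j<N}\lambda_j(-S)+\op{o}(G(N)).
\end{equation*}

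Next we use unitary invariance of the spectrum. The operator $U^*SU$ has the same eigenvalues as $S$, with the same algebraic multiplicities, since $U^*$ maps $E_\lambda(S)$ onto $E_\lambda(U^*SU)$. So any eigenvalue sequence of $S$ is also an eigenvalue sequence of $U^*SU$. By Corollary~\ref{cor:comparison} we may then replace $\lambda(U^*SU)$ by $\lambda(S)$ at the cost of an $\op{o}(G(N))$ error.

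Similarly, $-\lambda(S)$ is an eigenvalue sequence for $-S$. Corollary~\ref{cor:comparison} again lets us replace $\sum_{j<N}\lambda_j(-S)$ by $-\sum_{j<N}\lambda_j(S)+\op{o}(G(N))$. The two main terms then cancel, which yields \eqref{eq:spectral2} for $T$.

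For a general $T=\sum_{k=1}^m c_k(U_k^*S_kU_k-S_k)$, we proceed by induction on $m$. At each step we apply Proposition~\ref{prop:asymptotic-additivity} to split off one summand, and we use that $c\lambda(\cdot)$ is an eigenvalue sequence for $c(\cdot)$ to handle the scalars $c_k$. The accumulated errors form a finite sum of $\op{o}(G(N))$ terms, hence remain $\op{o}(G(N))$.

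There is no real obstacle here. The one point requiring care is that eigenvalue sequences are not unique, so every identification of partial sums must pass through Corollary~\ref{cor:comparison}, rather than being asserted as an exact equality.
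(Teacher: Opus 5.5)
Your proof is correct and follows the paper's argument. The paper reduces to a generator $U^*SU-S$ via \eqref{def:com} and Proposition~\ref{prop:asymptotic-additivity}, then applies that proposition to the pair $(U^*SU,-S)$, using that $\lambda(S)$ is an eigenvalue sequence for $U^*SU$ and $-\lambda(S)$ one for $-S$. Your detour through Corollary~\ref{cor:comparison} is harmless but unnecessary: Proposition~\ref{prop:asymptotic-additivity} holds for arbitrary eigenvalue sequences, so the paper simply takes $\lambda(U^*SU)=\lambda(S)=-\lambda(-S)$ directly.
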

\begin{proof}
 In view of~(\ref{def:com}) and Proposition~\ref{prop:asymptotic-additivity} it is enough to prove the result for operators of the form,
 \begin{equation*}
 U^*TU-T, \qquad T\in \sL_g, \quad U\in \sL(\sH),\ U\ \text{unitary}.
\end{equation*}
We just need to apply Proposition~\ref{prop:asymptotic-additivity} to the pair $(U^*TU, -T)$ with $\lambda(U^*TU)=
\lambda(T)=-\lambda(-T)$. This is possible, since $U^*TU$ and $T$ have the same eigenvalues with same multiplicity. We thus get
\begin{equation*}
  \sum_{j<N} \lambda_j(U^*TU-T)= \sum_{j<N} \lambda_j(T)+\sum_{j<N} \left(-\lambda_j(T) \right)+ \op{o}\left(G(N)\right)= \op{o}\left(G(N)\right).
\end{equation*}
This gives the result.
\end{proof}

\begin{corollary}\label{cor:separable part--spectral}
If $T\in (\sL_{g})_0$, then
  \begin{equation*}
 \sum_{j<N} \lambda_j(T)= \op{o}\left(G(N)\right).
\end{equation*}
\end{corollary}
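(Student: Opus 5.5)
The plan is to combine Weyl's inequality~(\ref{eq:NC-integration.Weyl-inequality}) with the characterization of $(\sL_g)_0$ in Proposition~\ref{prop:separable part}, and then to use a Ces\`aro-type comparison with $G(N)$ based on~(\ref{eq:Lorentz.divergence}). Let $T\in(\sL_g)_0$. By Proposition~\ref{prop:separable part}, $\mu_j(T)=\op{o}(g(j))$. Weyl's inequality gives
\begin{equation*}
 \Bigl|\sum_{j<N}\lambda_j(T)\Bigr|\leq \sum_{j<N}|\lambda_j(T)|\leq \sum_{j<N}\mu_j(T).
\end{equation*}
It therefore suffices to show that $\sum_{j<N}\mu_j(T)=\op{o}(G(N))$.

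For the latter, fix $\epsilon>0$ and choose $J$ such that $\mu_j(T)\leq \epsilon g(j)$ for all $j\geq J$. Then, for $N>J$,
\begin{equation*}
 \sum_{j<N}\mu_j(T)\leq \sum_{j<J}\mu_j(T)+\epsilon\sum_{j<N} g(j) = C_J+\epsilon\bigl(G(N)+\op{O}(1)\bigr),
\end{equation*}
where we have used~(\ref{eq:Lorentz.divergence}). Since $G(N)\to\infty$ (as $\int_0^\infty g(t)\,dt=\infty$), dividing by $G(N)$ and letting $N\to\infty$ yields
\begin{equation*}
 \limsup_{N\to\infty} G(N)^{-1}\sum_{j<N}\mu_j(T)\leq\epsilon .
\end{equation*}
As $\epsilon$ is arbitrary, the claim follows.

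There is no real obstacle here. The only point requiring care is the divergence $G(N)\to\infty$, which is what allows the finite head $\sum_{j<J}\mu_j(T)$ to be absorbed. We do not even need the asymptotic additivity of Proposition~\ref{prop:asymptotic-additivity}, since the statement holds for any choice of eigenvalue sequence directly.
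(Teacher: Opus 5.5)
Your proof is correct and follows the same route as the paper: $\mu_j(T)=\op{o}(g(j))$ from Proposition~\ref{prop:separable part}, then Weyl's inequality~(\ref{eq:NC-integration.Weyl-inequality}), then the comparison $\sum_{j<N}g(j)=G(N)+\op{O}(1)$ together with $G(N)\to\infty$. You spell out the $\epsilon$-argument that the paper leaves implicit; the paper's reference to~(\ref{eq:spectral2}) at that point seems to be intended as~(\ref{eq:Lorentz.divergence}), which is the estimate you use.
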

\begin{proof}
As $T \in (\sL_{g})_0$, we have $\mu_j(T)=\op{o}(g(j))$. Combining this with~(\ref{eq:NC-integration.Weyl-inequality}) and~(\ref{eq:spectral2}) gives the result.
\end{proof}

Corollary~\ref{cor:com--spectral} is actually a special case of a much more general result, which we mention for the sake of completeness.

\begin{proposition}[{\cite[Theorem~5.1.5]{LSZ:Book}}; see also~\cite{Ka:Crelle98}]\label{prop:Lorentz.Com-characterization}
If $S,T\in \sL_g$, then
\begin{equation}\label{eq:com difference}
S-T \in \Com\left(\sL_g\right) \ \Longleftrightarrow \  \sum_{j<N} \lambda_j(S)=\sum_{j<N} \lambda_j(T)+ \op{O}(Ng(N)).
\end{equation}
In particular, we have
 \begin{equation}\label{eq:spectral1}
 T \in \Com\left(\sL_g\right) \ \Longleftrightarrow \ \sum_{j<N} \lambda_j(T)= \op{O}(Ng(N)).
\end{equation}
\end{proposition}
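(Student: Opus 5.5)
Both equivalences follow from Kalton's characterization of commutator spaces, in the form recorded in \cite[Theorem~5.1.5]{LSZ:Book}, once it is checked that its hypotheses hold for $\sL_g$; the additivity formula~(\ref{eq:1st asymptotic}) then reduces~(\ref{eq:com difference}) to~(\ref{eq:spectral1}).

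\emph{Step 1: Kalton's criterion for $\sL_g$.} The ideal $\sL_g$ is quasi-Banach by Proposition~\ref{prop:Lorentz.quasi-Banach}, hence geometrically stable by Kalton~\cite[Proposition~3.2]{Ka:Crelle98}. For geometrically stable ideals, the cited results give
\begin{equation*}
T\in \Com(\sL_g) \ \Longleftrightarrow\ \diag\Bigl(\Bigl\{\tfrac{1}{N+1}\textstyle\sum_{j\leq N}\lambda_j(T)\Bigr\}_{N\geq 0}\Bigr)\in \sL_g .
\end{equation*}
This involves the Ces\`aro means of an eigenvalue sequence. Such a sequence is not monotone, so the membership must be read through its decreasing rearrangement.

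\emph{Step 2: rewriting the criterion as $\op{O}(Ng(N))$.} If $\sum_{j<N}\lambda_j(T)=\op{O}(Ng(N))$, the Ces\`aro means are $\op{O}(g(N))$. Since $g$ is ultimately decreasing, the decreasing rearrangement of their moduli is then also $\op{O}(g(N))$, so the diagonal operator lies in $\sL_g$.

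Conversely, suppose the diagonal operator lies in $\sL_g$. Then the decreasing rearrangement $a^*$ of the moduli of the means satisfies $a^*_k\leq Cg(k)$. To go from this to a pointwise bound, note that $|a_N|>Cg(M)$ forces $N$ to be among the first $M$ indices. I expect this to yield $|a_N|=\op{O}(g(N))$, using that $g$ is $\RV$ (via Lemma~\ref{lem:UCT-consequence} and~(\ref{eq:slow decreasing})). This pointwise step is where I expect the main obstacle. If it proves awkward, I will instead quote the precise formulation in~\cite[Theorem~5.1.5]{LSZ:Book}, which is already stated with the bound $\op{O}(Ng(N))$ for ideals whose characteristic sequence is regularly varying. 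This gives~(\ref{eq:spectral1}).

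\emph{Step 3: the difference form.} For~(\ref{eq:com difference}) I use the additivity estimate~(\ref{eq:1st asymptotic}), coming from~\cite[Lemma~5.6.4]{LSZ:Book}, applied to the pair $(S,-T)$:
\begin{equation*}
\sum_{j<N}\lambda_j(S-T)=\sum_{j<N}\lambda_j(S)-\sum_{j<N}\lambda_j(T)+\op{O}(Ng(N)).
\end{equation*}
Hence $\sum_{j<N}\lambda_j(S-T)=\op{O}(Ng(N))$ holds if and only if $\sum_{j<N}\lambda_j(S)-\sum_{j<N}\lambda_j(T)=\op{O}(Ng(N))$. Applying~(\ref{eq:spectral1}) to $S-T$ yields~(\ref{eq:com difference}).

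The statement does not depend on which eigenvalue sequences are chosen. This follows from the same estimate with $S=0$ (as in Corollary~\ref{cor:comparison}, but with error $\op{O}(Ng(N))$).
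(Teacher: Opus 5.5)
Your overall structure matches the paper's. The paper simply quotes \cite[Theorem~5.1.5]{LSZ:Book} for (\ref{eq:spectral1}). It applies that theorem because $\sL_g$ is quasi-Banach, hence geometrically stable, hence closed under logarithmic submajorization (not because of a regular-variation hypothesis). It then notes that (\ref{eq:com difference}) follows from (\ref{eq:spectral1}) via (\ref{eq:1st asymptotic}), which is exactly your Step~3, and your fallback citation is the paper's proof. The genuine gap is the converse half of your self-contained Step~2, where the mechanism you propose cannot work. Membership $\op{diag}(a)\in\sL_g$ only controls the decreasing rearrangement $a^*$ of $(|a_N|)$. For an arbitrary sequence this gives no pointwise bound, however regular $g$ is. For example, $a_{2^m}=g(m)$ and $a_N=0$ otherwise has $a^*_m=\op{O}(g(m))$, yet $a_{2^m}/g(2^m)\to\infty$. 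The claim that $|a_N|>Cg(M)$ puts $N$ among the first $M$ indices confuses the rank of $N$ in the rearrangement with $N$ itself. It would hold only if $(|a_N|)$ were monotone, and Lemma~\ref{lem:UCT-consequence} and (\ref{eq:slow decreasing}) cannot repair this.

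The missing idea is that Ces\`aro means of an eigenvalue sequence are nearly constant on dyadic blocks. First, you need $|\lambda_n(T)|=\op{O}(g(n))$. Weyl's multiplicative inequality gives $|\lambda_n(T)|^{n+1}\leq\prod_{j\leq n}\mu_j(T)$, so $|\lambda_n(T)|\leq\|T\|_g\bigl(\prod_{j\leq n}g(j)\bigr)^{1/(n+1)}$, which is $\op{O}(g(n))$ by regular variation. Equivalently, use closure of $\sL_g$ under logarithmic submajorization. The additive inequality (\ref{eq:NC-integration.Weyl-inequality}) only yields $\op{O}(G(N)/N)$, which is too weak. Next, with $s_N=\sum_{j\leq N}\lambda_j(T)$ and $a_N=s_N/(N+1)$, you get $|s_m-s_N|\leq N|\lambda_N(T)|\leq C_1Ng(N)$ for $N\leq m\leq 2N$. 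So if $|s_N|\geq KNg(N)$ with $K>C_1$, then all $N+1$ indices $m\in[N,2N]$ satisfy $|a_m|\geq (K-C_1)Ng(N)/(2N+1)\geq \frac{K-C_1}{3}g(N)$. Hence $a^*_N\geq\frac{K-C_1}{3}g(N)$, and comparing with $a^*_N\leq Cg(N)$ forces $K\leq 3C+C_1$, i.e.\ $s_N=\op{O}(Ng(N))$. With this argument inserted, your Steps~1--3 give a complete proof.
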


\begin{remark}
 The above result is proved in~\cite{LSZ:Book} for ideals that are closed under logarithmic sub-majorization. The spectral characterization of commutators provided by~(\ref{eq:spectral1}) was originally proved by Kalton~\cite{Ka:Crelle98} for geometrically stable ideals.
 We also note that we can get~(\ref{eq:com difference}) from~(\ref{eq:spectral1}) by using~(\ref{eq:1st asymptotic}).
\end{remark}

\begin{remark}
 The above result was proved in~\cite{DK:Crelle98} in the general setting of geometrically stable ideals. Every quasi-Banach ideal is geometrically stable (see~\cite{Ka:Crelle98}). This result is further extended to ideals closed under logarithmic submajorization in~\cite[Theorem~5.6.1]{LSZ:Book}.
\end{remark}

\subsection{Extended limits}
In what follows we let $\ell_\infty=\ell_\infty(\N_0)$ be the unital $C^*$-algebra of bounded complex-valued sequences $a=(a_j)_{j\geq 0}$ with norm,
\begin{equation*}
 \|a\|_\infty = \sup_{j\geq 0} |a_j|, \qquad a=(a_j)_{j\geq 0} \in \ell_\infty.
\end{equation*}
Let $\co$ be the (closed) ideal of sequences converging to zero. The quotient algebra $\ell_\infty/\co$ is a $C^*$-algebra with respect to the quotient norm and the induced involution. Moreover, the quotient map $\pi:\ell_\infty\rightarrow \ell_\infty/\co$ is a contracting $*$-homomorphism.

\begin{definition}
 An \emph{extended limit} is a positive linear functional $\omega:\ell_\infty\rightarrow \C$ such that:
 \begin{itemize}
 \item[(i)] $\omega(1) = 1$, where $1$ is the constant sequence whose entries are all equal to $1$.

 \item[(ii)] $\omega(a)=0$ for all sequences $a=(a_j)_{j\geq 0} \in \co$.
\end{itemize}
We denote by $\EL(\N_0)$ the set of extended limits.
 \end{definition}

\begin{remark}
 Together (i)--(ii) mean that
\begin{equation}\label{eq:true to extended}
 \lim_{j\rightarrow \infty} a_j = L \ \Longrightarrow \ \omega(a)=L.
\end{equation}
Thus, an extended limit is precisely a \emph{positive} extension of the limit.
\end{remark}

\begin{remark}
 If $\omega\in \EL(\N_0)$, then its positivity has some consequences. Given any $a=(a_j)_{j\geq 0}\in \ell_\infty$, we have
 \begin{equation*}
 \omega( \overline{a})= \overline{\omega(a)}, \qquad \Re(\omega(a))= \omega \left(\Re a\right), \qquad \Im(\omega(a))= \omega \left(\Im a\right).
\end{equation*}
Moreover, together with the fact that $\omega(1)=1$ positivity implies that
\begin{equation*}
 |\omega(a) |\leq \omega\left( |a|\right) \leq \omega(\|a\|_\infty \cdot 1)=\|a\|_\infty.
\end{equation*}
Thus, $\omega$ is a continuous linear form on $\ell_\infty$ with $\|\omega\|=1$. Conversely, any continuous linear form $\ell:\ell_\infty\rightarrow \C$ such that $\ell(1)=1$ and $\|\ell\|=1$ must be positive (see, e.g., \cite[pp.\ 83--84]{Co:Springer10}).
\end{remark}

\begin{remark}
 If $\omega: \ell_\infty\rightarrow \C$ is an extended limit, then its positivity and (i)--(ii) ensure that $\omega$ descends to a (positive) state on the unital $C^*$-algebra $\ell_\infty/\co$. Conversely, any state $\omega:\ell_\infty/\co\rightarrow \C$ uniquely lifts to a positive linear form on $\ell_\infty$ satisfying (i)--(ii). We thus have a natural one-to-one correspondence between $\EL(\N_0)$ and the states on $\ell_\infty/\co$.
\end{remark}

\begin{lemma}\label{lem:el property}
Let $a=(a_j)\in \ell_\infty$, and set  $\EL(a):= \left\{ \omega(a); \ \omega \in \EL(\N_0)\right\}$.
\begin{enumerate}
 \item[(i)] $\EL(a)$ is a convex subset of $\C$ containing all the cluster points of $a$.

 \item[(ii)] If $a$ is real-valued, then
 \begin{equation*}
 \EL(a) = \left[\liminf a, \limsup a\right].
\end{equation*}

\item[(iii)] We have
\begin{equation*}
 \lim_{j\rightarrow \infty}a_j = L \ \Longleftrightarrow \ \big( \omega(a)=L \quad \forall \omega \in \EL(\N_0)\big).
\end{equation*}
\end{enumerate}
\end{lemma}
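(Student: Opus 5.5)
The three parts are proved in the order (i), (ii), (iii). The tools are the positivity and normalization of extended limits, together with a Hahn--Banach extension argument.

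For (i), convexity is immediate. If $\omega_1,\omega_2\in\EL(\N_0)$ and $t\in[0,1]$, then $t\omega_1+(1-t)\omega_2$ is again positive, takes the value $1$ at the constant sequence $1$, and vanishes on $\co$. Hence $\EL(a)$ is convex.

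For cluster points, let $L$ be a cluster point of $a$ and choose a subsequence $a_{j_k}\to L$. Consider a free ultrafilter $\mathcal U$ on $\N_0$ containing the set $\{j_k\}$, and put $\omega(b)=\lim_{\mathcal U} b$. This functional is positive and linear, satisfies $\omega(1)=1$, and kills $\co$ because $\mathcal U$ is free. Moreover $\omega(a)=L$. If one prefers to avoid ultrafilters, the same can be done with Hahn--Banach. On the subspace $\co+\C 1+\C a$, first define $\omega$ by limits along the subsequence (this needs a small consistency check). Then extend it with norm $1$ using the sublinear functional $p(b)=\limsup_k|b_{j_k}|$. By the remark above, a norm-one functional with $\omega(1)=1$ is positive. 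The ultrafilter route is the cleanest, and I would use it.

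For (ii), take $a$ real. By positivity, $\omega(a)$ is real. For any $\varepsilon>0$, the sequence $a-(\liminf a-\varepsilon)$ is $\geq 0$ up to a finitely supported sequence, which lies in $\co$. Applying $\omega$ gives $\omega(a)\geq\liminf a-\varepsilon$, and symmetrically $\omega(a)\leq\limsup a+\varepsilon$. So $\EL(a)\subseteq[\liminf a,\limsup a]$. Conversely, $\liminf a$ and $\limsup a$ are both cluster points of $a$, which is finite-valued since $a\in\ell_\infty$. By (i) they therefore lie in $\EL(a)$, and convexity then fills in the whole interval.

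For (iii), the forward implication is exactly~(\ref{eq:true to extended}). For the converse, write $a=\Re a+i\Im a$ and use $\Re\omega(a)=\omega(\Re a)$ and $\Im\omega(a)=\omega(\Im a)$. The hypothesis says $\omega(\Re a)=\Re L$ for every $\omega$, so by (ii) the interval $[\liminf\Re a,\limsup\Re a]$ reduces to $\{\Re L\}$. Hence $\Re a\to\Re L$, and likewise $\Im a\to\Im L$.

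The only nontrivial step is the existence of extended limits realizing a prescribed cluster point in (i); everything else is routine bookkeeping with positivity.
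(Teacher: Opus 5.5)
Your proof is correct, and its architecture matches the paper's. Convexity is inherited from $\EL(\N_0)$, cluster points are realized by suitable extended limits, (ii) combines positivity bounds with (i), and (iii) is read off from (ii).

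The genuine difference is in the key step of (i). The paper builds nothing new. It takes an arbitrary $\omega\in\EL(\N_0)$ and sets $\tilde\omega(b):=\omega(\{b_{k_j}\})$. Passing to a subsequence is a positive unital map of $\ell_\infty$ preserving $\co$, so $\tilde\omega$ is again an extended limit, and $\tilde\omega(a)=\alpha$ by (\ref{eq:true to extended}).

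You instead construct the extended limit directly, as an ultralimit along a free ultrafilter $\mathcal U\ni\{j_k\}$ (or by a Hahn--Banach extension dominated by $\limsup_k|b_{j_k}|$). Your functional is in fact of the paper's form. Indeed, $\lim_{\mathcal U}b=\lim_{\mathcal V}(b_{j_k})_k$ for the free ultrafilter $\mathcal V:=\{A\subseteq\N_0;\ \{j_k;\ k\in A\}\in\mathcal U\}$.

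The paper's route is a one-line reduction that presupposes $\EL(\N_0)\neq\emptyset$. Yours is self-contained and actually proves that nonemptiness, which the equality in (ii) and the converse in (iii) tacitly require. The price is invoking the ultrafilter lemma at this point.

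The remaining differences are cosmetic. In (ii) you use an $\varepsilon$-slack plus a finitely supported correction, where the paper sandwiches $a_j$ between $\inf_{k\geq j}a_k$ and $\sup_{k\geq j}a_k$. In (iii) you make explicit the passage to $\Re a$ and $\Im a$, which the paper leaves implicit.
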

\begin{proof}
 The convexity of $\EL(a)$ is a direct consequence of the convexity of $\EL(\N_0)$. Suppose that $\alpha$ is a cluster point of $a$. Let $(a_{k_j})_{j\geq 0}$ be a subsequence converging to $\alpha$. Given $\omega\in \EL(\N_0)$, we define another extended limit $\tilde{\omega}$ by
 \begin{equation*}
 \tilde{\omega}(b):=\omega\left(\{b_{k_j}\}\right), \qquad b=(b_j)\in \ell_\infty.
\end{equation*}
We then have $\tilde{\omega}(a)=\omega(\{a_{k_j}\})=\alpha$, and hence $\alpha \in \EL(a)$. This proves~(i).

Assume that $a$ is real-valued.  As $\liminf a$ and  $\limsup a$ are cluster points of $a$,  part~(i) implies that $[\liminf a, \limsup a]\subseteq \EL(a)$. Moreover, we
have the obvious inequalities,
\begin{equation*}
 \inf_{k\geq j}a_k \leq a_j \leq \sup_{k\geq j} a_k \qquad \forall j\geq 0.
\end{equation*}
Therefore, if $\omega\in \EL(\N_0)$, then its positivity and~(\ref{eq:true to extended}) imply
\begin{equation*}
 \liminf_{j\rightarrow \infty} a_j = \lim_{j\rightarrow\infty} \big(\inf_{k\geq j}a_k \leq a_j\big)\leq  \omega(a) \leq  \lim_{j\rightarrow\infty} \big(\sup_{k\geq j}a_k\big)
 = \limsup_{j\rightarrow \infty} a_j.
\end{equation*}
This proves part~(ii).

Part (iii) follows from (ii) and~(\ref{eq:true to extended}). The proof is complete.
\end{proof}

\subsection{Dixmier traces}
If $T\in \sL_g$, then, as in~(\ref{eq:Lorentz norm}), the Weyl's inequality~(\ref{eq:NC-integration.Weyl-inequality}) implies that
\begin{equation}\label{eq:bdns partial sum}
  \bigg| \sum_{j<N} \lambda_j(T) \bigg| \leq \sum_{j<N} |\lambda_j(T)| \leq \sum_{j<N} \mu_j(T) \leq G(N)\|T\|_G .
\end{equation}
In particular, we see that
\begin{equation*}
\bigg\{ \frac{1}{G(N)} \sum_{j<N} \lambda_j(T)\bigg\}_{N\geq 0} \in \ell_\infty.
\end{equation*}
(By convention $\sum_{j<0} \lambda_j(T)=0$.) Moreover, it follows from Corollary~\ref{cor:comparison} that the class
$\pi(\lambda(T))\in \ell_\infty/\co$ does not depend on the choice of the eigenvalue sequence $\lambda(T)$ for $T$. It follows that, given any extended limit $\omega$, we have a well defined functional $\Trw: \sL_g \rightarrow \C$ such that
\begin{equation}\label{eq:def of dixmier}
 \Trw(T):= \omega \bigg( \bigg\{\frac{1}{G(N)} \sum_{j<N} \lambda_j(T)\bigg\}_{N\geq0}\bigg),
\end{equation}
where $\lambda(T)$ is any eigenvalue sequence for $T$.

\begin{proposition}\label{prop:dixmier}
$\Trw$ is a normalized positive trace on $\sL_g$ annihilating $(\sL_g)_0$. Moreover, we have
\begin{equation}\label{eq:cont-dixmier}
 \left| \Trw(T)\right| \leq \Trw\left(|T|\right) \leq \|T\|_{G} \qquad \forall T\in \sL_g.
\end{equation}
\end{proposition}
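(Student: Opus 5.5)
The argument runs directly off the definition~(\ref{eq:def of dixmier}) and the consequences of Proposition~\ref{prop:asymptotic-additivity} already recorded.

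\emph{Linearity.} Homogeneity holds because $c\lambda(T)$ is an eigenvalue sequence for $cT$, and $\Trw$ does not depend on the choice of eigenvalue sequence (Corollary~\ref{cor:comparison}). For additivity, Proposition~\ref{prop:asymptotic-additivity} shows that
\[
\frac{1}{G(N)}\sum_{j<N}\lambda_j(S+T)-\frac{1}{G(N)}\sum_{j<N}\lambda_j(S)-\frac{1}{G(N)}\sum_{j<N}\lambda_j(T)
\]
lies in $\co$. Since $\omega$ is linear and vanishes on $\co$, we get $\Trw(S+T)=\Trw(S)+\Trw(T)$.

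\emph{Trace and annihilation properties.} Corollary~\ref{cor:com--spectral} gives $\sum_{j<N}\lambda_j(T)=\op{o}(G(N))$ for $T\in\Com(\sL_g)$, so $\Trw(T)=0$ by~(\ref{eq:true to extended}). Corollary~\ref{cor:separable part--spectral} gives the same for $T\in(\sL_g)_0$.

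\emph{Positivity and normalization.} If $T\geq 0$, then $\lambda_j(T)=\mu_j(T)\geq 0$, so the Ces\`aro sequence is nonnegative and $\omega$ of it is $\geq 0$. For normalization, take $T_g\geq 0$ with $\mu_j(T_g)=g(j)$ for $j$ large. By~(\ref{eq:Lorentz.divergence}), $\sum_{j<N}\mu_j(T_g)=G(N)+\op{O}(1)$, and $G(N)\to\infty$, so the sequence converges to $1$ and $\Trw(T_g)=1$.

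\emph{The bound~(\ref{eq:cont-dixmier}).} This is the only point needing a little care. Write $a_N=G(N)^{-1}\sum_{j<N}\lambda_j(T)$ and $b_N=G(N)^{-1}\sum_{j<N}\mu_j(T)$. Weyl's inequality, as in~(\ref{eq:bdns partial sum}), gives $|a_N|\leq b_N\leq\|T\|_G$ for every $N$. Since $\mu(T)$ is the eigenvalue sequence of $|T|$, we have $\Trw(|T|)=\omega(b)$.

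It remains to show $|\omega(a)|\leq\omega(b)$. Choose $\theta$ with $e^{i\theta}\omega(a)=|\omega(a)|$. Then
\[
|\omega(a)|=\omega\bigl(\Re(e^{i\theta}a)\bigr)\leq\omega(b),
\]
because $b-\Re(e^{i\theta}a)\geq 0$ pointwise and $\omega$ is positive.

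Finally, $\omega(b)\leq\|b\|_\infty\leq\|T\|_G$, which completes the chain of inequalities in~(\ref{eq:cont-dixmier}).
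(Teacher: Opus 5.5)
Your proof is correct and follows essentially the same route as the paper: additivity from Proposition~\ref{prop:asymptotic-additivity}, homogeneity by scaling eigenvalue sequences, annihilation of $\Com(\sL_g)$ and $(\sL_g)_0$ from Corollaries~\ref{cor:com--spectral} and~\ref{cor:separable part--spectral}, positivity from $\lambda(T)=\mu(T)$ for $T\geq 0$, and the bound~(\ref{eq:cont-dixmier}) from~(\ref{eq:bdns partial sum}). You go slightly beyond the paper's proof by explicitly checking the normalization $\Trw(T_{g})=1$ via~(\ref{eq:Lorentz.divergence}) and by spelling out the positivity argument for $|\omega(a)|\leq\omega(b)$, both of which the paper leaves implicit.
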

\begin{proof}
It is immediate from Proposition~\ref{prop:asymptotic-additivity} that $\Trw$ is additive. If $c\in \C$, then $c\lambda(T)$ is an eigenvalue sequence for $\lambda(cT)$, and so we have
\begin{equation*}
 \Trw(cT)=\omega\bigg(c\bigg\{G(N)^{-1}\sum_{j<N} \lambda_j(T)\bigg\}\bigg)=c\Trw(T).
\end{equation*}
It also follows from Corollary~\ref{cor:com--spectral} and Corollary~\ref{cor:separable part--spectral} that $\Trw$ annihilates $\Com(\sL_g)\cup (\sL_g)_0$. In addition, if $T\geq 0$, then $\lambda(T)=\mu(T)$, and so we have
\begin{equation*}
 \Trw(T)= \omega \bigg( \bigg\{\frac{1}{G(N)} \sum_{j<N} \mu_j(T)\bigg\}\bigg)\geq 0.
\end{equation*}
Therefore, $\Trw$ is a positive trace on $\sL_g$ that annihilates $(\sL_g)_0$. The inequalities~(\ref{eq:cont-dixmier}) follow from~(\ref{eq:bdns partial sum}) and~(\ref{eq:def of dixmier}). The proof is complete.
\end{proof}

\begin{remark}
 The inequalities~(\ref{eq:cont-dixmier}) and the continuity of the inclusion of $\sL_g$ into $\sM_{G}$ implies that $\Trw$ is a continuous trace. In fact, every positive trace on $\sL_g$ is continuous (see Proposition~\ref{prop:cont--positive}).
\end{remark}

\begin{remark}
 The fact that $\Trw$ annihilates $(\sL_g)_0$ implies it is a singular trace, i.e., it annihilates finite-rank operators. It is actually a general fact that if $\sJ$ is a Banach ideal that does not contain $\sL_1$, then every continuous trace on $\sJ$ is singular (see~\cite[Theorem~3.5.8]{LSZ:Book}).
\end{remark}

\begin{definition}
 $\Trw$ is called the \emph{Dixmier trace} associated with the extended limit $\omega$.
\end{definition}

\begin{remark}\label{rmk:value--dixmier}
Let $T\in \sL_g$, and set $\op{DT}(T):=\{\Trw(T); \ \omega\in \EL(\N_0)\}$. Given any eigenvalue sequence $\lambda(T)$,
Lemma~\ref{lem:el property} implies the following:
\begin{itemize}
 \item $\op{DT}(T)$ is a convex set containing all the cluster points of the sequence $\{\frac{1}{ G(N)} \sum_{j<N}\lambda_j(T)\}$.

 \item If $T$ is selfadjoint, then
 \begin{equation*}
 \op{DT}(T)= \bigg[ \liminf_{N\rightarrow \infty}   \frac{1}{G(N)} \sum_{j<N} \lambda_j(T),  \limsup_{N\rightarrow \infty}   \frac{1}{G(N)} \sum_{j<N} \lambda_j(T) \bigg].
\end{equation*}
\end{itemize}
\end{remark}

\begin{remark}
Dixmier~\cite{Di:CRAS66} originally defined his traces on the larger Banach ideal $\sM_{G}$ in terms of extended limits that are invariant under the dilation $(a_j)_{j\geq 0} \rightarrow (a_{[j/2]})_{j\geq 0}$. Dixmier observed that, for any dilation-invariant extended limit $\omega$, we define an additive functional on the positive cone of $\sM_{G}$ by letting
\begin{equation*}
 \Trw(T) = \omega \bigg( \bigg\{\frac{1}{G(N)} \sum_{j<N} \mu_j(T)\bigg\}\bigg), \qquad T \in (\sM_{G})_+.
\end{equation*}
Therefore, it uniquely extends to a positive trace on $\sM_{G}$, which is called a Dixmier trace as well.

Note that Proposition~\ref{prop:asymptotic-additivity} does not hold for operators in $\sM_{G}$. In fact, even if the formula~(\ref{eq:def of dixmier}) makes sense for operators in $\sM_G$, this need not define an additive functional on $\sM_G$ (see~\cite[Example~6.6.1]{LSZ:Book}). However, as shown by Sedaev-Sukochev~\cite[Corollary~18]{SS:JFA13}, given any extended limit $\omega$,  there is always a dilation-invariant extended limit $\omega'$ such that $\Trw$ agrees on $\sL_g$ with the Dixmier trace $\Tr_{\omega'}$ (which is defined on $\sM_{G}$). Thus, every Dixmier trace on $\sL_g$ in the sense of~(\ref{eq:def of dixmier}) is the restriction of a Dixmier trace on $\sM_{G}$ in the sense of Dixmier.
\end{remark}

\subsection{Measurable operators}

\begin{definition}
 An operator $T\in \sL_g$ is called \emph{measurable} (or \emph{Dixmier-measurable}) if all Dixmier traces on $\sL_g$ take the same values on $T$.
 \end{definition}

We denote by $\sM(\sL_g)$ the set of measurable operators in $\sL_g$.

\begin{definition}
 If $T\in \sL_g$ is measurable, then its \emph{noncommutative integral} is defined by
 \begin{equation*}
 \gbint T := \Trw(T),
\end{equation*}
 where $\Trw$ is any Dixmier trace on $\sL_g$.
 \end{definition}

\begin{remark}
 In the case of the weak trace-class $\sL_{1,\infty}$, i.e., $g(t)=t+1$, we shall denote the NC integral just by $\bint$, since this is the usual notation in this case.
\end{remark}

 \begin{proposition}\label{prop:measurable-space-properties}
 The following hold.
 \begin{enumerate}
 \item  $\sM(\sL_g)$ is closed subspace of $\sL_g$ containing $\overline{\Com(\sL_g)}$. In particular, $\Com(\sL_g)$ and $(\sL_g)_0$ are subspaces of
 $\sM(\sL_g)$.

 \item The NC integral $\gbint: \sM(\sL_g) \rightarrow \C$ is a continuous positive linear trace that annihilates $(\sL_g)_0$.
\end{enumerate}
\end{proposition}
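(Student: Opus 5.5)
We will deduce everything from Proposition~\ref{prop:dixmier}, which says that each Dixmier trace $\Trw$ is a positive linear trace on $\sL_g$ that annihilates $(\sL_g)_0$ and satisfies the bound~(\ref{eq:cont-dixmier}).

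\emph{Subspace.} The set $\sM(\sL_g)$ can be written as
\begin{equation*}
 \sM(\sL_g)=\bigcap_{\omega,\omega'\in \EL(\N_0)} \ker\left(\Trw-\Tr_{\omega'}\right),
\end{equation*}
and each $\Trw$ is linear, so $\sM(\sL_g)$ is a linear subspace.

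\emph{Closedness.} We show that each $\Trw$ is continuous on $\sL_g$. By~(\ref{eq:cont-dixmier}) we have $|\Trw(T)|\leq \|T\|_G$. By Proposition~\ref{prop:Lorentz.Marcinkiewicz}(i) the inclusion $\sL_g\subseteq\sM_G$ is continuous, so there is $C>0$ with $\|T\|_G\leq C\|T\|_g$. Hence $|\Trw(T)|\leq C\|T\|_g$. The constant $C$ does not depend on $\omega$, so the family $\{\Trw\}$ is equicontinuous. Each difference $\Trw-\Tr_{\omega'}$ is linear and satisfies $|\Trw(T)-\Tr_{\omega'}(T)|\leq 2C\|T\|_g$, so it is continuous. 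Its kernel is therefore closed. Since $\sL_g$ is metrizable, a sequential argument suffices here, and the quasi-triangle inequality causes no difficulty because we only bound a linear functional.

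\emph{Containment of $\overline{\Com(\sL_g)}$ and $(\sL_g)_0$.} Every $\Trw$ vanishes on $\Com(\sL_g)$ by Proposition~\ref{prop:dixmier}, so all Dixmier traces agree there, with common value $0$. Thus $\Com(\sL_g)\subseteq\sM(\sL_g)$. Since $\sM(\sL_g)$ is closed, $\overline{\Com(\sL_g)}\subseteq\sM(\sL_g)$. For $(\sL_g)_0$, the same proposition gives $\Trw=0$ on $(\sL_g)_0$ for every $\omega$, so $(\sL_g)_0\subseteq\sM(\sL_g)$ as well.

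\emph{Properties of the integral.} By definition $\gbint$ is the common restriction of any single $\Trw$ to $\sM(\sL_g)$. It therefore inherits all the required properties from that $\Trw$:
\begin{itemize}
 \item it is linear;
 \item it is positive, since a positive $T\in\sM(\sL_g)$ has $\gbint T=\Trw(T)\geq0$;
 \item it is a trace, since $\Com(\sL_g)\subseteq\sM(\sL_g)$ and $\gbint$ vanishes on it, so it annihilates $\Com(\sL_g)$ in the sense given for subspaces containing the commutator space;
 \item it annihilates $(\sL_g)_0$;
 \item it is continuous, with $|\gbint T|\leq C\|T\|_g$.
\end{itemize}

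\emph{Expected obstacle.} The only step that needs any care is the continuity estimate, namely the uniform bound $|\Trw(T)|\leq C\|T\|_g$. This follows directly from~(\ref{eq:cont-dixmier}) combined with the continuous inclusion $\sL_g\subseteq\sM_G$, and it does not require any appeal to automatic continuity of positive traces.
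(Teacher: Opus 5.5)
Your proposal is correct and follows essentially the same route as the paper. You write $\sM(\sL_g)$ as an intersection of kernels of the differences $\Trw-\Tr_{\omega'}$, which are continuous by~(\ref{eq:cont-dixmier}) and the continuous inclusion $\sL_g\subseteq\sM_G$, and you let $\gbint$ inherit its properties from Proposition~\ref{prop:dixmier}; you also spell out directly the containments of $\overline{\Com(\sL_g)}$ and $(\sL_g)_0$ in $\sM(\sL_g)$, which the paper leaves implicit.
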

\begin{proof}
 By definition,
 \begin{equation*}
 \sM(\sL_g) =\bigcap_{\omega,\omega'} \big\{T\in \sL_g;\ \Tr_\omega(T)=\Tr_{\omega'}(T)\big\},
\end{equation*}
where $\omega$ and $\omega'$ range over all states on $\ell_\infty/\co$. As the Dixmier traces $\Tr_\omega$ are continuous linear maps, it follows that $ \sM(\sL_g)$ is a closed subspace of $\sL_g$. This gives the first part. The second part follows from Proposition~\ref{prop:dixmier} and the fact that $\gbint$ agrees with any Dixmier trace on $ \sM(\sL_g)$.
\end{proof}

\subsection{Spectral characterization}
For the weak trace $\sL_{1,\infty}$, the equivalence~(\ref{eq:Intro.spectral-measurability-lunf}) provides a spectral characterization of measurable operators and a spectral interpretation of the NC integral (see~\cite{Co:NCG, LSZ:Book, Po:JNCG23}). The following statement extends this result to all weak Lorentz ideals.

\begin{theorem}\label{thm:measurable}
 Given any $T\in \sL_g$, the following are equivalent:
 \begin{enumerate}
 \item[(i)] $T$ is measurable and $\gbint=L$.

 \item[(ii)]  ${\displaystyle \lim_{N\rightarrow \infty}G(N)^{-1} \sum_{j < N} \lambda_j(T)= L}$ for \underline{some} eigenvalue sequence $\lambda(T)$.

 \item[(iii)] ${\displaystyle \lim_{N\rightarrow \infty}G(N)^{-1} \sum_{j < N} \lambda_j(T)= L}$ for \underline{every} eigenvalue sequence $\lambda(T)$.
\end{enumerate}
\end{theorem}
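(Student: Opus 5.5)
We prove the cycle (iii)$\Rightarrow$(ii)$\Rightarrow$(i)$\Rightarrow$(iii). For a fixed eigenvalue sequence $\lambda(T)$ we set
\begin{equation*}
 a_N:=G(N)^{-1}\sum_{j<N}\lambda_j(T), \qquad N\geq 0.
\end{equation*}
By~(\ref{eq:bdns partial sum}), $a=(a_N)$ lies in $\ell_\infty$, and by definition $\Trw(T)=\omega(a)$ for every $\omega\in\EL(\N_0)$. The implication (iii)$\Rightarrow$(ii) is trivial, since eigenvalue sequences always exist.

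For (ii)$\Rightarrow$(i), suppose $a_N\to L$ for some eigenvalue sequence. Then~(\ref{eq:true to extended}) gives $\omega(a)=L$ for every extended limit $\omega$. Hence every Dixmier trace takes the value $L$ on $T$, so $T$ is measurable and $\gbint T=L$.

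For (i)$\Rightarrow$(iii), let $\lambda(T)$ be an arbitrary eigenvalue sequence. By Corollary~\ref{cor:comparison}, changing the sequence alters $a$ only by an element of $\co$, so $\Trw(T)=\omega(a)$ holds for this particular sequence and for every $\omega$. Measurability with $\gbint T=L$ then gives $\omega(a)=L$ for all $\omega\in\EL(\N_0)$. Lemma~\ref{lem:el property}(iii) converts this into $\lim_{N\to\infty} a_N=L$.

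This last step is the only one with content. Lemma~\ref{lem:el property}(iii) is stated for complex sequences, and the sequence $a$ here is complex even when $T$ is not selfadjoint. To apply the lemma we split into real and imaginary parts, using $\omega(\Re a)=\Re\omega(a)$ and $\omega(\Im a)=\Im\omega(a)$. Each part then has all its extended limits equal to a single real number, so by Lemma~\ref{lem:el property}(ii) its $\liminf$ and $\limsup$ coincide and it converges. There is no further obstacle: the asymptotic additivity of Proposition~\ref{prop:asymptotic-additivity} has already been used to make $\Trw$ well defined.
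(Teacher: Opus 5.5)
Your proof is correct and uses the same two ingredients as the paper's: Corollary~\ref{cor:comparison} (the choice of eigenvalue sequence does not matter) and Lemma~\ref{lem:el property} (a bounded sequence converges to $L$ iff all its extended limits equal $L$), with your real/imaginary splitting a valid justification of part~(iii) of that lemma for complex sequences. The only difference is organizational: the paper gets (ii)$\Leftrightarrow$(iii) directly from Corollary~\ref{cor:comparison} without any extended limits, and later reuses this in the choice-free answer to Connes' question (\S\S~\ref{subsec:Connes-question}), whereas your cycle routes (ii)$\Rightarrow$(iii) through extended limits.
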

\begin{proof}
 By Corollary~\ref{cor:comparison}, if $\lambda(T)$ and $\lambda'(T)$ are eigenvalue sequences for $T$, then
 \begin{equation*}
 \frac{1}{G(N)} \sum_{j<N}\lambda_j'(T) = \frac{1}{G(N)} \sum_{j<N}\lambda_j(T)+\op{o}(1).
\end{equation*}
 Thus, if a sequence  $\big(G(N)^{-1} \sum_{j<N}\lambda_j(T)\big)_{j\ge0}$ has a limit for \emph{some} eigenvalue sequence, then it has a limit for \emph{every} eigenvalue sequence, and the value of the limit does not depend on the choice of the eigenvalue sequence. This gives the equivalence of (ii) and (iii).

 Moreover, if $\lambda(T)$ is any eigenvalue sequence for $T$, then it follows from Remark~\ref{rmk:value--dixmier} that
\begin{align*}
  \lim_{N\rightarrow \infty} \frac{1}{G(N)} \sum_{j<N}\lambda_j(T) =L
  \; \Longleftrightarrow  &  \ \omega \bigg(\bigg\{\frac{1}{G(N)} \sum_{j<N}\lambda_j(T) \bigg\}\bigg) =L \quad \forall \omega \in \EL(\N_0)\\
 \Longleftrightarrow   & \  \Trw(T)=L \quad \forall \omega\\
 \Longleftrightarrow   & \ T\ \text{is measurable and}\ \gbint T=L.
\end{align*}
It follows from this that (ii)$\Rightarrow$(i) and (i)$\Rightarrow$(iii), and so (i),(ii), and (iii) are equivalent. The proof is complete.
\end{proof}

\begin{corollary}\label{cor:Dixmier.independence-g}
 Let $g_1(t)$, $t\geq 0$, be any $\RV_{-1}$-function such that $g_1(t)\sim g(t)$. Then the spaces $\sM(\sL_{g_1})$ and $\sM(\sL_{g})$ agree, and on there $\gubint=\gbint$.
\end{corollary}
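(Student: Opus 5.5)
The plan is to reduce everything to Theorem~\ref{thm:measurable}, applied separately in $\sL_g$ and in $\sL_{g_1}$, and then to compare the two normalizing functions $G(t)=\int_0^t g(s)\,ds$ and $G_1(t)=\int_0^t g_1(s)\,ds$.

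\textbf{Step 1: the two ideals coincide.} Since $g_1\sim g$, we have both $g_1(t)=\op{O}(g(t))$ and $g(t)=\op{O}(g_1(t))$. By the remark following the definition of weak Lorentz ideals, $\sL_{g_1}=\sL_g$ with equivalent quasi-norms. Hence both measurability notions are defined on the same space of operators. If $g_1$ is not assumed continuous, we first replace it on $\N_0$ by a continuous representative, as in Remark~\ref{rmk:continuous rv}. This is harmless because only the values $g_1(j)$, $j\in\N_0$, and the asymptotics of $G_1$ enter. In any case $g_1$ is $\RV_{-1}$, and $\int_0^\infty g_1=\infty$ follows from $g_1\sim g$ together with $\int_0^\infty g=\infty$. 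So the standing hypotheses of Section~\ref{sec:Dixmier} hold for $g_1$.

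\textbf{Step 2: $G_1(N)\sim G(N)$.} Fix $\epsilon>0$ and choose $t_0$ such that $(1-\epsilon)g(t)\leq g_1(t)\leq (1+\epsilon)g(t)$ for $t\geq t_0$. Then
\begin{equation*}
(1-\epsilon)\bigl(G(t)-G(t_0)\bigr)\leq G_1(t)-G_1(t_0)\leq (1+\epsilon)\bigl(G(t)-G(t_0)\bigr).
\end{equation*}
Since $G(t)\to\infty$, dividing by $G(t)$ and letting $t\to\infty$ gives
\begin{equation*}
1-\epsilon\leq\liminf_{t\to\infty} \frac{G_1(t)}{G(t)}\leq \limsup_{t\to\infty} \frac{G_1(t)}{G(t)}\leq 1+\epsilon .
\end{equation*}
As $\epsilon$ is arbitrary, $G_1(N)/G(N)\to 1$. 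This divergence-driven comparison is the only point requiring any care; the rest is bookkeeping.

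\textbf{Step 3: conclusion.} Let $T\in\sL_g=\sL_{g_1}$ and fix an eigenvalue sequence $\lambda(T)$. Write
\begin{equation*}
\frac{1}{G_1(N)}\sum_{j<N}\lambda_j(T)=\frac{G(N)}{G_1(N)}\cdot\frac{1}{G(N)}\sum_{j<N}\lambda_j(T).
\end{equation*}
The factor $G(N)/G_1(N)$ tends to $1$ by Step~2, and the second factor is bounded by~(\ref{eq:bdns partial sum}). Hence one of the two normalized partial-sum sequences converges to $L$ if and only if the other does. By Theorem~\ref{thm:measurable}, applied in $\sL_g$ and in $\sL_{g_1}$, $T$ is $g$-measurable with $\gbint T=L$ if and only if it is $g_1$-measurable with $\gubint T=L$. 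Therefore $\sM(\sL_{g_1})=\sM(\sL_g)$, and the two noncommutative integrals agree on this common space.
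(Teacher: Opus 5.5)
Your proof is correct and follows the paper's argument: you derive $G_1(N)\sim G(N)$ from $g_1\sim g$ and the divergence of $\int_0^\infty g$, so the two normalized partial-sum sequences converge together with the same limit, and then you apply Theorem~\ref{thm:measurable} in both ideals. The only difference is that you write out details the paper leaves implicit, namely the $\epsilon$-argument for $G_1\sim G$ and the identification $\sL_{g_1}=\sL_g$.
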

\begin{proof}
 Set $G_1(t)=\int_0^t g_1(s)ds$. As $g_1(t)\sim g(t)$ and $\int_0^\infty g(t)dt=\infty$, we see that $G_1(t)\sim G(t)$. Thus, if $T\in \sL_g$, then
 \begin{equation*}
 \bigg(  \lim_{N\rightarrow \infty} \frac{1}{G_1(N)} \sum_{j<N}\lambda_j(T) =L\bigg) \Longleftrightarrow  \bigg(  \lim_{N\rightarrow \infty} \frac{1}{G(N)} \sum_{j<N}\lambda_j(T) =L\bigg)
\end{equation*}
 Combining this with Theorem~\ref{thm:measurable} gives the result.
\end{proof}

Let $\sH'$ be another Hilbert space. In order to distinguish them we shall denote by $\sL_g(\sH)$ and $\sL_g(\sH')$ the quasi-Banach ideals of $\sL_g$-operators on $\sH$ and $\sH'$, respectively.

In what follows, we shall say that two operators $S\in \sL_g(\sH)$ and $T\in \sL_g(\sH')$ have the same \emph{non-zero spectrum up to multiplicity} if they have the same non-zero eigenvalues with the same (algebraic) multiplicities. Equivalently, any eigenvalue sequence for one operator is an eigenvalue sequence for the other operator.

The very fact that Theorem~\ref{thm:measurable} provides us with a characterization of measurability and $\gbint$ in terms of eigenvalue sequence immediately implies a strong form of spectral invariance. Namely, we have the following result.

\begin{corollary}\label{cor:Meas.spectral-invariance}
 If  $S\in \sL_g(\sH)$ and $T\in \sL_g(\sH')$ have the same non-zero spectrum up to multiplicity, then $S$ is measurable if and only if $T$ is measurable. Moreover, in this case $\gbint S=\gbint T$.
\end{corollary}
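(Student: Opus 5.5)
The plan is to reduce everything to Theorem~\ref{thm:measurable}, which characterizes measurability and the value of $\gbint$ purely in terms of the partial sums of an eigenvalue sequence. The only thing that needs checking is that the ambient Hilbert space plays no role in that characterization. Concretely, I would fix an eigenvalue sequence $\lambda(S)$ for $S$. By the standing assumption that $S$ and $T$ have the same non-zero spectrum up to multiplicity, $\lambda(S)$ is also an eigenvalue sequence for $T$. The non-zero eigenvalues, listed with algebraic multiplicity in order of non-increasing modulus, coincide. The trailing zeros, if any, are irrelevant: when either operator has only finitely many non-zero eigenvalues, the sequence is padded by zeros in both cases, so the two sequences agree termwise.

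Next I would apply Theorem~\ref{thm:measurable} on each Hilbert space separately. On $\sH$, $S$ is measurable with $\gbint S=L$ if and only if $\lim_{N\to\infty}G(N)^{-1}\sum_{j<N}\lambda_j(S)=L$, using part (ii) for this particular sequence. On $\sH'$, the same sequence is an eigenvalue sequence for $T$, so $T$ is measurable with $\gbint T=L$ if and only if the same limit exists and equals $L$. The two conditions are literally the same statement about a single scalar sequence. Hence $S$ is measurable if and only if $T$ is, and in that case the integrals coincide.

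The only subtle point, and it is mild, is to make sure Theorem~\ref{thm:measurable} applies verbatim on $\sH'$. This holds because Sections~2--3 were carried out for an arbitrary separable Hilbert space, and the function $g$ and the normalizing function $G$ do not depend on $\sH$. One also needs that "some eigenvalue sequence" in (ii) can be taken to be the shared one, which follows from the equivalence (ii)$\Leftrightarrow$(iii) established there.
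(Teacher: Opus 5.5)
Your proposal is correct and matches the paper's argument, which deduces the corollary directly from Theorem~\ref{thm:measurable}. Any eigenvalue sequence for $S$ is also one for $T$, so the limit condition characterizing measurability and $\gbint$ is literally the same for both operators.
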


In the special case where $\sH'$ is the Hilbert space $\sH$ with an equivalent inner product we obtain the following result.

\begin{corollary}
 $\sM(\sL_g(\sH))$ and $\gbint$ do not depend on the inner product of $\sH$.
\end{corollary}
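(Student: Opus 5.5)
The statement to prove is the last corollary: $\sM(\sL_g(\sH))$ and $\gbint$ do not depend on the inner product of $\sH$. The plan is to reduce it to Corollary~\ref{cor:Meas.spectral-invariance}, taking $\sH'$ to be $\sH$ equipped with an equivalent inner product $\scal{\cdot}{\cdot}'$.

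First, I will check that the ideal itself does not change. Equivalence of the inner products means the identity map $\iota:\sH\to\sH'$ is a bounded invertible (non-unitary) operator. I will write $\iota$ as $V P$, with $V:\sH\to\sH'$ unitary and $P$ positive and invertible on $\sH$. For $T\in\sL(\sH)$, the same operator viewed on $\sH'$ is $\iota T\iota^{-1}=VPTP^{-1}V^*$. Then~(\ref{eq:Quantized.properties-mun3}) and~(\ref{eq:Quantized.properties-mun4}) give
\[
\|P^{-1}\|^{-1}\|P\|^{-1}\mu_j(T)\le \mu'_j(T)\le \|P\|\|P^{-1}\|\mu_j(T).
\]
Here $\mu_j'$ denotes the singular values computed in $\sH'$, and the lower bound comes from applying the same estimate to the inverse conjugation. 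Hence $\sL_g(\sH)=\sL_g(\sH')$ as sets of operators, with equivalent quasi-norms.

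Second, I will observe that eigenvalues and algebraic multiplicities are purely algebraic. The root spaces $E_\lambda(T)=\bigcup_j\ker(T-\lambda)^j$ do not involve the inner product. So the operator $T$, regarded in $\sL_g(\sH)$ and in $\sL_g(\sH')$, has the same non-zero spectrum up to multiplicity; indeed the two have literally the same eigenvalue sequences.

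Finally, Corollary~\ref{cor:Meas.spectral-invariance}, applied to $S=T$ viewed in $\sH$ and $T$ viewed in $\sH'$, shows that $T$ is measurable for one inner product if and only if it is measurable for the other, with the same value of $\gbint T$. Alternatively, this follows directly from Theorem~\ref{thm:measurable}, since $G$ and the eigenvalue sequences are unchanged.

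The only point needing care is the first step, namely that the ideal $\sL_g$ does not depend on the inner product. Without it the statement would not even compare the same spaces. This is handled by the singular-value inequalities above.
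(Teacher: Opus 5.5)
Your proposal is correct and follows the paper's route: the paper obtains this corollary by specializing Corollary~\ref{cor:Meas.spectral-invariance} to $\sH'$ equal to $\sH$ with an equivalent inner product. Your preliminary check that $\sL_g(\sH)=\sL_g(\sH')$ with equivalent quasi-norms, via the polar decomposition of the identity map and the two-sided bound on singular values, is sound and makes explicit a point the paper leaves implicit.
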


\subsection{Connes' question}\label{subsec:Connes-question}
 From the sole perspective of spectral theory, Theorem~\ref{thm:measurable} implies the following result.

\begin{proposition}\label{prop:tauberian}
 The following formula,
 \begin{equation}\label{eq:nci tauberian}
 {\gbint}' T:= \lim_{N\rightarrow \infty} \frac{1}{G(N)}\sum_{j < N} \lambda_j(T) \quad \text{whenever the limit exists},
\end{equation}
defines a positive continuous linear form with the following properties:
\begin{enumerate}
 \item[(i)] Its domain is a closed subspace of $\sL_g$ containing the commutator subspace $\Com(\sL_g)$ and the separable ideal $(\sL_g)_0$.

  \item[(ii)] It is positive and continuous on its domain.

\item[(iii)]  Its annihilates both $\Com(\sL_g)$ and $(\sL_g)_0$, and so this is a singular trace on its domain.
 \end{enumerate}
\end{proposition}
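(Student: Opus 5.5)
The plan is to read Proposition~\ref{prop:tauberian} off Theorem~\ref{thm:measurable} together with Proposition~\ref{prop:measurable-space-properties}. First I will check that the functional in~(\ref{eq:nci tauberian}) is well defined, i.e., that neither the existence nor the value of the limit depends on the chosen eigenvalue sequence. This is exactly the equivalence (ii)$\Leftrightarrow$(iii) of Theorem~\ref{thm:measurable}, which itself comes from Corollary~\ref{cor:comparison}.

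Next I will identify the domain. By Theorem~\ref{thm:measurable}, the limit in~(\ref{eq:nci tauberian}) exists if and only if $T$ is measurable, and in that case the limit equals $\gbint T$. Hence the domain of ${\gbint}'$ is $\sM(\sL_g)$, and ${\gbint}'$ coincides with $\gbint$ on it. Everything else is then transported from Proposition~\ref{prop:measurable-space-properties}.

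\begin{itemize}
\item Part~(i) of that proposition says $\sM(\sL_g)$ is a closed subspace of $\sL_g$ containing $\overline{\Com(\sL_g)}$ and $(\sL_g)_0$. This gives item~(i) of the present statement.
\item Part~(ii) of that proposition says $\gbint$ is a continuous, positive, linear trace annihilating $(\sL_g)_0$. This gives items~(ii) and~(iii).
\end{itemize}

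For the annihilation of $\Com(\sL_g)$ I can also argue directly: Corollary~\ref{cor:com--spectral} gives $\sum_{j<N}\lambda_j(T)=\op{o}(G(N))$ for $T\in\Com(\sL_g)$, and Corollary~\ref{cor:separable part--spectral} gives the same for $T\in(\sL_g)_0$, so the limit is $0$ in both cases. Since finite-rank operators lie in $(\sL_g)_0$, the trace is singular.

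Linearity deserves an explicit remark, because it is the point Connes' question is really about. From the purely spectral viewpoint, additivity of ${\gbint}'$ is the asymptotic additivity of Proposition~\ref{prop:asymptotic-additivity}: dividing by $G(N)$ kills the $\op{o}(G(N))$ error. Homogeneity holds because $c\lambda(T)$ is an eigenvalue sequence for $cT$.

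For positivity, take $T\geq 0$. Then $\lambda(T)=\mu(T)$, so every partial sum is non-negative and so is the limit.

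For continuity, I will use~(\ref{eq:cont-dixmier}) together with the continuous inclusion $\sL_g\subseteq\sM_G$ of Proposition~\ref{prop:Lorentz.Marcinkiewicz}. The bound~(\ref{eq:bdns partial sum}) gives $|{\gbint}'T|\leq\|T\|_G$, and $\|T\|_G$ is controlled by $\|T\|_g$.

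There is no real obstacle here. The only subtle step is the identification of the domain with $\sM(\sL_g)$, and that is precisely what Theorem~\ref{thm:measurable} supplies.
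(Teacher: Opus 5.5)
Your proof is correct, but it takes the route the paper only mentions in passing (``Theorem~\ref{thm:measurable} implies the following result''), not the proof the paper actually writes out.

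\textbf{Your route.} You identify the domain with $\sM(\sL_g)$ via Theorem~\ref{thm:measurable} and import its properties from Proposition~\ref{prop:measurable-space-properties}. Most of your supplementary direct remarks are already free of extended limits: linearity from Proposition~\ref{prop:asymptotic-additivity}, positivity, the bound $|{\gbint}'T|\leq\|T\|_G$, and annihilation via Corollaries~\ref{cor:com--spectral} and~\ref{cor:separable part--spectral}. The one step that is not is closedness of the domain. You obtain it by writing $\sM(\sL_g)$ as an intersection of kernels of the continuous functionals $\Tr_\omega-\Tr_{\omega'}$. That presupposes extended limits exist, i.e.\ Hahn--Banach on the non-separable algebra $\ell_\infty/\co$, hence the Axiom of Choice.

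\textbf{The paper's route.} Its ``direct proof'' takes the domain to be the Tauberian operators $\sM_T(\sL_g)$ and never mentions Dixmier traces. Closedness and continuity come from the estimate $|\tau_N(S)-\tau_N(T)|\leq\|S-T\|_G+\op{o}(1)$, where $\tau_N(T)=G(N)^{-1}\sum_{j<N}\lambda_j(T)$. This estimate follows from asymptotic additivity and~(\ref{eq:bdns partial sum}). It implies the following: if $T_\ell\to T$ in $\sL_g$ with each $T_\ell$ Tauberian, then $({\gbint}'T_\ell)_\ell$ is Cauchy. Moreover $\limsup_N|\tau_N(T)-{\gbint}'T_\ell|\leq\|T-T_\ell\|_G$, so $T$ is Tauberian with ${\gbint}'T=\lim_\ell{\gbint}'T_\ell$.

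\textbf{Comparison.} Your argument is shorter given the machinery already in place. The paper's argument is choice-free, and that is exactly what answers Connes' question. So your remark that linearity is ``the point Connes' question is really about'' undersells the issue. A direct proof must also establish closedness of the domain without extended limits, and that is precisely the step your proof still routes through Dixmier traces.
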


In the case of the weak trace class $\sL_{1,\infty}$, Connes asked for a direct proof of Proposition~\ref{prop:tauberian} that does not involve any use or reference to extended limits (see~\cite[Question~A]{Po:JNCG23}). Extended limits are in one-to-one correspondence with states on the $C^*$-algebra $\ell_\infty/\co$. Their existence is then a consequence of applying the Hahn-Banach theorem on $\ell_\infty/\co$. As $\ell_\infty/\co$ is non-separable, this requires assuming the Axiom of Choice. Therefore, one motivation of Connes was to obtain a construction of the NC integral that does not rely on the Axiom of Choice.

A positive answer to Connes' question was provided in~\cite{Po:JNCG23}. This was a consequence of a version of Proposition~\ref{prop:asymptotic-additivity} for the weak trace $\sL_{1,\infty}$ that was established in the first edition of~\cite{LSZ:Book}.

As we shall now proceed to show, the full version of Proposition~\ref{prop:asymptotic-additivity} enables us to get a direct proof of Proposition~\ref{prop:tauberian}. This will extend the scope of applicability of the answer to Connes' question to all weak Lorentz ideals.

\begin{proof}[Direct proof of Proposition~\ref{prop:tauberian}]
The first part of the proof of Theorem~\ref{thm:measurable} shows that Proposition~\ref{prop:asymptotic-additivity} implies the equivalence of  (ii) and (iii). Note that the arguments there do not make any reference to extended limits. Therefore, the domain of ${ \gbint}'$ is
\begin{equation*}
 \sM_T(\sL_g):= \biggl\{ T\in \sL_g;\ \lim_{N\rightarrow \infty} \frac{1}{G(N)} \sum_{j<N}\lambda_j(T)\ \textup{exists} \biggr\}.
\end{equation*}
In the terminology of~\cite{LSZ:Book} the operators in $\sM^g_T$ are called Tauberian operators.

 The fact that $  \sM_T(\sL_g)$ is a subspace of $\sL_g$ and $\gbint'$ is a linear functional is a direct consequence of Proposition~\ref{prop:asymptotic-additivity}. It then is immediate from its definition that $\gbint'$ is a positive linear functional. It follows from Corollary~\ref{cor:com--spectral} and Corollary~\ref{cor:separable part--spectral} that if $T\in (\sL_g)_0\cup \Com(\sL_g)$, then the r.h.s.\ of~(\ref{eq:nci tauberian}) vanishes. This means that $\Com(\sL_g)\cup(\sL_g)_0$ is contained in $\sM_T(\sL_g)$ and annihilated by $\gbint'$. In particular, $\gbint'$ is a singular trace on $ \sM_T(\sL_g)$.

 It remains to show that $ \sM_T(\sL_g)$ is closed and $\gbint'$ is continuous. For $T\in \sL_g$, set
 \begin{equation*}
 \tau_N(T):= \frac{1}{G(N)} \sum_{j<N}\lambda_j(T), \qquad N\geq 1.
\end{equation*}
It follows from Proposition~\ref{prop:asymptotic-additivity} and the inequalities~(\ref{eq:bdns partial sum}) that, for all $S,T\in \sL_g$,  we have
\begin{align*}
 \left| \tau_N(S)-\tau_N(T)\right| & \leq |\tau_N(S-T)| +\op{o}(1)\\ & \leq \|S-T\|_G +\op{o}(1).
\end{align*}
Using these inequalities it can be shown that, if $T\in \sL_g$ and $(T_\ell)_{\ell\geq 0}\subseteq  \sM^g_T$ converges to $T$ in $\sL_g$, then the following hold:
\begin{enumerate}
 \item[(a)] $\bigg\{\gbint' T_\ell\bigg\}_{\ell\geq 0}$ is a Cauchy sequence, and hence has a limit.

 \item[(b)] We have
\begin{equation*}
 \lim_{N\rightarrow \infty} \tau_N(T)= \lim_{\ell \rightarrow \infty} {\gbint}' T_\ell.
\end{equation*}
\end{enumerate}
That is, $T\in  \sM_T(\sL_g)$ and $\gbint' T=\lim_{\ell \rightarrow \infty} \gbint' T_\ell$. This shows that $ \sM_T(\sL_g)$ is closed and $\gbint'$ is continuous. This completes the direct proof of Proposition~\ref{prop:tauberian}.
\end{proof}

\section{Birman-Solomyak Perturbation Theory for Weak Lorentz Ideals}\label{sec:Bir-Sol}
In this section, we extend to weak Lorentz ideals the perturbation theory of Birman-Solomyak for eigenvalues and singular values of operators in weak-Schatten classes $\sL_{p,\infty}$, $p>0$~(see, e.g., \cite[\S4]{BS:JFAA70} and~\cite[\S11.6]{BS:Book}). The main observation is that the $\RV$-property is the very property that allows to extend the original approach of Birman-Solomyak to weak Lorentz ideals.

Throughout this section we let $g:[0,\infty)\rightarrow (0,\infty)$ be a continuous $\RV_\rho$ function with $\rho<0$.

\subsection{Positive and negative eigenvalues of compact operators}
If $T$ is a selfadjoint compact operator, then its spectrum consists of real eigenvalues with finite multiplicity. We then denote by $(\pm \lambda_j^\pm(T))$ its sequences of positive and negative eigenvalues ordered in such a way that
\begin{equation*}
 \lambda_0^\pm(T)\geq \lambda_1^\pm(T) \geq \lambda_2^\pm(T)\geq \cdots \geq 0,
\end{equation*}
where each eigenvalue is repeated according to multiplicity. Equivalently, we have
\begin{equation*}
 \lambda_j^\pm(T)=\mu_j(T^\pm), \qquad j\geq 0,
\end{equation*}
where $T^\pm$ are the positive and negative parts of $T$, i.e.,
\begin{equation*}
 T^+=\frac{1}{2} \big(|T|+T), \qquad T^-=\frac{1}{2} \big(|T|-T).
\end{equation*}
Note that
\begin{equation*}
  \lambda_j^\pm(cT)=c\lambda_j^\pm(T), \quad c>0, \qquad  \lambda_j^\pm(-T)=\lambda_j^\mp(T).
\end{equation*}

We have the following version of the min-max principle for positive and negative eigenvalues (see, e.g.,~\cite[Theorem~9.2.4]{BS:Book}).
\begin{equation*}
 \lambda_j^\pm(T) = \min_{\dim E=j} \max_{\xi\in E\setminus \{0\}} \pm \frac{\scal{T\xi}{\xi}}{\scal{\xi}{\xi}}.
\end{equation*}
Comparing this to the min-max principle~(\ref{eq:min-max}) shows that
\begin{equation}\label{eq:domi1}
 0\leq \lambda_j^\pm(T) \leq \mu_j(T).
\end{equation}
In addition, we have the following Weyl's inequality (see, e.g., \cite[Theorem~9.2.8]{BS:Book}),
\begin{equation}\label{eq:Bir-Sol.Weyl-Ineq}
 \lambda_{j+k}^\pm(S+T)\leq  \lambda_{j}^\pm(S)+  \lambda_{k}^\pm(T).
\end{equation}

\subsection{Quasi-norm topology on $\sL_g/(\sL_g)_0$}
Given any $T\in \sL_g$, we shall denote by $\dot{T}$ its class in the quotient space $\dot{\sL}_g:=\sL_g/(\sL_g)_0$. We equip
$\dot{\sL}_g$ with the topology of the quasi-norm defined by
\begin{equation*}
\|\dot{T}\|_{\dot{g}}:= \inf\big\{\|T+S\|_g;\ S\in (\sL_g)_0\big \}, \qquad T\in \sL_g.
\end{equation*}
By definition, $(\sL_g)_0$ is the closure of the ideal $\sR$ of finite-rank operators. Hence,
\begin{equation*}
 \|\dot{T}\|_{\dot{g}}:= \inf\big\{\|T+S\|_g;\ S\in \sR\big \}= \dist_{\sL_g}(T;\sR).
\end{equation*}
Therefore, using Lemma~\ref{lem:Lorentz.distance-formula}, we obtain
\begin{equation}\label{eq:quotient norm}
 \|\dot{T}\|_{\dot{g}}= \limsup_{j\rightarrow \infty} g(j)^{-1} \mu_j(T).
\end{equation}

\begin{lemma}\label{lem:r-triangle}
The quasi-norm $\|\cdot\|_{\dot{g}}$ is $r$-convex with $r:=(|\rho|+1)^{-1}$, i.e.,
\begin{equation}\label{eq:r-triangle}
 \|\dot{S}+\dot{T}\|_{\dot{g}}^r\leq  \|\dot{S}\|_{\dot{g}}^r+  \|\dot{T}\|_{\dot{g}}^r, \qquad S,T\in \sL_g.
\end{equation}
\end{lemma}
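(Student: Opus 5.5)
We will use the singular-value formula~(\ref{eq:quotient norm}) for the quotient quasi-norm, together with Ky Fan's inequality~(\ref{eq:Lorentz.Fan1}) and the regular variation of $g$. Set $a:=\|\dot S\|_{\dot g}$ and $b:=\|\dot T\|_{\dot g}$. If $a=0$ or $b=0$, then $S$ or $T$ lies in $(\sL_g)_0$ and the inequality is trivial, since then $\dot S+\dot T=\dot T$ or $\dot S$. So assume $a,b>0$. Fix $\theta\in(0,1)$, and split the index $n$ as $j+k$ with $j\approx \theta n$ and $k\approx(1-\theta)n$. Concretely, take $j=\lfloor \theta n\rfloor$ and $k=n-j$. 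Fan's inequality gives
\begin{equation*}
 \mu_n(S+T)\leq \mu_j(S)+\mu_k(T).
\end{equation*}
Divide by $g(n)$. By Lemma~\ref{lem:UCT-consequence} we have $g(j)\sim \theta^\rho g(n)$ and $g(k)\sim(1-\theta)^\rho g(n)$. Taking $\limsup_{n\to\infty}$ therefore yields
\begin{equation*}
 \|\dot S+\dot T\|_{\dot g}\leq \theta^{\rho} a+(1-\theta)^{\rho} b \qquad \forall \theta\in(0,1).
\end{equation*}

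It remains to minimize the right-hand side over $\theta$. Write $s:=|\rho|$, so the quantity is $f(\theta)=a\theta^{-s}+b(1-\theta)^{-s}$. Setting $f'(\theta)=0$ gives $a\theta^{-s-1}=b(1-\theta)^{-s-1}$, i.e.
\begin{equation*}
 \theta=\frac{a^{r}}{a^{r}+b^{r}}, \qquad r=\frac{1}{s+1}.
\end{equation*}
Substituting, we get $a\theta^{-s}=a^{1-rs}(a^r+b^r)^s=a^r(a^r+b^r)^s$, using $1-rs=r$. Similarly $b(1-\theta)^{-s}=b^r(a^r+b^r)^s$. Hence
\begin{equation*}
 f(\theta)=(a^r+b^r)^{s+1}=(a^r+b^r)^{1/r}.
\end{equation*}
Raising both sides to the power $r$ gives~(\ref{eq:r-triangle}).

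The only delicate point is justifying the asymptotic step, namely that $g(\lfloor\theta n\rfloor)/g(n)\to\theta^\rho$ and $g(n-\lfloor\theta n\rfloor)/g(n)\to(1-\theta)^\rho$. This follows from Lemma~\ref{lem:UCT-consequence}(1) with $t_\alpha=n$ and $\lambda_\alpha=\lfloor\theta n\rfloor/n\to\theta$, respectively $\lambda_\alpha=(n-\lfloor\theta n\rfloor)/n\to 1-\theta$. With this in hand, the $\limsup$ of $\mu_j(S)/g(n)$ equals $\theta^\rho\limsup_j \mu_j(S)/g(j)=\theta^\rho a$, because $j\to\infty$ as $n\to\infty$ and each value of $j$ is attained by at most finitely many $n$. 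The same argument handles the $T$ term. Everything else is elementary calculus.
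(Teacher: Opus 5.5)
Your proof is correct and follows the paper's argument essentially verbatim: the same Fan-inequality split $\mu_n(S+T)\leq\mu_{\lfloor\theta n\rfloor}(S)+\mu_{n-\lfloor\theta n\rfloor}(T)$, the same use of $g(\lfloor\theta n\rfloor)\sim\theta^\rho g(n)$ from the UCT to get $\|\dot S+\dot T\|_{\dot g}\leq\theta^{\rho}a+(1-\theta)^{\rho}b$, and the same optimization over $\theta$. Your explicit choice $\theta=a^r/(a^r+b^r)$ and your separate treatment of the case $a=0$ or $b=0$ make the last step slightly more careful than the paper's, since there the ``min'' over $\theta$ is only an infimum.
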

\begin{proof}
Let $S,T\in \sL_g$ and $\theta\in (0,1)$. Given $j\geq 0$, let $k_j$ be the unique integer such that $k_j\leq j\theta<k_j+1$. We thus have
\begin{equation*}
 j\theta -1 <k_j \leq j\theta \qquad \text{and} \qquad j(1-\theta)\leq j-k_j < j(1-\theta)+1.
\end{equation*}
As $k_j\sim \theta j$ and $g(t)$ is $\RV_\rho$, it follows from Lemma~\ref{lem:UCT-consequence} that
\begin{equation}\label{eq:key equality1}
 g(k_j)\sim g(\theta j)\sim \theta^\rho g(j).
\end{equation}
Likewise, as $j-k_j\sim (1-\theta)j$, we have
\begin{equation}\label{eq:key equality2}
 g(j-k_j)\sim g\left((1-\theta) j\right)\sim (1-\theta)^\rho g(j).
\end{equation}

Bearing this in mind, by Fan's inequality~\eqref{eq:Lorentz.Fan1} we have
\begin{equation*}
 \mu_j (S+T) \leq  \mu_{k_j} (S) + \mu_{j-k_j} (T).
\end{equation*}
Thus,
\begin{equation*}
  g(j)^{-1}\mu_j (S+T) \leq  \frac{g(k_j)}{g(j)}\cdot g(k_j)^{-1}\mu_{k_j} (S) + \frac{g(j-k_j)}{g(j)} \cdot g(j-k_j)^{-1}\mu_{j-k_j} (T).
\end{equation*}
Taking $\limsup$ with respect to $j$ on both sides and using~(\ref{eq:key equality1})--(\ref{eq:key equality2}) yields
\begin{equation}\label{eq:key inequality}
\|\dot{S}+\dot{T}\|_{\dot{g}}  \leq \theta^{\rho} \|\dot{S}\|_{\dot{g}} + (1-\theta)^{\rho}\|\dot{T}\|_{\dot{g}}.
\end{equation}

If we set $r:=(|\rho|+1)^{-1}$, then we have
\begin{equation*}
 \min_{\theta \in (0,1)}\left\{ \theta^{\rho} \|\dot{S}\|_{\dot{g}} + (1-\theta)^{\rho}\|\dot{T}\|_{\dot{g}}\right\}=
 \big(\|\dot{S}\|_{\dot{g}}^r+\|\dot{T}\|_{\dot{g}}^r\big)^{\frac{1}{r}}.
\end{equation*}
Combining this with~(\ref{eq:key inequality}) gives~(\ref{eq:r-triangle}). The proof is complete.
\end{proof}

\begin{remark}\label{rmk:quasi-triangle-quotient}
 By~(\ref{eq:r-triangle}) the $r$-convexity of $\|\cdot\|_{\dot{g}}$ implies the quasi-triangular inequality,
 \begin{equation*}
\|\dot{S}+\dot{T}\|_{\dot{g}}  \leq 2^{|\rho|} \big(\|\dot{S}\big\|_{\dot{g}} + \|\dot{T}\|_{\dot{g}} \big), \qquad \dot{S},\dot{T}\in \dot{\sL}_g.
\end{equation*}
 \end{remark}

\begin{remark}\label{rmk:quasi-Banach-quotient}
If $F$ is closed subspace of quasi-Banach space $E$, then the quotient $E/F$ is a quasi-Banach space with respect to the quotient quasi-norm. The completeness of the quotient quasi-norm can be seen by using Remark~\ref{rmk:quasi-triangle-quotient} and arguing as in the Banach space setting (see, e.g.,~\cite[Proposition~11.8]{Br:Springer11}).
As $(\sL_g)_0$ is a closed subspace of $\sL_g$, it follows that the quasi-norm $\|\cdot\|_{\dot{g}}$ is complete. Therefore, we see that  $\dot{\sL}_g$ is an $r$-convex quasi-Banach space.
\end{remark}

\subsection{Quantitative estimates for eigenvalues}
For $T=T^*\in \sL_g$, combining the inequality~(\ref{eq:domi1}) with the definition of $\sL_g$, we obtain
\begin{equation}\label{eq:domi2}
 \lambda_j^\pm(T) \leq \mu_j(T) \leq  g(j)\|T\|_g \qquad \forall j\geq 0.
\end{equation}
We then define
\begin{equation}\label{eq:lower and upper}
 \overline{\Lambda}^\pm(T):= \limsup_{j\rightarrow \infty} g(j)^{-1}\lambda_j^\pm(T), \qquad
  \underline{\Lambda}^\pm(T):= \liminf_{j\rightarrow \infty} g(j)^{-1}\lambda_j^\pm(T).
\end{equation}
Note that
\begin{equation}\label{eq:weyl selfadjoint}
 \big( \lim_{j\rightarrow \infty} g(j)^{-1}\lambda_j^\pm(T)=c\big) \ \Longleftrightarrow \  \big(\underline{\Lambda}^\pm(T)=  \overline{\Lambda}^\pm(T)=c\big).
\end{equation}

\begin{lemma}\label{lem:cont-Lambda}
 Set $r=(|\rho|+1)^{-1}$. For all selfadjoint operators $S,T\in \sL_g$, we have
 \begin{gather}
  \left| \overline{\Lambda}^\pm(T)^{r} - \overline{\Lambda}^\pm(S)^{r} \right| \leq   \|\dot{T}-\dot{S}\|_{\dot{g}}^{r},\label{eq:upper}\\
  \left| \underline{\Lambda}^\pm(T)^{r} - \underline{\Lambda}^\pm(S)^{r} \right| \leq    \|\dot{T}-\dot{S}\|_{\dot{g}}^{r}.\label{eq:lower}
\end{gather}
\end{lemma}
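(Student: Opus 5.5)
The argument copies the proof of Lemma~\ref{lem:r-triangle}, with Fan's inequality~(\ref{eq:Lorentz.Fan1}) replaced by Weyl's inequality~(\ref{eq:Bir-Sol.Weyl-Ineq}) and the $\limsup$ of singular values replaced by $\overline{\Lambda}^\pm$ or $\underline{\Lambda}^\pm$. By symmetry in $S$ and $T$, it is enough to prove the one-sided bounds
\begin{equation*}
\overline{\Lambda}^\pm(T)^r\leq \overline{\Lambda}^\pm(S)^r+\|\dot{T}-\dot{S}\|_{\dot g}^r \quad\text{and}\quad \underline{\Lambda}^\pm(T)^r\leq \underline{\Lambda}^\pm(S)^r+\|\dot{T}-\dot{S}\|_{\dot g}^r.
\end{equation*}
We also have $\lambda^-_j(T)=\lambda^+_j(-T)$, and $\|\cdot\|_{\dot g}$ is invariant under $T\mapsto -T$, so only the ``$+$'' case needs treatment.

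Fix $\theta\in(0,1)$ and choose $k_j$ as in the proof of Lemma~\ref{lem:r-triangle}. Writing $T=S+(T-S)$, Weyl's inequality gives $\lambda^+_j(T)\leq \lambda^+_{k_j}(S)+\lambda^+_{j-k_j}(T-S)$. We then use~(\ref{eq:domi1}) to bound $\lambda^+_{j-k_j}(T-S)\leq \mu_{j-k_j}(T-S)$. We divide by $g(j)$ and invoke the asymptotics~(\ref{eq:key equality1})--(\ref{eq:key equality2}), that is, $g(k_j)\sim\theta^\rho g(j)$ and $g(j-k_j)\sim(1-\theta)^\rho g(j)$. This yields
\begin{equation*}
g(j)^{-1}\lambda^+_j(T)\leq (\theta^\rho+\op{o}(1))\,g(k_j)^{-1}\lambda^+_{k_j}(S)+((1-\theta)^\rho+\op{o}(1))\,g(j-k_j)^{-1}\mu_{j-k_j}(T-S).
\end{equation*}
For the upper bound, take $\limsup_j$. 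Since $k_j\to\infty$ and $j-k_j\to\infty$, the $\limsup$ of each factor is at most the corresponding $\limsup$ over all indices. Using~(\ref{eq:quotient norm}), we get $\overline{\Lambda}^+(T)\leq\theta^\rho\overline{\Lambda}^+(S)+(1-\theta)^\rho\|\dot T-\dot S\|_{\dot g}$.

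For the lower bound, take $\liminf_j$ instead. This is the step requiring care, because $\liminf$ of a sum is not controlled by $\liminf$ of each term. I would handle it by choosing a subsequence $m_i$ along which $g(m)^{-1}\lambda^+_m(S)\to\underline{\Lambda}^+(S)$, and then defining $j_i$ so that $k_{j_i}=m_i$. For instance, take $j_i=\lceil m_i/\theta\rceil$, which gives $\lfloor j_i\theta\rfloor=m_i$ when $\theta$ is fixed and $m_i$ is large; if needed, one can instead use $j_i:=\lceil (m_i+1)/\theta\rceil-1$ or simply pick $j$ with $k_j=m_i$. Such a $j$ exists because $k_{j+1}-k_j\in\{0,1\}$. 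Along $j_i$ the first term tends to $\theta^\rho\underline{\Lambda}^+(S)$, and the $\limsup$ of the second term is at most $(1-\theta)^\rho\|\dot T-\dot S\|_{\dot g}$. Hence $\underline{\Lambda}^+(T)\leq\liminf_i g(j_i)^{-1}\lambda^+_{j_i}(T)\leq \theta^\rho\underline{\Lambda}^+(S)+(1-\theta)^\rho\|\dot T-\dot S\|_{\dot g}$.

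In both cases, minimize over $\theta\in(0,1)$ with the same identity as in Lemma~\ref{lem:r-triangle}, namely $\min_\theta(\theta^\rho a+(1-\theta)^\rho b)=(a^r+b^r)^{1/r}$ for $r=(|\rho|+1)^{-1}$. This gives the one-sided inequalities, which handles degenerate cases such as $a=0$ by taking the infimum. Exchanging $S$ and $T$ then gives~(\ref{eq:upper}) and~(\ref{eq:lower}). The only genuine obstacle is the subsequence-matching step in the $\liminf$ case; the rest follows the proof of Lemma~\ref{lem:r-triangle} essentially line by line.
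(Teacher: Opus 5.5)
Your proof is correct. The upper bound~(\ref{eq:upper}) is handled exactly as in the paper: Weyl's inequality~(\ref{eq:Bir-Sol.Weyl-Ineq}) with $\theta\in(0,1)$, followed by the minimization identity from Lemma~\ref{lem:r-triangle}.

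For the lower bound you take a different route. You keep the same inequality $\lambda^+_j(T)\leq\lambda^+_{k_j}(S)+\lambda^+_{j-k_j}(T-S)$. You then handle the $\liminf$ by restricting to indices $j_i=\lceil m_i/\theta\rceil$, for which $\lfloor j_i\theta\rfloor=m_i$ runs along a subsequence realizing $\underline{\Lambda}^+(S)$. This is valid, because $j\mapsto\lfloor j\theta\rfloor$ is onto $\N_0$ for $\theta<1$ and $j_i\to\infty$.

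The paper instead takes $\theta>1$ and $k_j=\lfloor j\theta\rfloor$, and applies Weyl's inequality in the reverse direction: $\lambda^+_{k_j}(S)\leq\lambda^+_j(T)+\lambda^+_{k_j-j}(S-T)$. This gives a lower bound for $g(j)^{-1}\lambda^+_j(T)$ at every index $j$. The $\liminf$ then passes through directly via $\liminf(A_j-B_j)\geq\liminf A_j-\limsup B_j$, giving $\underline{\Lambda}^+(T)\geq\theta^\rho\underline{\Lambda}^+(S)-(\theta-1)^\rho\|\dot{T}-\dot{S}\|_{\dot{g}}$. One concludes by optimizing over $\theta>1$ with the companion identity $\sup_{\theta>1}\{\theta^\rho a-(\theta-1)^\rho b\}=(a^r-b^r)^{1/r}$ for $a^r>b^r$, after disposing of the trivial case $a^r\leq b^r$. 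The result is~(\ref{eq:lower-semi}), which is your one-sided inequality with $S$ and $T$ interchanged.

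Your version treats both bounds uniformly, with a single optimization identity and no case distinction. The paper's version avoids any subsequence extraction or index matching, so the step you flagged as the only genuine obstacle does not arise there.
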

\begin{proof}
 It is sufficient to prove the inequalities for the functionals $\overline{\Lambda}^+$ and $\underline{\Lambda}^+$, since replacing $(S,T)$ by $(-S,-T)$ gives the
 inequalities for $\overline{\Lambda}^-$ and $\underline{\Lambda}^-$. Moreover, by using Weyl's inequality~(\ref{eq:Bir-Sol.Weyl-Ineq}) and arguing as in the proof of Lemma~\ref{lem:r-triangle} we obtain the inequality,
\begin{equation*}
 \overline{\Lambda}^+(T)^{r}\leq \overline{\Lambda}^+(S)^{r} + \overline{\Lambda}^+(T-S)^{r}.
\end{equation*}
 It follows from~(\ref{eq:domi1}) that
 \begin{equation*}
 \overline{\Lambda}^+(T-S)\leq \limsup_{j\rightarrow \infty} g(j)^{-1}\mu_j(T-S) = \|\dot{T}-\dot{S}\|_{\dot{g}}.
\end{equation*}
Thus,
\begin{equation*}
 \overline{\Lambda}^+(T)^{r}- \overline{\Lambda}^+(S)^{r} \leq \|\dot{T}-\dot{S}\|_{\dot{g}}^{r}.
\end{equation*}
Interchanging the roles of $S$ and $T$ gives the inequality~(\ref{eq:upper}).

It remains to prove the inequality~(\ref{eq:lower}). It is sufficient to show that
\begin{equation}\label{eq:lower-semi}
 \underline{\Lambda}^+(T)^{r} \geq  \underline{\Lambda}^+(S)^{r}- \|\dot{T}-\dot{S}\|_{\dot{g}}^{r}.
\end{equation}
To prove this inequality we may further assume $ \underline{\Lambda}^+(S)^{r}> \|\dot{T}-\dot{S}\|_{\dot{g}}^{r}$, since otherwise the inequality holds trivially.

Let $\theta>1$. As in the proof of Lemma~\ref{lem:r-triangle}, for $j\geq 0$ we let $k_j$ be the unique integer such that $k_j\leq j\theta < k_j+1$.
The Weyl inequality~(\ref{eq:Bir-Sol.Weyl-Ineq}) gives
\begin{equation*}
 \lambda_{k_j}^+(S)\leq \lambda_j(T) + \lambda_{k_j-j}(S-T).
\end{equation*}
Thus,
\begin{equation*}
  g(j)^{-1}\lambda_j^+(T) \geq  \frac{g(k_j)}{g(j)} \cdot g(k_j)^{-1}\lambda_{k_j}^+(S) - \frac{g(k_j-j)}{g(j)}\cdot g(k_j-j)^{-1}\lambda_{k_j-j}^+(S-T).
\end{equation*}
Taking $\liminf$ with respect to $j$ on both sides and using~(\ref{eq:key equality1})--(\ref{eq:key equality2}) gives
\begin{align*}
  \underline{\Lambda}^+(T)&\geq \theta^\rho \underline{\Lambda}^+(S) - (\theta-1)^\rho \overline{\Lambda}^+(S-T)\\
  & \geq  \theta^\rho \underline{\Lambda}^+(S) - (\theta-1)^\rho \|\dot{T}-\dot{S}\|_{\dot{g}}.
\end{align*}
The inequality~(\ref{eq:lower-semi}) then follows from the fact that
\begin{equation*}
 \min_{\theta >1} \left\{\theta^\rho \underline{\Lambda}^+(S) - (\theta-1)^\rho \|\dot{T}-\dot{S}\|_{\dot{g}}\right\} =
\left( \underline{\Lambda}^+(S)^{r}- \|\dot{T}-\dot{S}\|_{\dot{g}}^{r}\right)^{\frac{1}{r}}.
\end{equation*}
The proof is complete.
\end{proof}

\begin{remark}
For weak Schatten classes the quantitative estimates~(\ref{eq:upper})--(\ref{eq:lower}) were obtained by Birman-Solomyak (see~\cite[Corollary~9.6.5]{BS:Book}). Therefore, Lemma~\ref{lem:cont-Lambda} extends those estimates to all weak Lorentz ideals.
\end{remark}

\subsection{Weyl operators}
In what follows we shall say that an operator $T=T^*\in \sL_g$ is a \emph{Weyl operator} if
\begin{equation*}
 \Lambda^+(T):= \lim_{j\rightarrow \infty} g(j)^{-1}\lambda_j^+(T) \quad \text{and} \quad \Lambda^-(T):= \lim_{j\rightarrow \infty} g(j)^{-1}\lambda_j^-(T)\quad \text{both exist}.
\end{equation*}
For instance, if $T=T^*\in (\sL_g)_0$, then it follows from Proposition~\ref{prop:separable part} that $T$ is a \emph{Weyl operator} with $\Lambda^\pm(T)=0$.

More generally, if $T\in \sL_g$ is not selfadjoint, we shall say that $T$ is a Weyl operator if its real part $\Re T:=\frac{1}{2}(T+T^*)$ and its imaginary part
$\Im T :=\frac{1}{2i}(T-T^*)$ are Weyl operators, in which case we set
\begin{equation*}
 \Lambda^\pm(T):= \Lambda^\pm\big(\Re(T)\big) + i \Lambda^\pm\big(\Im(T)\big).
\end{equation*}

\begin{definition}\label{def:weyl}
 $\sW(\sL_g)$ is the set of all Weyl operators in $\sL_g$.
\end{definition}

 We are now in a position to prove the following perturbation result for eigenvalue asymptotics for operators in $\sL_g$. This extends to weak Lorentz ideals the perturbation result of Birman-Solomyak~\cite{BS:JFAA70, BS:Book} for operators in weak Schatten classes.

\begin{theorem}\label{thm:cont-Lambda}
 Let $T\in \sL_g$ and $(T_\ell)_{\ell\geq 0}\subseteq \sW(\sL_g)$ be such that
 \begin{equation*}
  \dot{T}_\ell \longrightarrow \dot{T} \ \text{in}\ \dot{\sL}_g.
\end{equation*}
Then $\lim_{\ell \rightarrow \infty} \Lambda^\pm(T_\ell)$ both exist, $T\in  \sW(\sL_g)$, and we have
\begin{equation*}
 \Lambda^\pm(T) =\lim_{\ell \rightarrow \infty} \Lambda^\pm(T_\ell).
\end{equation*}
In particular, if $T$ and the operators $T_\ell$ are selfadjoint, then
\begin{equation*}
 \lim_{j \rightarrow \infty} g(j)^{-1} \lambda_j^\pm(T) =  \lim_{\ell \rightarrow \infty}\big(\lim_{j \rightarrow \infty} g(j)^{-1} \lambda_j^\pm(T_\ell) \big).
\end{equation*}
\end{theorem}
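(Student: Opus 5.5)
We first reduce to the selfadjoint case. Since $\Re T=\frac12(T+T^*)$ and $\Im T=\frac1{2i}(T-T^*)$, and the quotient quasi-norm satisfies $\|\dot{T}^*\|_{\dot g}=\|\dot T\|_{\dot g}$ by~(\ref{eq:quotient norm}) and~(\ref{eq:Quantized.properties-mun1}), the quasi-triangle inequality of Remark~\ref{rmk:quasi-triangle-quotient} gives $\Re\dot T_\ell\to\Re\dot T$ and $\Im\dot T_\ell\to\Im\dot T$ in $\dot{\sL}_g$. By definition $\Re T_\ell$ and $\Im T_\ell$ are Weyl operators. Once the selfadjoint case is proved, $\Re T$ and $\Im T$ are Weyl and $\Lambda^\pm(\Re T_\ell)\to\Lambda^\pm(\Re T)$, $\Lambda^\pm(\Im T_\ell)\to\Lambda^\pm(\Im T)$. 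Hence $T\in\sW(\sL_g)$ and $\Lambda^\pm(T_\ell)\to\Lambda^\pm(T)$ by the definition $\Lambda^\pm=\Lambda^\pm(\Re\cdot)+i\Lambda^\pm(\Im\cdot)$.

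So assume $T$ and all $T_\ell$ are selfadjoint, and set $r=(|\rho|+1)^{-1}$. The key tool is Lemma~\ref{lem:cont-Lambda}. For Weyl operators $\overline{\Lambda}^\pm=\underline{\Lambda}^\pm=\Lambda^\pm$, so applying~(\ref{eq:upper}) to the pair $(T_\ell,T_m)$ gives
\begin{equation*}
\bigl|\Lambda^\pm(T_\ell)^r-\Lambda^\pm(T_m)^r\bigr|\leq \|\dot T_\ell-\dot T_m\|_{\dot g}^r .
\end{equation*}
The sequence $(\dot T_\ell)$ converges, so it is Cauchy: by the $r$-convexity in Lemma~\ref{lem:r-triangle}, $\|\dot T_\ell-\dot T_m\|^r_{\dot g}\le \|\dot T_\ell-\dot T\|^r_{\dot g}+\|\dot T-\dot T_m\|^r_{\dot g}\to0$. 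Hence $(\Lambda^\pm(T_\ell)^r)$ is Cauchy in $[0,\infty)$, and it converges to some $c_\pm^r$ with $c_\pm\ge0$. Since $x\mapsto x^{1/r}$ is continuous on $[0,\infty)$, $\lim_\ell\Lambda^\pm(T_\ell)=c_\pm$ exists.

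Next we apply~(\ref{eq:upper}) and~(\ref{eq:lower}) to the pair $(T,T_\ell)$:
\begin{equation*}
\bigl|\overline{\Lambda}^\pm(T)^r-\Lambda^\pm(T_\ell)^r\bigr|\le\|\dot T-\dot T_\ell\|_{\dot g}^r,\qquad
\bigl|\underline{\Lambda}^\pm(T)^r-\Lambda^\pm(T_\ell)^r\bigr|\le\|\dot T-\dot T_\ell\|_{\dot g}^r .
\end{equation*}
Letting $\ell\to\infty$ gives $\overline{\Lambda}^\pm(T)^r=c_\pm^r=\underline{\Lambda}^\pm(T)^r$. By~(\ref{eq:weyl selfadjoint}), $\lim_j g(j)^{-1}\lambda^\pm_j(T)$ exists and equals $c_\pm$. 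Thus $T$ is Weyl and $\Lambda^\pm(T)=\lim_\ell\Lambda^\pm(T_\ell)$, which is also the final displayed formula of the theorem.

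The substantive analysis is already contained in Lemma~\ref{lem:cont-Lambda}; what remains is bookkeeping. The point that needs the most care is the reduction for non-selfadjoint operators, which depends on the continuity of $\dot T\mapsto\Re\dot T$ and $\dot T\mapsto\Im\dot T$ in the quasi-norm topology. This continuity follows from the adjoint invariance of $\|\cdot\|_{\dot g}$ together with the quasi-triangle inequality, as explained in the first paragraph.
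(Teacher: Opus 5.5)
Your proof is correct and follows the paper's argument. The selfadjoint case comes straight from the estimates~(\ref{eq:upper})--(\ref{eq:lower}) of Lemma~\ref{lem:cont-Lambda} applied to the pair $(T,T_\ell)$, and the general case is reduced to it through the continuity of $\dot T\mapsto\Re\dot T,\ \Im\dot T$ in $\dot{\sL}_g$. The only difference is that your Cauchy step with the pairs $(T_\ell,T_m)$ is unnecessary: the inequality $|\overline{\Lambda}^\pm(T)^r-\Lambda^\pm(T_\ell)^r|\le\|\dot T-\dot T_\ell\|_{\dot g}^r$ already shows directly that $\Lambda^\pm(T_\ell)$ converges to $\overline{\Lambda}^\pm(T)$.
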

\begin{proof}
Let us first prove the result in the case where $T$ and the operators $T_\ell$ are selfadjoint. By~(\ref{eq:upper}) we have
\begin{equation*}
  \left| \overline{\Lambda}^\pm(T)^{r} - \Lambda^\pm(T_\ell)^{r} \right| \leq   \|\dot{T}-\dot{T}_\ell\|_{\dot{g}}^{r}.
\end{equation*}
As $\|\dot{T}-\dot{T}_\ell\|_{\dot{g}}\rightarrow 0$ as $\ell\rightarrow \infty$, we deduce that
\begin{equation}\label{eq:upper equal}
\lim_{\ell \rightarrow \infty} \Lambda^\pm(T_\ell) =  \overline{\Lambda}^\pm(T).
\end{equation}
Similarly, by using~(\ref{eq:lower}) we get
\begin{equation*}
\lim_{\ell \rightarrow \infty} \Lambda^\pm(T_\ell) =  \underline{\Lambda}^\pm(T).
\end{equation*}
Combining this with~(\ref{eq:upper equal}) proves the result in the selfadjoint case.

In general, by Lemma~\ref{lem:r-triangle} we have
\begin{equation*}
 \max\left\{ \|\Re \dot{T}\|_{\dot{g}}^r,\|\Im \dot{T}\|_{\dot{g}}^r\right\} \leq \frac{1}{2}\left(\|\dot{T}\|_{\dot{g}}^r + \|\dot{T}^*\|_{\dot{g}}^r \right) = \|\dot{T}\|_{\dot{g}}^r.
\end{equation*}
Therefore, if  $\dot{T}_\ell \rightarrow \dot{T}$ in  $\dot{\sL}_g$, then $\Re\dot{T}_\ell \rightarrow \Re \dot{T}$ and $\Im\dot{T}_\ell \rightarrow \Im \dot{T}$. The first part of the proof then ensures that $\lim_{\ell \rightarrow \infty} \Lambda^\pm(\Re T_\ell)$ and $\lim_{\ell \rightarrow \infty} \Lambda^\pm(\Im T_\ell)$ both exist, $\Re T$ and $\Im T$ are Weyl operators, and we have
\begin{equation*}
  \Lambda^\pm(\Re T) =\lim_{\ell \rightarrow \infty} \Lambda^\pm(\Re T_\ell) \qquad \text{and} \qquad  \Lambda^\pm(\Im T) =\lim_{\ell \rightarrow \infty} \Lambda^\pm(\Im T_\ell).
\end{equation*}
 This gives the result in the general case. The proof is complete.
\end{proof}

As immediate consequences of Theorem~\ref{thm:cont-Lambda} we get the following results.

\begin{corollary}\label{cor:Bir-Sol.continuity-Lambda}
 $\sW(\sL_g)$ is a closed subset of $\sL_g$ on which $\Lambda^\pm$ are continuous functionals.
\end{corollary}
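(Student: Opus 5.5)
We derive Corollary~\ref{cor:Bir-Sol.continuity-Lambda} directly from Theorem~\ref{thm:cont-Lambda}, applied to sequences that converge in $\sL_g$ itself. The first step is to compare the two topologies. If $T_\ell\to T$ in $\sL_g$, then $\dot{T}_\ell\to\dot{T}$ in $\dot{\sL}_g$, because
\begin{equation*}
 \|\dot{T}_\ell-\dot{T}\|_{\dot{g}} = \limsup_{j\rightarrow\infty} g(j)^{-1}\mu_j(T_\ell-T)\leq \sup_{j\geq 0} g(j)^{-1}\mu_j(T_\ell-T)= \|T_\ell-T\|_g .
\end{equation*}
This follows from~(\ref{eq:quotient norm}) and the definition of $\|\cdot\|_g$. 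So the quotient map $\sL_g\to\dot{\sL}_g$ is continuous.

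For closedness, we use that the quasi-norm topology of $\sL_g$ is metrizable. It is therefore enough to show that $\sW(\sL_g)$ is sequentially closed. Let $(T_\ell)\subseteq\sW(\sL_g)$ with $T_\ell\to T\in\sL_g$. By the first step, $\dot{T}_\ell\to\dot{T}$ in $\dot{\sL}_g$. Theorem~\ref{thm:cont-Lambda} then gives $T\in\sW(\sL_g)$ and $\Lambda^\pm(T)=\lim_{\ell\to\infty}\Lambda^\pm(T_\ell)$.

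The same conclusion gives continuity. For each $T\in\sW(\sL_g)$, every sequence in $\sW(\sL_g)$ converging to $T$ in $\sL_g$ satisfies $\Lambda^\pm(T_\ell)\to\Lambda^\pm(T)$. This is sequential continuity of $\Lambda^\pm$ on the metrizable space $\sW(\sL_g)$, hence continuity.

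If one wants an explicit modulus of continuity in the selfadjoint case, Lemma~\ref{lem:cont-Lambda} supplies it:
\begin{equation*}
 \bigl|\Lambda^\pm(T)^r-\Lambda^\pm(S)^r\bigr|\leq \|T-S\|_g^r, \qquad r=(|\rho|+1)^{-1}.
\end{equation*}
In the non-selfadjoint case, $\Lambda^\pm(T)$ is defined through $\Re T$ and $\Im T$. Here we note that $T\mapsto \Re T$ and $T\mapsto\Im T$ are continuous on $\sL_g$, since $\|T^*\|_g=\|T\|_g$ and the quasi-triangle inequality~(\ref{eq:g-triangle inequality}) holds. Continuity of $\Lambda^\pm$ on $\sW(\sL_g)$ then follows from the selfadjoint case.

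The only point needing care is that the theorem is stated for convergence in the quotient $\dot{\sL}_g$, whereas the corollary concerns the topology of $\sL_g$. The comparison $\|\dot{T}\|_{\dot{g}}\leq \|T\|_g$ in the first step settles this.
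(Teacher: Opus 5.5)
Your proof is correct and follows the paper's route: the paper records this corollary as an immediate consequence of Theorem~\ref{thm:cont-Lambda}, and the one step it leaves implicit is exactly your comparison $\|\dot{T}\|_{\dot{g}}\leq \|T\|_g$, which converts convergence in $\sL_g$ into convergence in $\dot{\sL}_g$. Your sequential arguments via metrizability, and the optional explicit modulus of continuity from Lemma~\ref{lem:cont-Lambda}, are fine as written.
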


\begin{corollary}\label{cor:ptb-Lambda}
  If $T\in \sW(\sL_g)$ and $S\in (\sL_g)_0$, then $T+S\in  \sW(\sL_g)$, and
 \begin{equation*}
\Lambda^\pm(T+S)=\Lambda^\pm(T).
\end{equation*}
In particular, if $T$ and $S$ are selfadjoint, then
\begin{equation*}
 \lim_{j\rightarrow \infty} g(j)^{-1}\lambda_j^\pm(T+S)=  \lim_{j\rightarrow \infty} g(j)^{-1}\lambda_j^\pm(T).
\end{equation*}
\end{corollary}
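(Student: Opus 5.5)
The plan is to apply Theorem~\ref{thm:cont-Lambda} to the constant sequence $T_\ell:=T$, $\ell\geq 0$, with $T+S$ in the role of the limit operator. The only thing to check is that $\dot{T}_\ell\to(T+S)^{\cdot}$ in $\dot{\sL}_g$. Since $S\in(\sL_g)_0$, the classes $\dot{S}$ and $0$ coincide in the quotient $\dot{\sL}_g=\sL_g/(\sL_g)_0$. Hence $(T+S)^{\cdot}=\dot{T}+\dot{S}=\dot{T}$, and so
\begin{equation*}
\|\dot{T}_\ell-(T+S)^{\cdot}\|_{\dot{g}}=\|0\|_{\dot{g}}=0 \qquad \forall \ell\geq 0.
\end{equation*}
The same conclusion follows directly from~(\ref{eq:quotient norm}), since $\mu_j(-S)=\op{o}(g(j))$ by Proposition~\ref{prop:separable part}.

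Theorem~\ref{thm:cont-Lambda} then gives $T+S\in\sW(\sL_g)$ together with
\begin{equation*}
\Lambda^\pm(T+S)=\lim_{\ell\to\infty}\Lambda^\pm(T_\ell)=\Lambda^\pm(T),
\end{equation*}
because the sequence $\Lambda^\pm(T_\ell)=\Lambda^\pm(T)$ is constant.

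For the selfadjoint statement, note that when $T$ and $S$ are selfadjoint, so are $T+S$ and the constant sequence $T_\ell=T$. The ``in particular'' part of Theorem~\ref{thm:cont-Lambda} then gives the displayed equality of limits of $g(j)^{-1}\lambda_j^\pm$.

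No step is a real obstacle; the only point needing care is that $\dot{S}=0$ in the quotient. In the nonselfadjoint case no separate argument is required, since Theorem~\ref{thm:cont-Lambda} already covers it. Alternatively, one can observe that $\Re S,\Im S\in(\sL_g)_0$, because $(\sL_g)_0$ is a $*$-closed subspace.
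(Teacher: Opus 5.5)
Your proposal is correct and matches the paper, which states this corollary as an immediate consequence of Theorem~\ref{thm:cont-Lambda}. Your constant-sequence argument $T_\ell=T$ with limit $T+S$, using that $\dot{S}=0$ in $\dot{\sL}_g$, is exactly that application.
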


\begin{remark}
 For weak Schatten classes the previous result goes back to Weyl~\cite[Satz~V]{We:NGWG11} (see also~\cite[\S9.6]{BS:Book}). Therefore, Corollary~\ref{cor:ptb-Lambda} extends Weyl's result to weak Lorentz ideals.
\end{remark}

\subsection{Singular values}
The perturbation theory for positive and negative eigenvalues extends to singular values as follows.

\begin{definition}
 $|\sW|(\sL_g)$ consists of operators $T\in \sL_g$ such that
 \begin{equation*}
 \Delta(T):=\lim_{j\rightarrow \infty}g(j)^{-1}\mu_j(T)\ \text{exists}.
\end{equation*}
\end{definition}

As $\mu_j(T)=\lambda_j(|T|)=\lambda_j^+(|T|)$, we see that
\begin{equation*}
 \big( \lim_{j\rightarrow \infty}g(j)^{-1}\mu_j(T)\ \text{exists}\big) \Longleftrightarrow |T| \in \sW(\sL_g).
\end{equation*}
That is, $ T\in |\sW|(\sL_g)$ if and only if $|T|$ is a Weyl operator.

In what follows, for $T\in \sL_g$ we set
\begin{equation*}
 \overline{\Delta}(T)= \limsup_{j\rightarrow \infty} g(j)^{-1}\mu_j(T) \qquad \text{and} \qquad \underline{\Delta}(T)= \liminf_{j\rightarrow \infty} g(j)^{-1}\mu_j(T).
\end{equation*}
In particular, by~(\ref{eq:quotient norm}) we have $\overline{\Delta}(T)=\|\dot{T}\|_{\dot{g}}$. Moreover, as with~(\ref{eq:weyl selfadjoint}) we have
\begin{equation*}
 \big( T\in |\sW|(\sL_g) \ \text{and}\ \Delta(T)=c\big) \ \Longleftrightarrow \  \big(\overline{\Delta}(T)=  \underline{\Delta}(T)=c\big).
\end{equation*}

We have the following version of Lemma~\ref{lem:cont-Lambda} for singular values.

\begin{lemma}\label{lem:cont-Delta}
 Set $r=(|\rho|+1)^{-1}$. For all $S,T\in \sL_g$, we have
 \begin{gather}
  \left| \overline{\Delta}(T)^{r} - \overline{\Delta}(S)^{r} \right| \leq   \|\dot{T}-\dot{S}\|_{\dot{g}}^{r},\label{eq:upper quantitative}\\
  \left| \underline{\Delta}(T)^{r} - \underline{\Delta}(S)^{r} \right| \leq    \|\dot{T}-\dot{S}\|_{\dot{g}}^{r}. \label{eq:lower quantitative}
\end{gather}
\end{lemma}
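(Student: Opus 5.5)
The natural route is to reduce Lemma~\ref{lem:cont-Delta} to Lemma~\ref{lem:cont-Lambda}, or more directly to redo its proof with Fan's inequality~(\ref{eq:Lorentz.Fan1}) in place of Weyl's inequality~(\ref{eq:Bir-Sol.Weyl-Ineq}). A reduction via $|T|$ does not work, since $\| |\dot T|-|\dot S|\|_{\dot g}$ is not controlled by $\|\dot T-\dot S\|_{\dot g}$ in quasi-Banach ideals. I will therefore argue directly, repeating the two-step structure of the proof of Lemma~\ref{lem:cont-Lambda}.

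For~(\ref{eq:upper quantitative}), note that $\overline{\Delta}(T)=\|\dot T\|_{\dot g}$ by~(\ref{eq:quotient norm}). The inequality is then just the reverse triangle inequality for the $r$-convex quasi-norm of Lemma~\ref{lem:r-triangle}. Writing $\dot T=\dot S+(\dot T-\dot S)$ gives
\[
\|\dot T\|_{\dot g}^r\leq \|\dot S\|_{\dot g}^r+\|\dot T-\dot S\|_{\dot g}^r ,
\]
and exchanging $S$ and $T$ yields the absolute value. Here we use $\|\dot S-\dot T\|_{\dot g}=\|\dot T-\dot S\|_{\dot g}$, which holds since $\mu_j(-X)=\mu_j(X)$.

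For~(\ref{eq:lower quantitative}), by symmetry it suffices to show
\[
\underline{\Delta}(T)^r\geq \underline{\Delta}(S)^r-\|\dot T-\dot S\|_{\dot g}^r ,
\]
and we may assume the right-hand side is positive. Fix $\theta>1$ and let $k_j=\lfloor j\theta\rfloor$. Fan's inequality~(\ref{eq:Lorentz.Fan1}) applied to $S=T+(S-T)$ gives
\[
\mu_{k_j}(S)\leq \mu_j(T)+\mu_{k_j-j}(S-T).
\]
Divide by $g(j)$ and write each term as a ratio of $g$-values times a normalized singular value. The asymptotics~(\ref{eq:key equality1})--(\ref{eq:key equality2}), which follow from Lemma~\ref{lem:UCT-consequence} since $k_j\sim\theta j$ and $k_j-j\sim(\theta-1)j$, give $g(k_j)/g(j)\to\theta^\rho$ and $g(k_j-j)/g(j)\to(\theta-1)^\rho$. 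Taking $\liminf$ in $j$ then yields
\[
\underline{\Delta}(T)\geq \theta^\rho\underline{\Delta}(S)-(\theta-1)^\rho\,\overline{\Delta}(S-T)=\theta^\rho\underline{\Delta}(S)-(\theta-1)^\rho\|\dot T-\dot S\|_{\dot g}.
\]
Here we use that $\liminf$ of a sequence along $k_j$ is at least $\underline\Delta(S)$ (since $k_j\to\infty$), and that $\limsup$ along $k_j-j$ is at most $\overline\Delta(S-T)$.

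The last step is to optimize over $\theta>1$. Setting $a=\underline\Delta(S)$, $b=\|\dot T-\dot S\|_{\dot g}$ and $q=|\rho|$, the task is to maximize
\[
f(\theta)=\theta^{-q}a-(\theta-1)^{-q}b .
\]
One computes, as in the proof of Lemma~\ref{lem:cont-Lambda}, that the optimum is attained where $\bigl((\theta-1)/\theta\bigr)^{q+1}=b/a$. Substituting $\theta=(1-(b/a)^{r})^{-1}$ with $r=1/(q+1)$ gives $f(\theta)=(a^r-b^r)^{1/r}$, which yields the claim. This elementary optimization is the only step requiring care: it is where $r=(|\rho|+1)^{-1}$ enters, and it is also where the assumption $a^r>b^r$ is needed to make the optimizing $\theta$ admissible. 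If $b=0$, let $\theta\to1^+$ directly.
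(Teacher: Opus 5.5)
Your proposal is correct and follows essentially the paper's route. The upper estimate is the reverse $r$-triangle inequality of Lemma~\ref{lem:r-triangle} applied via $\overline{\Delta}(T)=\|\dot{T}\|_{\dot{g}}$, which is what the paper's ``restatement of~(\ref{eq:upper})'' amounts to. The lower estimate is exactly the Fan-inequality analogue of the second half of the proof of Lemma~\ref{lem:cont-Lambda}, and you correctly treat the extremum over $\theta>1$ as a maximum (the paper's ``$\min$'' there is a slip).
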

\begin{proof}
 The first inequality is a mere restatement of~(\ref{eq:upper}). The second inequality is proved by using Fan's inequality~(\ref{eq:Lorentz.Fan1}) and arguing as in the second part of the proof of Lemma~\ref{lem:cont-Lambda}.
\end{proof}

\begin{remark}
For weak Schatten classes the quantitative estimates~(\ref{eq:upper quantitative})--(\ref{eq:lower quantitative}) were established by Birman-Solomyak (see~\cite[Corollary~9.6.5]{BS:Book}). Therefore, as with Lemma~\ref{lem:cont-Lambda}, we see that Lemma~\ref{lem:cont-Delta} extends Birman-Solomyak's estimates to all weak Lorentz ideals.
\end{remark}

By using Lemma~\ref{lem:cont-Delta} and arguing as in the proof of Theorem~\ref{thm:cont-Lambda}, we obtain the following perturbation result for asymptotics of singular values for operators in $\sL_g$. This extends to weak Lorentz ideals the perturbation result of Birman-Solomyak for operators in weak Schatten classes.

\begin{theorem}\label{thm:Bir-Sol.Delta}
 Let $T\in \sL_g$ and $(T_\ell)\subseteq |\sW|(\sL_g)$ be such that
 \begin{equation*}
  \dot{T}_\ell \longrightarrow \dot{T} \ \text{in}\ \dot{\sL}_g.
\end{equation*}
Then $\lim_{\ell \rightarrow \infty} \Delta(T_\ell)$ exists, $T\in  |\sW|(\sL_g)$, and we have
\begin{equation*}
\lim_{j\rightarrow \infty} g(j)^{-1}\mu_j(T) = \lim_{\ell \rightarrow \infty}  \Delta(T_\ell)=   \lim_{\ell \rightarrow \infty}
\big(\lim_{j\rightarrow \infty} g(j)^{-1}\mu_j(T_\ell)\big).
\end{equation*}
\end{theorem}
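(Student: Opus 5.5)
The argument mirrors the selfadjoint case of Theorem~\ref{thm:cont-Lambda}, with Lemma~\ref{lem:cont-Delta} playing the role of Lemma~\ref{lem:cont-Lambda}. No splitting into real and imaginary parts is needed, because $\Delta$ is defined directly through the singular values.

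First, since each $T_\ell\in|\sW|(\sL_g)$, we have $\overline{\Delta}(T_\ell)=\underline{\Delta}(T_\ell)=\Delta(T_\ell)$. Apply~(\ref{eq:upper quantitative}) to the pair $(T,T_\ell)$:
\begin{equation*}
 \bigl|\overline{\Delta}(T)^r-\Delta(T_\ell)^r\bigr|\leq \|\dot{T}-\dot{T}_\ell\|_{\dot{g}}^r .
\end{equation*}
The right-hand side tends to $0$ by assumption, so $\Delta(T_\ell)^r\to\overline{\Delta}(T)^r$. Since $t\mapsto t^{1/r}$ is continuous on $[0,\infty)$, this gives that $\lim_{\ell\to\infty}\Delta(T_\ell)$ exists and equals $\overline{\Delta}(T)$.

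Next, apply~(\ref{eq:lower quantitative}) in the same way. This yields $\Delta(T_\ell)^r\to\underline{\Delta}(T)^r$, hence $\lim_{\ell}\Delta(T_\ell)=\underline{\Delta}(T)$.

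By uniqueness of limits, $\overline{\Delta}(T)=\underline{\Delta}(T)$. By the equivalence recorded just before Lemma~\ref{lem:cont-Delta}, this means $T\in|\sW|(\sL_g)$, with
\begin{equation*}
\Delta(T)=\lim_{j\to\infty}g(j)^{-1}\mu_j(T)=\lim_{\ell\to\infty}\Delta(T_\ell).
\end{equation*}
Unfolding the definition of $\Delta(T_\ell)$ gives the final iterated-limit formula.

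There is no real obstacle here: all the analytic content sits in Lemma~\ref{lem:cont-Delta}. The only point to check is that the limits are taken in $[0,\infty)$, where $t\mapsto t^r$ is a homeomorphism, so convergence of $r$-th powers is equivalent to convergence of the quantities themselves.
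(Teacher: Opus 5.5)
Your proposal is correct and is essentially the paper's own argument: the paper obtains this theorem by combining Lemma~\ref{lem:cont-Delta} with the argument used in the selfadjoint case of Theorem~\ref{thm:cont-Lambda}. Your use of the upper and lower estimates to show $\lim_{\ell}\Delta(T_\ell)=\overline{\Delta}(T)=\underline{\Delta}(T)$ is exactly that route.
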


In particular, we have the following consequences of Theorem~\ref{thm:Bir-Sol.Delta}.

\begin{corollary}\label{cor:Bir-Sol.continuity-Delta}
 $|\sW|(\sL_g)$ is a closed subset of $\sL_g$ on which the functional $\Delta$ is continuous.
\end{corollary}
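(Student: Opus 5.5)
The plan is to mirror the derivation of Corollary~\ref{cor:Bir-Sol.continuity-Lambda} from Theorem~\ref{thm:cont-Lambda}, using Theorem~\ref{thm:Bir-Sol.Delta} and Lemma~\ref{lem:cont-Delta} instead. The first step is to pass from convergence in $\sL_g$ to convergence in $\dot{\sL}_g$. For $S\in\sL_g$ we have, by~(\ref{eq:quotient norm}),
\begin{equation*}
 \|\dot{S}\|_{\dot{g}}=\limsup_{j\rightarrow\infty} g(j)^{-1}\mu_j(S)\leq \sup_{j\geq 0} g(j)^{-1}\mu_j(S)=\|S\|_g .
\end{equation*}
Hence the quotient map $\sL_g\rightarrow\dot{\sL}_g$ is continuous. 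In particular, if $T_\ell\rightarrow T$ in $\sL_g$, then $\dot{T}_\ell\rightarrow\dot{T}$ in $\dot{\sL}_g$.

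For closedness, I would take a sequence $(T_\ell)\subseteq|\sW|(\sL_g)$ converging to some $T\in\sL_g$ with respect to $\|\cdot\|_g$. Since $\sL_g$ is quasi-normed, its topology is metrizable, so sequential closedness suffices. By the first step, $\dot{T}_\ell\rightarrow\dot{T}$. Theorem~\ref{thm:Bir-Sol.Delta} then gives $T\in|\sW|(\sL_g)$ and $\Delta(T)=\lim_{\ell\to\infty}\Delta(T_\ell)$. This proves at once that $|\sW|(\sL_g)$ is closed and that $\Delta$ is sequentially continuous on it, hence continuous, again by metrizability.

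If one prefers a quantitative statement, Lemma~\ref{lem:cont-Delta} applies directly. For $S,T\in|\sW|(\sL_g)$ we have $\overline{\Delta}=\underline{\Delta}=\Delta$, so
\begin{equation*}
 \bigl|\Delta(T)^r-\Delta(S)^r\bigr|\leq \|\dot{T}-\dot{S}\|_{\dot{g}}^r\leq \|T-S\|_g^r,
\end{equation*}
with $r=(|\rho|+1)^{-1}$. Since $x\mapsto x^{1/r}$ is continuous on $[0,\infty)$, this yields continuity of $\Delta$ (indeed uniform continuity on bounded sets).

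I see no real obstacle. The only point needing care is the inequality $\|\dot{S}\|_{\dot{g}}\leq\|S\|_g$, which relies on the distance formula of Lemma~\ref{lem:Lorentz.distance-formula} (or simply on taking $0$ in the infimum defining the quotient quasi-norm).
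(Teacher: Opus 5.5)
Your proof is correct and follows the paper's route: the paper states the corollary as an immediate consequence of Theorem~\ref{thm:Bir-Sol.Delta}, using exactly your observation that $\|\dot{S}\|_{\dot{g}}\leq\|S\|_g$ turns convergence in $\sL_g$ into convergence in $\dot{\sL}_g$. Your quantitative variant via Lemma~\ref{lem:cont-Delta} is the same ingredient that underlies that theorem, so it adds a modulus of continuity but no new idea.
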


\begin{corollary}\label{cor:Bir-Sol.Ky-Fan-Lorentz}
 If $T\in |\sW|(\sL_g)$ and $S\in (\sL_g)_0$, then $T+S\in  |\sW|(\sL_g)$, and
 \begin{equation*}
\lim_{j\rightarrow \infty} g(j)^{-1}\mu_j(T+S) =\lim_{j\rightarrow \infty} g(j)^{-1}\mu_j(T).
\end{equation*}
\end{corollary}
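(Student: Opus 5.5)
This is a direct consequence of Theorem~\ref{thm:Bir-Sol.Delta}, applied to a constant sequence. I would set $T_\ell := T$ for all $\ell\geq 0$. By assumption $T\in |\sW|(\sL_g)$, so this sequence lies in $|\sW|(\sL_g)$, with $\Delta(T_\ell)=\Delta(T)$ for every $\ell$.

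The only thing to check is the convergence hypothesis $\dot{T}_\ell \to \dot{(T+S)}$ in $\dot{\sL}_g$. Since $S\in(\sL_g)_0$, the classes $\dot{T}$ and $\dot{(T+S)}$ coincide in the quotient $\dot{\sL}_g=\sL_g/(\sL_g)_0$. We can also see this through~(\ref{eq:quotient norm}): $\|\dot{T}-\dot{(T+S)}\|_{\dot g}=\limsup_{j} g(j)^{-1}\mu_j(S)=0$ by Proposition~\ref{prop:separable part}. So the constant sequence converges trivially to $\dot{(T+S)}$.

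Theorem~\ref{thm:Bir-Sol.Delta}, applied with limit operator $T+S$, then gives $T+S\in|\sW|(\sL_g)$ and
\begin{equation*}
\lim_{j\to\infty} g(j)^{-1}\mu_j(T+S)=\lim_{\ell\to\infty}\Delta(T_\ell)=\Delta(T)=\lim_{j\to\infty} g(j)^{-1}\mu_j(T).
\end{equation*}
There is essentially no obstacle here. The only point needing care is that the hypothesis of Theorem~\ref{thm:Bir-Sol.Delta} is convergence in the quotient, not in $\sL_g$, and that is exactly what makes the $(\sL_g)_0$-perturbation invisible.

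An alternative that avoids the theorem is to use~(\ref{eq:lower quantitative}) and~(\ref{eq:upper quantitative}) of Lemma~\ref{lem:cont-Delta} directly with the pair $(T+S,T)$. Since $\|\dot{(T+S)}-\dot{T}\|_{\dot g}=0$, we get $\overline{\Delta}(T+S)=\overline{\Delta}(T)=\Delta(T)=\underline{\Delta}(T)=\underline{\Delta}(T+S)$, which is the claim.
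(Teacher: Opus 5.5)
Your proof is correct and follows the paper's route. The paper presents this corollary as an immediate consequence of Theorem~\ref{thm:Bir-Sol.Delta}, applied exactly as you do to the constant sequence $T_\ell=T$, whose class in $\dot{\sL}_g$ coincides with that of $T+S$ because $S\in(\sL_g)_0$. Your alternative via Lemma~\ref{lem:cont-Delta} is also valid; it is simply the mechanism underlying that theorem.
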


\begin{remark}\label{rmk:Bir-Sol.Gohberg-Krein}
 For weak Schatten classes the previous result goes back to Ky~Fan~\cite[Theorem~3]{Fa:PNAS51} (see also~\cite[Theorem~II.2.3]{GK:AMS69}). It was also conjectured by Gohberg-Krein~\cite[pp.~32--33]{GK:AMS69} that Fan's result holds for weak Lorentz ideals. Therefore, Corollary~\ref{cor:Bir-Sol.Ky-Fan-Lorentz} extends Fan's theorem to all weak Lorentz ideals and gives a positive answer to Gohberg-Krein's conjecture.
\end{remark}

\subsection{Counting functions} The perturbation results of Birman-Solomyak can be equivalently reformulated in terms of counting functions (see, e.g., \cite[\S11.6]{BS:Book}). For the sake of completeness, we explain how to reformulate the main results of this section in terms of counting functions.

For $T=T^*\in \sL_g$, we set
\begin{equation*}
 N^\pm(T;\lambda) = \#\left\{j; \ \lambda_j^\pm(T)>\lambda\right\}, \qquad \lambda>0.
\end{equation*}
For $T\in \sL_g$, we also define
\begin{equation*}
 \nu(T;\lambda) = \#\left\{j; \ \mu_j(T)>\lambda\right\}, \qquad \lambda>0.
\end{equation*}

Set $h(t)=g(t)^{-1}$, $t\geq 0$; this is an $\RV_{|\rho|}$-function. We also let $h^\sharp:[0,\infty)\rightarrow (0,\infty)$ be an asymptotic $\RV_{|\rho|^{-1}}$-inverse, i.e., an $\RV_{|\rho|^{-1}}$ function such that
 \begin{equation*}
 h(h^\sharp(t)) \sim t \qquad \text{and} \qquad h^\sharp(h(t)) \sim t  \qquad \text{as}\ t\rightarrow \infty.
 \end{equation*}
Such a function always exists (see~Remark~\ref{rmk:B2}).

For instance, if $g(t)=(t+1)^{-1/p}$, with $p=-1/\rho>0$, then we may take $h^\sharp(t)=(1+t)^{p}$. If $g(t)=(t+1)^{-1/p}(\log (t+2))^{q}$, $q\in \R$, then $h(t)= (t+1)^{1/p}(\log (t+2))^{-q}$, and so we may take $h^\sharp(t)=p^{-q/p} t^{1/p} \left(\log (t+2)\right)^{q/p}$, $t\geq 0$ (\emph{cf}.~Example~\ref{ex:asymp inverse}).

By Lemma~\ref{lem:B5}, if $T=T^*\in \sL_g$, then
\begin{equation*}
 \lim_{j\rightarrow \infty} g(j)^{-1}\lambda_j^\pm(T) =c^\pm
 \ \Longleftrightarrow \
  \lim_{\lambda\rightarrow 0^+} h^\sharp(\lambda^{-1})^{-1}N^\pm(T;\lambda)=\left(c^\pm\right)^{\frac{1}{|\rho|}}
\end{equation*}
In particular, we see that $T\in \sW(\sL_g)$ if and only if $ \lim_{\lambda\rightarrow 0^+} h^\sharp(\lambda^{-1})^{-1}N^\pm(T;\lambda)$ exist. Therefore, Theorem~\ref{thm:cont-Lambda} implies the following result.

\begin{proposition}\label{prop:Bir-Sol.Npm}
 Let $T=T^*\in \sL_g$ and $T_\ell=T_\ell^*\in \sW(\sL_g)$, $\ell\geq 0$, be such that
  \begin{equation*}
  \dot{T}_\ell \longrightarrow \dot{T} \ \text{in}\ \dot{\sL}_g.
\end{equation*}
Then $\lim_{\ell \rightarrow \infty} (\lim_{\lambda\rightarrow 0^+} h^\sharp(\lambda^{-1})^{-1}N^\pm(T_\ell;\lambda))$ exist, and we have
\begin{equation*}
 \lim_{\lambda\rightarrow 0^+} h^\sharp(\lambda^{-1})^{-1}N^\pm(T;\lambda) = \lim_{\ell \rightarrow \infty} \big(\lim_{\lambda\rightarrow 0^+} h^\sharp(\lambda^{-1})^{-1}N^\pm(T_\ell;\lambda)\big).
\end{equation*}
\end{proposition}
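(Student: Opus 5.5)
The plan is to transfer Theorem~\ref{thm:cont-Lambda} from eigenvalue asymptotics to counting-function asymptotics by means of the equivalence furnished by Lemma~\ref{lem:B5}. We are only in the selfadjoint case, so $T=T^*$ and $T_\ell=T_\ell^*\in\sW(\sL_g)$. First, for each $\ell$, since $T_\ell$ is a Weyl operator the limits $\Lambda^\pm(T_\ell)=\lim_{j\to\infty} g(j)^{-1}\lambda_j^\pm(T_\ell)$ exist. By Lemma~\ref{lem:B5} (the direction ``$\Rightarrow$''), the limits
\begin{equation*}
 \lim_{\lambda\rightarrow 0^+} h^\sharp(\lambda^{-1})^{-1}N^\pm(T_\ell;\lambda)=\Lambda^\pm(T_\ell)^{\frac{1}{|\rho|}}
\end{equation*}
then exist as well.

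Next, we apply Theorem~\ref{thm:cont-Lambda} in the selfadjoint case. It gives that $\lim_{\ell\to\infty}\Lambda^\pm(T_\ell)$ exist, that $T\in\sW(\sL_g)$, and that $\Lambda^\pm(T)=\lim_{\ell}\Lambda^\pm(T_\ell)$. The map $c\mapsto c^{1/|\rho|}$ is continuous on $[0,\infty)$, and each $\Lambda^\pm(T_\ell)$ is nonnegative. Hence $\lim_\ell \Lambda^\pm(T_\ell)^{1/|\rho|}$ exists and equals $\Lambda^\pm(T)^{1/|\rho|}$. This proves that the iterated limit in the statement exists.

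Finally, since $T\in\sW(\sL_g)$, a second application of Lemma~\ref{lem:B5} to $T$, with $c^\pm=\Lambda^\pm(T)$, shows that
\begin{equation*}
 \lim_{\lambda\rightarrow0^+}h^\sharp(\lambda^{-1})^{-1}N^\pm(T;\lambda)=\Lambda^\pm(T)^{\frac{1}{|\rho|}}.
\end{equation*}
Combined with the previous step, this yields the asserted equality.

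The main obstacle is Lemma~\ref{lem:B5} itself, namely the inversion between $\lambda_j^\pm$ asymptotics and $N^\pm$ asymptotics via the asymptotic inverse $h^\sharp$. We take it as given, as the paper does. The one point to watch is the degenerate case $c^\pm=0$, which must be covered by that lemma; then $N^\pm(T;\lambda)=\op{o}(h^\sharp(\lambda^{-1}))$. If it were not covered, we would handle it directly: $\lambda_j^\pm\le \epsilon g(j)$ for large $j$ gives $N^\pm(T;\lambda)\lesssim h^\sharp((\epsilon^{-1}\lambda)^{-1})\sim \epsilon^{1/|\rho|}h^\sharp(\lambda^{-1})$, using the $\RV_{|\rho|^{-1}}$ property of $h^\sharp$.
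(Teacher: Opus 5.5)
Your proposal is correct and follows the paper's own argument. The paper likewise translates eigenvalue limits into counting-function limits via Lemma~\ref{lem:B5}, applied with the roles of $h$ and $h^\sharp$ interchanged (exponent $p=|\rho|^{-1}$), and then invokes Theorem~\ref{thm:cont-Lambda} in the selfadjoint case together with continuity of $c\mapsto c^{1/|\rho|}$. Your fallback for $c^\pm=0$ is not needed, because Lemma~\ref{lem:B5} is stated with $\limsup$ and $\liminf$ and so already covers the degenerate case.
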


Similarly, for any $T\in \sL_g$, Lemma~\ref{lem:B5} also implies that
\begin{equation*}
 \lim_{j\rightarrow \infty} g(j)^{-1}\mu_j(T) =c
 \ \Longleftrightarrow \
  \lim_{\lambda\rightarrow 0^+} h^\sharp(\lambda^{-1})^{-1}\nu(T;\lambda)=c^{\frac{1}{|\rho|}}.
\end{equation*}
In particular, we see that $T\in |\sW|(\sL_g)$ if and only if $ \lim_{\lambda\rightarrow 0^+} h^\sharp(\lambda^{-1})^{-1}\nu(T;\lambda)$ exists. We thus have the following equivalent reformulation of Proposition~\ref{prop:Bir-Sol.Npm}.

\begin{proposition}\label{prop:Bir-Sol.nu}
 Let $T\in \sL_g$ and $T_\ell\in \sW(\sL_g)$, $\ell\geq 0$, be such that
  \begin{equation*}
  \dot{T}_\ell \longrightarrow \dot{T} \ \text{in}\ \dot{\sL}_g.
\end{equation*}
Then $\lim_{\ell \rightarrow \infty} (\lim_{\lambda\rightarrow 0^+} h^\sharp(\lambda^{-1})^{-1}\nu(T_\ell;\lambda))$ exist, and we have
\begin{equation*}
 \lim_{\lambda\rightarrow 0^+} h^\sharp(\lambda^{-1})^{-1}\nu(T;\lambda) = \lim_{\ell \rightarrow \infty} \big(\lim_{\lambda\rightarrow 0^+} h^\sharp(\lambda^{-1})^{-1}\nu(T_\ell;\lambda)\big).
\end{equation*}
\end{proposition}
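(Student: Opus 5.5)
The plan is to reduce the statement to Theorem~\ref{thm:cont-Lambda} and Theorem~\ref{thm:Bir-Sol.Delta}, using the dictionary of Lemma~\ref{lem:B5} between eigenvalue asymptotics and counting-function asymptotics. The bridge for selfadjoint operators is the identity
\begin{equation*}
 \nu(S;\lambda)=N^+(S;\lambda)+N^-(S;\lambda), \qquad S=S^*\in\sK,\ \lambda>0.
\end{equation*}
It holds because the nonzero singular values of $S$ are the moduli of its nonzero eigenvalues, i.e., the union with multiplicity of the $\lambda_j^+(S)$ and the $\lambda_j^-(S)$.

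\emph{Step 1 (selfadjoint $T_\ell$).} First I would treat the case where the Weyl operators $T_\ell$ are selfadjoint. By Lemma~\ref{lem:B5}, each limit $\lim_{\lambda\to0^+}h^\sharp(\lambda^{-1})^{-1}N^\pm(T_\ell;\lambda)$ exists and equals $\Lambda^\pm(T_\ell)^{1/|\rho|}$. By the identity above, the limit of $h^\sharp(\lambda^{-1})^{-1}\nu(T_\ell;\lambda)$ then exists and equals
\begin{equation*}
 \Lambda^+(T_\ell)^{1/|\rho|}+\Lambda^-(T_\ell)^{1/|\rho|}.
\end{equation*}

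\emph{Step 2 (making $T$ selfadjoint modulo $(\sL_g)_0$).} Next I would reduce to a selfadjoint limit. Since $\dot T_\ell\to\dot T$, the estimate in the proof of Theorem~\ref{thm:cont-Lambda} gives $\Im\dot T_\ell\to\Im\dot T$. As $\Im T_\ell=0$, this means $\Im T\in(\sL_g)_0$, so $T=\Re T+S$ with $S:=i\Im T\in(\sL_g)_0$. Theorem~\ref{thm:cont-Lambda} applied to $\Re T_\ell=T_\ell\to\Re T$ shows that $\Re T\in\sW(\sL_g)$ with $\Lambda^\pm(\Re T)=\lim_\ell\Lambda^\pm(T_\ell)$. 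Step~1 applied to $\Re T$ then shows that $\Re T\in|\sW|(\sL_g)$. Corollary~\ref{cor:Bir-Sol.Ky-Fan-Lorentz} shows that adding $S$ does not change the singular-value asymptotics, so $T\in|\sW|(\sL_g)$. Translating back through Lemma~\ref{lem:B5} gives
\begin{equation*}
 \lim_{\lambda\to0^+}h^\sharp(\lambda^{-1})^{-1}\nu(T;\lambda)=\lim_{\ell\to\infty}\Bigl(\Lambda^+(T_\ell)^{1/|\rho|}+\Lambda^-(T_\ell)^{1/|\rho|}\Bigr)=\lim_{\ell\to\infty}\Bigl(\lim_{\lambda\to0^+}h^\sharp(\lambda^{-1})^{-1}\nu(T_\ell;\lambda)\Bigr).
\end{equation*}
Alternatively, once Step~1 puts the $T_\ell$ in $|\sW|(\sL_g)$, Theorem~\ref{thm:Bir-Sol.Delta} gives the same conclusion directly.

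\emph{Step 3 (non-selfadjoint $T_\ell$): the main obstacle.} For a non-selfadjoint Weyl operator, the real and imaginary parts control only $\lambda^\pm(\Re T_\ell)$ and $\lambda^\pm(\Im T_\ell)$, not $\mu(T_\ell)$. So the existence of the inner $\nu$-limits is not automatic. I would try first to show that $\nu(T_\ell;\cdot)$ has the required asymptotics. Failing that, I would isolate the extra input needed: exactly that $T_\ell\in|\sW|(\sL_g)$. Once that holds, the conclusion follows from Theorem~\ref{thm:Bir-Sol.Delta} composed with Lemma~\ref{lem:B5}, verbatim as in the derivation of Proposition~\ref{prop:Bir-Sol.Npm}. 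I expect this step to need that additional hypothesis: without it the inner limits in the statement need not exist, so the statement as worded for non-selfadjoint $T_\ell$ would have to be read with $T_\ell\in|\sW|(\sL_g)$.
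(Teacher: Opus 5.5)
Your proposal is correct. Under the reading you reach in Step~3 (hypothesis $T_\ell\in|\sW|(\sL_g)$, which is what the paper intends, since it presents this proposition as the counting-function form of Theorem~\ref{thm:Bir-Sol.Delta} obtained through Lemma~\ref{lem:B5}), your argument is exactly the paper's. Your Steps~1--2 are a correct addition: via $\nu=N^++N^-$, selfadjoint Weyl operators lie in $|\sW|(\sL_g)$, so the literal hypothesis already suffices for selfadjoint $T_\ell$. Your suspicion about non-selfadjoint $T_\ell$ is also justified. In $\sL_{1,\infty}$, take the normal operator $T=A+iB$ with $A=\op{diag}(1/k)_{k\geq 1}$ and $B=\op{diag}(1/\sigma(k))_{k\geq 1}$, where the permutation $\sigma$ is the identity on the blocks $[n_m,n_{m+1})$ for $m$ even and reverses them for $m$ odd, with $n_{m+1}/n_m\to\infty$. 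This $T$ is a Weyl operator, yet $\lambda\,\nu(T;\lambda)$ oscillates between $\sqrt{2}$ and $2$, so the inner limits in the statement need not exist.
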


\section{Strong Measurability}\label{sec:strong-hyper}
From the point of view of noncommutative geometry, the NC integral should be a positive trace. Dixmier traces are (normalized)  positive traces (see below for the definition of normalized traces). There is a whole wealth of normalized positive traces that are not Dixmier traces (see, e.g.,~\cite{LSZ:Book, Us:SM21}). For this reason,  as with the weak trace class $\sL_{1,\infty}$, it stands to reason to look at a stronger notion of measurability in terms of the whole class of positive normalized traces.

In this section, we investigate this notion and relate it to the spectral asymptotics considered in the previous section. This will lead to a further notion of measurability.

Throughout this section, we let $g:[0,\infty) \rightarrow (0,\infty)$ be a continuous $\RV_{-1}$-function such that $\int_0^\infty g(t)dt=\infty$.

\subsection{Strong measurability}
In what follows, we let $T_{g}$ be any compact operator such that
\begin{equation}\label{eq:generator}
 T_{g}\xi_j = g(j)\xi_j, \qquad j\geq 0,
\end{equation}
where $(\xi_j)_{j \geq 0}$ is an arbitrary orthonormal basis of $\sH$. This ensures that $T_{g}$ is a positive operator in $\sL_g$ such that
\begin{equation*}
 \mu_j(T_{g})=\lambda_j(T_{g}) = g(j) \qquad \text{for}\ j \gg 1.
\end{equation*}

In what follows, a trace  $\varphi:\sL_g\rightarrow \C$ is called \emph{normalized} (or \emph{$g$-normalized}) if $\varphi(T_{g})=1$. For instance, every Dixmier trace is normalized.

Note that in view of~(\ref{eq:uni invariance}) this normalization does not depend on the choice of the orthonormal basis $(\xi_j)_{j\geq 0}$. However, it may depend on the choice of the function $g$ to define the weak Lorentz ideal $\sL_g$ (see~\cite{Po-Ti:Part2}). It can be shown, however, that for positive traces (and even continuous traces) the normalization $\varphi(T_g)=1$ does not depend on $g$ (see Lemma~\ref{lem:independent of generator}).

\begin{definition}
 An operator $T\in \sL_g$ is said to be \emph{strongly measurable} (or \emph{positively measurable}) if all normalized positive traces take the same value on $T$.
\end{definition}

We shall denote by $\sM_s(\sL_g)$ the space of strongly measurable operators in $\sL_g$.

\begin{remark}\label{rmk:strong implies dixmier}
If $T\in \sL_g$ is strongly measurable, then $T$ is measurable. Moreover, for every normalized positive trace $\varphi:\sL_g\rightarrow \C$, we have
\begin{equation}\label{eq:normalized to nci}
 \varphi(T) = \gbint T.
\end{equation}
\end{remark}

The following statement (which is essentially proved in~\cite{LSZ:Book}) gives the relationship between continuous traces and normalized positive traces.

\begin{proposition}\label{prop:cont--positive}
 The following hold.
 \begin{enumerate}
 \item Every positive trace on $\sL_g$ is continuous and is a scalar multiple of a normalized positive trace.

 \item Every continuous trace on $\sL_g$ is linear combination of four normalized positive traces.
\end{enumerate}
\end{proposition}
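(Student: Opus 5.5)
We prove (1) in three steps: a norm bound on $\varphi$ for positive operators, extension of that bound to all operators, and normalization. Then (2) follows by splitting a continuous trace into real, imaginary, positive and negative parts.

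\emph{Step 1: positive operators.} Let $\varphi$ be a positive trace on $\sL_g$. Take $T\in\sL_g$ with $T\geq 0$. Then $\mu_j(T)\leq \|T\|_g\, g(j)$ for all $j$. Diagonalize $T$ in an orthonormal basis $(\eta_j)$ with $T\eta_j=\mu_j(T)\eta_j$. Let $U$ be the unitary sending $\xi_j$ to $\eta_j$; if $\ker T$ makes the bases mismatch, first pad $\sH$ or use a partial isometry. Then
\begin{equation*}
T\leq \|T\|_g\, U T_g U^*.
\end{equation*}
Positivity together with unitary invariance~(\ref{eq:uni invariance}) gives
\begin{equation*}
0\leq \varphi(T)\leq \|T\|_g\,\varphi(T_g).
\end{equation*}
If kernel dimensions make $U$ only an isometry, I would instead use $T=V^*(VTV^*)V$ with $V$ a partial isometry. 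The trace property $\varphi(AB)=\varphi(BA)$ then lets me compare $\varphi(T)$ with $\varphi$ of an operator dominated by $\|T\|_g T_g$, up to a finite-rank correction. A finite-rank correction is harmless, because finite-rank operators are dominated by $\epsilon T_g$ plus something dominated by $\epsilon T_g$ after conjugation.

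\emph{Step 2: continuity on all of $\sL_g$.} Write a general $T$ as
\begin{equation*}
T=(\Re T)^+-(\Re T)^-+i(\Im T)^+-i(\Im T)^-.
\end{equation*}
Each of the four pieces has $\mu_j\leq\mu_j(\Re T)$ or $\mu_j\leq\mu_j(\Im T)$, and those in turn are bounded by a constant times $\|T\|_g$. This uses the quasi-triangle inequality~(\ref{eq:g-triangle inequality}) and $\|T^*\|_g=\|T\|_g$. Hence $|\varphi(T)|\leq C\varphi(T_g)\|T\|_g$, so $\varphi$ is continuous.

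\emph{Step 3: normalization.} If $\varphi(T_g)=0$, Step 2 shows $\varphi\equiv0$, which is $0$ times any normalized positive trace (for instance a Dixmier trace, Proposition~\ref{prop:dixmier}). Otherwise $\varphi/\varphi(T_g)$ is normalized. This proves~(1).

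\emph{Part (2).} Let $\psi$ be a continuous trace. Set $\psi^*(T):=\overline{\psi(T^*)}$; this is again a continuous trace. Hence $\psi$ is a combination of two hermitian continuous traces, $\tfrac12(\psi+\psi^*)$ and $\tfrac1{2i}(\psi-\psi^*)$. It remains to write each hermitian continuous trace as the difference of two positive traces. Each of these is then a scalar multiple of a normalized one by~(1), giving four in total.

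This Jordan-type decomposition is the main obstacle. For $T\geq0$ I would define
\begin{equation*}
\psi^+(T):=\sup\{\psi(S);\ 0\leq S,\ \mu(S)\prec\prec \mu(T)\},
\end{equation*}
or, more simply, use $\sup\{\psi(S);\ 0\leq S\leq T\}$ symmetrized over unitary conjugates. Showing that this is additive requires the Riesz-decomposition-type property that traces respect. This is exactly where I would invoke the Lord--Sukochev--Zanin theory: a continuous trace on $\sL_g$ factors through a continuous linear functional on the corresponding symmetric sequence space, namely the shift-and-dilation-invariant functionals on $\ell_\infty$ via $\mu(T)\mapsto$ sequence. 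A bounded hermitian functional on the order-unit space $\ell_\infty$ (or on $\ell_\infty/\co$, a $C^*$-algebra) decomposes as a difference of positive functionals by the Jordan decomposition. Invariance is preserved if the decomposition is canonical, which it is because the Jordan decomposition is unique.
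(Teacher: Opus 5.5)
Your part (1) is correct, and it takes a different, more elementary route than the paper. The paper cites \cite[Lemma~2.4.12]{LSZ:Book} for automatic continuity of positive functionals on quasi-Banach ideals. It then argues as in \cite[Lemma~4.3]{Po:JNCG23} that a nonzero positive trace has $\varphi(T_g)>0$. Your domination estimate $|\varphi(T)|\le C'\varphi(T_g)\|T\|_g$ gives both facts at once. It uses the trace property rather than mere positivity, but in exchange it is self-contained.

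The kernel issue needs no padding and no finite-rank correction. (Also, finite-rank operators are \emph{not} dominated by $\epsilon T_g$ for small $\epsilon$.) If $T=\sum_j\mu_j(T)\ketbra{\eta_j}{\eta_j}$ and $V$ is the isometry with $V\xi_j=\eta_j$, then $T=VDV^*$ with $D=\sum_j\mu_j(T)\ketbra{\xi_j}{\xi_j}\le\|T\|_gT_g$. Hence $\varphi(T)=\varphi(DV^*V)=\varphi(D)\le\|T\|_g\varphi(T_g)$.

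The gap is in part (2), at its only nontrivial step: writing a continuous hermitian trace $\psi$ as a difference of positive traces. The mechanism you invoke does not work as stated, for three reasons.
\begin{enumerate}
\item The map $T\mapsto\mu(T)$ is not additive, so applying a linear functional on $\ell_\infty$ to $\mu(T)$ does not by itself produce a trace.
\item ``Shift-and-dilation-invariant functionals on $\ell_\infty$'' is not a valid description of the traces on $\sL_g$. Dilation invariance belongs to Dixmier's original construction. A Pietsch-type (shift-invariant) correspondence for $\sL_g$ is precisely what the companion paper sets up, so it is not available here.
\item Even granting some correspondence, uniqueness of the Jordan decomposition only shows that $f_\pm$ keep the invariance. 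You would still need $f_\pm$ to pull back to \emph{positive traces}, i.e., to functionals additive on $(\sL_g)_+$.
\end{enumerate}
The paper closes this step by citing \cite[Theorem~4.1.10]{LSZ:Book}.

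Alternatively, your ``simpler'' candidate already works, with no symmetrization. For $A\ge0$ put $\psi_+(A)=\sup\{\psi(B);\ 0\le B\le A\}$.
\begin{itemize}
\item It is finite, since $\|B\|_g\le\|A\|_g$ and $\psi$ is continuous.
\item It is unitarily invariant and trivially superadditive.
\item The Riesz-type property you mention holds modulo the trace. If $0\le B\le A_1+A_2$, Douglas' lemma gives $B^{1/2}=C(A_1+A_2)^{1/2}$ with $\|C\|\le1$. Using $\psi(X^*X)=\psi(XX^*)$ twice (polar decomposition plus the trace property), you get
\[
\psi(B)=\psi(CA_1C^*)+\psi(CA_2C^*)=\psi\big(A_1^{1/2}C^*CA_1^{1/2}\big)+\psi\big(A_2^{1/2}C^*CA_2^{1/2}\big),
\]
with $0\le A_i^{1/2}C^*CA_i^{1/2}\le A_i$.
\end{itemize}
So $\psi_+$ is additive on $(\sL_g)_+$ and extends to a positive trace. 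Then $\psi_-:=\psi_+-\psi$ is also a positive trace, since $\psi_+(A)\ge\psi(A)$. Part (1) turns each of these into a scalar multiple of a normalized positive trace, which gives the four traces.
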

\begin{proof}
 By~\cite[Lemma~2.4.12]{LSZ:Book} every positive linear functional on a quasi-Banach ideal is continuous. By~\cite[Theorem~4.1.10]{LSZ:Book} every continuous trace on a quasi-Banach ideal is a linear combination of 4 positive traces. Moreover, in the same way as in the proof of~\cite[Lemma~4.3]{Po:JNCG23}, it can be shown that if $\varphi:\sL_g \rightarrow  \C$ is a non-zero positive trace, then $\varphi(T_{g})> 0$. Thus, $\psi:= \varphi(T_{g})^{-1}\varphi$ is a normalized positive trace such that $\varphi=\varphi(T_{g})\psi$. This shows that every positive trace is a scalar multiple of a normalized positive trace. Combining this with the first part of the proof shows that every continuous trace is a linear combination of four normalized positive traces. The proof is complete.
\end{proof}

We have the following description of the space of strongly measurable operators.

\begin{proposition}\label{prop:decomp}
 We have
 \begin{equation*}
 \sM_s(\sL_g)=\C T_{g} \oplus \overline{\Com\big(\sL_g\big)}.
\end{equation*}
In particular, $\sM_s(\sL_g)$ is a closed subspace of $\sL_g$. Moreover, if $T\in \sM_s(\sL_g)$, then
\begin{equation}\label{eq:nci as normalization factor}
  \varphi(T) = \bigg(\gbint T \bigg)\varphi(T_{g}) \qquad \text{for every continuous trace  $\varphi:\sL_g\rightarrow \C$}.
\end{equation}
\end{proposition}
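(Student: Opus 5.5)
The plan is to characterize $\sM_s(\sL_g)$ as the intersection of kernels of differences of normalized positive traces, and then to invoke Proposition~\ref{prop:cont--positive} together with a Hahn--Banach argument on the quasi-Banach space $\sL_g$. Everything is organized around the set $\sE$ of continuous traces $\varphi$ with $\varphi(T_g)=0$.

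First, the inclusion ``$\supseteq$''. Every normalized positive trace is continuous by Proposition~\ref{prop:cont--positive}, annihilates $\Com(\sL_g)$, and hence annihilates its closure. For $T=cT_g+S$ with $S\in\overline{\Com(\sL_g)}$ we then get $\varphi(T)=c$ for all normalized positive traces, so $T$ is strongly measurable. The sum is direct because $\Tr_\omega(T_g)=1$ while $\Tr_\omega$ vanishes on $\overline{\Com(\sL_g)}$.

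Next, the inclusion ``$\subseteq$''. Let $T\in\sM_s(\sL_g)$ and set $c:=\gbint T$, which exists by Remark~\ref{rmk:strong implies dixmier}, and $S:=T-cT_g$. Every normalized positive trace vanishes on $S$. If $\varphi$ is any continuous trace, then by Proposition~\ref{prop:cont--positive} we can write $\varphi=\sum_{k=1}^4 a_k\psi_k$ with the $\psi_k$ normalized positive traces. This gives $\varphi(S)=\sum a_k\psi_k(S)=0$, and
\[
\varphi(T)=c\sum a_k=c\,\varphi(T_g).
\]
This identity is already (\ref{eq:nci as normalization factor}). It remains to show that a vector annihilated by all continuous traces lies in $\overline{\Com(\sL_g)}$. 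Let $F=\overline{\Com(\sL_g)}$. Continuous traces are exactly the continuous linear functionals on $\sL_g$ vanishing on $F$, i.e.\ the elements of the dual of the quasi-Banach space $\sL_g/F$.

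The main obstacle is that quasi-Banach spaces may have trivial dual, so Hahn--Banach separation of $S$ from $F$ is not automatic. To get around this I would use Proposition~\ref{prop:Lorentz.Marcinkiewicz}(i): $\sL_g$ embeds continuously into the Banach ideal $\sM_G$, and $\|\cdot\|_G$ is a genuine norm on $\sL_g$. Alternatively, I would invoke the known fact from \cite{LSZ:Book} that the commutator subspace of $\sL_g$ is closed in a locally convex sense. Concretely, I would try to show that $F$ is also closed for the seminorm $\|\cdot\|_G$ restricted to $\sL_g$. For this I would use Proposition~\ref{prop:Lorentz.Com-characterization}: $T\in\Com(\sL_g)$ iff $\sum_{j<N}\lambda_j(T)=\op{O}(Ng(N))$. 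In addition, by Kalton's theory \cite{Ka:Crelle98} the quotient $\sL_g/\Com(\sL_g)$ is controlled by the sequence space of partial sums modulo $\op{O}(Ng(N))$. If this works, the Hahn--Banach theorem for the seminormed space $(\sL_g,\|\cdot\|_G)$ gives a $\|\cdot\|_G$-continuous functional vanishing on $F$ and nonzero on $S\notin F$. Such a functional is continuous on $\sL_g$ and is a trace, which contradicts $\varphi(S)=0$. Hence $S\in F$.

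Finally, closedness of $\sM_s(\sL_g)$ follows because it is the sum of a closed subspace and a one-dimensional subspace, or directly because it is an intersection of kernels of continuous functionals $\psi-\psi'$.
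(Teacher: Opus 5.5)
Your outline is the paper's proof. The inclusion $\supseteq$ comes from continuity of normalized positive traces. For $\subseteq$, Proposition~\ref{prop:cont--positive} takes you from normalized positive traces to all continuous traces, which gives (\ref{eq:nci as normalization factor}) and shows that $S=T-(\gbint T)\,T_g$ is killed by every continuous trace. Your directness and closedness remarks are fine. At that point the paper simply writes ``and hence is contained in $\overline{\Com(\sL_g)}$''. You are right that this is a separation statement which is not automatic in a quasi-Banach space.

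The gap is in your fix: $F=\overline{\Com(\sL_g)}$ is in general \emph{not} closed for $\|\cdot\|_G$, and no $\|\cdot\|_G$-continuous trace can separate a point from it. Here is a counterexample.
\begin{itemize}
\item Take $g(t)=(t+1)^{-1}$, so $\sL_g=\sL_{1,\infty}$ and $G(N)=\log(N+1)$.
\item Pick $a_n\in(0,1]$ with $a_{n+1}\leq 2a_n$, Ces\`aro means tending to $0$, but arbitrarily long runs of $1$'s. For instance, use plateaus of length $k$ separated by geometric ramps down and back up of length $\sim k^2$.
\item Let $T\geq 0$ be diagonal with entries $2^{-n}a_n$ on $B_n=\{j:\ 2^n-1\leq j<2^{n+1}-1\}$. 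Then $T\in\sL_{1,\infty}$ and $\sum_{j<N}\mu_j(T)=\op{o}(\log N)$.
\item Hence $\|T-T_N\|_G\to0$ for the truncations $T_N$. These are finite rank, so they lie in $\Com(\sL_{1,\infty})$ by (\ref{eq:spectral1}).
\item Let $\theta$ be a shift-invariant extended limit with $\theta(a)=1$, and set $\varphi_\theta(A):=\theta\bigl(\{\sum_{j\in B_n}\lambda_j(A)\}_{n\geq0}\bigr)$. By (\ref{eq:1st asymptotic}), its defects of additivity and of well-definedness have the form $e_{n+1}-e_n$ with $e$ bounded, and $\theta$ kills these. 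So $\varphi_\theta$ is a positive trace, hence continuous, and $\varphi_\theta(T)=1$.
\end{itemize}
Thus $T$ lies in the $\|\cdot\|_G$-closure of $\Com(\sL_g)$ but not in $\overline{\Com(\sL_g)}$. Every trace you could obtain from Hahn--Banach on $(\sL_g,\|\cdot\|_G)$ vanishes on such a $T$, so that route cannot conclude.

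Your instinct to use Kalton's criterion is right, but the seminorm has to see dyadic blocks rather than Ces\`aro means. Keep $B_n$ as above and set
\begin{itemize}
\item $w_n:=2^ng(2^n)$ and $\alpha_n(T):=w_n^{-1}\sum_{j\in B_n}\lambda_j(T)$;
\item $V:=\{\beta\in\ell_\infty:\ \sum_{n<m}w_n\beta_n=\op{O}(w_m)\}$;
\item $p(T):=\dist_{\ell_\infty}(\alpha(T),V)$.
\end{itemize}
Then $|\lambda_j(T)|\leq C\|T\|_g\,g(j)$ follows from Weyl's multiplicative inequality and the $\RV$ property. Together with (\ref{eq:1st asymptotic}) and (\ref{eq:spectral1}), this makes $p$ a well-defined seminorm with $p\leq C\|\cdot\|_g$ that vanishes on $\Com(\sL_g)$, hence on its closure.

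For the converse, let $L(\beta)$ be the diagonal operator with entries $g(2^n)\beta_n$ on $B_n$. Then $\|L(\beta)\|_g\leq C\|\beta\|_\infty$ and $\alpha(L(\beta))-\beta\in V$. If $p(T)=0$, choose $v^{(k)}\in V$ with $v^{(k)}\to\alpha(T)$ in $\ell_\infty$. By Kalton's criterion, $T-L(\alpha(T))$ and $L(v^{(k)})$ lie in $\Com(\sL_g)$, while $\|L(\alpha(T)-v^{(k)})\|_g\to0$. Hence $T\in\overline{\Com(\sL_g)}$, so the zero set of $p$ is exactly $\overline{\Com(\sL_g)}$.

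Now Hahn--Banach for $p$ gives, for any $S\notin\overline{\Com(\sL_g)}$, a linear $\varphi$ with $|\varphi|\leq p$ and $\varphi(S)=p(S)\neq0$. This $\varphi$ is a continuous trace not vanishing on $S$. With this Pietsch-type block argument in place of your $\|\cdot\|_G$ step, the rest of your proof goes through.
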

\begin{proof}
 If $T\in \sM_s(\sL_g)$, then by Remark~\ref{rmk:strong implies dixmier} the equality~(\ref{eq:nci as normalization factor}) holds for all normalized positive traces on $\sL_g$. By Proposition~\ref{prop:cont--positive} the space of continuous traces on $\sL_g$ is spanned by normalized positive traces. Therefore, by linearity the equality~(\ref{eq:nci as normalization factor}) holds for all continuous traces on $\sL_g$. Thus, if we set $c=\gbint T$, then $T-cT_{g}$ is annihilated by all continuous traces, and hence is contained in $\overline{\Com(\sL_g)}$. Thus, $T\in \C T_{g} \oplus \overline{\Com(\sL_g)}$. This shows that  $  \sM_s(\sL_g) \subseteq \C T_{g} \oplus \overline{\Com\big(\sL_g\big)}$.

 It remains to show that $ \C T_{g} \oplus \overline{\Com\big(\sL_g\big)} \subseteq\sM_s(\sL_g)$. Suppose that $T\in \C T_{g} \oplus \overline{\Com(\sL_g)}$, i.e., $T=c T_{g}$ for some $c\in \C$. If $\varphi$ is a normalized positive trace on $\sL_g$, then it is continuous by Proposition~\ref{prop:cont--positive}, and so it annihilates $\overline{\Com(\sL_g)}$. As $\varphi(T_{g})=1$, we get
\begin{equation*}
 \varphi(T)=\varphi(cT_{g})=c\varphi(T_{g})=c.
\end{equation*}
 This shows that $T$ is strongly measurable, and hence $ \C T_{g} \oplus \overline{\Com\big(\sL_g\big)} \subseteq\sM_s(\sL_g)$.  The proof is complete.
 \end{proof}

Recall that a trace  $\varphi:\sL_g\rightarrow \C$ is called \emph{singular} if it annihilates finite-rank operators.

\begin{proposition}\label{prop:strong-meas.positive-trace-sLg0}
 Every positive trace on $\sL_g$ annihilates $(\sL_g)_0$, and hence is singular. In particular, every operator in $(\sL_g)_0$ is strongly measurable.
\end{proposition}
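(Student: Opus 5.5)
Let $\varphi$ be a positive trace on $\sL_g$. By Proposition~\ref{prop:cont--positive} it is continuous, so it suffices to show that $\varphi$ annihilates $(\sL_g)_0$, and by continuity and density of $\sR$ it is enough to show that $\varphi(R)=0$ for every finite-rank $R$. Every finite-rank operator is a linear combination of rank-one projections, and by unitary invariance~(\ref{eq:uni invariance}) all rank-one projections have the same value $\varphi(P)=:c\geq 0$. So the whole argument reduces to showing $c=0$.

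To get $c=0$, I will compare rank-one projections with $T_g$. Choose $N$ orthonormal vectors $\xi_0,\dots,\xi_{N-1}$ from the basis defining $T_g$, and set $P_N:=\sum_{j<N}\ketbra{\xi_j}{\xi_j}$. Then $\varphi(P_N)=Nc$. Since $g$ is ultimately decreasing, we have $g(j)\geq g(N)$ for $j<N$ once $N$ is large, after discarding finitely many indices. Hence
\begin{equation*}
T_g\geq g(N)P_N - F_0,
\end{equation*}
where $F_0$ is a fixed finite-rank correction that accounts for the finitely many initial indices where monotonicity fails. One can do slightly better and use $T_g\geq \sum_{b\leq j<N} g(j)\ketbra{\xi_j}{\xi_j}\geq g(N)\bigl(P_N-P_b\bigr)$, which holds for a fixed $b$ beyond which $g$ is decreasing. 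Positivity then gives
\begin{equation*}
\varphi(T_g)\geq g(N)(N-b)c.
\end{equation*}
Here I would use $Ng(N)\rightarrow\infty$, and verifying this is the main point. It follows from $g$ being $\RV_{-1}$: $tg(t)$ is slowly varying, but slowly varying functions need not tend to infinity, so this is not automatic. Instead I would use the divergence $\sum g(j)=\infty$ differently: replace the crude bound $g(j)\geq g(N)$ by a dyadic block argument.

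The dyadic block argument runs as follows. Take disjoint blocks $B_k=[2^k,2^{k+1})$. On $B_k$ one has $T_g\geq g(2^{k+1})P_{B_k}$, and therefore
\begin{equation*}
\varphi(T_g)\geq c\sum_{k\leq K}2^kg(2^{k+1}).
\end{equation*}
By the $\RV$ property, $2^kg(2^{k+1})\sim \tfrac14 2^{k+1}g(2^k)\cdot 2\cdot\tfrac12$, which is comparable to $\sum_{j\in B_k}g(j)$ up to a constant. Summing over $k$, the right-hand side is comparable to $cG(2^{K+1})\rightarrow\infty$, while $\varphi(T_g)$ is a finite number. Hence $c=0$. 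This comparison, namely controlling the block sums by $G$ using the uniform convergence theorem in the form of Lemma~\ref{lem:UCT-consequence}, is the only step requiring care; the rest is formal.

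Finally, $\varphi$ vanishes on $\sR$, hence on its closure $(\sL_g)_0$ by continuity, so $\varphi$ is singular. It follows that every normalized positive trace takes the value $0$ on any $T\in(\sL_g)_0$, so such $T$ is strongly measurable. Alternatively, one can conclude via Proposition~\ref{prop:decomp}, since the value is then forced to equal $\gbint T=0$ by Proposition~\ref{prop:dixmier}.
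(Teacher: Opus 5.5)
Your proposal is correct and has the same structure as the paper's proof: unitary invariance gives a common value $c\geq 0$ on rank-one projections, positivity against $T_g$ together with $\sum_j g(j)=\infty$ forces $c=0$, and linearity plus continuity (Proposition~\ref{prop:cont--positive}) extends this to $(\sL_g)_0$. The dyadic-block step is an unnecessary detour, because you already had $T_g\geq \sum_{j<N} g(j)\ketbra{\xi_j}{\xi_j}$ (the cutoff $b$ and monotonicity are not even needed), and by linearity $\varphi$ of the right-hand side is exactly $c\sum_{j<N}g(j)$. That is how the paper concludes, so no $\RV$ or condensation comparison is required.
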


\begin{proof}
By Proposition~\ref{prop:cont--positive} every positive trace is a constant multiple of a normalized positive trace. Therefore, it is enough to prove the result for normalized positive traces. Thus, let $\varphi$ be a normalized positive trace. If $\xi$ and $\eta$ are two unit vectors in $\sH$ and $U$ is a unitary operator such that $U\xi=\eta$,
then $\ketbra{\eta}{\eta}=\ketbra{U\xi}{U\xi}=U\ketbra{\xi}{\xi}U^*$. Thus,
\begin{equation*}
 \varphi\big(\ketbra{\eta}{\eta}\big)= \varphi\big(U\ketbra{\xi}{\xi}U^*\big)=\varphi\big(\ketbra{\xi}{\xi}\big).
\end{equation*}
This shows that $\varphi$ takes the same value on all rank 1 orthogonal projections. Denote this value by $c$.

We claim that $c=0$. To see this let $T_{g}=\sum_{j\geq 0} g(j) \ketbra{\xi_j}{\xi_j}$ be a Schmidt series representation for $T_{g}$. As $\varphi$ is positive and normalized, for all $N\geq 1$, we have
\begin{equation*}
 1=\varphi(T_{g}) \geq \varphi\big( \sum_{j<N}g(j)\ketbra{\xi_j}{\xi_j}\big)=\sum_{j<N}g(j)\varphi\big(\ketbra{\xi_j}{\xi_j}\big)=c\sum_{j<N}g(j).
\end{equation*}
This implies that $\sum_{j\geq 0}g(j)\leq c^{-1}<\infty$, which contradicts~(\ref{eq:Lorentz.divergence}). Therefore, $\varphi$ annihilates all rank 1 orthogonal projections. By linearity it annihilates all finite rank operators. We know from Proposition~\ref{prop:cont--positive} that $\varphi$ is continuous. Therefore, it annihilates the closure in $\sL_g$ of finite rank operators, i.e., the ideal $(\sL_g)_0$. The proof is complete.
\end{proof}

\begin{remark}
 More generally, for any ideal $\sI$ which is not contained in $\sL_1$, it can be shown that every positive trace is singular (see~\cite[Lemma~15]{SZ:AIM14}). In fact, the proof given above essentially follows the outline of the proof of~\cite[Lemma~15]{SZ:AIM14}.
\end{remark}

\begin{remark}\label{rmk:spt vs cl-com}
 It follows from Proposition~\ref{prop:strong-meas.positive-trace-sLg0} and Proposition~\ref{prop:cont--positive}, that $(\sL_g)_0$ is annihilated by all continuous traces. Thus,
 \begin{equation*}
 (\sL_g)_0 \subseteq \overline{\Com(\sL_g)}.
\end{equation*}
\end{remark}

\subsection{Strong measurability and Weyl laws}
As the following result shows, every Weyl operator in $\sL_g$ is strongly measurable. This extends to weak Lorentz ideals a result for the weak trace class $\sL_{1,\infty}$  (see, e.g., \cite[Proposition~5.12]{Po:JNCG23}).

\begin{theorem}\label{thm:weyl implies strong}
If $T\in \sW(\sL_g)$, then $T$ is strongly measurable,  and we have
\begin{equation*}
 \gbint T = \Lambda^+(T) - \Lambda^{-}(T).
\end{equation*}
In particular, if $T$ is selfadjoint, then
\begin{equation*}
  \gbint T = \lim_{j\rightarrow \infty} g(j)^{-1} \lambda_j^+(T)- \lim_{j\rightarrow \infty} g(j)^{-1} \lambda_j^-(T).
\end{equation*}
\end{theorem}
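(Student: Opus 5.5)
Since $\sW(\sL_g)$ and $\Lambda^\pm$ are defined through real and imaginary parts, and strong measurability and $\gbint$ are linear (the space $\sM_s(\sL_g)$ is a subspace by Proposition~\ref{prop:decomp}), it suffices to treat a selfadjoint Weyl operator $T$. Write $T=T^+-T^-$. The operators $T^\pm$ are positive with $\mu_j(T^\pm)=\lambda_j^\pm(T)$, so $g(j)^{-1}\mu_j(T^\pm)\to\Lambda^\pm(T)$. Linearity then reduces the statement to the following claim: if $A\geq 0$ lies in $\sL_g$ and $\mu_j(A)\sim c\,g(j)$ with $c\geq 0$ (or $\mu_j(A)=\op{o}(g(j))$ when $c=0$), then $A$ is strongly measurable and $\gbint A=c$. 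The integral formula $\gbint T=\Lambda^+(T)-\Lambda^-(T)$ follows from the claim by linearity.

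To prove the claim, I will compare $A$ with $cT_g$. Choose an orthonormal eigenbasis $(\eta_j)$ of $A$ with $A\eta_j=\mu_j(A)\eta_j$ on the closure of the range; if $\ker A$ is infinite-dimensional, first enlarge the Hilbert space or absorb the kernel, using unitary invariance (\ref{eq:uni invariance}). Take a unitary $U$ with $U\xi_j=\eta_j$, so that $U T_g U^*=\sum_j g(j)\ketbra{\eta_j}{\eta_j}$. Then $A-cUT_gU^*=\sum_j(\mu_j(A)-cg(j))\ketbra{\eta_j}{\eta_j}$ is a diagonal operator whose entries are $\op{o}(g(j))$, so it lies in $(\sL_g)_0$ by Proposition~\ref{prop:separable part}. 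The singular values of a diagonal operator are the decreasing rearrangement of the moduli of its entries, and for an $\RV$ function $g$ a rearrangement of an $\op{o}(g(j))$ sequence is still $\op{o}(g(j))$, because $g$ is ultimately decreasing and there are at most $j$ indices before $j$. Moreover $cUT_gU^*-cT_g\in\Com(\sL_g)$. Hence
\[
A\in cT_g+\Com(\sL_g)+(\sL_g)_0\subseteq \C T_g\oplus\overline{\Com(\sL_g)},
\]
using Remark~\ref{rmk:spt vs cl-com}. Proposition~\ref{prop:decomp} then gives $A\in\sM_s(\sL_g)$. For any normalized positive trace $\varphi$ we get $\varphi(A)=c$, since $\varphi$ annihilates both $\Com(\sL_g)$ and $(\sL_g)_0$ by Proposition~\ref{prop:strong-meas.positive-trace-sLg0}. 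Taking $\varphi$ to be a Dixmier trace yields $\gbint A=c$.

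The main obstacle is the bookkeeping of the diagonalization when $A$ has a kernel or finite rank. In that case $\sum\mu_j(A)\ketbra{\eta_j}{\eta_j}$ lives on a proper subspace, and the unitary $U$ must be replaced by a partial isometry. The cleanest fix is to use $V T_g V^*$ with $V$ an isometry sending $\xi_j\mapsto\eta_j$. Then $VT_gV^*-T_g=V(T_gV^*)-(T_gV^*)V$ is a commutator $[V,T_gV^*]$, because $V^*V=1$. So the same argument goes through with $\Com(\sL_g)$ defined through commutators $[A,T]$.
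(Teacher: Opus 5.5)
Your proof is correct and follows essentially the paper's argument: reduce to $T\geq 0$, then show that $T$ differs from $\Lambda(T)T_g$ by an element of $(\sL_g)_0$ (modulo $\Com(\sL_g)$). The one variation is that you read off $\gbint T$ by evaluating a Dixmier trace, which is a normalized positive trace, whereas the paper obtains it from the partial-sum criterion of Theorem~\ref{thm:measurable}. Your identity $VT_gV^*-T_g=[V,T_gV^*]$ and the rearrangement estimate usefully spell out the step the paper delegates to \cite{Po:JNCG23}, since $T-\Lambda(T)T_g\in(\sL_g)_0$ only holds once $T_g$ is diagonal in an eigenbasis of $T$. The suggestion of enlarging the Hilbert space is unnecessary and should be dropped, because it would change the ideal.
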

\begin{proof}
By linearity it is sufficient to prove the result for $T\geq 0$. As $T$ is a Weyl operator, we have $\lambda_j(T)=\Lambda(T)g(j)+\op{o}(g(j))$. We deduce from~(\ref{eq:Lorentz.divergence}) that
$\sum_{j<N}g(j)-G(N)=\op{O}(1)=\op{o}(G(N))$. Thus,
\begin{equation*}
 \sum_{j<N} \lambda_j(T)= \Lambda(T)\sum_{j<N}g(j) +\op{o}\big(\sum_{j<N}g(j)\big)=\Lambda(T)G(N)+\op{o}(G(N)).
\end{equation*}
Theorem~\ref{thm:measurable} then ensures that $T$ is measurable and $\gbint T=\Lambda(T)$.

It remains to show that $T$ is strongly measurable. In fact, by proceeding as in the proof of~\cite[Proposition~5.12]{Po:JNCG23}, it can be shown that $T-\Lambda(T)T_{g}\in (\sL_g)_0$. By Proposition~\ref{prop:strong-meas.positive-trace-sLg0} operators in $(\sL_g)_0$ are strongly measurable. As $T_{g}$ is strongly measurable, it follows that $T$ is strongly measurable. This completes the proof.
\end{proof}

\begin{corollary}
 If $T\in |\sW|(\sL_g)$, then $T$ is strongly measurable,  and we have
 \begin{equation*}
  \gbint |T| = \lim_{j\rightarrow \infty} g(j)^{-1} \mu_j(T).
\end{equation*}
\end{corollary}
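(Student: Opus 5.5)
The plan is to reduce everything to Theorem~\ref{thm:weyl implies strong} applied to the positive operator $|T|$. First I record the equivalence noted just after the definition of $|\sW|(\sL_g)$: $T\in|\sW|(\sL_g)$ if and only if $|T|\in\sW(\sL_g)$. Indeed $|T|\in\sL_g$ with $\mu_j(|T|)=\mu_j(T)$. Since $|T|\geq 0$, its positive eigenvalues are $\lambda_j^+(|T|)=\mu_j(T)$ and its negative eigenvalues vanish, so $\lambda_j^-(|T|)=0$. Hence $\Lambda^+(|T|)=\lim_{j\to\infty}g(j)^{-1}\mu_j(T)=\Delta(T)$ exists by hypothesis, and trivially $\Lambda^-(|T|)=0$. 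Thus $|T|$ is a selfadjoint Weyl operator.

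Second, I apply Theorem~\ref{thm:weyl implies strong} to $|T|$. This gives that $|T|$ is strongly measurable and
\begin{equation*}
\gbint |T| = \Lambda^+(|T|)-\Lambda^-(|T|)=\lim_{j\rightarrow\infty} g(j)^{-1}\mu_j(T),
\end{equation*}
which is the integral formula in the corollary. Concretely, the proof of that theorem shows $|T|-\Delta(T)T_g\in(\sL_g)_0$. Proposition~\ref{prop:strong-meas.positive-trace-sLg0} together with Proposition~\ref{prop:decomp} then gives $\varphi(|T|)=\Delta(T)$ for every normalized positive trace~$\varphi$.

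The main obstacle is the clause ``$T$ is strongly measurable'' read for $T$ itself rather than for $|T|$. Singular-value asymptotics alone cannot yield this. For example, take $T$ selfadjoint with $\mu_j(T)=g(j)$, and let the signs of its eigenvalues alternate in blocks of rapidly growing length. Then $|T|$ is a Weyl operator, but $G(N)^{-1}\sum_{j<N}\lambda_j(T)$ oscillates. By Theorem~\ref{thm:measurable}, $T$ is then not even Dixmier-measurable.

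So the plan establishes the statement with the measurability assertion understood for $|T|$, which is what the displayed formula $\gbint|T|$ requires in order to make sense. For $T$ itself, I would only add the remark that strong measurability holds under extra hypotheses, namely when $T\in\sW(\sL_g)$. That case is covered directly by Theorem~\ref{thm:weyl implies strong}.
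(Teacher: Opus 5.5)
Your proof is correct and matches the paper's intended argument. The paper gets this corollary by noting that $T\in|\sW|(\sL_g)$ if and only if $|T|\in\sW(\sL_g)$, with $\Lambda^+(|T|)=\lim_{j\to\infty}g(j)^{-1}\mu_j(T)$ and $\Lambda^-(|T|)=0$, and then applying Theorem~\ref{thm:weyl implies strong} to $|T|$. You are also right that the strong-measurability clause should read ``$|T|$ is strongly measurable'', and your block-alternating-sign example correctly shows that it can fail for $T$ itself.

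One small imprecision in your side remark: $|T|-\Delta(T)T_g\in(\sL_g)_0$ holds only if $T_g$ is taken diagonal in an eigenbasis of $|T|$, or replaced by an isometric conjugate $VT_gV^*$. This is harmless, since every trace takes the same value on $VT_gV^*$ and $T_g$, but the inclusion can fail for an arbitrary choice of $T_g$.
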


Theorem~\ref{thm:weyl implies strong} asserts that every Weyl operator in $\sL_g$ is strongly measurable. The converse does not hold (see~\cite{Po-Ti:Part2}). This leads us to introduce the following notion of measurability, which then is a stronger property than strong measurability.

\begin{definition}
 An operator $T\in \sL_g$ is called \emph{spectrally measurable}, if it is a Weyl operator in the sense of Definition~\ref{def:weyl}, i.e., it belongs to $\sW(\sL_g)$.
\end{definition}

\subsection{Dependence on the function $g(t)$}
As a further consequence of Theorem~\ref{thm:weyl implies strong}, we can show that the space $\sM_s(\sL_g)$ does not depend on the choice of the function $g(t)$ to define $\sL_g$.

Let $g_1:[0,\infty)\rightarrow (0,\infty)$ be some (continuous) $\RV_{-1}$-function such that $g_1(t)\sim g(t)$ as $t\rightarrow \infty$. In this case the quasi-Banach ideals $\sL_{g_1}$ and $\sL_g$ agree and have equivalent quasi-norms. We denote by $\sM_s^g(\sL_g)$ (resp., $\sM_s^{g_1}(\sL_g)$) the space of strongly measurable operators associated with the normalization $\varphi(T_g)=1$ (resp., $\varphi(T_{g_1})=1$).

We have the following consequence of Theorem~\ref{thm:weyl implies strong}.

\begin{lemma}\label{lem:independent of generator}
 For every continuous trace $\varphi$ on $\sL_g$ we have $\varphi(T_{g_1})=\varphi(T_g)$.
\end{lemma}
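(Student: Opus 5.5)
The plan is to show that $T_{g_1}-T_g$ lies in $(\sL_g)_0$ and then use that continuous traces kill $(\sL_g)_0$. Both $T_{g_1}$ and $T_g$ depend only on a choice of orthonormal basis. By unitary invariance~(\ref{eq:uni invariance}), the value $\varphi(T_{g_1})$ does not depend on that choice. So we may realize both operators on the same orthonormal basis $(\xi_j)_{j\geq 0}$:
\begin{equation*}
 T_g=\sum_{j\geq 0} g(j)\ketbra{\xi_j}{\xi_j}, \qquad T_{g_1}=\sum_{j\geq 0} g_1(j)\ketbra{\xi_j}{\xi_j}.
\end{equation*}
Unitary invariance is only guaranteed for traces, which holds here since $\varphi$ is a continuous trace.

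Their difference is the diagonal operator $S:=T_{g_1}-T_g$ with eigenvalues $g_1(j)-g(j)$. Since $g_1(t)\sim g(t)$, we have $|g_1(j)-g(j)|=\op{o}(g(j))$. The singular values of $S$ are the nonincreasing rearrangement of the sequence $(|g_1(j)-g(j)|)_{j\geq 0}$.

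To conclude that $\mu_j(S)=\op{o}(g(j))$, I would argue directly. Fix $\epsilon>0$ and choose $J$ such that $|g_1(j)-g(j)|\leq \epsilon g(j)$ for $j\geq J$. Then $S$ is a finite-rank operator plus a diagonal operator $S'$ whose entries are dominated by $\epsilon g(j)$. Because $g$ is ultimately decreasing, domination by $\epsilon T_g$ (up to finite rank) gives $\mu_j(S')\leq \epsilon\,\mu_j(T_g)$ for $j$ large. The finite-rank part only shifts indices by a bounded amount, which is harmless by Fan's inequality~(\ref{eq:Lorentz.Fan1}) together with~(\ref{eq:slow decreasing}). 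Hence $\limsup_j g(j)^{-1}\mu_j(S)\leq\epsilon$ for every $\epsilon>0$, and Proposition~\ref{prop:separable part} gives $S\in(\sL_g)_0$.

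Alternatively, one can bypass the rearrangement estimate. The operator $T_{g_1}$ is positive with $\lim_j g(j)^{-1}\lambda_j(T_{g_1})=\lim_j g_1(j)/g(j)=1$, so it is a Weyl operator in $\sL_g$ with $\Lambda^+=1$ and $\Lambda^-=0$. The proof of Theorem~\ref{thm:weyl implies strong} then yields $T_{g_1}-T_g\in(\sL_g)_0$ directly. This is exactly the sense in which the lemma is a consequence of that theorem.

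Finally, Remark~\ref{rmk:spt vs cl-com} (equivalently Propositions~\ref{prop:cont--positive} and~\ref{prop:strong-meas.positive-trace-sLg0}) shows that every continuous trace annihilates $(\sL_g)_0$. Therefore $\varphi(T_{g_1})-\varphi(T_g)=\varphi(S)=0$. The only step requiring care is the verification $S\in(\sL_g)_0$, that is, the rearrangement and finite-shift bookkeeping. Invoking Theorem~\ref{thm:weyl implies strong} handles this.
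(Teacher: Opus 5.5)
Your proof is correct, but it is organized differently from the paper's. The paper gets the lemma in one line from earlier results. $T_{g_1}$ is a positive Weyl operator with $\lim_{j} g(j)^{-1}\lambda_j(T_{g_1})=1$, so Theorem~\ref{thm:weyl implies strong} makes it strongly measurable with $\gbint T_{g_1}=1$. Then~\eqref{eq:nci as normalization factor} in Proposition~\ref{prop:decomp} gives $\varphi(T_{g_1})=\bigl(\gbint T_{g_1}\bigr)\varphi(T_g)=\varphi(T_g)$ for every continuous trace $\varphi$.

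You bypass strong measurability, the NC integral, and the decomposition $\sM_s(\sL_g)=\C T_g\oplus\overline{\Com(\sL_g)}$. After putting both operators on a common orthonormal basis via unitary invariance, you check by hand that $T_{g_1}-T_g\in(\sL_g)_0$, and you finish with Remark~\ref{rmk:spt vs cl-com}. Both arguments rest on the same two facts: continuous traces are spanned by positive ones (Proposition~\ref{prop:cont--positive}), and positive traces kill $(\sL_g)_0$ (Proposition~\ref{prop:strong-meas.positive-trace-sLg0}). The paper's route is shorter because it recycles Theorem~\ref{thm:weyl implies strong}. Yours is more elementary and self-contained. It also makes explicit the step the paper only sketches by citation, namely that a positive Weyl operator $T$ differs from $\Lambda(T)\,T_g$ by an element of $(\sL_g)_0$.

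Your basis alignment is not cosmetic. If $T_g$ and $T_{g_1}$ are diagonal in unrelated bases, $T_{g_1}-T_g$ need not lie in $(\sL_g)_0$, even when $g_1=g$; an example is two bases differing by a permutation that reverses each dyadic block $[2^m,2^{m+1})$. In general the difference lies only in $(\sL_g)_0+\Com(\sL_g)$. That is why the paper's route works modulo $\overline{\Com(\sL_g)}$, whereas yours needs the common basis.

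One small precision: $|S'|\leq\epsilon T_g$ holds in operator order, so $\mu_j(S')\leq\epsilon\,\mu_j(T_g)$ for every $j$, not just for $j$ large. Ultimate monotonicity of $g$ is needed only to identify $\mu_j(T_g)=g(j)$ for $j$ large.
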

\begin{proof}
 For $j$ large enough we have $\lambda_j(T_{g_1})=g_1(j)\sim g(j)$. Thus, $T_{g_1}$ is a (positive) Weyl operator. Theorem~\ref{thm:weyl implies strong} then ensures that $T_{g_1}$ belongs to $\sM_s^g(\sL_g)$, and we have
 \begin{equation*}
 \gbint T_{g_1}= \lim_{j\rightarrow \infty} g(j)^{-1} \lambda_j(T_{g_1})=1.
\end{equation*}
Combining this with Proposition~\ref{prop:decomp} we deduce that, for every continuous trace $\varphi$ on $\sL_g$, we have
\begin{equation*}
 \varphi\left(T_{g_1}\right)= \bigg( \gbint T_{g_1}\bigg)\varphi(T_g)= \varphi(T_g).
\end{equation*}
The result is proved.
\end{proof}

It follows from Lemma~\ref{lem:independent of generator} that, for positive traces on $\sL_g$, the normalization conditions $\varphi(T_g)=1$ and $\varphi(T_{g_1})=1$ are equivalent. As a result, the set of $g_1$-normalized positive traces agrees with that of $g$-normalized traces. We thus arrive at the following result.

\begin{proposition}\label{prop:strong-independent}
We have $\sM_s^{g_1}(\sL_{g})=\sM_s^g(\sL_{g})$.
\end{proposition}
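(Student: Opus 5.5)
The plan is to show that the two classes of positive normalized traces coincide, after which the two definitions of strong measurability are literally the same condition. Recall that $\sM_s^{g}(\sL_g)$ is the set of $T\in\sL_g$ on which all positive traces $\varphi$ with $\varphi(T_g)=1$ agree. Likewise, $\sM_s^{g_1}(\sL_g)$ is defined with the normalization $\varphi(T_{g_1})=1$.

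First, I will use Proposition~\ref{prop:cont--positive}: every positive trace on $\sL_g$ is continuous. Here $\sL_g=\sL_{g_1}$ with equivalent quasi-norms, so continuity does not depend on which quasi-norm is used. Next, I apply Lemma~\ref{lem:independent of generator} to any positive trace $\varphi$. This gives $\varphi(T_{g_1})=\varphi(T_g)$, so $\varphi(T_g)=1$ holds exactly when $\varphi(T_{g_1})=1$. Hence the set of $g$-normalized positive traces equals the set of $g_1$-normalized positive traces. Strong measurability with respect to either normalization asks that all traces in this common set agree on $T$, so the two spaces are equal.

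The only delicate point is that the lemma is applied with the roles of $g$ and $g_1$ possibly interchanged. It is not actually needed, since the lemma's conclusion is a symmetric equality. Still, the hypotheses are symmetric: $g_1\sim g$, both functions are $\RV_{-1}$, and $\int_0^\infty g_1=\infty$ follows from $g_1\sim g$. So either direction is legitimate.

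As a consistency check, I will also compare the common values. For $T$ in either space, Proposition~\ref{prop:decomp} with respect to $g$ gives $T=cT_g+C$ with $C\in\overline{\Com(\sL_g)}$. For any continuous trace $\varphi$, Lemma~\ref{lem:independent of generator} then gives $\varphi(T)=c\,\varphi(T_g)=c\,\varphi(T_{g_1})$. This agrees with the decomposition $T=cT_{g_1}+C'$, where $C'=c(T_g-T_{g_1})+C$. Here $T_g-T_{g_1}$ lies in $\overline{\Com(\sL_g)}$ because it is annihilated by every continuous trace. By Corollary~\ref{cor:Dixmier.independence-g}, the two NC integrals also agree. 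I expect no real obstacle: the substance was already in Lemma~\ref{lem:independent of generator}, and this proposition is a bookkeeping consequence of it.
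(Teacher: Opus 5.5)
Your argument is correct and is the same as the paper's: positive traces are continuous (Proposition~\ref{prop:cont--positive}), so Lemma~\ref{lem:independent of generator} gives $\varphi(T_{g_1})=\varphi(T_g)$ for every positive trace. Hence the $g$- and $g_1$-normalized positive traces form the same family and the two spaces coincide; your final consistency-check paragraph is not needed.
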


This shows that $\sM(\sL_g)$ depends only on the quasi-Banach ideal $\sL_g$, not on the choice of function in the equivalence class of $g(t)$.

\begin{remark}
A full spectral characterization of strongly measurable operators in $\sL_g$, analogous to the spectral characterization of measurability in Theorem~\ref{thm:measurable}, will be established in the companion paper~\cite{Po-Ti:Part2} via Pietsch's correspondence for traces on~$\sL_g$.
\end{remark}

\section{Examples Arising from Nonclassical Weyl Laws}\label{sec:examples}
In this section, we describe various concrete examples of spectrally measurable and $g$-hypermeasurable operators, and compute their NC integrals. These examples come from a variety of settings. Most of them arise from nonclassical Weyl Laws in the sense of Simon~\cite{Si:JFA83}. We also describe a general recipe to construct examples from operators satisfying nonclassical Weyl laws.

Further examples can be found in~\cite{GS:JFA14, GU:JMAA20, LU:JMAA25, TU:arXiv24}. We stress that in these references the focus is mostly on Dixmier measurability, while in this section we focus on spectral measurability and $g$-hypermeasurability.

\subsection{General construction of examples}
Let $A:\dom(A)\rightarrow \sH$ be a selfadjoint operator with non-negative spectrum such that
\begin{itemize}
 \item $0$ is isolated in the spectrum of $A$

 \item The non-zero part of the spectrum of $A$ consists of isolated eigenvalues with finite multiplicity.
\end{itemize}
We then can arrange the positive eigenvalues of $A$ as a non-decreasing sequence,
\begin{equation*}
 \lambda_0(A)\leq  \lambda_1(A) \leq \lambda_2(A)\leq \cdots
\end{equation*}
where each eigenvalue is repeated according to multiplicity. We then define the counting function of $A$ by
\begin{equation*}
 N(A;\lambda)=\#\left\{j; \ \lambda_j\left(A\right) <\lambda\right\},  \qquad \lambda>0.
\end{equation*}

For $p>0$ we define $A^{-p}$ to be $f(A)$ with $f(t)=\car_{(0,\infty)}(t)t^{-p}$.  Equivalently, we let $(\xi_j)_{j\geq 0}$ be any orthonormal family in $\sH$ such that $A\xi_j=\lambda_j(A)\xi_j$ for all $j\geq 0$, then we have
\begin{equation*}
 A^{-p}=0 \quad \text{on}\ \ker A \qquad \text{and} \qquad A^{-p}\xi_j= \lambda_j(A)^{-p} \xi_j \quad \text{for all}\ j\geq 0.
\end{equation*}
In particular, $A^{-p}$ is a positive compact operator, and we have
\begin{equation*}
 \lambda_{j}(A^{-p})=\lambda_{j}(A)^{-p} \qquad \forall j\geq 0.
\end{equation*}
Thus, if we define the counting function of $A^{-p}$ by
\begin{equation*}
 N\left(A^{-p};\lambda\right)=\#\left\{j; \ \lambda_j\big(A^{-p}\big) >\lambda\right\},  \qquad \lambda>0,
\end{equation*}
then we have
\begin{equation}\label{eq:example.NA-NAp}
 N\left(A^{-p};\lambda\right)= N\big(A;\lambda^{-\frac{1}{p}}\big) \qquad \forall \lambda>0.
\end{equation}

 We focus on cases where $A$ satisfies a (nonclassical) Weyl law of the form,
\begin{equation}\label{eq:example.A-Weyl-law}
 N(A;\lambda)\sim c \lambda^p (\log \lambda)^q, \qquad c>0, \quad p>0, \quad q\geq -1.
\end{equation}
We then set $g(t)=(t+1)^{-1}(\log(t+2))^q$, $t\geq 0$. This is a continuous $\RV_{-1}$-function. Note that for $q=0$ the ideal $\sL_g$ just is the weak trace-class $\sL_{1,\infty}$.

\begin{lemma}\label{lem:example.Ap-spectral-meas}
If $A$ satisfies the Weyl law~(\ref{eq:Intro.A-Weyl-law}), then the operator $A^{-p}$ is spectrally measurable in $\sL_{g}$, and we have
 \begin{equation}\label{eq:example.Ap-int-formula}
 \gbint A^{-p} = cp^{-q}.
\end{equation}
\end{lemma}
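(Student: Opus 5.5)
The plan is to convert the Weyl law for $A$ into an asymptotic for the eigenvalues $\lambda_j(A^{-p})$ in terms of $g(j)$. Spectral measurability then follows from the definition of Weyl operator, and the integral from Theorem~\ref{thm:weyl implies strong}. Since $A^{-p}\geq 0$, we have $\lambda_j^-(A^{-p})=0$ and $\lambda_j^+(A^{-p})=\lambda_j(A)^{-p}$. It therefore suffices to show
\begin{equation*}
 \lim_{j\rightarrow\infty} g(j)^{-1}\lambda_j(A)^{-p} = cp^{-q}.
\end{equation*}
This gives $A^{-p}\in\sW(\sL_g)$ with $\Lambda^+(A^{-p})=cp^{-q}$ and $\Lambda^-(A^{-p})=0$. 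Theorem~\ref{thm:weyl implies strong} then yields $\gbint A^{-p}=cp^{-q}$; note that $A^{-p}\in \sL_g$ is automatic once the limit exists.

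First, I invert the counting function. Set $\lambda_j:=\lambda_j(A)$, which is non-decreasing and tends to $\infty$ because $N(A;\lambda)<\infty$ for all $\lambda$. By definition of $N$ with strict inequality, $N(A;\lambda_j)\leq j$ and $N(A;\lambda_j+\epsilon)\geq j+1$ for every $\epsilon>0$. Fix $\delta>1$ and use $N(A;\delta\lambda_j)\geq j+1$ together with the Weyl law~(\ref{eq:example.A-Weyl-law}). Since $(\log(\delta\lambda))^q\sim(\log\lambda)^q$, I get
\begin{equation*}
 c\lambda_j^p(\log\lambda_j)^q\,(1+\op{o}(1)) \leq j+1 \leq c\,\delta^p\lambda_j^p(\log\lambda_j)^q\,(1+\op{o}(1)).
\end{equation*}
The lower bound uses $N(A;\lambda_j)\leq j$ only for the values $\lambda_j$ at which $N$ jumps. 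To avoid this issue I will instead use $N(A;\lambda_j/\delta)\leq j$. Letting $\delta\to1$ gives $j\sim c\lambda_j^p(\log\lambda_j)^q$.

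Next, I solve for $\lambda_j^{-p}$. Taking logarithms gives $\log j\sim p\log\lambda_j$, because $\log$ of the slowly varying factor is $\op{o}(\log\lambda_j)$. This holds also for $q=-1$, since $(\log\lambda)^{-1}$ is still slowly varying. Hence $(\log\lambda_j)^q\sim p^{-q}(\log j)^q$, and so
\begin{equation*}
 \lambda_j^{-p}\sim c\,j^{-1}(\log\lambda_j)^q\sim c\,p^{-q}j^{-1}(\log j)^q\sim cp^{-q}g(j),
\end{equation*}
using $(j+1)^{-1}(\log(j+2))^q\sim j^{-1}(\log j)^q$. Alternatively, this step is the content of Lemma~\ref{lem:B5} with the asymptotic inverse $h^\sharp(t)=p^{-q}t(\log(t+2))^q$ of $h=g^{-1}$, applied via~(\ref{eq:example.NA-NAp}). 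In that form one checks that $h^\sharp(\lambda^{-1})^{-1}N(A^{-p};\lambda)\to c\,p^{q}$, i.e.\ $(\Lambda^+)^{1/|\rho|}$ with $\rho=-1$. I would probably use this route to stay consistent with the paper.

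The main obstacle is the careful inversion step. This means handling the strict-versus-nonstrict inequality and the jumps of $N$, and confirming that $q\geq-1$ causes no degeneracy: $g$ must be $\RV_{-1}$ with $\int_0^\infty g=\infty$, which holds precisely for $q\geq-1$. The remaining steps are routine regular-variation asymptotics via Lemma~\ref{lem:UCT-consequence}.
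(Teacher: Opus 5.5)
Your main argument is correct, and it follows a different route from the paper's. The paper first transfers the Weyl law to $A^{-p}$ via~(\ref{eq:example.NA-NAp}), getting $N(A^{-p};\lambda)\sim cp^{-q}\lambda^{-1}|\log\lambda|^q$ as $\lambda\to0^+$. Here the factor $p^{-q}$ comes from $\log(\lambda^{-1/p})=p^{-1}|\log\lambda|$. It then invokes the general Lemma~\ref{lem:B5}, which relates counting-function asymptotics to eigenvalue asymptotics for regularly varying functions via asymptotic inverses, to get $\lambda_j(A^{-p})\sim cp^{-q}j^{-1}(\log j)^q$. Theorem~\ref{thm:weyl implies strong} then finishes exactly as in your plan.

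You instead invert on the side of $A$ by hand, using the sandwich $N(A;\lambda_j)\leq j<N(A;\delta\lambda_j)$ and the explicit relation $\log j\sim p\log\lambda_j$. This is essentially the proof of Lemma~\ref{lem:B5} specialized to $\lambda^p(\log\lambda)^q$. Your version is self-contained and makes the origin of $p^{-q}$ transparent. The paper's is shorter given the appendix and applies verbatim to any regularly varying Weyl law. The detour through $N(A;\lambda_j/\delta)$ is harmless but unnecessary: $N(A;\lambda_j)\leq j$ holds for every $j$, and the Weyl law applies along any sequence tending to $\infty$, so jumps of $N$ cause no problem.

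The alternative you say you would prefer has two errors that would give the wrong constant. First, $p^{-q}t(\log(t+2))^q$ is not an asymptotic inverse of $h=g^{-1}=(t+1)(\log(t+2))^{-q}$, since $h\bigl(p^{-q}t(\log(t+2))^q\bigr)\sim p^{-q}t$. Since $h$ is $\RV_1$, Example~\ref{ex:asymp inverse} with index $1$ and exponent $-q$ gives $h^\sharp(t)=t(\log(t+2))^q$ with no constant. The Weyl exponent $p$ of $A$ does not enter $h^\sharp$ at all. Second, with the correct $h^\sharp$ one gets $h^\sharp(\lambda^{-1})^{-1}N(A^{-p};\lambda)\to cp^{-q}$, which is $(\Lambda^+)^{1/|\rho|}=\Lambda^+$. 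Your stated limit $cp^{q}$ is neither this value nor what your own $h^\sharp$ would give, which is $c$. Fix both points if you take the Lemma~\ref{lem:B5} route; otherwise keep your direct computation, which correctly yields $\gbint A^{-p}=cp^{-q}$.
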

\begin{proof}
It follows from~(\ref{eq:example.NA-NAp})--(\ref{eq:example.A-Weyl-law}) that, as $\lambda \rightarrow 0^+$ we have
\begin{equation*}
  N\left(A^{-p};\lambda\right)= N\big(A,\lambda^{-\frac{1}{p}}\big) \sim c \lambda^{-1} \left(\log \big(\lambda^{-\frac1p}\big)\right)^q = c p^{-q}\lambda^{-1} |\log  \lambda|^q.
\end{equation*}
Combining this with Lemma~\ref{lem:B5} gives the eigenvalue asymptotic, 
 \begin{equation*}
 \lambda_j\big(A^{-p}\big) \sim c p^{-q} j^{-1} (\log j)^q \qquad \text{as}\ j \rightarrow \infty. 
\end{equation*}
The result then follows from Theorem~\ref{thm:weyl implies strong}. 
\end{proof}

\subsection{Zeros of Riemann's zeta function (Riemann Hypothesis)}
Assume Riemann Hypothesis (RH) holds, and let $\sD$ be the ``Dirac operator" whose spectrum consists of the imaginary parts of non-trivial zeros of the Riemann zeta function  $\zeta(s):=\sum_{n\geq 1} n^{-s}$ (see~\cite{Co:AFA24, CM:PNAS22}). Thus, $\sD$ is a selfadjoint unbounded operator on some suitable Hilbert space.

As the zeros of $\zeta(s)$ are symmetric with respect to the real axis, we see that the counting function $N(|\sD|;\lambda)$, $\lambda>0$, is two times the number of zeros of
$\zeta(s)$ with imaginary part in $[0,\lambda)$. Therefore, by the Riemann-von Mangoldt formula (see, e.g., \cite{Ti:Zeta}), as $\lambda\rightarrow \infty$ we have
\begin{align*}
 N\big(|\sD|;\lambda\big) & = 2\frac{\lambda}{2\pi}\left( \log\left(\frac{\lambda}{2\pi}\right)-1\right)+\op{O}(\log \lambda)\\
 & = \frac{1}{\pi} \lambda \log \lambda\left( 1+ \op{O}\left((\log \lambda)^{-1} \right)\right).
\end{align*}
Applying Lemma~\ref{lem:example.Ap-spectral-meas} then gives the following result.

\begin{proposition}\label{prop:RH}
 Assume (RH), and set $g(t)=(t+1)^{-1}\log (t+2)$, $t\geq 0$. The operator $|\sD|^{-1}$ is spectrally measurable in $\sL_g$, and we have
 \begin{equation*}
 \gbint |\sD|^{-1} = \frac{1}{\pi}.
\end{equation*}
\end{proposition}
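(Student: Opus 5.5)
The plan is to reduce Proposition~\ref{prop:RH} to Lemma~\ref{lem:example.Ap-spectral-meas}, applied with $A=|\sD|$, $p=1$, $q=1$ and $c=1/\pi$. With these parameters the function attached to the Weyl law~(\ref{eq:example.A-Weyl-law}) is $g(t)=(t+1)^{-1}\log(t+2)$, which is exactly the function in the statement. Moreover $cp^{-q}=1/\pi$, so the lemma would give both the spectral measurability of $|\sD|^{-1}$ and the value $\gbint|\sD|^{-1}=1/\pi$.

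I will first check the structural hypotheses on $A=|\sD|$. It is selfadjoint with non-negative spectrum. Its spectrum is the set of absolute values of the ordinates $\gamma$ of the nontrivial zeros $\tfrac12+i\gamma$. The nontrivial zeros are discrete and each has finite multiplicity, so every positive eigenvalue is isolated and has finite multiplicity. The value $0$ is not an ordinate at all: $\zeta(\tfrac12)\neq 0$, and in fact the smallest ordinate is about $14.13$. Hence $0$ is either absent from the spectrum or isolated in it. Multiplicities are counted as multiplicities of zeros of $\zeta$. The ordering and the counting convention $N(A;\lambda)=\#\{j;\lambda_j(A)<\lambda\}$ differ from the usual zero-counting function only at jump points. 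This does not affect asymptotics.

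Next I will verify the Weyl law $N(|\sD|;\lambda)\sim\pi^{-1}\lambda\log\lambda$. By the symmetry $\bar\rho$ of zeros, $N(|\sD|;\lambda)$ is twice the number of zeros with ordinate in $(0,\lambda)$. The Riemann--von Mangoldt formula gives that count as $\frac{\lambda}{2\pi}\log\frac{\lambda}{2\pi}-\frac{\lambda}{2\pi}+\op{O}(\log\lambda)$. Doubling it yields $\pi^{-1}\lambda\log\lambda\,(1+\op{O}((\log\lambda)^{-1}))$, as displayed before the statement. In particular $N(A;\lambda)\sim c\lambda^p(\log\lambda)^q$ with $c=1/\pi>0$, $p=1$ and $q=1\geq -1$.

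Finally I invoke Lemma~\ref{lem:example.Ap-spectral-meas}. It yields $\lambda_j(|\sD|^{-1})\sim\pi^{-1}j^{-1}\log j\sim\pi^{-1}g(j)$, so $|\sD|^{-1}$ is a positive Weyl operator in $\sL_g$. Theorem~\ref{thm:weyl implies strong} then gives $\gbint|\sD|^{-1}=\Lambda^+=1/\pi$, and also strong measurability. The only step needing care is the structural check that $0$ is isolated in the spectrum and that multiplicities are finite. The rest is a direct substitution, so I expect no real obstacle.
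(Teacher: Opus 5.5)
Your proposal is correct and matches the paper's proof: the paper also doubles the Riemann--von Mangoldt count to get $N(|\sD|;\lambda)=\pi^{-1}\lambda\log\lambda\,(1+\op{O}((\log\lambda)^{-1}))$ and then applies Lemma~\ref{lem:example.Ap-spectral-meas} with $p=q=1$ and $c=1/\pi$. Your explicit check that $0$ is not an eigenvalue (since $\zeta(1/2)\neq 0$) and that multiplicities are finite is a harmless addition that the paper leaves implicit.
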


\begin{remark}
 As $|\sD|$ is a positive operator, the fact that $|\sD|^{-1}$ is spectrally measurable means that 
 \begin{equation}\label{eq:asymptotic of RH dirac}
 \lambda_j\left(|\sD|^{-1}\right)\sim  \frac{1}{\pi} j^{-1} (\log j).
\end{equation}
 The symmetry between positive and negative eigenvalues of $\sD$, ensures that $\lambda_j^\pm(\sD^{-1})=\lambda_{2j}(|\sD|^{-1})$ for all $j\geq 0$. Therefore, the positive and negative eigenvalues satisfy asymptotics of the form~(\ref{eq:asymptotic of RH dirac}). It then follows that $\sD^{-1}$ is spectrally measurable in $\sL_g$. In fact, the symmetry further ensures that $\sH$ has an orthogonal splitting $\sH=\sH^+\oplus \sH^-$, and we can find an orthonormal eigenbasis $(\xi_j^\pm)$ of $\sH^{\pm}$ in such a way that $\sD^{-1}\xi_j^\pm =\pm \lambda_{2j}(|\sD|)\xi_j^\pm$. If $J$ is the involution on $\sH$ such that $J\xi_j^\pm=\xi_j^\mp$, then we have $J\sD^{-1}J=-\sD^{-1}$. Equivalently,
 \begin{equation*}
 \sD^{-1}= \frac{1}{2}\big(\sD^{-1}-J\sD^{-1}J\big)= \frac12\big[J,J\sD^{-1}\big].
\end{equation*}
This shows that $\sD^{-1}$ is actually in the commutator space $\Com(\sL_g)$, and so it annihilates \emph{every} trace on $\sL_g$. 
\end{remark}

\subsection{Schr\"odinger operators on unbounded domains}
On $\R^n$, $n\geq 2$, consider the family of Schr\"odinger operators,
\begin{equation*}
 H_\alpha:= - \Delta + |x_1 \cdots x_n|^\alpha, \qquad \alpha>0,
\end{equation*}
where $\Delta=-(\partial_{x_1}^2+\cdots + \partial_{x_n}^2)$ is the (positive) Laplacian.

It was shown by Simon~\cite{Si:AP83} in dimension $n=2$ that $H_\alpha$ is essentially selfadjoint and its spectrum is positive and discrete. This was extended by Camus-Rautenberg~\cite{CR:JMP15} to dimensions~$\geq 3$. It was further shown by Simon~\cite{Si:JFA83} (for $n=2$) and Camus-Rautenberg~\cite{CR:JMP15} (for $n\geq 3$) that as $\lambda\rightarrow \infty$ we have the Weyl law,
\begin{equation}\label{eq:distribution of H_alpha}
 N\big(H_\alpha;\lambda\big) \sim c(n,\alpha) \lambda^{\frac{n}{2}+\frac{1}{\alpha}} (\log \lambda)^{n-1},
\end{equation}
where we have set
\begin{equation}\label{eq:Examples.Simon-cnalpha}
  c(n,\alpha) = \frac{1}{(n-1)!}\bigg(\frac{n}{2}+\frac1\alpha\bigg)^{n-1}
   \frac{\pi^{-\frac{n}{2}}\Gamma\big(\frac1\alpha+1\big)}{\Gamma\big(\frac{n}{2}+\frac1\alpha +1\big)}.
\end{equation}
This type of Weyl law prompted Simon~\cite{Si:JFA83} to coin the terminology ``nonclassical Weyl law''.

By combining~(\ref{eq:distribution of H_alpha}) with Lemma~\ref{lem:example.Ap-spectral-meas} we get the following result.

\begin{proposition}\label{prop:Simon}
 Put $g(t)=(t+1)^{-1}(\log(t+2))^{n-1}$, $t\geq 0$. The operator $H_\alpha^{-(\frac{n}{2}+\frac1\alpha)}$ is spectrally measurable in $\sL_g$, and we have
 \begin{equation*}
 \gbint H_\alpha^{-(\frac{n}{2}+\frac1\alpha)} = \frac{1}{(n-1)!}\bigg(\frac{n}{2}+\frac1\alpha\bigg)^{n-1}
   \frac{\pi^{-\frac{n}{2}} \Gamma\big(\frac1\alpha+1\big)}{\Gamma\big(\frac{n}{2}+\frac1\alpha +1\big)}.
\end{equation*}
\end{proposition}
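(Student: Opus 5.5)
The plan is to apply Lemma~\ref{lem:example.Ap-spectral-meas} with the data read off from~(\ref{eq:distribution of H_alpha}). Everything rests on results already available, so the work is to check that the hypotheses of that lemma hold and to track the constant.

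First, I record the facts from~\cite{Si:AP83, CR:JMP15} that $H_\alpha$ is essentially selfadjoint with positive, discrete spectrum. The spectrum is therefore made of isolated eigenvalues of finite multiplicity. The kernel is trivial, and $0$ lies outside the spectrum since the lowest eigenvalue is positive, so it is in particular isolated. This places $H_\alpha$ in the general framework of the subsection on the general construction of examples. Next, I set
\begin{equation*}
p=\frac{n}{2}+\frac{1}{\alpha}>0, \qquad q=n-1\geq 1\geq -1, \qquad c=c(n,\alpha)>0,
\end{equation*}
so that~(\ref{eq:distribution of H_alpha}) is exactly the Weyl law~(\ref{eq:example.A-Weyl-law}) for $A=H_\alpha$. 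With this $q$, the function $g(t)=(t+1)^{-1}(\log(t+2))^{q}$ of the lemma coincides with the $g$ in the statement.

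Lemma~\ref{lem:example.Ap-spectral-meas} then shows that $H_\alpha^{-p}$ is spectrally measurable in $\sL_g$, with
\begin{equation*}
\gbint H_\alpha^{-p}=c\,p^{-q}=c(n,\alpha)\Big(\frac{n}{2}+\frac1\alpha\Big)^{-(n-1)}.
\end{equation*}

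The only step requiring care is this final bookkeeping of constants. Taken literally, substituting~(\ref{eq:Examples.Simon-cnalpha}) gives the factor $(\frac n2+\frac1\alpha)^{n-1}$ cancelling against $p^{-q}$. That would leave
\begin{equation*}
\frac{1}{(n-1)!}\,\frac{\pi^{-n/2}\Gamma(\frac1\alpha+1)}{\Gamma(\frac n2+\frac1\alpha+1)}.
\end{equation*}
This does not match the displayed value in the statement, which keeps the factor $(\frac n2+\frac1\alpha)^{n-1}$. I would therefore re-check the normalization of $c(n,\alpha)$ in~\cite{Si:JFA83, CR:JMP15}. In particular, I would verify whether the logarithm there is of $\lambda$ or of $\lambda^{p}$ (that is, $\log$ of the counting variable); a $\log\lambda^p$ convention would supply exactly the factor $p^{q}$ needed to reconcile the two.

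Apart from fixing this constant, the proof is a direct substitution into Lemma~\ref{lem:example.Ap-spectral-meas}.
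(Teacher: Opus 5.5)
Your route is the paper's. Proposition~\ref{prop:Simon} is obtained there by feeding the Weyl law~(\ref{eq:distribution of H_alpha}) into Lemma~\ref{lem:example.Ap-spectral-meas} with $p=\frac n2+\frac1\alpha$, $q=n-1$, $c=c(n,\alpha)$, and your check of the hypotheses is fine.

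Your bookkeeping is also correct. The mismatch you noticed is a slip in the displayed formula, not something a convention can absorb. The lemma yields
\[
\gbint H_\alpha^{-(\frac n2+\frac1\alpha)} = c(n,\alpha)\Big(\frac n2+\frac1\alpha\Big)^{-(n-1)}=\frac{1}{(n-1)!}\,\frac{\pi^{-\frac n2}\Gamma\big(\frac1\alpha+1\big)}{\Gamma\big(\frac n2+\frac1\alpha+1\big)},
\]
so the factor $(\frac n2+\frac1\alpha)^{n-1}$ in the statement should not be there.

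The reconciliation you suggest does not hold. The constant $c(n,\alpha)$ in~(\ref{eq:Examples.Simon-cnalpha}) is the one for the plain $(\log\lambda)^{n-1}$:
\begin{itemize}
\item For $n=2$ it gives $c(2,\alpha)=1/\pi$ for every $\alpha$. This is Simon's constant in $N(E)\sim\pi^{-1}E^{3/2}\ln E$ for $-\Delta+x^2y^2$. It also matches the limit $N(E)\sim\pi^{-1}E\ln E$ for the region $|xy|<1$.
\item A $\log\lambda^{p}$ reading would instead give $N(E)\sim\frac{3}{2\pi}E^{3/2}\ln E$ for $\alpha=2$, contradicting Simon.
\end{itemize}
Hence in dimension $2$ the correct value is $\gbint H_\alpha^{-(1+\frac1\alpha)}=\frac{\alpha}{\pi(\alpha+1)}$, while the displayed formula gives $\frac1\pi$. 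You should commit to your value rather than leave the constant open.

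The same slip affects Proposition~\ref{prop:Examples.Dirichlet} when $n\geq 3$. There $p=\frac n2\neq 1$, so $c(n,\infty)$ must likewise be divided by $(\frac n2)^{n-1}$.
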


We note that in the above result the exponent $\alpha^{-1} +n/2$ takes on all positive real values~$>n/2$ as $\alpha$ ranges over $(0,\infty)$. As observed
in~\cite{Si:JFA83,  CR:JMP15} the behavior of the spectrum of $H_\alpha$ as $\alpha \rightarrow \infty$ is closely related to the spectrum of the Dirichlet Laplacian
$\Delta_\Omega$ on the infinite-volume open domain,
\begin{equation*}
 \Omega:=\left\{(x_1,\ldots, x_n)\in \R^n; \ |x_1\cdots x_n|<1\right\}.
\end{equation*}
In particular, we get a Weyl Law for $\Delta_\Omega$ by letting $\alpha \rightarrow \infty$ in the r.h.s.\ of~(\ref{eq:distribution of H_alpha}). Namely, as $\lambda \rightarrow \infty$ we have
\begin{equation*}
 N\big(\Delta_\Omega;\lambda\big) \sim c(n,\infty) \lambda^{\frac{n}{2}} (\log \lambda)^{n-1},
\end{equation*}
where
\begin{equation*}
  c(n,\infty) =  \frac{1}{(n-1)!}\bigg(\frac{n}{2}\bigg)^{n-1} \frac{\pi^{-\frac{n}{2}}}{\Gamma\big(\frac{n}{2}+1\big)}=  \frac{2n^n}{n!}(2\pi)^{-n}|\bB^n|.
\end{equation*}
Combining this with Lemma~\ref{lem:example.Ap-spectral-meas} then yields the following result.

\begin{proposition}[compare {\cite[Example~2.7]{GU:JMAA20}}, {\cite[Example~5.5]{TU:arXiv24}}] \label{prop:Examples.Dirichlet}
The operator $\Delta_\Omega^{-n/2}$ is spectrally measurable in $\sL_g$, with $g(t)=(t+1)^{-1}(\log (t+2))^{n-1}$, $t\geq 0$. Moreover, we have
 \begin{equation*}
 \gbint \Delta_\Omega^{-\frac{n}{2}} = \frac{2n^n}{n!}(2\pi)^{-n}|\bB^n|.
 \end{equation*}
\end{proposition}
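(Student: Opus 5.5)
The statement is Proposition~\ref{prop:Examples.Dirichlet}. I will deduce it from Lemma~\ref{lem:example.Ap-spectral-meas}, applied with $A=\Delta_\Omega$, $p=n/2$, $q=n-1\geq 1$ and $c=c(n,\infty)$. The choice $g(t)=(t+1)^{-1}(\log(t+2))^{n-1}$ is exactly the weight attached to $q=n-1$ in that lemma.

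The first step is to check that $\Delta_\Omega$ fits the framework set up at the start of this section. The Dirichlet Laplacian is defined through the closure of the form $\int|\nabla u|^2$ on $C_c^\infty(\Omega)$. It is selfadjoint and nonnegative, and by the results of Simon~\cite{Si:JFA83} ($n=2$) and Camus--Rautenberg~\cite{CR:JMP15} ($n\geq 3$) it has compact resolvent, although $\Omega$ has infinite volume. So its spectrum consists of positive eigenvalues of finite multiplicity. Its kernel is trivial, so $0$ is not in the spectrum at all, hence certainly isolated. This also means $\Delta_\Omega^{-n/2}$ is just the ordinary inverse power.

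The second step is the Weyl law
\begin{equation*}
N(\Delta_\Omega;\lambda)\sim c(n,\infty)\lambda^{n/2}(\log\lambda)^{n-1}.
\end{equation*}
I will quote this from~\cite{Si:JFA83, CR:JMP15} as a proved result, not as a heuristic limit of~(\ref{eq:distribution of H_alpha}) as $\alpha\to\infty$. Interchanging $\alpha\to\infty$ with $\lambda\to\infty$ is not justified by the text here, so the citation is needed. I also need to check the constant: setting $\alpha=\infty$ in~(\ref{eq:Examples.Simon-cnalpha}) gives $\frac{1}{(n-1)!}(n/2)^{n-1}\pi^{-n/2}/\Gamma(n/2+1)$. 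Using $|\bB^n|=\pi^{n/2}/\Gamma(n/2+1)$, this equals $\frac{2n^n}{n!}(2\pi)^{-n}|\bB^n|$, as a short manipulation with $n^{n-1}/(n-1)!=n^n/n!$ and powers of $2$ and $\pi$ confirms.

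Finally, Lemma~\ref{lem:example.Ap-spectral-meas} gives spectral measurability of $\Delta_\Omega^{-n/2}$ in $\sL_g$, with
\begin{equation*}
\gbint\Delta_\Omega^{-n/2}=c(n,\infty)\,(n/2)^{-(n-1)}.
\end{equation*}
Here is the one real obstacle. The factor $p^{-q}=(n/2)^{-(n-1)}$ cancels the $(n/2)^{n-1}$ in $c(n,\infty)$, leaving $\pi^{-n/2}/\bigl((n-1)!\,\Gamma(n/2+1)\bigr)$, which does not obviously equal the constant stated in the proposition. So I must recheck the normalization carefully. This means redoing the computation in the proof of Lemma~\ref{lem:example.Ap-spectral-meas}: there $\log(\lambda^{-1/p})=p^{-1}|\log\lambda|$, and passing to eigenvalues via Lemma~\ref{lem:B5} uses $h^\sharp$ with the factor $p^{-q/p}$ at $p=1$. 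I will then either confirm the stated value or correct it to $c(n,\infty)(n/2)^{1-n}$. Everything else is direct citation.
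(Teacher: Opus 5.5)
Your route is the paper's route. The paper proves this proposition in one line, by combining the cited Weyl law for $\Delta_\Omega$ with Lemma~\ref{lem:example.Ap-spectral-meas} at $p=\frac n2$, $q=n-1$, $c=c(n,\infty)$. Your checks of the hypotheses (compact resolvent, trivial kernel) are correct, and so is your check of the identity $c(n,\infty)=\frac{2n^n}{n!}(2\pi)^{-n}|\bB^n|$. Spectral measurability of $\Delta_\Omega^{-n/2}$ follows exactly as you say.

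The obstacle you flag at the end is genuine, and rechecking will confirm the factor, not remove it. Its only source is $\log(\lambda^{-2/n})=\frac2n|\log\lambda|$, which gives $N(\Delta_\Omega^{-n/2};\lambda)\sim c(n,\infty)(\frac2n)^{n-1}\lambda^{-1}|\log\lambda|^{n-1}$. Lemma~\ref{lem:B5} is then applied with the index-one function $h(t)=t(\log(t+2))^{n-1}$. Its asymptotic inverse is $t(\log(t+2))^{1-n}$ with no constant in front: the factor $p^{\pm q/p}$ from Example~\ref{ex:asymp inverse} is evaluated at the index of $h$, which is $1$. Theorem~\ref{thm:weyl implies strong} introduces no constant either. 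Hence
\begin{equation*}
\gbint\Delta_\Omega^{-\frac n2}=c(n,\infty)\Big(\frac2n\Big)^{n-1}=\frac{\pi^{-\frac n2}}{(n-1)!\,\Gamma(\frac n2+1)}=\frac{2^n}{(n-1)!}(2\pi)^{-n}|\bB^n|.
\end{equation*}
As you checked, the constant displayed in the statement is $c(n,\infty)$ itself, so it is too large by the factor $(n/2)^{n-1}$. The stated formula is therefore right for $n=2$ (both sides equal $1/\pi$) and false for $n\geq 3$.

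The paper's one-line proof silently drops the factor $p^{-q}$ of its own lemma. The same slip occurs in Proposition~\ref{prop:Simon} and in part (2) of Proposition~\ref{prop:cusp}, where $p\neq 1$.

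A phase-space heuristic confirms the corrected value. The part of $\Omega$ where the horn is wider than $\lambda^{-1/2}$ has volume $\sim 2^n(\frac n2\log\lambda)^{n-1}/(n-1)!$. This reproduces $c(n,\infty)$, and it shows that in the variable $\mu=\lambda^{n/2}$ the Weyl law reads $N\sim C\mu(\log\mu)^{n-1}$, with $C$ the value above.

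So do not leave the alternative open. Your argument proves spectral measurability together with the corrected value of the integral, and the value claimed in the statement cannot be proved for $n\geq 3$.
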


\begin{remark}
 The above examples were considered in \cite{GU:JMAA20, TU:arXiv24}, but there only Dixmier-measurability was considered. The above result thus provides an improvement by establishing spectral measurability. In particular, this ensures that the operators
 $H_\alpha^{-(\frac{n}{2}+\frac1\alpha)}$  and $\Delta_\Omega^{-n/2}$ are strongly measurable.
\end{remark}

\begin{remark}\label{rmk.Examples.Dirichlet}
 We also stress the contrast between Proposition~\ref{prop:Examples.Dirichlet}  and the corresponding result in the case $\Omega$ is bounded with say Lipschitz boundary. In the latter case we get spectral measurability (and even $(t+1)^{-1}$-hypermeasurability) in the weak trace class $\sL_{1,\infty}$, since we have the (classical) Weyl's law,
 \begin{equation*}
  N\big(\Delta_\Omega;\lambda\big) \sim c(n) \lambda^{\frac{n}{2}}\left( 1+ \op{O}\big(\lambda^{-1/2} \big)\right), \qquad c(n)=(2\pi)^{-n}|\bB^n|\Vol(\Omega).
\end{equation*}
\end{remark}

\subsection{Open manifolds with conformally cusp metrics}
In~\cite{Mo:MA08} S.\ Moroianu established Weyl laws, including nonclassical Weyl laws, for some open manifolds with conformally cusp metrics. More precisely, let
$X^n$ be an open manifold, which is the interior of a compact manifold $\overline{X}$ with closed boundary $M^{n-1}=\partial \overline{X}$ and boundary defining function $x$.  We assume that $X$ is endowed with a Riemannian metric of the form,
\begin{equation}\label{eq:Intro.boundary-metric}
 g=x^{2r}g_0,
\end{equation}
where $g_0$ is a cusp metric on $\overline{X}$ (see~\cite{Mo:MA08}), i.e., a (complete) Riemannian metric which in local coordinates near $M$ takes the form,
\begin{equation*}
 g_0= a_{00}(x,y)x^{-4}dx^2+ \sum_{1\leq j<n}a_{0j}(x,y)x^{-2}dxdy^{j} +  \sum_{1\leq i, j<n}h_{ij}(x,y)dy^i dy^j.
\end{equation*}
If in addition $a_{00|M}=1$ and $a_{0j|M}=0$ for $j=1, \ldots, n$, then (following Melrose~\cite{Me:Book}) we say that $g_0$ is an \emph{exact} cusp metric. Note that the cusp metric $g_0$ induces on $M$ the metric,
\begin{equation}\label{eq:Cusp.boundary-metric}
 h_0:=  \sum_{1\leq i, j<n}h_{ij}(0,y)dy^i dy^j.
\end{equation}
Note that $(X,g)$ has finite volume iff $r>1/n$. It is complete iff $r\geq 1$.

Examples of manifolds with conformally cusp metrics include hyperbolic metrics of finite volume (in which case $r=1$; see~\cite{Mo:MA08}).  Another class of examples is provided by metric horns in the sense of Cheeger~\cite{Ch:PSPM80} (see also~\cite{LP:CCPDE98}) for which $r>1$.

Assume further that $\overline{X}$ is spin. We denote by  $\shD_g$ the Dirac operator of $X$ acting on the sections of the spinor bundle $\shS_X$. In addition, we denote by $\shD_{h_0}$ the Dirac operator of $M=\partial X$ with respect to the metric~(\ref{eq:Intro.boundary-metric}).  We make the following assumptions:
\begin{itemize}
 \item $g_0$ is an exact cusp metric.

 \item $\ker \shD_{h_0}=\{0\}$, i.e., there are no (non-zero) harmonic spinors on $M$.
\end{itemize}
Under these assumptions Moroianu~\cite{Mo:MA08} showed that the Dirac operator $\shD_g$ with domain $C^\infty_c(X)$ is essentially selfadjoint and has pure discrete spectrum. Furthermore, he established Weyl laws for the counting function of $|\shD_g|$. We get classical Weyl laws for $r\neq 1/n$, but for $r=1/n$ we have nonclassical Weyl law. We will restrict ourselves to the case $r\geq 1/n$, since in those cases the leading coefficients in the Weyl laws are computable locally. As $\lambda \rightarrow \infty$, we have
\begin{equation*}
 N\big(|\shD_g|; \lambda) \sim \left\{
 \begin{array}{cl}
 \slashed{c}_1(n)\Vol_g(X)\lambda^n & \text{if}\ r>\frac{1}{n},\medskip\\
 \slashed{c}_2(n) \Vol_h(M)\lambda^n \log \lambda & \text{if}\ r=\frac{1}{n},
\end{array}\right.
\end{equation*}
where we have set
\begin{equation}\label{eq:Examples.Open-constants}
  \slashed{c}_1(n):= 2^{\left[\frac{n}{2}\right]}(2\pi)^{-\frac{n}{2}} |\bB^{n}|, \qquad
 \slashed{c}_2(n):= 2^{\left[\frac{n}{2}\right]}(2\pi)^{-\frac{n}{2}} |\bS^{n-1}|.
\end{equation}
Combining this with Lemma~\ref{lem:example.Ap-spectral-meas} we then obtain the following result.

\begin{proposition}\label{prop:cusp}
 Suppose that $\overline{X}$ is spin, and assume that $g_0$ is an exact cusp metric and $\ker \shD_{h_0}=\{0\}$.
 \begin{enumerate}
 \item If $r>1/n$, then $|\shD_g|^{-n}$ is spectrally measurable in $\sL_{1,\infty}$, and we have
 \begin{equation*}
 \bint |\shD_g|^{-n} =\slashed{c}_1(n) \Vol_g(X), \qquad \slashed{c}_1(n):= 2^{\left[\frac{n}{2}\right]}(2\pi)^{-\frac{n}{2}} |\bB^{n}|.
\end{equation*}

\item If $r=1/n$, and we put $g(t)=(t+1)^{-1}\log (t+2)$, $t\geq 0$, then $|\shD_g|^{-n}$ is spectrally measurable in $\sL_g$, and we have
 \begin{equation}\label{eq:NCI endpoint case}
 \gbint |\shD_g|^{-n} =\slashed{c}_2(n) \Vol_h(M), \qquad  \slashed{c}_2(n):= 2^{\left[\frac{n}{2}\right]}(2\pi)^{-\frac{n}{2}} |\bS^{n-1}|.
\end{equation}
\end{enumerate}
\end{proposition}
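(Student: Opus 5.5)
The plan is to reduce both parts to Lemma~\ref{lem:example.Ap-spectral-meas}, applied to $A=|\shD_g|$ with the exponent $p=n$. First I check that $|\shD_g|$ fits the general construction. Moroianu's results say that $\shD_g$ is essentially selfadjoint with pure discrete spectrum. Hence $|\shD_g|$ is a nonnegative selfadjoint operator, its spectrum is discrete, its nonzero eigenvalues have finite multiplicity, and $0$ is isolated in its spectrum. The Weyl law then forces the counting function to be finite. The operator $|\shD_g|^{-n}$ is the one defined in the general construction: it vanishes on $\ker \shD_g$ and acts by $\lambda_j(|\shD_g|)^{-n}$ on the eigenvectors.

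\textbf{Case $r>1/n$.} Moroianu's Weyl law reads $N(|\shD_g|;\lambda)\sim \slashed{c}_1(n)\Vol_g(X)\lambda^n$. This is~(\ref{eq:example.A-Weyl-law}) with $p=n$, $q=0$, and $c=\slashed{c}_1(n)\Vol_g(X)$. Here $c>0$ because $\Vol_g(X)$ is finite and positive exactly when $r>1/n$. With $q=0$ we get $g(t)=(t+1)^{-1}$, so $\sL_g=\sL_{1,\infty}$. Lemma~\ref{lem:example.Ap-spectral-meas} then gives spectral measurability, and $\bint |\shD_g|^{-n}=cp^{0}=\slashed{c}_1(n)\Vol_g(X)$.

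\textbf{Case $r=1/n$.} Here the Weyl law is $N(|\shD_g|;\lambda)\sim \slashed{c}_2(n)\Vol_{h_0}(M)\lambda^n\log\lambda$. This is~(\ref{eq:example.A-Weyl-law}) with $p=n$, $q=1$, and $c=\slashed{c}_2(n)\Vol_{h_0}(M)>0$, which also fixes $g(t)=(t+1)^{-1}\log(t+2)$. Lemma~\ref{lem:example.Ap-spectral-meas} yields spectral measurability in $\sL_g$ and $\gbint|\shD_g|^{-n}=c\,p^{-q}=c\,n^{-1}$.

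\textbf{Main obstacle: the constant in the endpoint case.} The formula just obtained carries a factor $n^{-1}$, so $c\,n^{-1}$ must match the value~(\ref{eq:NCI endpoint case}). I will recheck the normalization of the constant in Moroianu's endpoint Weyl law, since his $\log$ term may be written in a different variable. If $\slashed{c}_2(n)$ as defined in~(\ref{eq:Examples.Open-constants}) already absorbs the factor $n$, for instance through $|\bS^{n-1}|=n|\bB^n|$, the two agree. If it does not, the stated value has to be read with the factor $n^{-1}$, i.e.\ as $n^{-1}\slashed{c}_2(n)\Vol_{h_0}(M)$.

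Apart from this check, everything follows directly from the quoted Weyl laws, Lemma~\ref{lem:example.Ap-spectral-meas}, and Theorem~\ref{thm:weyl implies strong}, which also gives strong measurability as a by-product.
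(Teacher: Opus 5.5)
Your argument is the paper's own proof: the paper obtains this proposition by feeding Moroianu's two Weyl laws into Lemma~\ref{lem:example.Ap-spectral-meas} with $p=n$ and $q=0$, resp.\ $q=1$. Your check of the hypotheses and of part~(1) is fine.

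The factor you flag in part~(2) is real, and the fault lies in the printed statement, not in your argument. With $p=n$, $q=1$ the lemma gives $\gbint|\shD_g|^{-n}=c\,p^{-q}=n^{-1}\slashed{c}_2(n)\Vol_{h_0}(M)$. The paper's one-line proof simply drops the factor $p^{-q}$. The same omission occurs in Propositions~\ref{prop:Simon} and~\ref{prop:Examples.Dirichlet}, where the missing factors are $\bigl(\frac{n}{2}+\frac{1}{\alpha}\bigr)^{-(n-1)}$ and $\bigl(\frac{n}{2}\bigr)^{-(n-1)}$.

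Your first proposed reconciliation does not work. The same quantity $\slashed{c}_2(n)\Vol_{h_0}(M)$ is both the Weyl coefficient $c$ and the claimed value of the integral. So no normalization of $\slashed{c}_2(n)$ can turn $c\,n^{-1}$ into $c$; only a misquotation of Moroianu's coefficient by a factor $n$ could. What $|\bS^{n-1}|=n|\bB^n|$ does give is $n^{-1}\slashed{c}_2(n)=\slashed{c}_1(n)$. Hence, with the Weyl law as quoted, you should commit to
\[
\gbint|\shD_g|^{-n}=\slashed{c}_1(n)\Vol_{h_0}(M)
\]
instead of hedging. Spectral (and hence strong) measurability in $\sL_g$ is unaffected.
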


\begin{remark}
 The first part is a special case of the version of Connes' integration formula for open manifolds in~\cite{Po:arXiv26}.
\end{remark}

\begin{remark}
 The formula~(\ref{eq:NCI endpoint case}) shows that, for $r=1/n$, the NC integral recaptures the volume of the boundary $M=\partial X$. As mentioned above, if $r=1/n$, then  $(X,g)$ has infinite volume.
\end{remark}

\subsection{Product of quantum spheres with the 2-torus} We revisit an example of Gayral-Sukochev~\cite[Example~4.9]{GS:JFA14}. Let $q\in (0,1)$. The Podle\`s quantum sphere $\bS^2_q$ is a quantum analogue of the 2-sphere $\bS^2$. Its coordinate algebra $\cO(\bS^2_q)$ appears as a $*$-subalgebra of the coordinate Hopf $*$-algebra $\cO(\SU_q(2))$ which is invariant under the action of $\sU_q(\su(2))$ (see~\cite{Po:LMP87}).

The quantum sphere $\bS^2_q$ is also a quantum space in the sense of Connes' noncommutative geometry, since we have a natural spectral triple $(\cO(\bS^2_q), \sH, \sD_q)$, which was constructed by Dabrowski-Sitarz~\cite{DS:PAN03}. Here $\sH=\sH_{1/2}\oplus \sH_{1/2}$ is two copies of the same Hilbert space $\sH_{1/2}$, which is actually the representation space of two non-isomorphic $\sU_q(\su(2))$-equivariant representations of $\cO(\bS^2_q)$.

The Dirac operator $\sD_q$ is a selfadjoint unbounded operator on $\sH=\sH_{1/2}\oplus \sH_{1/2}$ of the form,
\begin{equation*}
 \sD_q =
 \begin{pmatrix}
0 & D_q\\
 D_q& 0
\end{pmatrix},
\qquad D_q\xi_{\ell,m} = \bigg[\ell+\frac12\bigg]_q\xi_{\ell,m},  \qquad [x]_q = \frac{q^x-q^{-x}}{q-q^{-1}},
 \end{equation*}
 where $(\xi_{\ell,m})$ is some orthonormal basis of $\sH_{1/2}$ parametrized by $\ell \in \N_0+1/2$ and $m\in \{-\ell,-\ell+1, \ldots,\ell\}$. Note that $|\sD_q|=D_q\oplus D_q$. Moreover, it is shown in~\cite{EIS:CMP14} that
 \begin{equation}\label{eq:1st heat kernel formula}
 \Tr\left[e^{-t|\sD_q|}\right] = \bigg(\frac{\log t}{\log q}\bigg)^2\big( 1 +\op{O}\big((\log t)^{-1}\big)\big)  \qquad \text{as} \ t\rightarrow 0^+.
\end{equation}

 Let $\Delta=-(\partial_{x}^2+\partial_{y}^2)$ be the Laplacian on the (ordinary) torus $\T^2=(\R/(2\pi \Z))^2$. This is an essentially selfadjoint operator with non-negative and discrete spectrum. Its eigenvalues are $|k|^2$, $k\in \Z^2$. Moreover, as $t\rightarrow 0^+$ we have
 \begin{equation}\label{eq:2nd heat kernel formula}
 \Tr\left[e^{-t\Delta}\right]= (4\pi t)^{-1}\big(\Vol(\T^2)+ \op{O}(t)\big)= \pi t^{-1}\big(1+ \op{O}(t)\big).
\end{equation}
We actually can get a much better remainder term by using Poisson summation formula (see, e.g., \cite{Po:arXiv26}). However,  the above remainder term is enough for our purpose.

 Following Gayral-Sukochev~\cite[Example~4.9]{GS:JFA14} we look at the operator,
 \begin{equation*}
A:=|\sD_q|\otimes 1 + 1 \otimes \Delta.
\end{equation*}
 This is an essentially selfadjoint unbounded operator on $\sH\oplus L^2(\T^2)$. Moreover, by using~(\ref{eq:1st heat kernel formula}) and~(\ref{eq:2nd heat kernel formula}) we see that, as $t\rightarrow 0^+$, we have
 \begin{equation*}
 \Tr\left[e^{-tA}\right]= \Tr\left[e^{-t|\sD_q|}\right] \Tr\left[e^{-t\Delta}\right]= \frac{\pi}{(\log q)^2} t^{-1}(\log t)^2\big( 1 +\op{O}\big((\log t)^{-1}\big)\big).
\end{equation*}
Therefore, we may apply Karamata’s Tauberian theorem in the form, e.g., of \cite[Theorem IV.8.1]{Ko:Springer04} to get the Weyl law,
\begin{equation*}
\lim_{\lambda \rightarrow \infty} \lambda^{-1} (\log \lambda)^{-2}  N(A;\lambda)= \frac{1}{\Gamma(2)} \lim_{t\rightarrow 0^+} t(\log t)^{-2}  \Tr\left[e^{-tA}\right] =  \frac{\pi}{(\log q)^2}.
\end{equation*}
 By combining this with Lemma~\ref{lem:example.Ap-spectral-meas} we then arrive at the following result.

\begin{proposition}[compare {\cite[Example~4.9]{GS:JFA14}}]\label{prop:Podles}
 Put $g(t)=(t+1)^{-1}(\log(t+2))^2$, $t\geq 0$. The operator $(|\sD_q|\otimes 1 + 1 \otimes \Delta)^{-1}$ is spectrally measurable in $\sL_{g}$, and we have
 \begin{equation*}
 \gbint (|\sD_q|\otimes 1 + 1 \otimes \Delta)^{-1} = \frac{\pi}{(\log q)^2}.
\end{equation*}
\end{proposition}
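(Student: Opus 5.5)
The plan is to reduce Proposition~\ref{prop:Podles} to Lemma~\ref{lem:example.Ap-spectral-meas} with $p=1$ and $q=2$. That lemma requires three things: $A=|\sD_q|\otimes 1+1\otimes\Delta$ must satisfy the standing spectral hypotheses (selfadjoint, non-negative spectrum, $0$ isolated, positive part made of eigenvalues of finite multiplicity), and $N(A;\lambda)\sim \frac{\pi}{(\log q)^2}\lambda(\log\lambda)^2$. Granting these, the lemma gives spectral measurability of $A^{-1}$ in $\sL_g$ with $g(t)=(t+1)^{-1}(\log(t+2))^2$, and $\gbint A^{-1}=c\,p^{-q}=\frac{\pi}{(\log q)^2}$.

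\textbf{Spectral hypotheses.} The operator $A$ is diagonal in the tensor basis $\xi_{\ell,m}\otimes e^{ik\cdot x}$, with eigenvalues $[\ell+\tfrac12]_q+|k|^2$. Since $[\ell+\tfrac12]_q\geq [1]_q=1$, we have $A\geq 1$. So $A$ is positive with $\ker A=0$, and $0$ is trivially isolated in (indeed outside) the spectrum. Finite multiplicities and discreteness follow from $e^{-tA}$ being trace class for $t>0$, which the heat trace computation below provides. Essential selfadjointness on the algebraic tensor product of eigenvectors is standard for sums of commuting positive diagonal operators.

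\textbf{Weyl law.} The key identity is
\[
\Tr[e^{-tA}]=\Tr[e^{-t|\sD_q|}]\,\Tr[e^{-t\Delta}],
\]
which holds by multiplicativity of traces on tensor products and commutation of the two summands. Inserting~(\ref{eq:1st heat kernel formula}) and~(\ref{eq:2nd heat kernel formula}) gives
\[
\Tr[e^{-tA}]\sim \frac{\pi}{(\log q)^2}\,t^{-1}(\log t)^2 \qquad \text{as } t\to0^+.
\]
Now $t^{-1}(\log t)^2=t^{-1}L(1/t)$ with $L(s)=(\log s)^2$ slowly varying, and $N(A;\cdot)$ is non-decreasing with $\Tr[e^{-tA}]=\int_0^\infty e^{-t\lambda}\,dN(A;\lambda)$. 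Karamata's Tauberian theorem therefore yields
\[
N(A;\lambda)\sim \frac{1}{\Gamma(2)}\,\frac{\pi}{(\log q)^2}\,\lambda(\log\lambda)^2 \qquad \text{as } \lambda\to\infty.
\]
Since $\Gamma(2)=1$, this gives the constant $c=\pi/(\log q)^2$ with exponents $p=1$ and $q=2$.

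\textbf{Main obstacle.} The step needing the most care is this Tauberian step. One has to check that the counting function convention in Section~\ref{sec:examples} (strict inequality $\lambda_j(A)<\lambda$) agrees with the Stieltjes measure up to jump points, which is harmless for asymptotics. One also has to check that the $\log$ factor is handled by the slowly varying version of Karamata's theorem, i.e.\ that $(\log t)^2=(\log(1/t))^2$ is slowly varying as $t\to0^+$, as in \cite[Theorem IV.8.1]{Ko:Springer04}. Once the Weyl law is in hand, everything else is a direct application of Lemma~\ref{lem:example.Ap-spectral-meas}, which itself relies on Lemma~\ref{lem:B5} and Theorem~\ref{thm:weyl implies strong}.
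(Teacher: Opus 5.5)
Your proposal is correct and follows the paper's proof essentially verbatim. Both factor the heat trace as $\Tr[e^{-tA}]=\Tr[e^{-t|\sD_q|}]\,\Tr[e^{-t\Delta}]$, apply Karamata's Tauberian theorem to get $N(A;\lambda)\sim\frac{\pi}{(\log q)^2}\lambda(\log\lambda)^2$, and conclude with Lemma~\ref{lem:example.Ap-spectral-meas} for $p=1$, $q=2$. Your extra checks, that $A\geq 1$ and that the counting-function conventions agree asymptotically, are fine.

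One caveat, which you inherit from the paper rather than introduce: with the paper's stated conventions, $|\sD_q|=D_q\oplus D_q$ has eigenvalue $[n]_q$ with multiplicity $4n$. Hence $\Tr[e^{-t|\sD_q|}]=4\sum_{n\geq 1}n\,e^{-t[n]_q}\sim 2(\log t/\log q)^2$. Formula~(\ref{eq:1st heat kernel formula}) as quoted is instead the asymptotics of $\Tr[e^{-tD_q}]$ on a single copy of $\sH_{1/2}$. So you should check that normalization, since it would change the final constant to $2\pi/(\log q)^2$.
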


\begin{remark}\label{rmk:Examples.Podles}
 The above result refines \cite[Example~4.9]{GS:JFA14} in two ways (see also~\cite[Example~2.10]{GU:JMAA20}). First, we obtain measurability in the weak Lorentz ideal $\sL_g$, whereas in~\cite{GS:JFA14} they work with the larger Lorentz ideal $\sM_G$, with $G(t)\sim (1/3)[\log(t+2)]^3$. Second, the authors in~\cite{GS:JFA14} focus on measurability with respect to Dixmier traces, but Proposition~\ref{prop:Podles} yields spectral measurability, which is a much stronger property. In particular, this implies strong measurability with respect to all normalized positive traces on $\sL_g$.
\end{remark}

\subsection{Final comments} We refer to~\cite{DR:LNM87, GU:JMAA20, LU:JMAA25, TU:arXiv24} and the companion paper~\cite{Po-Ti:Part2} for additional instances of nonclassical  Weyl laws. Combining them with Lemma~\ref{lem:example.Ap-spectral-meas} produces further examples of spectrally measurable 
operators in weak Lorentz ideals.

\appendix

\section{Proof of results on weak Lorentz ideals}\label{app:proofs}
In this appendix, for the reader's convenience we gather proofs of several results on weak Lorentz ideals that were mentioned in Section~\ref{sec:Lorentz}.

\begin{proof}[Proof of Lemma~\ref{lem:Lorentz.quasi-norm}]
 We only have to prove part (ii) (\emph{cf}.\ Remark~\ref{rmk:quasi-norm}).  Set
 \begin{equation*}
C=\sup_{k\geq 0}C_k, \quad \text{where}\ C_k:= \left(\max\left\{ \frac{g(k)}{g(2k)},  \frac{g(k)}{g(2k+1)},  \frac{g(k+1)}{g(2k+1)}\right\}\right).
\end{equation*}
It follows from Remark~\ref{rmk:slow decreasing} that $C<\infty$. Let $S,T\in \sK$ and $j\geq 0$. Set $k=\lfloor j/2\rfloor$. By~(\ref{eq:Lorentz.Fan1}), we have
\begin{equation*}
 g(j)^{-1} \mu_j(S+T) \leq \frac{g(k)}{g(j)} g(k)^{-1}\mu_k(S) + \frac{g(j-k)}{g(j)} g(j-k)^{-1}\mu_{j-k}(T).
\end{equation*}
If $j$ is even, then $j=2k$, and so
\begin{equation*}
  \frac{g(k)}{g(j)} = \frac{g(j-k)}{g(j)} = \frac{g(k)}{g(2k)}\leq C_k.
\end{equation*}
If $j$ is odd, then $j=2k+1$, and so we get
\begin{equation*}
  \frac{g(k)}{g(j)}  =  \frac{g(k)}{g(2k+1)}\leq C_k \qquad \text{and} \qquad   \frac{g(j-k)}{g(j)} = \frac{g(k+1)}{g(2k+1)}\leq C_k.
\end{equation*}
In any case, we have
\begin{equation*}
  g(j)^{-1} \mu_j(S+T) \leq C_k \left( g(k)^{-1}\mu_k(S) +g(j-k)^{-1}\mu_{j-k}(T)\right).
\end{equation*}
It then follows that
\begin{equation}\label{eq:quasi-triangle inequality}
 \|S+T\|_g \leq C\left(\|S\|_g + \|T\|_g\right) \qquad \forall S,T\in \sK.
\end{equation}
This proves (ii). The proof of Lemma~\ref{lem:Lorentz.quasi-norm} is complete.
\end{proof}

\begin{remark}
 By using Remark~\ref{rmk:slow decreasing} and the fact that $g$ is $\RV_\rho$ it can be shown that $C_k\rightarrow  2^{|\rho|}$ as $k\rightarrow \infty$.
It then follows that
\begin{equation*}
 C=\sup_{k\geq 0}C_k\geq 2^{|\rho|}.
\end{equation*}
\end{remark}

\begin{proof}[Proof of Proposition~\ref{prop:Lorentz.quasi-Banach}]
 We only need to check that every Cauchy sequence $(T_n)_{n\geq 0}$ in $\sL_g$ is convergent. Note that the inclusion of $\sL_g$ into $\sK$ is continuous, since we have
 \begin{equation*}
 \|T\| = \mu_0(T) \leq g(0) \|T\|_g.
\end{equation*}
Thus, $(T_n)_{n\geq 0}$ is a Cauchy sequence in $\sK$, and hence it converges to some operator $T$ in $\sK$.

As   $(T_n)_{n\geq 0}$ is a Cauchy sequence in $\sL_g$, given any $\epsilon>0$, there is $N\geq 0$ such that, for all $m,n\geq N$ and $j\geq 0$, we have
\begin{equation*}
g(j)^{-1} \mu_j(T_m-T_n) \leq \|T_m-T_n\|_g \leq \epsilon.
\end{equation*}
Letting $m\rightarrow \infty$ and using~(\ref{eq:Quantized.properties-mun2}) we get
\begin{equation*}
 g(j)^{-1} \mu_j(T-T_n) \leq \epsilon \qquad \forall j \geq 0.
\end{equation*}
Thus,
\begin{equation}\label{eq:convergence}
 \|T-T_n\|_g \leq \epsilon \qquad \forall n>N.
\end{equation}
In particular, by using~(\ref{eq:quasi-triangle inequality}) we get
\begin{equation*}
 \|T\|_g \leq C\left( \|T-T_n\|_g + \|T_n\|_g\right) <\infty.
\end{equation*}
That is, $T\in \sL_g$. The estimate~(\ref{eq:convergence}) then means that $T_n\rightarrow T$ in $\sL_g$. This shows that every Cauchy sequence in $\sL_g$ is convergent. The
proof of Proposition~\ref{prop:Lorentz.quasi-Banach} is complete.
\end{proof}

\begin{proof}[Proof of Lemma~\ref{lem:Lorentz.distance-formula}]
Let $T\in \sL_g$ have Schmidt series  $T=\sum_{j \geq 0} \mu_j(T)\ketbra{U\xi_j}{\xi_j}$, and set $T_N=\sum_{j<N} \mu_j(T) \ketbra{U\xi_j}{\xi_j}$, $N\geq 1$. As $T_N$ has finite rank, we have $\|T-T_N\|_g \geq  \dist_{\sL_g}(T;\sR)$, and hence
 \begin{equation*}
  \dist_{\sL_g}(T;\sR) \leq  \limsup_{N\rightarrow \infty} \|T-T_N\|_g.
\end{equation*}

Let $R\in \sR$, and set $N=\rk(R)$. The min-max principle~(\ref{eq:min-max}) implies that $\mu_N(R)=0$. Combining this with Fan's inequality~(\ref{eq:Lorentz.Fan1}) we see that, for $j\geq N$, we have
\begin{equation*}
 \mu_j(T) \leq \mu_{j-N}(T-R)+\mu_N(R) =\mu_{j-N}(T-R).
\end{equation*}
 Thus,
 \begin{equation*}
 g(j)^{-1} \mu_j(T) \leq \frac{g(j-N)}{g(j)} g(j-N)^{-1}\mu_{j-N}(T-R) \leq \frac{g(j-N)}{g(j)}\|T-R\|_g.
\end{equation*}
Using Remark~\ref{rmk:slow decreasing} then gives
\begin{equation*}
  \limsup_{j\rightarrow \infty} g(j)^{-1}\mu_j(T) \leq \|T-R\|_g.
\end{equation*}
As the above inequality holds for every $R\in \sR$, we get
\begin{equation*}
  \limsup_{j\rightarrow \infty} g(j)^{-1}\mu_j(T) \leq  \dist_{\sL_g}(T;\sR) .
\end{equation*}

To complete the proof it is enough to show that
\begin{equation}\label{eq:auxiliary inequality}
 \limsup_{N\rightarrow \infty} \|T-T_N\|_g \leq  \limsup_{j\rightarrow \infty} g(j)^{-1}\mu_j(T).
\end{equation}
To see this, set
\begin{equation*}
 S_N:= \sum_{j \geq N}  \mu_j(T)\ketbra{\xi_j}{\xi_j}= \sum_{j\geq 0}  \mu_{j+N}(T)\ketbra{\xi_{j+N}}{\xi_{j+N}} \qquad N\geq 1.
\end{equation*}
As $S_N\geq 0$ with eigensequence $(\mu_{j+N}(T))_{j\geq 0}$, we see that $\mu_j(S_N)=\mu_{j+N}(T)$ for all $j\geq 0$. Note also that
\begin{equation*}
 T-T_N= \sum_{j \geq N}  \mu_j(T)\ketbra{U\xi_j}{\xi_j}=US_N.
\end{equation*}
Here $U^*U$ is the orthogonal projection onto $\ker T=\ker |T|$. As $\ran S_N \subseteq \ran |T| \subseteq (\ker |T|)^\perp$ we see that $U^*US_N=S_N$. Thus,
\begin{equation*}
 (T-T_N)^*(T-T_N)=S_N U^*US_N=S_N^2.
\end{equation*}
It follows that $|T-T_N|=S_N$, and so we have
\begin{equation*}
 \mu_j(T-T_N)= \mu_j(S_N)=\mu_{j+N}(T).
\end{equation*}
Thus,
\begin{equation*}
 g(j)^{-1}\mu_j(T-T_N) =  \frac{g(j+N)}{g(j)}g(j+N)^{-1}\mu_{j+N}(T) \leq \rho(N) g(j+N)^{-1}\mu_{j+N}(T),
\end{equation*}
where we have set
\begin{equation*}
 \rho(N):= \sup_{j\geq 0} \frac{g(j+N)}{g(j)}.
\end{equation*}
It then follows that, for all $N\geq 1$, we have
\begin{equation}\label{eq:domi inequality}
 \|T-T_N\|_g = \sup_{j\geq 0}  g(j)^{-1}\mu_j(T-T_N) \leq \rho(N) \sup_{j\geq N} g(j)^{-1}\mu_j(T).
\end{equation}

By assumption $g(t)$ is decreasing on $[t_0,\infty)$. We then have $\rho(N)=\max\{\rho_1(N), \rho_2(N)\}$, where
\begin{equation*}
 \rho_1(N) = \max_{0\leq j \leq t_0} \frac{g(j+N)}{g(j)}, \qquad  \rho_2(N) = \sup_{ j \geq t_0} \frac{g(j+N)}{g(j)}\leq 1.
\end{equation*}
As $g(t)\rightarrow 0$, we see that $ \rho_1(N)\rightarrow 0$ as $N\rightarrow \infty$. It follows that $\rho(N)\leq 1$ for $N$ large enough. Combining this with~(\ref{eq:domi inequality}) we then get
\begin{equation*}
\limsup_{N\rightarrow \infty} \|T-T_N\|_g \leq \lim_{N\rightarrow \infty} \sup_{j\geq N} g(j)^{-1}\mu_j(T)=  \limsup_{j\rightarrow \infty} g(j)^{-1}\mu_j(T).
\end{equation*}
This proves~(\ref{eq:auxiliary inequality}).  The proof of Lemma~\ref{lem:Lorentz.distance-formula} is complete.
\end{proof}

\begin{proof}[Proof of Proposition~\ref{prop:Lorentz.Marcinkiewicz}]
 Let $T\in \sK$. As $\mu_j(T)\leq \|T\|_g g(j)$, we have
 \begin{equation*}
 \frac{1}{G(N)} \sum_{j<N} \mu_j(T) \leq \frac{\|T\|_g}{G(N)} \sum_{j\leq N} g(j).
\end{equation*}
By assumption $g$ is a positive continuous function that is decreasing on $[t_0,\infty)$. If $\int_0^\infty g(t)dt=\infty$, then $\sum_{j\leq N}g(j)\simeq G(N)$ as $N\rightarrow \infty$. If $\int_0^\infty g(t)dt<\infty$, then $\sum_{j\geq 0} g(j)$. In either case the sequence $\frac{1}{G(N)} \sum_{j<N} g(j)$ is bounded. Therefore, we have
\begin{equation*}
 \|T\|_{G} \leq C \|T\|_g, \qquad \textup{with}\ C:= \sup_{N\geq 1} \frac{1}{G(N)} \sum_{j\leq N} g(j) <\infty.
\end{equation*}
This gives the first part.

Assume $-1<\rho<0$. The inequality $N\mu_N(T) \leq \sum_{j<N}\mu_j(T)$ yields
\begin{equation*}
 g(N)^{-1} \mu_N(T) \leq \frac{G(N)}{Ng(N)} \frac{1}{G(N)} \sum_{j<N} \mu_j(T) \leq \frac{G(N)}{Ng(N)}  \|T\|_{G}.
\end{equation*}
As  $-1<\rho<0$, it follows from Karamata Theorem (Theorem~\ref{thm:karamata}) that $\frac{G(t)}{tg(t)}$ is bounded on $[0,\infty)$. Thus,
\begin{equation*}
 \|T\|_g \leq C' \|T\|_{G}, \qquad \textup{where}\ C':= \sup_{N\geq 1} \frac{G(N)}{Ng(N)}<\infty.
\end{equation*}
This gives the second part. The proof of Proposition~\ref{prop:Lorentz.Marcinkiewicz} is complete.
 \end{proof}

\section{Counting Functions and Eigenvalues} \label{app:counting}
In this appendix, we link the asymptotics of non-increasing non-negative null sequences to the asymptotic behaviours of their counting functions when the leading terms are given by functions of regular variations and suitable asymptotic inverses.

\subsection{Asymptotic inverses}
If $h:[0,\infty)\rightarrow [0,\infty)$ is function converging to $\infty$ as $t\rightarrow \infty$, then by an \emph{asymptotic inverse} we shall mean any function
$h^\sharp:[0,\infty)\rightarrow [0,\infty)$ converging to $\infty$ as $t\rightarrow \infty$ such that
 \begin{equation*}
 h(h^\sharp(t)) \sim t \qquad \text{and} \qquad h^\sharp(h(t)) \sim t  \qquad \text{as}\ t\rightarrow \infty.
 \end{equation*}

As the following shows every $\RV$-function of positive index admits an asymptotic inverse which is an $\RV$-function as well.

\begin{proposition}[see {\cite[Theorem~5.1.12]{BGT:Cambridge87}}] \label{prop:counting-asymptotic-inverse}
Let $h:[0,\infty)\rightarrow [0,\infty)$ be an $\RV_p$-function with $p>0$.
\begin{enumerate}
 \item $h(t)$ always admits an asymptotic inverse $h^\sharp(t)$ which is $\RV_{1/p}$.

 \item A function $\tilde{h}(t)$ is an asymptotic inverse of $h(t)$ if and only if $\tilde{h}(t)\sim h^\sharp(t)$.
\end{enumerate}
 \end{proposition}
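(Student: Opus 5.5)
Both parts rest on the same basic device. I will first smooth $h$ into a strictly increasing continuous $\RV_p$-function, then take its genuine inverse, and finally transport asymptotic equivalences through it by means of Lemma~\ref{lem:UCT-consequence}.

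\emph{Part (1).} Since $p>0$, the function $h$ is ultimately increasing and $h(t)\to\infty$. I will replace $h$ by a function $h_1\sim h$ that is continuous and strictly increasing on some $[t_0,\infty)$. The natural candidate is the averaged function
\begin{equation*}
h_1(t)=\int_1^2 h(st)\,ds,
\end{equation*}
adjusted by adding $e^{-1/t}$ or a similar term to force strict increase. By the Uniform Convergence Theorem (Remark~\ref{rmk:UCT}) we have $h(st)/h(t)\to s^p$ uniformly for $s\in[1,2]$, so $h_1(t)\sim c\,h(t)$ with $c=\int_1^2 s^p\,ds$. Dividing by $c$ gives $h_1\sim h$. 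Moreover $h_1$ is continuous, since $h$ is monotone near infinity, and it is ultimately nondecreasing. Let $h_1^{-1}$ be the inverse of $h_1$ on $[h_1(t_0),\infty)$, and extend it arbitrarily (positive and continuous) to the left of that point; call the result $h^\sharp$. Then
\begin{equation*}
h_1(h^\sharp(t))=t, \qquad h^\sharp(h_1(t))=t \quad \text{for large } t.
\end{equation*}
Because $h^\sharp(t)\to\infty$ and $h\sim h_1$, we get $h(h^\sharp(t))\sim t$.

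For the other composition, $h^\sharp(h(t))\sim h^\sharp(h_1(t))=t$, I need $h^\sharp$ to be $\RV_{1/p}$ first, so that Lemma~\ref{lem:UCT-consequence}(2) applies. To prove $h^\sharp\in\RV_{1/p}$, fix $\lambda>0$ and set $u=h^\sharp(t)$ and $v=h^\sharp(\lambda t)$. Then $h_1(v)/h_1(u)=\lambda$. Suppose, along some subsequence, $v/u\to\mu\in[0,\infty]$. If $\mu\in(0,\infty)$, Lemma~\ref{lem:UCT-consequence}(1) gives $h_1(v)/h_1(u)\to\mu^p$, hence $\mu=\lambda^{1/p}$. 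The cases $\mu=0$ and $\mu=\infty$ are excluded by monotonicity: if $v/u\ge M$ then $h_1(v)/h_1(u)\ge h_1(Mu)/h_1(u)\to M^p$, which contradicts the ratio being $\lambda$ once $M^p>\lambda$, and similarly for $\mu=0$. Hence $v/u\to\lambda^{1/p}$. Monotonicity of $h^\sharp$ holds by construction.

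\emph{Part (2).} For the "if" direction, suppose $\tilde h\sim h^\sharp$. Then $h(\tilde h(t))\sim h(h^\sharp(t))\sim t$ by Lemma~\ref{lem:UCT-consequence}(2) applied to the $\RV_p$-function $h$, and $\tilde h(h(t))\sim h^\sharp(h(t))\sim t$ directly. For the "only if" direction, suppose $h(\tilde h(t))\sim t$. Then $h_1(\tilde h(t))\sim t$, so $\tilde h(t)=h^\sharp(s(t))$ with $s(t)=h_1(\tilde h(t))\sim t$. Since $h^\sharp$ is $\RV_{1/p}$, Lemma~\ref{lem:UCT-consequence}(2) gives $\tilde h(t)\sim h^\sharp(t)$. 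Only the first defining property of an asymptotic inverse is needed here, and $\tilde h\to\infty$ is part of the definition.

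The main obstacle is the construction of a continuous, strictly monotone $h_1\sim h$, since $h$ is only $L^\infty_{\loc}$ and ultimately monotone, and a monotone function may have jumps. The averaging above handles continuity. If strictness is delicate, I would instead define $h^\sharp(t)=\inf\{s;\ h(s)>t\}$ (the generalized inverse) and check the two sandwich inequalities directly; that check again reduces to the UCT argument above.
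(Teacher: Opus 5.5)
Your proof is correct. It also does more than the paper, which does not prove this proposition: it quotes Bingham--Goldie--Teugels. Only Remark~\ref{rmk:B2} adds a construction: replace $h$ by an ultimately monotone smooth equivalent (Adamovi\'c's result, Remark~\ref{rmk:continuous rv}) and invert that honestly.

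Your route has the same shape but is self-contained:
\begin{itemize}
\item You replace Adamovi\'c's theorem by the elementary average $c^{-1}t^{-1}\int_t^{2t}h(u)\,du$, with $c=\int_1^2 s^p\,ds$; its equivalence with $h$ follows from the UCT.
\item You prove the $\RV_{1/p}$ property of the inverse by hand. Your compactness argument on $v/u$ is sound, including the monotonicity argument that rules out $\mu=0$ and $\mu=\infty$.
\item You get uniqueness up to $\sim$ from $h^\sharp\circ h_1=\mathrm{id}$ together with Lemma~\ref{lem:UCT-consequence}(2) applied to $h^\sharp$.
\end{itemize}

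The version of $h^\sharp$ singled out by Bingham--Goldie--Teugels is the generalized inverse $\inf\{s;\ h(s)>t\}$. That choice needs no smoothing, but it may be discontinuous. Your construction gives a continuous, strictly increasing $h^\sharp$ directly, which is what Remark~\ref{rmk:B2} and the proof of Lemma~\ref{lem:B5} need, and it uses only tools already in the paper. Your observation that the ``only if'' direction needs only $h(\tilde h(t))\sim t$ and $\tilde h\to\infty$ is a correct small sharpening.

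Two minor points:
\begin{itemize}
\item The averaged function is continuous for any locally integrable $h$, not because $h$ is monotone.
\item The $e^{-1/t}$ correction is unnecessary. If the average took equal values at $t<t'$ with $t$ large, monotonicity would force $h$ to be constant on $(t,2t')$, which contradicts $h(2x)/h(x)\to 2^p>1$.
\end{itemize}
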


 \begin{remark}\label{rmk:B2}
  It follows from Remark~\ref{rmk:continuous rv} and Proposition~\ref{prop:counting-asymptotic-inverse}
 that we always can choose the asymptotic inverse to be a continuous $\RV_{1/p}$-function.
If $h:[0,\infty)\rightarrow [0,\infty)$ is an increasing bijection, then $h$ and $h^{-1}$ are continuous and are asymptotic inverses of each other.
 In general, if $h$ is only ultimately increasing, then there is $a>0$ such that $h:[a,\infty)\rightarrow [h(a),\infty)$ is an increasing bijection. An instance of continuous asymptotic inverse of $h$ then is given by any continuous function $h^\natural:[0,\infty)\rightarrow [0,\infty)$ such that $h^\natural=(h_{|[a,\infty)})^{-1}$ on $[h(a),\infty)$.
 \end{remark}

\begin{example}
 If $h(t)=t^p$, $p>0$, then its inverse $h^\natural(t)=t^{1/p}$, $t\geq 0$, is an $\RV_{1/p}$-continuous asymptotic inverse.
\end{example}

 \begin{example}\label{ex:asymp inverse}
 If $h(t)=t^p(\log (t+2))^q$, $p>0$, $q\in \R$, then an $\RV_{1/p}$-continuous asymptotic inverse is given by
 \begin{equation*}
 h^\sharp(t):=p^{\frac{q}{p}} t^{\frac1p} \left(\log (t+2)\right)^{-\frac{q}{p}}, \qquad t\geq 0.
\end{equation*}
\end{example}

\subsection{Asymptotic behaviour of counting functions}
From now on, we let
\begin{equation*}
 \lambda_0\geq \lambda_1 \geq \lambda_2 \geq \cdots \geq 0
\end{equation*}
be a non-increasing non-negative sequence converging to $0$. Its counting function is given by
\begin{equation*}
 N(\lambda):= \#\{j; \lambda_j>\lambda\}, \qquad \lambda>0.
\end{equation*}

In addition, we let $h:[0,\infty)\rightarrow [0,\infty)$ be an $\RV_p$ function with $p>0$, and let $h^\sharp: [0,\infty)\rightarrow [0,\infty)$ be an
$\RV_{1/p}$ asymptotic inverse.

\begin{lemma}\label{lem:B5}
We have
 \begin{gather} \label{eq:limsup in appendix B}
\limsup_{j\rightarrow \infty} h^\sharp(j)\lambda_j=\bigg[\limsup_{\lambda\rightarrow 0^+} h(\lambda^{-1})^{-1}N(\lambda)\bigg]^{\frac1p},\\
\liminf_{j\rightarrow \infty} h^\sharp(j)\lambda_j=\bigg[\liminf_{\lambda\rightarrow 0^+} h(\lambda^{-1})^{-1}N(\lambda)\bigg]^{\frac1p}
\label{eq:liminf in appendix B}.
\end{gather}
 Thus,
\begin{equation}\label{eq:asymptotic equivalence}
 \lim_{\lambda\rightarrow 0^+} h(\lambda^{-1})^{-1}N(\lambda)=c \ \Longleftrightarrow \ \lim_{j\rightarrow \infty} h^\sharp(j)\lambda_j =c^{\frac1p}.
\end{equation}
 \end{lemma}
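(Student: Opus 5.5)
The plan is to compare, for each $\epsilon$, the tail of the sequence with the level sets of $N$, using that $h$ and $h^\sharp$ are regularly varying asymptotic inverses. Set $L:=\limsup_{\lambda\to0^+}h(\lambda^{-1})^{-1}N(\lambda)$ and $\ell:=\limsup_{j\to\infty}h^\sharp(j)\lambda_j$. I will prove (\ref{eq:limsup in appendix B}) by showing $\ell\le L^{1/p}$ and $L\le \ell^{p}$. The $\liminf$ statement (\ref{eq:liminf in appendix B}) follows by the same argument with inequalities reversed, and (\ref{eq:asymptotic equivalence}) is then immediate. The basic link is the pair of monotonicity facts
\begin{equation*}
 \lambda_j>\lambda \ \Longrightarrow\ N(\lambda)\geq j+1, \qquad N(\lambda)\le j \ \Longrightarrow\ \lambda_j\le\lambda,
\end{equation*}
which hold because the sequence is non-increasing.

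For $\ell\le L^{1/p}$, I will assume $L<\infty$ and fix $L'>L$. Then $N(\lambda)\le L'h(\lambda^{-1})$ for all small $\lambda$. Given a large $j$ with $\lambda_j>0$, I apply this at $\lambda$ slightly below $\lambda_j$ (letting $\lambda\uparrow\lambda_j$) and use the first implication. This gives
\begin{equation*}
 j+1\le L'h(\lambda_j^{-1}(1+o(1))).
\end{equation*}
Lemma~\ref{lem:UCT-consequence} removes the $1+o(1)$. The real point is then to apply $h^\sharp$ to both sides. Since $h^\sharp$ is $\RV_{1/p}$ and ultimately increasing, Lemma~\ref{lem:UCT-consequence} gives $h^\sharp(L'h(u))\sim L'^{1/p}h^\sharp(h(u))\sim L'^{1/p}u$. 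This yields $h^\sharp(j)\le (1+o(1))L'^{1/p}\lambda_j^{-1}$, that is, $\ell\le L'^{1/p}$.

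For $L\le\ell^p$, I will fix $\ell'>\ell$, so that $\lambda_j\le \ell' h^\sharp(j)^{-1}$ for $j\ge j_0$. Given a small $\lambda$, I choose $j$ to be the least integer with $\ell'h^\sharp(j)^{-1}\le\lambda$. Because $h^\sharp$ is continuous and ultimately increasing, this $j$ satisfies $h^\sharp(j)\sim \ell'\lambda^{-1}$. Then $\lambda_j\le\lambda$, so $N(\lambda)\le j$. Applying $h$ gives $j\sim h(h^\sharp(j))\sim h(\ell'\lambda^{-1})\sim \ell'^{p}h(\lambda^{-1})$, again by Lemma~\ref{lem:UCT-consequence}. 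Hence $L\le \ell'^p$.

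The degenerate cases $L=0$ or $\ell=0$, and the infinite cases, are handled by the same estimates with an arbitrary $L'$ or $\ell'$. The eventually-zero sequence is trivial.

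The main obstacle is the discreteness and composition step: passing from an inequality like $j\le L'h(u)$ to $h^\sharp(j)\le(1+o(1))L'^{1/p}u$. This requires monotonicity of $h^\sharp$ on a tail, together with the uniform convergence theorem to handle the multiplicative constant and the rounding $j$ versus $j+1$. I will handle it by working only on a tail where $h$ and $h^\sharp$ are increasing and continuous, using Remark~\ref{rmk:B2}, and by using Remark~\ref{rmk:slow decreasing} to absorb the $\pm1$ shifts.
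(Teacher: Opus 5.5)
Your proposal is correct and follows essentially the paper's route. For $\ell\le L^{1/p}$ you sandwich $j$ against $N$ evaluated just below $\lambda_j$ and apply $h^\sharp$, as the paper does with $N(\lambda_j)\le j\le N(a\lambda_j)$ and $a\to1^-$; for $L\le\ell^{p}$ you compare $N(\lambda)$ with an index and apply $h$, where the paper evaluates at $j=N(\lambda)-1$ and you instead choose the index by inverting $h^\sharp$ on the integers, which spares you the monotonicity of $h$.

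One small slip: your two displayed implications are contrapositives of each other, whereas your second step (and the $\liminf$ reversal) uses the converse $\lambda_j\le\lambda\Rightarrow N(\lambda)\le j$. That converse is equally immediate, since $\{k;\ \lambda_k>\lambda\}=\{0,\ldots,N(\lambda)-1\}$ for a non-increasing sequence.
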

 \begin{proof}
 We only have to prove~(\ref{eq:limsup in appendix B})--(\ref{eq:liminf in appendix B}). Define
\begin{gather*}
 \underline{N}=\liminf_{\lambda\rightarrow 0^+} h\left(\lambda^{-1}\right)^{-1}N(\lambda), \qquad \overline{N}=\limsup_{\lambda\rightarrow 0^+}
 h\left(\lambda^{-1}\right)^{-1}N(\lambda),\\
 \underline{L}=\liminf_{j\rightarrow \infty} h^\sharp(j)\lambda_j, \qquad  \overline{L}=\limsup_{j\rightarrow \infty} h^\sharp(j)\lambda_j.
\end{gather*}
To complete the proof we then only need to show the following two sets of inequalities,
\begin{gather}
 \underline{N}^{\frac1p} \leq \underline{L} \qquad \textup{and} \qquad  \overline{L}  \leq \overline{N}^{\frac1p},\label{eq:NLLN}\\
  \underline{L}^p\leq \underline{N} \qquad \textup{and} \qquad  \overline{N} \leq \overline{L}^p. \label{eq:LNNL}
\end{gather}

\noindent \texttt{Proof of ~(\ref{eq:NLLN}).}We may assume $\underline{N}>0$. Let $a\in(0,1)$. The definition of $N(\lambda)$ implies that
\begin{equation*}
N(\lambda_j)\leq j\leq N(a\lambda_j)-1\leq N(a\lambda_j).
\end{equation*}
In addition, given any $\epsilon>0$, the definitions of $\underline{N}$ and $\overline{N}$ implies that, for $\lambda$ large enough, we have
\begin{equation*}
(1-\epsilon)\underline{N}\leq h(\lambda^{-1})^{-1}N(\lambda)\leq \overline{N}+\epsilon.
\end{equation*}
Take $\lambda=\lambda_j$ and $\lambda=a\lambda_j$, respectively. Then for $j$ large enough, we have
\begin{equation*}
	(1-\epsilon)\underline{N}h(\lambda_j^{-1})\leq N(\lambda_j)\leq j\leq N(a\lambda_j)\leq (\overline{N}+\epsilon)h(a^{-1}\lambda_j^{-1}).
\end{equation*}
As $h^\sharp$ is ultimately increasing, for $j$ large enough, we get
\begin{equation*}
	h^\sharp[(1-\epsilon)\underline{N}h(\lambda_j^{-1})]\leq h^\sharp(j)\leq h^\sharp[(\overline{N}+\epsilon)h(a^{-1}\lambda_j^{-1})].
\end{equation*}
Thus,
\begin{equation*}
	h^\sharp[(1-\epsilon)\underline{N}h(\lambda_j^{-1})]\lambda_j\leq h^\sharp(j)\lambda_j\leq h^\sharp[(\overline{N}+\epsilon)h(a^{-1}\lambda_j^{-1})]\lambda_j.
\end{equation*}
As $h^\sharp$ is $\RV_{1/p}$ and is an asymptotic inverse of $h$, for $j\to\infty$ we have
\begin{equation*}
 h^\sharp[(\overline{N}+\epsilon)h(a^{-1}\lambda_j^{-1})]\lambda_j\sim  (\overline{N}+\epsilon)^{\frac1p}h^\sharp[h(a^{-1}\lambda_j^{-1})]\lambda_j\sim a^{-1}(\overline{N}+\epsilon)^{\frac1p}.
\end{equation*}
Similarly, we have
\begin{equation*}
	h^\sharp[(1-\epsilon)\underline{N}h(\lambda_j^{-1})]\lambda_j\sim [(1-\epsilon)\underline{N}]^{\frac1p}	h^\sharp[h(\lambda_j^{-1})]\lambda_j\sim [(1-\epsilon)\underline{N}]^{\frac1p}.
\end{equation*}
We then get
\begin{equation*}
[(1-\epsilon)\underline{N}]^{\frac1p}\leq \underline{L}\leq \overline{L}\leq a^{-1}(\overline{N}+\epsilon)^{\frac1p}.
\end{equation*}
Letting $\epsilon\to 0^+$ and $a\to1^{-}$ gives the inequalities ~(\ref{eq:NLLN}).

\noindent \texttt{Proof of ~(\ref{eq:LNNL}).} We may assume that $\underline{L}>0$. The proof relies on the inequalities:
\begin{equation*}
\lambda_{N(\lambda)}\leq \lambda\leq	\lambda_{N(\lambda)-1}.
\end{equation*}
Let $\epsilon>0$. For $j$ large enough, we have
\begin{equation*}
(1-\epsilon)\underline{L}\leq h^\sharp(\lambda_j)\lambda_j\leq \overline{L}+\epsilon.
\end{equation*}
Then, for $j=N(\lambda)$ and $j=N(\lambda)-1$, we get
\begin{equation*}
(1-\epsilon)\underline{L} h^\sharp(N(\lambda))^{-1}\leq\lambda_{N(\lambda)}\leq \lambda\leq	\lambda_{N(\lambda)-1}\leq(\overline{L}+\epsilon)h^\sharp(N(\lambda)-1)^{-1}.
\end{equation*}
As the function $\lambda\to h(\lambda^{-1})^{-1}$ is ultimately increasing, for $\lambda$ large enough, we have
\begin{equation*}
h[((1-\epsilon)\underline{L})^{-1} h^\sharp(N(\lambda))]^{-1}\leq h(\lambda^{-1})^{-1}\leq h[(\overline{L}+\epsilon)^{-1}h^\sharp(N(\lambda)-1)]^{-1}.
\end{equation*}
Thus,
\begin{equation}\label{eq:h-full}
h[((1-\epsilon)\underline{L})^{-1} h^\sharp(N(\lambda))]^{-1}N(\lambda)\leq h(\lambda^{-1})^{-1}N(\lambda)\leq h[(\overline{L}+\epsilon)^{-1}h^\sharp(N(\lambda)-1)]^{-1}N(\lambda).
\end{equation}
As $h$ is an $\RV_p$-function and is an asymptotic inverse of $h^\sharp$, as $\lambda\to\infty$, we have
\begin{equation}\label{eq:h-left}
h[((1-\epsilon)\underline{L})^{-1} h^\sharp(N(\lambda))]^{-1}N(\lambda)\sim [(1-\epsilon)\underline{L}]^p[h\circ h^\sharp(N(\lambda))]^{-1}N(\lambda)\sim [(1-\epsilon)\underline{L}]^p.
\end{equation}
Similarly, we have
\begin{equation*}
h[(\overline{L}+\epsilon)^{-1}h^\sharp(N(\lambda)-1)]^{-1}N(\lambda)\sim (\overline{L}+\epsilon)^p[h\circ h^\sharp(N(\lambda)-1)]^{-1}N(\lambda)\sim (\overline{L}+\epsilon)^p.
\end{equation*}
Combining this with ~(\ref{eq:h-full})--(\ref{eq:h-left}), we then get
\begin{equation*}
[(1-\epsilon)\underline{L}]^p\leq \underline{N} \quad\mbox{and}\quad \overline{N}\leq (\overline{L}+\epsilon)^p.
\end{equation*}
Letting $\epsilon\to0^+$ gives ~(\ref{eq:LNNL}). The proof is complete.
\end{proof}

\begin{example}
 Suppose that $h(t)=t^p$, $p>0$. An exact inverse of $h(t)$ then is
\begin{equation*}
 h^\natural(t):=t^{\frac1p}, \qquad t\geq 0.
\end{equation*}
This is an $\RV_{\frac1p}$-continuous function. Therefore, Lemma~\ref{lem:B5} allows us to recover the well-known equivalence,
 \begin{equation*}
 \lim_{\lambda\rightarrow 0^+} \lambda^pN(\lambda)=c \ \Longleftrightarrow \ \lim_{j\rightarrow \infty} j^{\frac1p}\lambda_j =c^{\frac1p}.
\end{equation*}
\end{example}

\begin{example}
 Suppose that $h(t)=t^p(\log(t+2))^q$ with $p>0$ and $q\neq 0$. As mentioned in Example~\ref{ex:asymp inverse} an $\RV_{1/p}$ asymptotic inverse is given by
\begin{equation*}
 h^\sharp(t):= p^{\frac{q}{p}} t^{\frac{1}{p}} \left(\log(t+2)\right)^{-\frac{q}{p}}, \qquad t\geq 0.
\end{equation*}
We may thus apply Lemma~\ref{lem:B5} to get the following equivalence, 
\begin{equation*}
 \lim_{\lambda\rightarrow 0^+} \lambda^p|\log \lambda|^qN(\lambda)=c \ \Longleftrightarrow \ \lim_{j\rightarrow \infty} j^{\frac1p} (\log j)^{-\frac{q}{p}}\lambda_j =
 \left(p^{-q}c\right)^{\frac1p}.
\end{equation*}
\end{example}

\end{document}